\documentclass[reqno,11pt]{amsart}
\usepackage{setspace}
\makeatletter

\usepackage[T1]{fontenc}
\usepackage{lmodern}

\usepackage[a4paper,top=1in,bottom=1in,left=1in,right=1in,marginparwidth=1.75cm]{geometry}
\usepackage{microtype}
\usepackage{textcmds}  
\usepackage{amsmath, amssymb, amsfonts, amstext, verbatim, amsthm, mathrsfs, stmaryrd}
\usepackage{thmtools}
\usepackage{appendix}
\usepackage{mathdots}
\usepackage{ytableau}
\usepackage[all,cmtip,2cell]{xy}
\usepackage{pgf,tikz,pgfplots}
\pgfplotsset{compat=1.15}
\usetikzlibrary{arrows}
\usetikzlibrary{positioning}
\usetikzlibrary{quotes}
\usepackage{tikz-cd}
\usepackage{quiver}
\usepackage{mathtools}
\usepackage[dvipsnames]{xcolor}

\usepackage{graphicx}
\graphicspath{ {images/} }

\tikzset{
  symbol/.style={
    draw=none,
    every to/.append style={
      edge node={node [sloped, allow upside down, auto=false]{$#1$}}}
  }
}

\usepackage{extarrows}

\numberwithin{equation}{section}

\usepackage{enumerate}
\usepackage{enumitem}
\setlist{topsep=1em, itemsep=1em}
\usepackage[colorlinks=true,linkcolor=blue,citecolor=blue,urlcolor=blue,citebordercolor={0 0 1},urlbordercolor={0 0 1},linkbordercolor={0 0 1}]{hyperref} 
\usepackage[shortalphabetic]{amsrefs}
\usepackage[nameinlink, capitalize]{cleveref}

\usepackage{enotez}
\setenotez{backref=true}

\def\mydefc#1{\expandafter\def\csname c#1\endcsname{\mathcal{#1}}}
\def\mydefallc#1{\ifx#1\mydefallc\else\mydefc#1\expandafter\mydefallc\fi}
\mydefallc ABCDEFGHIJKLMNOPQRSTUVWXYZ\mydefallc

\def\mydefb#1{\expandafter\def\csname b#1\endcsname{\mathbf{#1}}}
\def\mydefallb#1{\ifx#1\mydefallb\else\mydefb#1\expandafter\mydefallb\fi}
\mydefallb ABCDEFGHIJKLMNOPQRSTUVWXYZ\mydefallb

\def\mydeffrac#1{\expandafter\def\csname frac#1\endcsname{\mathscr{#1}}}
\def\mydeffracall#1{\ifx#1\mydeffracall\else\mydeffrac#1\expandafter\mydeffracall\fi}
\mydeffracall ABCDEFGHIJKLMNOPQRSTUVWXYZ\mydeffracall

\theoremstyle{plain}
\newtheorem{thm}{Theorem}[section]
\newtheorem{cor}[thm]{Corollary}
\newtheorem{lem}[thm]{Lemma}

\newtheorem{prop}[thm]{Proposition}

\newtheorem{heur}[thm]{Heuristic}

\theoremstyle{definition}
\newtheorem{rem}[thm]{Remark}
\newtheorem{defn}[thm]{Definition}

\newtheorem{setup}[thm]{Setup}

\newtheorem{ex}[thm]{Example}

\newtheorem{introthm}{Theorem}

\crefname{thm}{theorem}{theorems}
\crefname{cor}{corollary}{corollaries}
\crefname{lem}{lemma}{lemmas}
\crefname{prop}{proposition}{propositions}
\crefname{defn}{definition}{definitions}
\crefname{rem}{remark}{remarks}
\crefname{claim}{claim}{claims}
\crefname{conj}{conjecture}{conjectures}
\crefname{ex}{example}{examples}
\crefname{fact}{fact}{facts}
\crefname{quest}{question}{questions}

\Crefname{thm}{Theorem}{Theorems}
\Crefname{cor}{Corollary}{Corollaries}
\Crefname{lem}{Lemma}{Lemmas}
\Crefname{prop}{Proposition}{Propositions}
\Crefname{defn}{Definition}{Definitions}
\Crefname{rem}{Remark}{Remarks}
\Crefname{claim}{Claim}{Claims}
\Crefname{conj}{Conjecture}{Conjectures}
\Crefname{ex}{Example}{Examples}
\Crefname{fact}{Fact}{Facts}
\Crefname{quest}{Question}{Questions}

\def\rm{\mathrm}
\def\d{\mathrm{d}}
\def\K{\mathrm{K}}

\DeclareMathOperator{\Amp}{Amp}
\DeclareMathOperator{\Aut}{Aut}

\newcommand{\DCoh}{\mathcal{D}^{b}}

\DeclareMathOperator{\Coh}{Coh}

\DeclareMathOperator{\Cone}{Cone}

\DeclareMathOperator{\Ext}{Ext}

\DeclareMathOperator{\GL}{GL}
\DeclareMathOperator{\SL}{SL}
\DeclareMathOperator{\PSL}{PSL}

\DeclareMathOperator{\Hom}{Hom}

\newcommand{\Knum}{\mathrm{K}^{\mathrm{num}}}

\DeclareMathOperator{\NS}{NS}
\DeclareMathOperator{\Num}{Num}

\DeclareMathOperator{\rank}{rank}

\DeclareMathOperator{\Roots}{Roots}

\newcommand{\norm}[1]{\left\lVert#1\right\rVert}

\DeclareMathOperator{\Stab}{Stab}

\DeclareMathOperator{\Pic}{Pic}

\DeclareMathOperator{\Ob}{Ob}

\DeclareMathOperator{\im}{im}

\DeclareMathOperator{\logZ}{logZ}
\DeclareMathOperator{\gr}{gr}

\def\bf{\mathbf}

\def\Astab{\mathcal{A}\Stab}

\newcommand{\kn}{{\mathcal{N}}}
\newcommand{\ko}{\mathcal{O}}

\newcommand{\Ku}{\mathrm{Ku}}

\newcommand{\ch}{\mathrm{ch}}

\newcommand{\Zforg}{\mathcal{Z}}
\newcommand{\Bridgeland}{\mathrm{Bri}}
\newcommand{\ArcaraBertram}{\mathrm{AB}}

\newcommand{\hooklongrightarrow}{\lhook\joinrel\longrightarrow}

\usepackage{pbox}
\usepackage[normalem]{ulem}

\makeatletter

\usepackage{babel}

\begin{document}

\title[Degenerate complex structures on surfaces and stability conditions]{Degenerate complex structures on surfaces and stability conditions}

\author[A. Robotis]{Antonios-Alexandros Robotis}
\address{Department of Mathematics, Imperial College London}
\email{a.robotis@ic.ac.uk}

\author[N. Vilches]{Nicol\'{a}s Vilches}
\address{Mathematical Sciences Institute, Australian National University}
\email{nicolas.vilches@anu.edu.au}

\begin{abstract}
    Physical heuristics suggest that the space of Bridgeland stability conditions of a projective Calabi--Yau manifold $X$ should be related to the moduli space of complex structures of its mirror $X^\vee$. On the other hand, recent work in the literature introduces a modular partial compactification, $\Astab(X)$, of a quotient of the space of stability conditions on $\DCoh(X)$. 
    
    We provide some mathematical evidence for the aforementioned heuristics by proving that for curves and surfaces, certain boundary strata in $\Astab(X)$ describe degenerate complex structures on $X^\vee$. Along the way, we introduce a criterion for producing boundary points of $\Astab(\cD)$ when $\cD$ is an arbitrary $k$-linear triangulated category. As a byproduct, we deduce the existence of Bridgeland stability conditions on certain quotients of the derived category of K3 and Enriques surfaces.
\end{abstract}

\maketitle

\tableofcontents

\addtocontents{toc}{\protect\setcounter{tocdepth}{1}}

\linespread{1.15}\selectfont

\section{Introduction}

The study of stability conditions on triangulated categories was initiated by Bridgeland \cite{Br07} as an attempt to mathematize the theory of $\Pi$-stability in physics, as in the work of Douglas \cite{DouglasPi}. In spite of, or perhaps because of, its physical origin, the theory of stability conditions has proven to be a useful tool in resolving problems in algebraic geometry and beyond. 

The basic construction associates to a triangulated category $\cD$ a complex manifold $\Stab(\cD)$ of stability conditions \cite{Br07}. When $\cD = \DCoh(X)$, the bounded derived category of coherent sheaves on a variety $X$, it is frequently possible to construct moduli spaces $M_\sigma(v)$ of $\sigma$-stable objects with fixed numerical class $v \in \Knum(X)$, where $\sigma \in \Stab(X)$. The study of the dependence of the moduli space $M_\sigma(v)$ on the parameter $\sigma \in \Stab(X)$ often gives rise to wall-crossing phenomena, which have recently shed light on many problems in algebraic geometry --- see \cites{BMunreasonable,BMMMPK3,ProjbiratBM,MukaiWCF} for several examples.

In spite of this plethora of applications to algebraic geometry, many of the original heuristics about $\Stab(X)$ come from the physical theory of mirror symmetry. For a Calabi--Yau manifold $X$, mirror symmetry predicts an isomorphism $\cM_{K}(X) \cong \cM_{\bf{C}}(X^\vee)$, where $\cM_K(X)$ is the physically defined \emph{stringy K\"ahler moduli space} of $X$ and $\cM_{\bf{C}}(X^\vee)$ is the moduli space of complex structures on the mirror manifold $X^\vee$. Unfortunately, the stringy K\"ahler moduli space seems to evade mathematical definition; nevertheless, it is posited that $\cM_K(X)$ should embed in the double quotient $\Aut(\cD)\backslash \Stab(X)/\bC$, at least when $X$ is a Calabi--Yau manifold --- cf. \cite{BrSpacesofstability}*{Rem. 3.9}. The mirror to a compact Calabi--Yau manifold $X$ is itself a Calabi--Yau manifold $X^\vee$ of the same dimension and the heuristics above predict an embedding 
\begin{equation}
\label{E:embeddingSKMS}
    \cM_{\bC}(X^\vee) \xlongrightarrow{\sim} \cM_K(X) \hooklongrightarrow \Aut(\DCoh(X))\backslash \Stab(X)/\bC.
\end{equation}

The spaces in \eqref{E:embeddingSKMS} are in general non-compact as seen, for instance, in the example of $X$ an elliptic curve \cite{BrSpacesofstability}*{\S 4.1}. On the other hand, the recent work \cite{augmented} introduces a modular partial compactification $\Astab(X)$ of ${\Stab(X)}/\bC$ to which the $\Aut(\DCoh(X))$-action extends. The points of $\Astab(X)$ are called \emph{augmented stability conditions}. 

In light of these developments, it is reasonable to ask if certain boundary points of $\Astab(X)$, or $\Aut(\DCoh(X))\backslash \Astab(X)$, can be interpreted as boundary points of $\cM_{\bC}(X^\vee)$, i.e. degenerate complex structures on the Calabi--Yau manifold $X^\vee$. We summarize this as:

\begin{heur}
\label{Heur:CY}
    Let $X$ be a Calabi--Yau manifold. There should exist certain boundary strata of $\Astab(X)$ which correspond to degenerate complex structures on the mirror $X^\vee$.
\end{heur}

The purpose of the present work is to develop tools to investigate Heuristic~\ref{Heur:CY} in examples. More precisely, our main results, summarized below, give effective criteria for constructing paths in $\Stab(X)/\bC$ converging to boundary points in $\Astab(X)$. We apply these criteria to study the cases of categories $\cD$ with rank two numerical K-theory, such as $\DCoh(X)$ for $X$ a smooth projective curve, and the derived categories of K3 and Enriques surfaces.

\subsection{Main results}

We study boundary points of $\Astab(\cD)$ arising as limits of quasi-convergent paths in $\Stab(\cD)/\bC$ as introduced in \cite{quasiconvergence}. In Section~\ref{sec:ranktwo}, we consider the situation where the rank of $\Knum(\cD)$ is two. A notable special case covered in this section is when $X$ is a Calabi--Yau curve, i.e. an elliptic curve. 

In this case, the space $\Stab(X)$ is completely understood, being biholomorphic to $\bH\times \bC$ by \cite{Br07}*{Thm. 9.1}. Thus, $\Stab(X)/\bC$ is isomorphic to $\bH$ and it is observed in \cite{BrSpacesofstability}*{\S 4.1} that $\Aut(\DCoh(X))$ acts through the standard action of $\SL_2(\bZ)$ on $\bH$, the spherical twists $T_{\cO_X}$ and $T_{\cO_x}$ being mapped to generators. Thus, the double quotient $\Aut(\DCoh(X))\backslash \Stab(X)/\bC$ is identified with $\SL_2(\bZ)\backslash \bH$, the moduli stack $\cM_{1,1}$ of elliptic curves, which can be interpreted as $\cM_{\bC}(X^\vee)$. Our first result gives a description of $\Astab^\dagger(X)$, i.e. the closure of $\Stab(X)/\bC$ in $\Astab(X)$.

\begin{introthm} \label{thm:intro_elliptic}
\label{T:introthmelliptic}
    The space $\Astab^\dagger(X)$ is homeomorphic to $\bH\cup \bQ\cup \{\infty\} \subset \bP^1$ such that $\Stab(X)/\bC$ corresponds to $\bH$. Each $\mu \in \bQ \cup \{\infty\}$ corresponds to a limit point where stable sheaves of slope $\mu$ become massless. In particular, 
    \[
        \Aut(\DCoh(X))\backslash \Astab(X) \cong \overline{\cM}_{1,1}. 
    \]
\end{introthm}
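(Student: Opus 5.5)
We plan to deduce the theorem from the general rank-two analysis of Section~\ref{sec:ranktwo}, specialised to $\cD=\DCoh(X)$ with $X$ an elliptic curve. Here $\Knum(X)\cong\bZ^2$ via $(\rank,\deg)$. By \cite{Br07}*{Thm. 9.1}, every $\sigma\in\Stab(X)$ has heart a rotation of $\Coh(X)$ and is determined, up to the $\bC$-action, by the point $\tau\in\bH$ with $Z(\rank,\deg)\propto -\deg+\tau\rank$. The stable objects are exactly the shifts of stable sheaves (including $\cO_x$). Thus the first step is to fix the identification $\Stab(X)/\bC\cong\bH$ and record that every primitive class $(r,d)$ is the class of a stable object for every $\sigma$.

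Second, we classify boundary points. An augmented stability condition in $\Astab^\dagger(X)$ that is not in $\Stab(X)/\bC$ should consist of a nonzero proper subgroup of massless classes $\Lambda\subset\Knum(X)$ and a stability condition on the quotient. Since the central charge is a nonzero map $\bZ^2\to\bC$, the kernel $\Lambda$ has rank at most one, and it must be saturated, hence $\Lambda=\bZ(r,d)$ with $(r,d)$ primitive. Write $\mu=d/r\in\bQ\cup\{\infty\}$. The quotient $\Knum/\Lambda$ has rank one, so the induced stability condition on the (thick) quotient category is unique up to $\bC$. This gives exactly one boundary point for each $\mu$. To realise it, we take a path $\tau(t)\to\mu$ in $\bH$, say vertically or inside a horocycle-type region, and check it is quasi-convergent in the sense of \cite{quasiconvergence}. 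Its limit has massless objects equal to the stable sheaves of slope $\mu$, by the criterion established earlier for the rank-two case. Conversely, paths escaping to an irrational boundary point of $\bR$ should not converge in $\Astab$. The reason is that no nonzero integral class has $Z\to0$ relative to the normalisation, so the masses of all objects stay incomparable and no limit exists. This yields a bijection $\Astab^\dagger(X)\leftrightarrow\bH\cup\bQ\cup\{\infty\}$.

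Third, we compare topologies. The topology on $\Astab$ is defined through convergence of normalised central charges and phases of stable objects. We will show that a sequence $\tau_n\to\mu$ in $\Astab$ exactly when $\tau_n$ converges to the cusp $\mu$ in the horoball (Satake) topology, not merely in the Euclidean topology of $\bP^1$. Here we use the $\SL_2(\bZ)$-equivariance to reduce to $\mu=\infty$, where convergence means $\mathrm{Im}\,\tau_n\to\infty$. I expect this step to be the main obstacle. The statement says "homeomorphic to $\bH\cup\bQ\cup\{\infty\}\subset\bP^1$", and one must check carefully which topology arises. I would first test whether a sequence $\tau_n=x_n+i$ with $x_n\to\infty$ converges in $\Astab$. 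In that case $Z(\cO_x)$ stays bounded relative to the other classes, so I expect it not to converge, which would confirm the horoball topology. One then has to pin down the correct topology in the statement accordingly.

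Finally, $\Aut(\DCoh(X))$ acts on $\Astab(X)$ extending the $\SL_2(\bZ)$-action via $T_{\cO_X},T_{\cO_x}$. This action is transitive on the cusps $\bQ\cup\{\infty\}$, with stabiliser of $\infty$ generated by $\pm T$. The quotient is therefore $\SL_2(\bZ)\backslash(\bH\cup\bQ\cup\{\infty\})$ with the Satake topology, which is $\overline{\cM}_{1,1}$. If $\Astab(X)$ contains additional components not in the closure $\Astab^\dagger$, we argue from the rank-one quotient classification that every augmented stability condition lies in the closure, so $\Astab=\Astab^\dagger$ here.
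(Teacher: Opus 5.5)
\textbf{Where you match the paper.} Your Steps 1, 2 and 4 are essentially the paper's proof. The identification $\Stab(X)/\bC\cong\bH$ comes from \cite{Br07}*{Thm. 9.1}. The rank-two analysis (Lemma~\ref{L:rank2astab}, Proposition~\ref{P:rank2_thicksubcat}) then gives one boundary point with massless category $\cB_\mu$ for each $\mu\in\bQ\cup\{\infty\}$, realized by paths with $\tau(t)\to\mu$ (Corollary~\ref{C:caseofcurves}). The last step is the $\SL_2(\bZ)$-quotient (Corollary~\ref{C:astabofgenusone}).

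One correction there: uniqueness is not a lattice statement, because the boundary datum is a thick subcategory, not a subgroup of $\Knum(X)$. Inside the closure it is pinned down to $\cB_\mu$ by Lemma~\ref{L:rank2quasiconvergent}, since every limit point is reached by a quasi-convergent path in the orbit. Semistability is also constant along the orbit, so all semistable objects of slope $\mu$ become massless at once.

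\textbf{The gap: Step 3.} You leave Step 3 open, and it is the real gap. Your diagnosis is correct and is not addressed in the paper. With the subspace topology of $\bP^1$, every $\SL_2(\bZ)$-orbit in $\bH$ accumulates at every cusp. The quotient would then be non-Hausdorff and could not be $\overline{\cM}_{1,1}$, so the cusps need horoball neighbourhoods. You cannot borrow this step from the paper. Proposition~\ref{P:rank2_thicksubcat} only checks that paths with $\tau(t)\to\mu$ in $\overline{\bH}$ realize $\cB_\mu$ (for $\mu=\infty$, any path with $Z_t(F)/Z_t(E)\to 0$), and that is an object-by-object statement.

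\textbf{Your proposed test is misreasoned.} Normalize $Z(\cO_x)=-1$, so that $Z(r,d)=r\tau-d$. Along $\tau_n=x_n+\mathtt{i}$, the ratio $Z(\cO_x)/Z(E)\to0$ for every \emph{fixed} $E$ of positive rank. All the pointwise limit data coincide with those of a vertical path: the massless category $\cB_\infty$, $\mathfrak{p}=1$, the rank-one stability conditions modulo $\bC$, and the limits of $\mathfrak{n}_t$. What separates the two sequences is only a moving family: $\lvert Z(\cO(d_n))\rvert\le\sqrt{5}/2$, where $d_n$ is the integer nearest $x_n$.

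\textbf{How to close it.} You must extract from the strong topology of \cite{augmented} the uniform quantity that controls neighbourhoods of a cusp. A natural candidate is $\sup m(\cO_x)/m(E)$ over stable $E$ of positive rank. The estimate $\Im\tau\le\min_{r\ge1,\,d\in\bZ}\lvert r\tau-d\rvert\le\sqrt{(\Im\tau)^2+1/4}$ shows this tends to $0$ exactly when $\Im\tau\to\infty$. Then transport the conclusion to $\mu=p/q$ by $\SL_2(\bZ)$.

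Be careful which uniformity you use. Naive uniform convergence of phases on the massive level already fails along vertical paths, since the phase of $\cO(d)$ tends to $1$ when $d\gg\Im\tau$. The outcome would be the theorem with the Satake (horoball) topology on $\bH\cup\bQ\cup\{\infty\}$, not the subspace topology from $\bP^1$.

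\textbf{The final claim $\Astab=\Astab^\dagger$.} Your last sentence is unsupported: a lattice argument cannot give $\Astab(X)=\Astab^\dagger(X)$. For example, complexes supported at a single point have the same class lattice $\bZ\cdot v(\cO_x)$ as $\cB_\infty$. You would have to rule out stability conditions on such subcategories and their quotients. The paper itself only proves the statement for $\Astab^\dagger(X)$ (Corollary~\ref{C:astabofgenusone}).
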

Under homological mirror symmetry for $X$ \cite{PolishchukZaslow}, the sheaves $\cO_X$ and $\cO_x$ correspond to linear Lagrangians $L_1,L_2$ in the torus $X^\vee$ whose classes form a basis for $H_1(X^\vee,\bZ)$. The boundary point in $\Astab^\dagger(X)$ corresponding to $0$ can be interpreted as a limit of complex structures $\lim_{t\to\infty} \cJ_t$ in the complex moduli of $X^\vee$, where 
\begin{equation}
\label{E:ratio_of_charges}
    \frac{Z_t(\cO_X)}{Z_t(\cO_x)} = \frac{\int_{L_1} \Omega_t}{\int_{L_2}\Omega_t} \to 0.
\end{equation}
Here, $\Omega_t$ is a non-zero $1$-form on $X^\vee$ holomorphic for $\cJ_t$; note that the ratio of charges \eqref{E:ratio_of_charges} is independent of the choice of $\Omega_t$. In the limit, $L_1$ becomes massless relative to $L_2$ and we can interpret this limit as contributing a limit point where $L_1$ is shrunk to a point, i.e. a nodal torus. Quotienting by $\SL_2(\bZ)$ removes the particular choice of basis for $H_1(X^\vee,\bZ)$ used to construct the limit. Thus, Theorem~\ref{T:introthmelliptic} provides the first evidence for Heuristic~\ref{Heur:CY}.

In Section~\ref{sec:qcpaths}, we study conditions on paths $\sigma_\bullet\colon[0,\infty) \to \Stab(\cD)$ which ensure convergence in $\Astab(\cD)$ as $t\to\infty$. The first condition, \ref{axiom1}, requires that $\partial_t \Im(Z_t) = 0$. This has the key consequence that the category $\cA_t:=\cP_{\sigma_t}(0,1]$ is independent of $t$ --- see Lemma~\ref{L:constantheart}. We say that such paths have \emph{constant heart}. 

The second condition, \ref{axiom2}, requires that $\lim_{t\to\infty}Z_t = Z_\infty$ exists, and also that $Z_t$ depends analytically on $t$ near $\infty$. This can be regarded as a strengthening of Definition~\ref{D:quasiconvergent}(2). We further consider a pair of formulations \ref{axiom3} and \ref{axiom4} of the support property for paths as in \cite{quasiconvergence}*{Defn. 2.37}. Assuming \ref{axiom1} and \ref{axiom2}, these conditions are equivalent --- see Lemma~\ref{L:equivsupportproperty}. Our main technical result is:

\begin{introthm}
[= Theorem~\ref{T:strongquasiconvergence}] \label{thm:intro_qcpaths}
    If $\sigma_\bullet\colon[0,\infty)\to \Stab(\cD)$ is given such that \ref{axiom1}, \ref{axiom2}, and \ref{axiom3} hold then the induced path in $\Stab(\cD)/\bC$ is strongly quasi-convergent and thus converges in $\Astab(\cD)$. 
\label{T:secondintrothm}
\end{introthm}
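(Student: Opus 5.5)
The plan is to check the definition of strong quasi-convergence from \cite{quasiconvergence} directly. Convergence in $\Astab(\cD)$ then follows from the result there that strongly quasi-convergent paths (with the support property) converge.

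\textbf{Step 1: constant heart.} By \ref{axiom1}, $\Im Z_t$ is independent of $t$. Lemma~\ref{L:constantheart} then gives a single heart $\cA = \cP_{\sigma_t}(0,1]$ for all $t$. So every object has a Harder--Narasimhan filtration with respect to the stability functions $Z_t$ on the fixed abelian category $\cA$, and only the real parts $\Re Z_t$ vary. By \ref{axiom2}, near $t=\infty$ we may write $\Re Z_t(v)$ as a convergent power series in $s = 1/t$ (or $e^{-t}$, depending on the precise formulation), whose coefficients are linear functionals on $\Knum(\cD)$.

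\textbf{Step 2: eventual stabilization of HN filtrations.} For each $E \in \cA$ we show that the $\sigma_t$-HN filtration of $E$ is eventually constant in $t$. The key points are the following.
\begin{itemize}
\item The support property in the path form \ref{axiom3} (equivalently \ref{axiom4}, by Lemma~\ref{L:equivsupportproperty}) leaves only finitely many classes that can occur as HN factors, or as destabilizing subobjects, of $E$ for large $t$.
\item For two such classes $v, w$, the difference of phases $\phi_t(v) - \phi_t(w)$ has sign given by $\Im Z(v)\Re Z_t(w) - \Im Z(w)\Re Z_t(v)$. This is an analytic function of $s$, so it is either identically zero or has constant sign for $s$ small.
\end{itemize}
Hence all comparisons among the finitely many relevant classes stabilize, and the walls met by $E$ are finite in number. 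It follows that for $t \gg 0$ the HN filtration, and the semistability of each factor, is constant.

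\textbf{Step 3: the limit data.} Next we verify the remaining conditions of strong quasi-convergence: the limiting phases, and the ratios $Z_t(F)/Z_t(G)$ for semistable factors $F, G$, have limits in the sense required. The limiting phases exist because $\phi_t(F) = \frac{1}{\pi}\arg Z_t(F)$ with $Z_t \to Z_\infty$. If $Z_\infty(F) = 0$, analyticity gives a leading-order term $Z_t(F) \sim c\, s^k$ with $c \neq 0$, so the phase still converges. Likewise, ratios of central charges of factors behave like $c\, s^{k-l}$, so their $\log$-moduli either converge or diverge monotonically. This is exactly the growth control needed to define the limiting filtration and the limiting ``mass-ordered'' structure.

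\textbf{Main obstacle.} I expect Step 2 to be the hardest part: turning the support property along the path into finiteness of the relevant classes uniformly in $t$, since the quadratic form degenerates as $Z_t \to Z_\infty$. The plan there is to use \ref{axiom4} to bound $\|v\|$ by $|Z_t(v)|$ times a constant that is uniform on $[t_0,\infty)$, and $|Z_t(v)|$ for subfactors by the mass $m_t(E)$. For an object with constant heart, $m_t(E)$ is bounded because the imaginary parts are fixed and the real parts converge.
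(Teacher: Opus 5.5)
Your Steps 1--3 give, at best, ordinary quasi-convergence. Step~2 is a legitimate and in fact stronger replacement for Corollary~\ref{C:limitHN}. Once the classes of $\sigma_t$-HN subobjects of a fixed $E$ lie in a finite set for $t\ge t_0$ (Proposition~\ref{P:boundedHN}), analyticity of $A_t$ in $1/t$ makes the HN filtration eventually constant. The paper only groups $\sigma_t$-HN factors by limiting phase and needs no analyticity there. Step~3 matches Lemmas~\ref{L:normalizedlimit} and \ref{L:crossratio}.

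However, the conditions that make the path \emph{strongly} quasi-convergent in the sense of \cite{augmented}*{Defn. 5.19} are never addressed:
\begin{enumerate}
\item[(i)] \emph{stable lifts}: every $\tau_\nu$-stable object of $\cG_\nu$ is isomorphic in $\cG_\nu$ to an object that is $\sigma_t$-stable for all $t\gg 0$;
\item[(ii)] \emph{uniform} convergence, over all limit semistable objects of a given order $\nu$, of $\phi_t^{\pm}$ and of $\log(t^{\nu}m_t)$;
\item[(iii)] for the limit to be a point of $\Astab(\cD)$ at all, the support property of the limiting $\tau_\nu$ with respect to $\gr_\nu(\Lambda)$.
\end{enumerate}
Everything in your plan is pointwise in $E$. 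The time after which the HN filtration of $E$ freezes, or after which $\phi_t(E)$ is $\epsilon$-close to its limit, depends on $E$, so no uniform statement follows.

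What is missing is a quantitative use of \ref{axiom3} over the whole class $\cP_{\sigma_\bullet}$.
\begin{itemize}
\item For (ii) on massive objects, the paper uses $\lvert Z_\infty(E)\rvert\ge C_0\lVert v(E)\rVert$. This gives $\lvert Z_t(E)/Z_\infty(E)-1\rvert\le \lVert Z_t-Z_\infty\rVert_{\mathrm{op}}/C_0$ uniformly. It then controls $\phi_t^\pm$ by comparing each $\sigma_t$-stable $F$ with its extreme limit HN factors $H^\pm$.
\item For (ii) on massless objects of order $\nu$, it uses $\lvert a_\nu(E)\rvert\ge C_\nu\lVert v(E)\rVert$. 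This makes the tail $\sum_{\mu>\nu}\frac{a_\mu(E)}{a_\nu(E)}t^{\nu-\mu}$ uniformly small.
\item For (i), it takes a limit semistable lift $\hat E$ and uses Proposition~\ref{P:boundedHN} to get finitely many possible $\sigma_t$-Jordan--H\"older factors for $t\ge t_0$. A uniform gap in limiting phases then shows $\hat E$ is eventually $\sigma_t$-stable.
\item (iii) is the compatible-norm estimate in Proposition~\ref{P:quasiconvergentlimit}.
\end{itemize}
So the obstacle you flag, bounding the HN classes of a fixed $E$, is not where the real work lies.

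Two smaller points about that step. First, $m_t(E)$ is not bounded merely because $\Im Z_t$ is fixed and $\Re Z_t$ converges, since the HN polygon can extend far to the left. The paper controls subobjects via Bayer's inequality $m_{t_0}(G)-A_{t_0}(G)\le m_{t_0}(E)-A_{t_0}(E)$ at one fixed time $t_0$, together with the support property at $t_0$. Second, no constant bounds $\lVert v\rVert/\lvert Z_t(v)\rvert$ uniformly on $[t_0,\infty)$ for massless classes, because \ref{axiom3} degenerates like $t^{-\nu}$ there. You must first split off the constant phase-one part.
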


The rest of the paper is dedicated to applying Theorem~\ref{T:secondintrothm} in the stability manifolds of K3 and Enriques surfaces. Using the now-classical constructions of stability conditions on surfaces in \cites{ABLStabilitysurf,BridgelandK3} by tilting, it is fairly easy to construct paths $\sigma_\bullet$ satisfying \ref{axiom1} and \ref{axiom2}. The condition \ref{axiom4}, and thus \ref{axiom3}, can be checked using formal computations with the Euler form in a 2-Calabi--Yau or Enriques category and we exploit this fact. 

\begin{introthm}[= Theorem \ref{thm:pathsK3_main}, simplified] 
\label{thm:intro_K3}
Let $S$ be a K3 surface. \vspace{-2mm}
\begin{enumerate}
    \item There exists a path $\sigma_\bullet$ in $\Stab(S)/\bC$ which converges in $\Astab(S)$ to a boundary point with associated filtration $0\subset \langle \cO_S\rangle \subset \DCoh(S)$. \vspace{-2mm}
    \item Assume that $S$ has Picard rank 1 and let $E$ be a spherical vector bundle. There exists a path $\sigma_\bullet$ in $\Stab(S)/\bC$ which converges in $\Astab(S)$ to a boundary point with associated filtration $0\subset \langle E\rangle \subset \DCoh(S)$. \vspace{-2mm}
    \item Let $f\colon S \to T$ be a crepant resolution of a projective normal surface $T$ with ADE singularities with exceptional curves $C_1, \dots, C_n$. There exists a path $\sigma_\bullet$ in $\Stab(S)/\bC$ which converges in $\Astab(S)$ to a boundary point with  associated filtration 
    \[
        0\subset \langle \cO_{C_1}(-1), \dots, \cO_{C_n}(-1) \rangle \subset \DCoh(S).\vspace{-2mm}
    \]
\end{enumerate}
\end{introthm}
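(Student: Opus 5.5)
The plan is to apply Theorem~\ref{thm:intro_qcpaths} in each of the three cases. For each case I will construct an explicit path $\sigma_t=(Z_t,\cA)$ with constant heart satisfying \ref{axiom1}, \ref{axiom2} and \ref{axiom4}. By Lemma~\ref{L:equivsupportproperty}, \ref{axiom4} gives \ref{axiom3}, and then Theorem~\ref{thm:intro_qcpaths} yields convergence in $\Astab(S)$. I will use the tilted hearts $\cA=\Coh^{\beta H}(S)$ of \cites{ABLStabilitysurf,BridgelandK3} and central charges
\[
Z_t(v)=-\langle \exp(\beta H+i\omega_t H),v\rangle
\]
(Mukai pairing), perturbed where necessary. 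The guiding idea is to keep $\Im Z_t$ fixed while sending $\Re Z_t$ along an analytic path. In the limit the charges of the prescribed spherical objects tend to $0$ faster than all other charges, so after rescaling by $\bC$ they become massless.

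For (1) I will take $\beta$ and $\omega$ such that $Z_t(\cO_S)\to 0$. Recall that $Z(\cO_S)=\tfrac12(\omega^2-\beta^2)H^2-1+i\,\omega\beta H^2$. To keep $\Im Z_t$ constant I will not move $\omega$ alone. Instead I deform the central charge by $Z_t=Z_0+t\cdot u$, where $u$ is a real linear functional vanishing on $v(\cO_S)$ up to the required limit. The analytic-in-$1/t$ condition \ref{axiom2} then forces a normalization $Z_t/t$, or a path parametrized so that $Z_t$ converges to a $Z_\infty$ whose kernel contains $v(\cO_S)$. The constraint is that $\sigma_t$ must remain in $\Stab$. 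Here I will use Bridgeland's description of the K3 stability manifold: it suffices that $Z_t$ lie in $\cP_0^+$, i.e.\ $Z_t(\delta)\ne0$ for all roots $\delta$, with heart $\Coh^{\beta H}$ in the geometric chamber. The path should approach the wall where $\cO_S$ becomes massless while staying on the geometric side. Cases (2) and (3) are handled the same way with $v(E)$, respectively the classes $v(\cO_{C_j}(-1))=(0,C_j,0)$. Case (3) is the most natural: take $\beta$ with $\beta\cdot C_j$ chosen so that $Z(\cO_{C_j}(-1))$ becomes real and then $0$ as the Kähler class tends to $f^*$ of an ample class on $T$, namely $\omega_t=f^*A+\epsilon(t)D$ with $\epsilon\to0$. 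Here $\Im Z_t=\omega_t\cdot\ch_1-\ldots$ must be kept constant, so I will instead let $B$ move, in order to satisfy \ref{axiom1}.

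The main obstacle is the support-property condition \ref{axiom4}. I need a uniform quadratic form $Q$ that is negative on $\ker Z_\infty$ modulo the span of the massless classes and nonnegative on semistable objects along the path. My approach will use the $2$-Calabi--Yau Euler form. Every $\sigma_t$-stable $F$ satisfies $\langle v(F),v(F)\rangle\ge -2$. For a stable object whose charge tends to zero, one gets $v(F)$ in the kernel of $Z_\infty$. The limiting kernel is negative definite except along the massless directions: for $\cO_S$ and for $E$ it is spanned by a single root, and for (3) by the ADE root lattice, which is negative definite. Hence the classes with $\langle v,v\rangle\ge-2$ in that kernel are exactly the roots generating $\langle \cO_S\rangle$, $\langle E\rangle$, or $\langle \cO_{C_j}(-1)\rangle$. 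This identifies the filtration. For (2), the Picard rank one hypothesis guarantees that $v(E)^\perp\cap\ker$ has the right signature.

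In all three cases I will also need to check that the $\sigma_t$-stable objects of vanishing mass are exactly the objects of the given subcategory. This should follow because they are stable of phase tending to a fixed value with class a root, and $\Hom$-vanishing then forces $F\cong E$ (or $\cO_S$, or $\cO_{C_j}(-1)$), via Mukai's lemma.
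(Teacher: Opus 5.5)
Your overall architecture is the paper's. You use constant-heart $G$-translates of Bridgeland's geometric stability conditions, get \ref{axiom1}--\ref{axiom2} by construction, get \ref{axiom4} at $\nu=0$ from $(v,v)\ge -2$ plus negative definiteness of $(-,-)$ on $\ker Z_\infty$, use finitely many massless stable objects for $\nu\ge 1$, and conclude with Theorem~\ref{T:strongquasiconvergence}. The paths you actually describe, however, do not work.

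\textbf{Part (3).} Without a rotation, $\Im Z^{\Bridgeland}_{\beta,\omega}=\omega\cdot(\ch_1-\beta\ch_0)$, so \ref{axiom1} forces $\omega$ to be constant. Then $\Im Z(\cO_{C_j}(-1))=\omega\cdot C_j>0$ never moves, and letting $\beta$ vary only changes the real part $\beta\cdot C_j$, so the curves never become massless. The missing idea is to apply the fixed element of $G$ rotating by $\pi/2$. That is, take $Z_t=\mathtt{i}\,Z^{\Bridgeland}_{0,\omega_t}$, so $Z_t(s_0,s_1,s_2)=-\omega_t\cdot s_1+\mathtt{i}(-s_2+\tfrac{\omega_t^2}{2}s_0)$. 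Here $\omega_t$ is ample, analytic, $\omega_t\to f^*\eta$, and normalized so that $\omega_t^2=\eta^2>2$ is constant. That normalization is exactly what makes the imaginary part constant, and it also keeps $(0,\omega_t)\in\mathscr{L}(S)$.

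\textbf{Part (1).} As written, $Z_t=Z_0+tu$ with $u(v(\cO_S))=0$ keeps $Z_t(\cO_S)$ fixed, and dividing by $t$ rescales $\Im Z_t$ too, violating \ref{axiom1}. The paper simply moves $\omega=tH$ with $\beta=0$ and divides only the imaginary part by $t$: $Z_t=-s_2+\tfrac12 t^2H^2s_0+\mathtt{i}H\cdot s_1$ with $t\downarrow\sqrt{2/H^2}$.

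\textbf{Part (2).} You must check that the translates of $\sigma^{\Bridgeland}_{bH,tH}$, $b=d/r$, stay in $\mathscr{L}(S)$ for every $t>\sqrt{2/(r^2H^2)}$. The relevant criterion is $Z(\delta)\notin\bR_{\le 0}$ for $\delta\in\Roots^+(S)$, not merely $Z(\delta)\neq 0$. This is where Picard rank one is used: a positive root with $\Im Z_t=0$ has slope $d/r$, hence equals $v(E)$ by $\gcd(r,d)=1$ and uniqueness of $\ell$. Its charge $-1/r+\tfrac{r}{2}t^2H^2$ stays positive until the endpoint. It has nothing to do with a signature condition.

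\textbf{Support property and classification.} \ref{axiom4} at $\nu=0$ needs $(-,-)$ negative definite on all of $\ker Z_\infty$, including the massless root directions (which have square $-2$); it is not a statement modulo massless classes. Also, $\ker Z_\infty$ has rank $\rho(S)$ and is generally not spanned by roots: in (1) it is $\{(s_0,s_1,s_0):s_1\cdot H=0\}$, whose negativity comes from the Hodge index theorem together with the $s_0$-direction.

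For classifying massless objects, your argument is a legitimate shortcut in (1) and (2): the class is a root in $\ker Z_\infty$, its sign is fixed by $Z_t<0$, and then $\chi=2$ forces an isomorphism with $\cO_S[1]$ or $E[1]$, which are known to be stable of phase $1$. In (3), however, the roots in $\ker Z_\infty$ are all roots of the ADE lattice, e.g.\ $(0,C_1+C_2,0)$. Comparing with the $\cO_{C_j}(-1)$ first requires knowing that each $\cO_{C_j}(-1)$ is $\sigma_t$-stable of phase $1$ along the path. This is not automatic: $\cO_{C_j}(-1)$ lies in $\cA_{0,\omega_t}=\cP_t(1/2,3/2]$ with negative real charge, which does not force semistability. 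The paper proves it separately in Lemma~\ref{lemma:pathsK3_masslessbir}. There, a destabilizing stable subobject $A$ satisfies $(v(A),v(A))\ge -2$ and, by Hodge index, $\ch_1(A)\cdot\omega_t\ge\sqrt{\eta^2(\eta^2-2)}$, which is impossible once $\omega_t\cdot C_j$ is small.

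Granting stability of the $\cO_{C_j}(-1)$, Hom-vanishing gives $\chi(F,\cO_{C_j}(-1))\le 0$, i.e.\ $\delta\cdot C_j\ge 0$ for all $j$. Since $Z_t(F)<0$ forces $\delta$ to be a positive root, you get $\delta^2\ge 0$, a contradiction. Finally, you still need finiteness of the massless stable objects to obtain \ref{axiom3} for $\nu\ge 1$ via Lemma~\ref{L:qcpathsA4_massless}.
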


In particular, as a consequence of the quasi-convergent paths formalism, we produce Bridgeland stability conditions in $\DCoh(S)/\langle \ko_S\rangle$ for an arbitrary K3 surface $S$ and $\DCoh(S)/\langle E\rangle$ when $S$ has Picard rank 1. In the third case, $\DCoh(S)/\langle \cO_{C_1}(-1), \dots, \cO_{C_n}(-1)\rangle \simeq \DCoh(T)$, which was known to admit stability conditions --- cf. \cite{Langernormal} and the references therein. Nevertheless, we provide an alternative construction of stability conditions on $T$ as limits of stability conditions on its resolution $S$.

According to Heuristic~\ref{Heur:CY}, some of the quasi-convergent paths in Theorem~\ref{thm:intro_K3} should converge to boundary strata in $\Astab(S)$ that correspond to degenerate complex structures on a K3 surface. The objects generating the subcategories of $\DCoh(S)$ in Theorem~\ref{thm:intro_K3} are 2-spherical, and thus correspond to Lagrangian spheres $L$ in the mirror. The spherical objects here are \emph{massless} for $\sigma_\bullet$ as defined in Section~\ref{sec:qcpaths}, and we expect the corresponding variation of complex structure to result in 
\begin{equation}
\label{E:chargenormal}
    Z_t(L) = \int_L \Omega_t\to 0,
\end{equation}
where $\Omega_t$ is a $\cJ_t$-holomorphic 2-form. 

In the context of Theorem~\ref{thm:intro_K3}(3), the mirror to $T$ is a nodal K3 surface $X_0$ and the mirror to the contraction $f\colon S\to T$ is a smoothing of the nodes of $X_0$, i.e. a family of K3 surfaces $\cX\to B$ with smooth generic fiber and singular fiber $X_0$ --- see \cite{STYConifold}*{p. 2} and the references therein. Thus, mirror to the spherical objects $\cO_{C_1}(-1),\ldots, \cO_{C_n}(-1)$ are Lagrangian vanishing cycles $L_1,\ldots, L_n$ in $\cX$ as $t\to 0$ in $B$ and the charges $Z_t(L_i)$ as in \eqref{E:chargenormal} describe smoothing parameters of the nodal singularities of $X_0$.\footnote{We thank Richard Thomas for pointing out this interpretation to us.} Thus, Heuristic~\ref{Heur:CY} in realized as:

\begin{introthm}
[= Theorem \ref{T:stratum_description}, simplified]
    Let $f\colon S\to T$ be as in Theorem~\ref{thm:intro_K3} and suppose that the fibers of $f$ are irreducible. There is a stratum $\cM_f$ of $\partial\Astab(S)$ which admits the structure of a holomorphic fiber bundle over $\Stab(T)/\bC$. The associated paths $\sigma_\bullet$ as in Theorem~\ref{thm:intro_K3}(3) converge to points of $\cM_f$.
\end{introthm}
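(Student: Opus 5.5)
We construct the stratum $\cM_f$ as the locus of augmented stability conditions whose associated filtration is $0\subset \cD_f\subset \DCoh(S)$, with $\cD_f:=\langle \cO_{C_1}(-1),\dots,\cO_{C_n}(-1)\rangle$. The points of this stratum consist of two pieces of data. The first is a stability condition on the Verdier quotient $\DCoh(S)/\cD_f\simeq \DCoh(T)$, taken up to the $\bC$-action. The second is a stability condition on the subcategory $\cD_f$, also taken up to $\bC$, subject to the compatibility conditions from \cite{augmented}.

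\textbf{Step 1: identify the two pieces.} Since the fibers of $f$ are irreducible, $\cD_f$ splits as an orthogonal direct sum of the categories $\langle \cO_{C_i}(-1)\rangle$. Each of these is generated by a single $2$-spherical object, so it is equivalent to the derived category of the $2$-Calabi--Yau $A_1$-quiver. Its stability manifold is known and is described by the central charge of $\cO_{C_i}(-1)$. Modulo $\bC$, the stability space of $\cD_f$ is therefore cut out by the charges $(Z(\cO_{C_1}(-1)):\dots:Z(\cO_{C_n}(-1)))$ together with the relative phases. I would check that this is a complex manifold, using the known contractibility and description of the $A_1$ case.

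\textbf{Step 2: build the bundle structure.} Consider the map $\cM_f\to \Stab(T)/\bC$ that records the quotient stability condition. The fiber over a point is the set of compatible stability conditions on $\cD_f$. I would show that the compatibility condition is open, or at most constrains phases in a way that depends holomorphically on the base point. Local triviality would come from the local homeomorphism $\Stab\to \Hom(\Knum,\bC)$ applied on both factors. The identification $\Knum(S)\cong\Knum(T)\oplus\bigoplus_i\bZ[\cO_{C_i}(-1)]$, possibly only after tensoring with $\bQ$, provides holomorphic coordinates.

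\textbf{Step 3: limits of the paths.} The paths of Theorem~\ref{thm:intro_K3}(3) are produced via Theorem~\ref{T:secondintrothm}. Their limits have massless subcategory $\cD_f$ and the prescribed filtration, so by construction they lie in $\cM_f$. It remains to read off that the limiting quotient stability condition is a point of $\Stab(T)/\bC$, which is the byproduct already noted after Theorem~\ref{thm:intro_K3}.

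\textbf{Main obstacle.} I expect the hardest step to be showing that the compatibility conditions of \cite{augmented} between the sub and quotient stability conditions are holomorphic, and that they do not vary the fiber discontinuously. I would first try to show that, for irreducible fibers, every stability condition on each $\langle \cO_{C_i}(-1)\rangle$ glues with any quotient condition, so that the fiber is a product of copies of the $A_1$ stability space modulo $\bC$ and the bundle is trivial. If that fails, I would describe the fiber as an open subset cut out by phase inequalities that depend continuously on $Z_T$.
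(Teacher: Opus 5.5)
Your proposal follows essentially the same route as the paper. In both, $\cM_f$ is the set of augmented stability conditions with filtration $0\subset \ker \bR f_*\subset \DCoh(S)$. A point of $\cM_f$ is a pair of stability conditions modulo $\bC$, one on $\ker\bR f_*=\bigoplus_i\langle \cO_{C_i}(-1)\rangle$ and one on $\DCoh(S)/\ker\bR f_*\simeq\DCoh(T)$, and convergence of the paths is tautological.

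The one difference is that your ``main obstacle'' does not arise. By Definition~\ref{D:augmentedstab_special_case}, an augmented stability condition is an arbitrary tuple in $\prod_v\Stab(\gr_v(\cD))/\bC$, constrained only by the support property for the subquotient lattices, so there is no gluing condition. The paper therefore reads off the global product structure directly from the definition of the strong topology in \cite{augmented}. It also uses that $\Stab$ of an orthogonal sum is the product of the factors \cite{augmented}*{Lem. 6.12}.
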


See Remark~\ref{R:parameters} for a further discussion. Finally, we apply the same type of ideas to construct quasi-convergent paths in the stability manifold of Enriques surfaces:

\begin{introthm}[= Theorem \ref{thm:pathsEnriques_main}] \label{thm:intro_Enriques}
Let $T$ be an Enriques surface. \vspace{-2mm}
\begin{enumerate}
    \item There exists a path $\sigma_\bullet$ in $\Stab(T)/\bC$ which converges in $\Astab(T)$ to a boundary point with associated filtration $0\subset \langle \cO_T, \cO_T(K_T) \rangle \subset \DCoh(T)$. \vspace{-2mm}
    \item Assume that $T$ is unnodal, and fix a Fano polarization $\Delta$. Let $F_1^\pm, \dots, F_{10}^\pm$ be the half-fibers associated to the ten nef classes $f_1, \dots, f_{10}$ satisfying $f_i^2=0$ and $f_i.\Delta=3$. There exists a path $\sigma_\bullet$ in $\Stab(T)/\bC$ which converges in $\Astab(T)$ to a boundary point with associated filtration $0\subset \langle \cO_T(F_i^+), \cO_T(F_i^-): 1 \leq i \leq 10\rangle \subset \DCoh(T)$.\vspace{-2mm}
\end{enumerate}
\end{introthm}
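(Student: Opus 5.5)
The plan is to apply Theorem~\ref{T:secondintrothm}, exactly as in the K3 case. For each part we build an explicit path $\sigma_t=(Z_t,\cA)$ of tilt stability conditions on $T$ that satisfies \ref{axiom1}, \ref{axiom2} and \ref{axiom4}. By Lemma~\ref{L:equivsupportproperty}, \ref{axiom4} then gives \ref{axiom3}, and the path converges in $\Astab(T)$. The remaining task is to identify the massless subcategory with the claimed one.

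\textbf{The path.} We fix an ample class $H$ (for part (2), $H=\Delta$) and a $B$-field $B=\beta H$. Let $\cA=\Coh^{\beta H}(T)$ be the tilted heart, so that $\Im Z$ is $H.(\ch_1-\beta H\ch_0)$. We take
\[
Z_t(E)=-\ch_2^{B}(E)+a(t)\ch_0(E)+i\,H.\ch_1^{B}(E).
\]
Here $a(t)$ is real, analytic in $1/t$, and the real part is tuned so that the limit $Z_\infty$ vanishes exactly on the classes of the intended objects. Since the imaginary part is constant, \ref{axiom1} holds and the heart is constant by Lemma~\ref{L:constantheart}, provided $Z_t$ stays a stability function on $\cA$ for every $t$. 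That follows from the Bogomolov-type inequality for Enriques surfaces, which holds on the region $a(t)>\frac{1}{2}(\beta^2-\ldots)$ minus the walls given by spherical and exceptional objects. We must choose the path to stay in the Bridgeland/Arcara–Bertram region (ABL/Bridgeland's construction) for $t<\infty$ and reach its boundary only at $t=\infty$.

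\textbf{The massless objects.} In part (1), the objects $\cO_T$ and $\cO_T(K_T)$ are exceptional and share the numerical class $v_0=(1,0,\tfrac12)$. We choose $\beta<0$ and $a(t)\to a_\infty$ so that $Z_\infty(\cO_T[1])=0$; concretely this means $Z_\infty(v_0)$ has zero real part, with $\beta=0$ adjusted accordingly. In part (2), the ten classes $\ch(\cO(F_i^\pm))$ coincide numerically in pairs, with $F_i^+ \equiv F_i^-$. Because the unnodal Fano polarization $\Delta$ has the symmetry $f_i.\Delta=3$ for all $i$, a single choice of $(\beta,a_\infty)$ makes all ten classes $(1,f_i/2,0)$ massless at once. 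Indeed, $Z_\infty$ depends only on $\ch_0$, $\ch_1.\Delta$, $\ch_2$, and these agree for all $i$. We then check that the objects $\cO(F_i^\pm)$ (or their shifts) are $\sigma_t$-stable for $t\gg0$, and that they generate the kernel-type subcategory.

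\textbf{Support property (main obstacle).} We must verify \ref{axiom4}. For a $\sigma_t$-semistable $E$ whose charge $Z_t(E)$ tends to $0$, the class of $E$ must lie in the lattice spanned by the massless classes, and there must be a uniform bound $|Z_t(E)|\ge C\|v(E)\|$ away from that span. For this we use the Euler form on $\Knum(T)$, together with Serre duality $S=-\otimes\omega_T[2]$. For stable $E$ one has $\hom(E,E)=1$ and $\ext^2(E,E)=\hom(E,E\otimes\omega_T)\le1$, so $\chi(E,E)\le2$. A stable object with $\chi(E,E)>0$ therefore has $v(E)^2$ bounded, and this lets us solve the quadratic inequality defining the support property. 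The delicate point is the degenerate direction: we must show that semistable objects whose class is a multiple of $v_0$ (or of the $(1,f_i/2,0)$) decompose into the massless objects, i.e.\ that they are extensions of $\cO(F_i^\pm)$ and their twists. We would prove this using $\Hom$-vanishing between the half-fiber line bundles, namely $\chi(\cO(F_i^\pm),\cO(F_j^\pm))$ computed from $F_i.F_j=\tfrac14 f_i.f_j$, together with a Mukai-type lemma, in the spirit of the K3 argument of Theorem~\ref{thm:pathsK3_main}. Once this holds, Theorem~\ref{T:strongquasiconvergence} gives the limit, and the associated filtration is $0\subset\langle\text{massless}\rangle\subset\DCoh(T)$.
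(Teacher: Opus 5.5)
Part (1) is essentially the paper's path, provided you take $\beta=0$ exactly and let $a(t)\to 0^+$, the boundary of the Arcara--Bertram region. With $\beta<0$ one has $\Im Z(\cO_T)=-\beta H^2\neq 0$, so $\cO_T$ never becomes massless.

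Part (2) has a genuine gap: the path \emph{cannot} stay in the Arcara--Bertram region. In your normalization (with $\omega$ proportional to $H=\Delta$, after the $G$-action) that region is $a>0$.
\begin{itemize}
\item The condition $\Im Z(\cO_T(F_i^\pm))=\Delta.(f_i-B)=0$ forces $(f_i-B)^2\le 0$ by Hodge index, with equality only if $B\equiv f_i$.
\item Hence $Z(\cO_T(F_i^\pm))=a-\tfrac12(f_i-B)^2\ge a$.
\item Since the ten $f_i$ are distinct, making all of them massless forces $a_\infty<0$. For $B=\tfrac{3}{10}\Delta$ one gets $a_\infty=-\tfrac{9}{20}$. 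In the paper's parameter, existence is known only for $t>\tfrac1{20}$, while the endpoint is $t=-\tfrac25$.
\end{itemize}
So you must show that $\sigma_t$ exists on a stretch where no construction is available. Your ``region $a(t)>\frac12(\beta^2-\ldots)$ minus walls'' is an unproved, indeed unspecified, claim.

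The paper handles this with an infimum argument. Let $t_*$ be the infimum of parameters down to which the path exists. Stable objects of phase $1$ do not depend on $t$ (Lemma~\ref{L:phase_one_independent}). So they are skyscrapers and shifts $F[1]$ with $F$ a $\mu_\Delta$-stable bundle of slope $3$ (Lemma~\ref{lemma:stabEnriques_stphase1}). If $t_*>-\tfrac25$, none of these is massless at $t_*$. Theorem~\ref{T:strongquasiconvergence} then produces an honest stability condition at $t_*$, which deforms further, a contradiction.

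The ingredient that powers this, and that also identifies the massless subcategory, is absent from your plan. Let $F$ be a $\mu_\Delta$-stable bundle of slope $3$ with $F\not\cong\cO_T(F_i^\pm)$. Then both $\Hom(\cO_T(F_i^+),F)$ and $\Ext^2(\cO_T(F_i^+),F)\cong\Hom(F,\cO_T(F_i^-))^\vee$ vanish, so $-\chi(\cO_T(F_i^+),F)\ge 0$ for every $i$. Summing over $i$ and using $3[\Delta]=\sum_i f_i$ gives $s_2(F)\le\tfrac25 s_0(F)$. Hence $\Re Z_t(F[1])<0$ for all $t\ge-\tfrac25$ (Lemma~\ref{lemma:pathsEnriques_massless10}).

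Your substitute cannot replace this. You use Euler pairings among the half-fiber bundles themselves plus a Mukai-type lemma, applied to objects whose class is a multiple of a single vector. But the massless classes span the rank-$10$ lattice $\ker Z_\infty$. The bound $(v,v)\ge-2$ together with Hodge index still allows, for example, $v=(2,f_i+f_j,1)$. This is a root of slope $3$ with $Z_{-2/5}(v)=0$, and it is ruled out only by pairing with $\cO_T(F_i^+)$, which gives $-\chi=-1$.

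Two further corrections:
\begin{itemize}
\item The $F_i^\pm$ are the two lifts of $f_i$ to $\NS(T)$. So $v(\cO_T(F_i^\pm))=(1,f_i,\tfrac12)$ and $F_i.F_j=f_i.f_j$, not $(1,f_i/2,0)$ and $\tfrac14 f_i.f_j$.
\item In part (1) the massless lattice $\{(s_0,s_1,\tfrac{s_0}{2}):H.s_1=0\}$ also has rank $10$, not just multiples of $v_0$. The paper isolates $\cO_T$ and $\cO_T(K_T)$ via $s_1^2-s_0^2\ge-2$, evenness of $\Num(T)$, and Hodge index.
\end{itemize}
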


These quasi-convergent paths induce Bridgeland stability conditions on $\DCoh(T)/\langle \ko_T, \ko_T(K_T)\rangle$ on any Enriques surface, and on $\DCoh(T)/\langle \ko_T(F_i^+), \ko_T(F_i^-): 1 \leq i \leq 10\rangle$ on any unnodal Enriques surface.

\subsection{Relations to other work}
The subject of (partial) compactifications of spaces of stability conditions has received a fair amount of attention in recent years, with several different groups providing their own viewpoints on the problem. We refer to \cite{augmented}*{p. 8} for a brief survey of some of these approaches. In the work \cite{BDLThurston}, the so-called \emph{Thurston compactification} of $\Stab(\cD)$ is introduced and studied in the 2-Calabi--Yau categories associated to the $A_2$ and Kronecker quivers. Subsequently, this compactification was studied for derived categories of generic analytic K3 surfaces in \cite{DeopurkarThurstonk3}. These examples all fit into the rank two regime described in Section~\ref{sec:ranktwo}, and we hope that our results will allow for a precise comparison of the Thurston compactification to the space $\Astab(\cD)$ in these cases.

In the case of an algebraic surface $S$, paths in $\Stab(S)$ where $Z(\cO_S)\to 0$, as in Theorem~\ref{thm:intro_K3}, are related to dimension bounds on global sections of semistable bundles supported on an embedded curve $C\hookrightarrow S$. We refer to \cite{FL21}*{p. 4} and the references therein for related discussions.

In \cite{CGL+23}, an abstract notion of a \emph{nodal category} $\cD$ is introduced. Examples of these are obtained by taking  $\DCoh(S)/\langle E \rangle$ for a K3 surface $S$ and a spherical vector bundle $E$. One of the questions raised in \emph{op. cit.} relates to whether $\cD$ is \emph{geometric}, i.e. equivalent to an admissible subcategory of $\DCoh(Y)$ for $Y$ a normal quasi-projective variety. Theorem~\ref{thm:intro_K3} supports this interpretation by producing stability conditions on $\DCoh(S)/\langle E\rangle$, when $S$ has Picard rank 1. 

A result similar to Theorem~\ref{thm:intro_K3}(3) was obtained by Chou \cite{Cho24} when $f\colon S \to T$ is a crepant resolution of a normal projective surface $T$ with a single ADE singularity. He studies a degeneration to a \emph{weak} stability condition as opposed to an \emph{augmented} stability condition. Additionally, his result invokes Langer's construction of stability conditions on singular surfaces \cite{Langernormal}, whereas our techniques produce elements of $\Stab(T)$ by studying stability conditions on the smooth category $\DCoh(S)$.

Finally, any Enriques surface $T$ admits a semiorthogonal decomposition 
\[
    \DCoh(T) =\langle \cO_T(F_1), \dots, \cO_T(F_{10}), \Ku(T)\rangle,
\]
cf. \cite{LNSZ21}*{\textsection 3.1}. When $T$ is unnodal, the divisors $F_1, \dots, F_{10}$ can be picked as ten of the half-fibers $F_1^+, \dots, F_{10}^+$ associated to a Fano polarization, cf. \cite{Zub97}*{\textsection 5}. It is currently unknown whether or not $\Ku(T)$ admits Bridgeland stability conditions. The projections of the other ten half-fibers $\ko_T(F_1^-), \dots, \ko_T(F_{10}^-)$ into $\Ku(T)$ are 3-spherical with zero class in numerical K-theory, hence they cannot be semistable for any stability condition in $\Ku(T)$. This could provide a possible obstruction to existence of stability conditions on $\Ku(T)$. A beautiful proposal of Ingalls and Kuznetsov \cite{IK15} relates $\Ku(T)$ to the derived category of a crepant resoution of an Artin--Mumford double solid in the case where $T$ is nodal --- cf. \citelist{\cite{LNSZ21}\cite{HT20}}. Thus, the fact that such resolutions are not projective should be related to $\Ku(T)$ not admitting Bridgeland stability conditions. 

Theorem~\ref{thm:intro_Enriques} constructs stability conditions on the Verdier quotient $\Ku(T)/\langle E_1, \dots, E_{10}\rangle$, where $E_i$ is the projection of $\ko_T(F_i^-)$ into $\Ku(T)$ and $T$ is unnodal. In the nodal case, this quotient is expected to be related to a semiorthogonal component of the Artin--Mumford double solid $Y$. Although singular, these varieties are projective, and thus $\Stab(Y)$ is non-empty by \citelist{\cite{Li26} \cite{LLL26}}; this suggests that $\Ku(T)/\langle E_1, \dots, E_{10}\rangle$ should admit stability conditions in the nodal case also.

\subsection{Structure of the paper}
We review some preliminaries in Section \ref{S:preliminaries}, including Bridgeland stability conditions, quasi-convergent paths, and augmented stability conditions. Section \ref{sec:ranktwo} is devoted to the study of $\Astab(\cD)$ for rank two categories $\cD$. As a very special case, we obtain a proof of Theorem \ref{thm:intro_elliptic}. In Section~\ref{sec:qcpaths}, we prove Theorem~\ref{thm:intro_qcpaths} which is the technical backbone of this work. We apply the criterion of Theorem~\ref{thm:intro_qcpaths} in Section \ref{sec:K3} to K3 surfaces, and in Section \ref{sec:Enriques} to Enriques surfaces.

\subsection*{Notation and conventions}
We collect here some notation used throughout the body of the paper. Given an Abelian group $A$ and a subgroup $B$, we define the \emph{saturation} of $B$ in $A$ to be 
\[
    B^{\rm{sat}} := \left\{ v\in A: n\cdot v\in B \text{ for some }n \ge 1 \right\}.
\]
Given a pair of real-valued functions $f,g : I\subset \bR\to \bR$ and a distinguished point $a$ in $\overline{I} \subset \bR\cup\{\pm \infty\}$ we say $f\sim_a g$ if $\lim_{t\to a} f(t)/g(t) = 1$ and $f\approx_a g$ if $\lim_{t\to a} f(t) - g(t) = 0$. Usually, $a$ is clear from the context and we omit it from the notation. 

If $\cD$ is a triangulated category and $\{\cA_\alpha\}_{\alpha \in A}$ a collection of subcategories we let $\langle \cA_\alpha:\alpha \in A\rangle$ denote the smallest strict full subcategory of $\cD$ containing $\bigcup_{\alpha \in A}\Ob(\cA_\alpha)$. In the same notation, $\langle \cA_\alpha:\alpha \in A\rangle_{\rm{ext}}$ denotes the smallest extension closed subcategory containing $\bigcup_{\alpha \in A}\Ob(\cA_\alpha)$. 

We write $G$ for the group $\GL_2^+(\bR)^{\sim}$, which acts on $\Stab(\cD)$ by \cite{Br07}*{Lem. 8.2}. The symbol $\mathtt{i}$ denotes a fixed complex number satisfying $\mathtt{i}^2 = -1$. Given a complex number $z$, $\Re(z)$ denotes its real part and $\Im(z)$ its imaginary part.

\subsection*{Acknowledgements} It is our pleasure to thank Arend Bayer, Daniel Halpern-Leistner, Elise LePage, Emanuele Macrì, Richard Thomas, and Xiaolei Zhao for helpful conversations related to the contents of this paper.

N.V. thanks the Mathematisches Forschungsinstitut Oberwolfach and the organizers of the workshop \emph{Moduli Spaces in Algebraic Geometry} for providing an exciting environment. The results in Theorem \ref{thm:intro_Enriques} were inspired by a lunch discussion of N.V. with Arend Bayer, Emanuele Macrì, and Xiaolei Zhao at the institute.

A.R. was supported by NSF grant DMS-250340 and Royal Society University Research Fellowship URF/R1/231191. N.V. gratefully acknowledges support by the Australian Research Council grant DP240101084.

A.R. thanks Maria Teresa for her love and support during the preparation of this work. 

\subsection*{AI Statement}
LLMs and other AI systems were not used in the preparation of this work, besides to help produce code for the several TikZ figures featured below.

\section{Preliminaries}
\label{S:preliminaries}

In this section, we review background material from the works \cites{quasiconvergence,augmented} in the hopes of making the present work more self-contained. Throughout the paper, $\cD$ denotes a $k$-linear triangulated category with a fixed homomorphism $v\colon \K_0(\cD) \twoheadrightarrow \Lambda$, where $\Lambda$ is a finite rank free Abelian group. To fix notation, we review the definition of a Bridgeland stability condition:

\begin{defn}
[\cite{Br07}] A \emph{pre-stability condition} on $\cD$ is a pair $(Z,\cP)$ where $Z\in \Hom(\K_0(\cD),\bC)$ is called the \emph{central charge} and $\cP$ is a collection $\{\cP(\phi)\}_{\phi \in \bR}$ of full additive subcategories of $\cD$ called a \emph{slicing} such that: \vspace{-2mm}
\begin{enumerate}
    \item for all $\phi \in \bR$ one has $\cP(\phi)[1] = \cP(\phi+1)$; \vspace{-2mm}
    \item if $\psi > \phi$ then $\Hom(\cP(\psi),\cP(\phi)) = 0$; and \vspace{-2mm}
    \item every non-zero object $E$ of $\cD$ admits a \emph{Harder--Narasimhan (HN) filtration}, i.e. a sequence of morphisms $0 = E_0 \to E_1\to \cdots\to E_{n} = E$ such that $H_i = \Cone(E_{i-1}\to E_i) \in \cP(\phi_i)$ for each $i=1,\ldots, n$ and such that $\phi_1>\cdots>\phi_n$. \vspace{-2mm}
\end{enumerate}
Non-zero objects of $\cP(\phi)$ are said to be semistable of phase $\phi$. The objects $H_i$ arising from the HN filtration are called HN \emph{factors} of $E$. The pair $(Z,\cP)$ is required to satisfy a compatibility condition: for all non-zero $E\in \cP(\phi)$ we have 
\[
    Z(E) = m(E)\cdot \exp(\mathtt{i}\pi \phi)
\]
where $m(E) \in \bR_{>0}$ is called the \emph{mass} of $E$. We also write $\cP$ to mean the union $\bigcup_{\phi \in \bR} \cP(\phi)$. 

A pre-stability condition on $\cD$ is a \emph{stability condition} if in addition its charge $Z$ factors through $v\colon \K_0(\cD)\twoheadrightarrow \Lambda$ and it satisfies the support property of Kontsevich--Soibelman \cite{KS08}: for some (equiv. any) norm $\lVert\:\cdot\:\rVert$ on $\Lambda_{\bR}$ we have
\[
    \inf_{0\ne E \in \cP} \frac{\lvert Z(E)\rvert}{\lVert v(E)\rVert} > 0.
\]
Let $\Stab(\cD)$ denote the set of stability conditions on $\cD$.
\label{D:stabilitycondition}
\end{defn}

\begin{ex}
    In most of what follows, $\cD = \DCoh(X)$, the bounded derived category of coherent sheaves on $X$. In this case, we take $\Lambda = \Knum(X)$ and take $v$ to be the quotient map $\K_0(X) \twoheadrightarrow \Knum(X)$. The resulting stability conditions are called \emph{numerical}. We denote by $\Stab(X)$ the space of numerical stability conditions on $\DCoh(X)$.
\end{ex}

The most important fact about $\Stab(\cD)$ is:

\begin{thm}
[Bridgeland, \cites{Bayer_short,Br07}] The set $\Stab(\cD)$ admits a natural metric topology such that the map $\cZ\colon \Stab(\cD) \to \Hom(\Lambda,\bC)$ given by $(Z,\cP)\mapsto Z$ is a local homeomorphism onto an open subspace. 
\label{T:deformation}
\end{thm}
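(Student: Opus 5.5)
The plan follows Bridgeland's original argument in \cite{Br07}, with the simplification of \cite{Bayer_short}, organized in three steps.

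\textbf{Step 1: the metric.} On $\Stab(\cD)$ I would use
\[
d(\sigma_1,\sigma_2)=\sup_{0\ne E}\Bigl\{|\phi^+_{\sigma_2}(E)-\phi^+_{\sigma_1}(E)|,\ |\phi^-_{\sigma_2}(E)-\phi^-_{\sigma_1}(E)|,\ \Bigl|\log\tfrac{m_{\sigma_2}(E)}{m_{\sigma_1}(E)}\Bigr|\Bigr\}.
\]
Here $\phi^\pm$ are the extremal HN phases and $m$ is the total mass, meaning the sum of $|Z(H_i)|$ over the HN factors $H_i$. One first checks that $d$ is a generalized metric. Continuity of $\cZ$ then follows from the support property: for $E\in\cP_{\sigma}(\phi)$ one has $|Z(E)|\ge C\lVert v(E)\rVert$. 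So if two stability conditions are close in $d$, the difference of their charges is bounded on a spanning set of semistable classes, and hence is bounded in operator norm on $\Lambda_\bR$.

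\textbf{Step 2: local injectivity.} I would show that if $\sigma_1$ and $\sigma_2$ have the same central charge and $d(\sigma_1,\sigma_2)<1$, then $\sigma_1=\sigma_2$. Suppose $E\in\cP_1(\phi)$. Then its $\sigma_2$-HN factors have phases in $(\phi-1,\phi+1)$. Their charges sum to $Z(E)$ and all lie in the open half-plane centred at phase $\phi$. Comparing extremal phases, together with the Hom-vanishing axiom in both slicings, forces $E\in\cP_2(\phi)$. The same argument with the roles reversed gives the opposite inclusion, so the two slicings agree.

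\textbf{Step 3: deformation.} This is the main obstacle. Given $\sigma=(Z,\cP)$ satisfying the support property with constant $C$, and $W$ with $\lVert W-Z\rVert<\epsilon C$ for small $\epsilon$, I would construct a slicing $\cQ$ with $(W,\cQ)\in\Stab(\cD)$ near $\sigma$. Following Bayer's approach, the thin categories $\cP(\phi-\epsilon,\phi+\epsilon)$ are quasi-abelian and of finite length with respect to strict morphisms, the finiteness coming from the support property. On each such category, $W$ defines a stability function whose phases are $\epsilon$-close to those given by $Z$. One then defines $\cQ(\psi)$ as the $W$-semistable objects of phase $\psi$ in a suitable thin category, and shows this is independent of the choice of that category. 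Next, HN filtrations for $\cQ$ are constructed by refining the $\cP$-HN filtration, using the existence of $W$-HN filtrations in each thin category; this step needs the finite-length property. Finally, the support property for $(W,\cQ)$ is deduced with constant roughly $C(1-\epsilon)$ from that of $\sigma$.

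Combining the three steps: $\cZ$ is continuous and locally injective, and by Step 3 it surjects locally onto an open ball around $Z$. The image is therefore open, and the inverse is continuous because the construction in Step 3 yields $d$-close stability conditions. I expect the delicate part to be the existence and well-definedness of $\cQ$-HN filtrations in Step 3.
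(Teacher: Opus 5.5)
Your outline is correct and is essentially Bridgeland's proof in \cite{Br07}: the generalized metric built from $\phi^{\pm}$ and masses, local injectivity for slicings with the same charge at distance $<1$, and deformation through the thin quasi-abelian categories $\cP(\phi-\epsilon,\phi+\epsilon)$, whose finite length comes from the support property; this is the argument behind the citation, which the paper gives without a proof. The one correction is attribution: the thin-category construction in your Step 3 is Bridgeland's, whereas \cite{Bayer_short} avoids it by deforming the charge only in directions that keep a heart fixed (changing $\Re Z$ with $\Im Z$ constant, and using the $\bC$-action to handle the other direction) and by obtaining HN filtrations from the HN polygon and the support property, the same mechanism the paper uses in Lemma~\ref{L:constantheart} and Proposition~\ref{P:boundedHN}.
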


An immediate consequence of Theorem~\ref{T:deformation} is that $\Stab(\cD)$ has a canonical structure of a  complex manifold of dimension equal to the rank of $\Lambda$. Before proceeding, we recall several other functions defined using (pre-)stability conditions. 

First, in the notation of Definition~\ref{D:stabilitycondition}, given a stability condition $\sigma$ and a non-zero object $E$, we put $\phi^+(E) = \phi_1$ and $\phi^-(E) = \phi_n$. We sometimes write $\phi^{\pm}_\sigma$ to emphasize the choice of $\sigma$. Next, we define the \emph{mass} of $E$ to be $m(E) = \sum_{i=1}^n m(H_i)$. Finally, it is also useful to define the average phase of an object $E$ as 
\[
    \phi_\sigma(E) := \frac{1}{m_\sigma(E)} \sum_{i=1}^n m_\sigma(H_i)\cdot \phi_\sigma(H_i). 
\]
The average phase is defined for any non-zero object and when $E$ is semistable it agrees with the usual phase. When considering a path of stability conditions $\{\sigma_t\}_{t\in I\subset \bR}$ we write $\phi_t = \phi_{\sigma_t}$, etc.

Finally, the space $\Stab(\cD)$ admits a smooth action by $G := \GL_2^+(\bR)^{\sim}$ which lifts the right action $\Hom(\Lambda,\bC) \times G \to \Hom(\Lambda,\bC)$ given by $(f,g) = f\circ g$, where we identify $\bC = \bR^2$. The action of $\bC^* = \GL_1(\bC)\subset G$ lifts to a free holomorphic action of $\bC$ on $\Stab(\cD)$.

\subsection{Quasi-convergent paths} The work \cite{quasiconvergence} relates paths in $\Stab(\cD)$ to decomp\-ositions of $\cD$. The philosophy of \emph{loc. cit.} is that in the presence of a suitably tame path $\sigma_\bullet\colon [0,\infty)\to \Stab(\cD)$ we can filter $\cD$ by studying the asymptotic behavior of the functions $m_t(-), \phi_t^{\pm}(-):\Ob(\cD)\to \bR$ as $t\to\infty$. The requisite notion of tameness is:

\begin{defn}
[\cite{quasiconvergence}*{Defns. 2.6 \& 2.8}] 
A non-zero object $E$ is called \emph{limit semistable} with respect to $\sigma_\bullet$ if $\lim_{t\to\infty} \phi_t^+(E) - \phi_t^-(E) = 0$.  The path $\sigma_\bullet$ is said to be \emph{quasi-convergent} if \vspace{-2mm}
\begin{enumerate}
    \item for any $E\in \Ob(\cD)$ there exists a sequence of morphisms $0 = E_0\to E_1\to \cdots \to E_n = E$ in $\cD$ such that $G_i = \Cone(E_{i-1}\to E_i)$ is limit semistable and 
    \[
        \liminf_{t\to\infty} \phi_t(G_i) - \phi_t(G_{i+1}) > 0
    \]
    for each $i=1,\ldots, n$; and \vspace{-2mm}
    \item for each pair of limit semistable objects $E$ and $F$, the limit
    \begin{equation}
    \label{E:ratiolimit}
        \lim_{t\to\infty}  \frac{\logZ_t(E) - \logZ_t(F)}{1+\lvert \logZ_t(E) - \logZ_t(F)\rvert}
    \end{equation}
    exists. Here, since $\phi_t^+(E) - \phi_t^-(E)\to 0$ for all limit semistable objects $E$, there is a canonical logarithm of $Z_t(E)$ for all $t \gg 0$, uniquely defined such that $\phi_t^-(E)\le \Im (\logZ_t(E)) \le \phi_t^+(E)$. We write $\mathfrak{n}_t(E,F)$ for the ratio in \eqref{E:ratiolimit}. \vspace{-2mm}
\end{enumerate}
\label{D:quasiconvergent}
\end{defn}

Much of the remainder of the present work is dedicated to studying quasi-convergent paths in $\Stab(X)$, where $X$ is a projective curve or surface. The reader is encouraged to consult \cite{quasiconvergence} for basic properties of quasi-convergent paths. We will restrain ourselves here to only stating the main result of the theory. 

Let $\sigma_\bullet$ be a quasi-convergent path and  $\cP_{\sigma_\bullet}$ its class of limit semistable objects. There exists a total preorder $\preceq^i$ on $\cP_{\sigma_\bullet}$ such that $E\prec^i F$ if and only if $\lim_{t\to\infty} \phi_t(F) - \phi_t(E) = \infty$. We denote the induced equivalence relation on $\cP_{\sigma_\bullet}$ by $\sim^i$. The relation $\sim^i$ declares two limit semistable objects equivalent if and only if the difference of their phase functions converges \cite{quasiconvergence}*{Lem. 2.16}. The resulting quotient $\mathcal{I} := {\cP_{\sigma_\bullet}}/{\sim^i}$ can be regarded as a set of asymptotics of $\Im \logZ_t(-)$ as $t\to\infty$. 

We also use a finer total preorder $\preceq$ on $\cP_{\sigma_\bullet}$ such that $E\prec F$ if and only if $E\prec^i F$ or $E\sim^i F$ and $\lim_{t\to\infty} m_t(E)/m_t(F) = 0$. The preorder $\preceq$ should be regarded as a lexicographic ordering on $\logZ_t(E)$ for $E\in \cP_{\sigma_\bullet}$: first, one compares the growth of the imaginary part and then that of the real part. We define $\cT := {\cP_{\sigma_\bullet}}/{\sim}$. Since $\sim$ is a refinement of $\sim^i$,  there is a canonical map $\varpi\colon \cT \twoheadrightarrow \cI$ and for any $\mathfrak{i} \in \cI$, $\varpi^{-1}(\mathfrak{i})$ is totally ordered by the order induced by $\preceq$, which compares order of growth of masses. Given $\rho \in \mathfrak{i}$, we let $
\mathfrak{i}_{\le \rho} = \{\psi \in \mathfrak{i}:\psi \preceq \rho\}$.

\begin{thm}
[Summary of \cite{quasiconvergence}*{\S2}] 
    Let $\sigma_\bullet\colon [0,\infty)\to \Stab(\cD)$ be a quasi-convergent path. For each $\mathfrak{i} \in \cI$, let $\cD_\mathfrak{i} := \langle E: E\in \mathfrak{i}\rangle$. There is a semiorthogonal decomposition 
    \begin{equation}
    \label{E:SODqc}
        \cD = \left\langle \cD_{\mathfrak{i}}:\mathfrak{i}\in \cI, \text{ ordered by }\preceq^i\right\rangle.
    \end{equation}
    Furthermore, each $\cD_{\mathfrak{i}}$ is filtered by the thick triangulated subcategories $\cD_{\le \rho} = \langle E: E\in \mathfrak{i}_{\le \rho} \rangle$ where $\rho \in \varpi^{-1}(\mathfrak{i})$. Additionally, given $E\in \rho$, the category $\cD_{\le \rho}/\cD_{<\rho}$ admits a pre-stability condition $\sigma_\rho^E = (Z_E,\cP_E)$ where 
    \[
        Z_E(-) = \lim_{t\to\infty} \frac{Z_t(-)}{Z_t(E)}
    \]
    and $\cP_E(\phi)$ is the essential image of the class of those $F\in \mathfrak{i}_{\le \rho}$ such that $\lim_{t\to\infty} \phi_t(F) - \phi_t(E) = \phi$. If, in addition, $\sigma_\bullet$ satisfies the support property \cite{quasiconvergence}*{Defn. 2.37} then each $\sigma_\rho^E$ satisfies the support property with respect to a canonically defined sub-quotient lattice $\Lambda_\rho$ of $\Lambda$. 
\label{T:quasiconvergencesummary}
\end{thm}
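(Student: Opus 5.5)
Since this statement summarizes \cite{quasiconvergence}*{\S2}, the proof I would give follows the structure of that section. I would reduce everything to the two conditions of Definition~\ref{D:quasiconvergent} and to one Hom-vanishing principle: if $E,F$ are limit semistable and $\liminf_{t\to\infty}\phi_t(F)-\phi_t(E)>0$, then $\Hom(F,E)=0$.

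\textbf{Hom-vanishing and the total preorder.} For the principle above, note that $\phi_t^\pm$ of limit semistable objects differ from $\phi_t$ by $o(1)$. Hence, for $t\gg0$, we get $\phi_t^-(F)>\phi_t^+(E)$, and axiom (2) of Definition~\ref{D:stabilitycondition} applied to $\sigma_t$ kills every morphism $F\to E$. Next I would check that $\preceq^i$ and $\preceq$ are total preorders on $\cP_{\sigma_\bullet}$. Totality is where condition (2) of Definition~\ref{D:quasiconvergent} enters. The existence of $\lim \mathfrak{n}_t(E,F)$ forces $\logZ_t(E)-\logZ_t(F)$ either to converge or to diverge in a definite direction. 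This makes both the imaginary parts (phases) and the real parts (log-masses) comparable.

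\textbf{The semiorthogonal decomposition.} For \eqref{E:SODqc}, semiorthogonality follows from the principle, because $E\prec^i F$ means that the phase difference tends to $\infty$. For generation, I would take the filtration from condition (1) of Definition~\ref{D:quasiconvergent} and group consecutive factors by their $\sim^i$-class. Since the phases strictly decrease asymptotically, this yields a finite filtration whose graded pieces lie in the $\cD_{\mathfrak{i}}$, in decreasing $\preceq^i$ order. The subtle point is showing that the pieces $\cD_{\mathfrak{i}}$ are exactly the thick closures, and that an object of $\cD_{\mathfrak{i}}$ has all its limit HN factors in $\mathfrak{i}$. I would argue this by uniqueness of the limit HN filtration, which follows from Hom-vanishing in the standard way. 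The subcategories $\cD_{\le\rho}$ are thick by definition and are nested because $\preceq$ is total on $\varpi^{-1}(\mathfrak{i})$.

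\textbf{Pre-stability condition and support property.} For $F\in\mathfrak{i}_{\le\rho}$, the ratio $Z_t(F)/Z_t(E)=\exp(\logZ_t(F)-\logZ_t(E))$ has bounded imaginary part of its logarithm, since $F\sim^i E$. Its log-modulus is bounded above, since $F\preceq E$. The existence of $\lim\mathfrak{n}_t$ then gives convergence of the ratio, with limit $0$ exactly when $F\prec E$. So $Z_E$ is additive on $\K_0(\cD_{\le\rho})$, vanishes on $\cD_{<\rho}$, and descends to the quotient. Compatibility $Z_E(F)\in\bR_{>0}e^{\mathtt{i}\pi\phi}$ for $F\in\cP_E(\phi)$ is immediate. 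I expect the main obstacle to be the slicing axioms in the Verdier quotient $\cD_{\le\rho}/\cD_{<\rho}$. For Hom-vanishing, a morphism in the quotient is a roof $F\leftarrow F'\to G$ with cone in $\cD_{<\rho}$. I would first try to use the limit HN filtrations to replace $F'$ by an object whose factors of class $\rho$ map isomorphically, and then apply the principle. HN filtrations in the quotient would come from the limit filtrations of condition (1) restricted to $\mathfrak{i}$, after discarding the factors lying in $\mathfrak{i}_{<\rho}$. Finally, the support property of the path (\cite{quasiconvergence}*{Defn.~2.37}) gives a uniform bound of the form $|Z_t(F)|\ge C\,\|v(F)\|_t$. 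After dividing by $|Z_t(E)|$ and passing to the limit, this becomes a support bound for $\sigma_\rho^E$ on the sub-quotient $\Lambda_\rho$. Here $\Lambda_\rho$ is the saturation of the image of $\cD_{\le\rho}$ modulo that of $\cD_{<\rho}$.
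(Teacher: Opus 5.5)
The paper gives no proof of this statement; it is quoted from \cite{quasiconvergence}. Your outline has the right overall shape, but two steps do not go through as written.

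\emph{Totality of $\preceq^i$.} You cannot get totality of $\preceq^i$ from condition (2). Existence of $\lim\mathfrak{n}_t(E,F)$ only says one of two things about $w_t=\logZ_t(E)-\logZ_t(F)$: either $w_t$ converges, or $\lvert w_t\rvert\to\infty$ with $w_t/\lvert w_t\rvert\to u\in S^1$. When $u=\pm1$ this forces $\Re w_t\to\pm\infty$. It leaves $\Im w_t=o(\lvert w_t\rvert)$ otherwise free (e.g.\ $w_t=t+\mathtt{i}\sqrt{t}\sin t$), so phases are not made comparable.

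What rules out a phase difference $\delta_t=\phi_t(E)-\phi_t(F)$ that is unbounded but does not tend to $\pm\infty$ is condition (1). Pick $n\in\bZ$ so that $\delta_t-n$ is both $<-1$ and arbitrarily large infinitely often. By continuity of $\phi_t^{\pm}$, it is then also within $o(1)$ of $0$ infinitely often. For $t\gg0$ the $\sigma_t$-HN filtration of $E\oplus F[n]$ refines any limit HN filtration. Hence its phase width $\lvert\delta_t-n\rvert+o(1)$ must either tend to $0$ or have positive $\liminf$, which is impossible here. Condition (2) is what you need afterwards: for totality of $\preceq$ inside a $\sim^i$-class, and for convergence of $w_t$ on $\rho$.

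Moreover, only ``bounded or $\to\pm\infty$'' holds. On $\DCoh(\mathrm{pt}\sqcup\mathrm{pt})$, let $k_1$ have phase $0$ and mass $1$, and let $k_2$ have phase $-\tfrac12-\tfrac1{10}\sin t$ and mass $e^{-t}$. Phases of shifts of $k_1$ and $k_2$ stay at least $0.4$ apart, so (1) and (2) hold, yet $\phi_t(k_1)-\phi_t(k_2)$ oscillates. So $\sim^i$ must be read as bounded phase difference. In particular, objects of $\mathfrak{i}_{<\rho}$ need not have a limiting phase relative to $E$; only those in $\rho$ do, by your (correct) argument.

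\emph{The quotient.} This is where the real content lies, and the mechanism is missing.

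(a) $\cD_{\le\rho}$ is not thick by definition. Your HN filtrations in $\cD_{\le\rho}/\cD_{<\rho}$ also presuppose that every object of $\cD_{\le\rho}$ has all its limit HN factors in $\mathfrak{i}_{\le\rho}$. Closure of that class under cones and summands is a statement about mass growth, which uniqueness of limit HN filtrations cannot supply. For $\cD_{\mathfrak{i}}$ the semiorthogonal decomposition does this work, but inside $\cD_{\mathfrak{i}}$ there is none. You need the triangle inequality $m_t(B)\le m_t(A)+m_t(C)$, together with $m_t(H)\le m_t(X)$ for limit HN factors $H$ of $X$.

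(b) In the roof $F\leftarrow F'\to G$, truncating $F'$ helps only if the discarded piece lies in $\cD_{<\rho}$. Stability of $\cD_{<\rho}$ under truncation does not give that. For example, $\langle\cO\rangle\subset\DCoh(\bP^1)$ is closed under slope-HN truncation, yet in the quotient $\cO_x\cong\cO(-1)[1]$. So $\Hom(\cO(-1)[1],\cO_x)\neq0$ even though $\cO(-1)[1]$ has larger phase.

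The missing idea is this. $Z_E$ vanishes exactly on $\cD_{<\rho}$, and it maps the relevant heart (cut strictly between the phases of $F$ and $G$) into a half-open half-plane together with $\{0\}$. Hence $\cD_{<\rho}$ meets that heart in a Serre subcategory. Apply the cohomology long exact sequence to $N[-1]\to F'\to F$ with $N\in\cD_{<\rho}$; it puts $\tau^{\ge1}F'$ in $\cD_{<\rho}$. Your Hom-vanishing principle then kills $\tau^{\le0}F'\to G$. Because of the oscillation above, the cut must be placed between the limit HN factors of the finitely many objects involved, not at a fixed limiting phase.
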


\begin{rem}
\label{R:modC}
    Note that for any $\rho$ as in Theorem~\ref{T:quasiconvergencesummary} and any pair $E,F\in \rho$ the pre-stability conditions $\sigma_\rho^E$ and $\sigma_\rho^F$ are in the same orbit for the $\bC$-action. Thus, we obtain canonically defined elements $\sigma_\rho \in \Stab^{\rm{pre}}(\cD_{\le \rho}/\cD_{<\rho})/\bC$. Observe also that the conditions defining quasi-convergence are all $\bC$-invariant. Thus, it is reasonable to interpret quasi-convergent paths as naturally living in the quotient $\Stab(\cD)/\bC$. 
\end{rem}

\begin{ex}
    In the present work, we are primarily interested in the case where the semi\-orthogonal decomposition \eqref{E:SODqc} is trivial, i.e. when $\cI$ is a singleton. Suppose that $\sigma_\bullet$ is a quasi-convergent path with this property and that for each $E \in \cP_{\sigma_\bullet}$ we have $m_t(E) \sim C_E \cdot t^{k}$ for some $k \in \bZ$ --- cf. Lemma~\ref{L:vanishing_order}. Then, as totally ordered sets 
    \[
        \cT \cong \left\{k\in \bZ: \text{ there exists }E \in \cP_{\sigma_\bullet} \text{ with } m_t(E)\sim C_E\cdot t^k\right\}.
    \]
    Theorem~\ref{T:quasiconvergencesummary} thus gives a filtration of $\cD$ by categories 
    \[
        \cD_{\le k} = \left\langle E \in \cP_{\sigma_\bullet}:m_t(E) \sim C_E\cdot t^\ell \text{ for } \ell \le k\right\rangle
    \]
    along with (pre-)stability conditions on the associated graded categories $\cD_{\le k}/\cD_{<k}$. Note that in this case there is a canonical representative of the $\bC$-orbit obtained by defining $Z_{(k)}(E) = \lim_{t\to\infty} Z_t(E)/t^k$. 
\end{ex}

\subsection{Augmented stability conditions} Theorem~\ref{T:quasiconvergencesummary} and Remark~\ref{R:modC} suggest that quasi-convergent paths should be regarded as convergent in an enlargement of $\Stab(\cD)/\bC$. The resulting partial compactification constructed in \cite{augmented} is called the space of \emph{augmented} stability conditions, denoted $\Astab(\cD)$. The boundary points of $\Astab(\cD)$ parametrize the data of a \emph{multi-scale} decomposition of $\cD$, a generalization of a filtration of $\cD$, equipped with stability conditions in a suitable sense. 

The full definition of $\Astab(\cD)$ in \cite{augmented} is somewhat involved, but we attempt here to summarize the relevant information for what follows. We present a simplified version of the main definitions of \emph{loc. cit.} which is sufficient for the present work.

\begin{defn}
\label{D:multiscaleline}
    A \emph{$\le$2-level multi-scale line} is determined by a triple $(\Sigma,p_\infty,\omega_\bullet)$ where $\Sigma$ is a proper nodal genus zero complex curve equipped with a smooth marked point $p_\infty$ and a collection $\omega_\bullet$ of meromorphic 1-forms, one per irreducible component of $\Sigma$, such that:\vspace{-2mm}
    \begin{enumerate}
        \item The point $p_\infty$ lies on a distinguished irreducible component $\Sigma_\infty$, which gives the dual graph $\Gamma(\Sigma)$ of $\Sigma$ the structure of a rooted tree. The two-level condition forces requires that all non-root components are connected to $\Sigma_\infty$ by a node. We call the non-root components \emph{terminal}. \vspace{-2mm}
        \item If $\Sigma$ is reducible, we require that there be at least $2$ terminal components. \vspace{-2mm}
    \end{enumerate}
    We write $V(\Sigma)$ for the set of vertices of $\Gamma(\Sigma)$, i.e. the set of irreducible components of $\Sigma$. \vspace{-2mm}
    \begin{enumerate}[resume]
        \item In the collection $\omega_\bullet = \{\omega_v\}_{v\in V(\Sigma)}$, each $1$-form $\omega_v$ has a unique pole of order $2$ at the node connecting $\Sigma_v$ to $\Sigma_\infty$ (resp. $p_\infty$ when $\Sigma_v = \Sigma_\infty$) and no other zeros or poles.\vspace{-2mm}
    \end{enumerate}
\end{defn}

\begin{ex}
    It is not hard to classify the $\le$2-level multi-scale lines. There is the $1$-level case, where $\Sigma$ is irreducible. In this case, choosing an isomorphism of $\Sigma$ with $\bP^1$ taking $p_\infty$ to $\infty$, the form $\omega$ is identified with $\lambda \: \d z$ for some $\lambda \in \bC^*$. In the 2-level case, $\Sigma$ consists of a root component $\Sigma_r$ and $p\ge2$ terminal components. The curve $\Sigma$ is thus specified by a configuration of points 
    \[
    \begin{tikzpicture}
        \fill[gray!25] (0,0) rectangle (6,4);

        \fill (1,1) circle (2pt);
        \fill (2.5,2) circle (2pt);
        \fill (5,3.8) circle (2pt);

        \node[below=5pt] at (1,1) {$n_1$};
        \node[below=5pt] at (2.5,2) {$n_2$};
        \node[below=5pt] at (5,3.8) {$n_p$};
        
        \node at (1,3.5) {$\Sigma_r\setminus \{p_\infty\}$};
        \node at (3.3,2.7) {$\iddots$};
    \end{tikzpicture}
    \]
    in $\Sigma_r\setminus \{p_\infty\} \cong \bC$. Note that the form $\omega_r$ on $\Sigma_r$ makes this isomorphism canonical up to translation. The additional data consist of the forms attached to the components $\Sigma_1,\ldots,\Sigma_p$; however, these will not be needed in what follows. Consequently, it is best to think of $\Sigma$ as being specified by a configuration of points in $\Sigma_r\setminus \{p_\infty\}$, as visualized above.
\label{ex:twolevel}
\end{ex}

We consider a 2-level multi-scale line $\Sigma$ as in Example~\ref{ex:twolevel}. For any distinct pair of points $n_i$ and $n_j$, we define the period 
\[
    \Pi(n_i,n_j) := \int_{n_i}^{n_j}\omega_r\in \bC^*.
\]
To obtain smooth moduli spaces of multi-scale lines, their marked variants are defined in \cite{augmented} and considered up to  \emph{complex projective} and \emph{real oriented} equivalence. The important point is that up to real oriented equivalence, the $S^1$-valued angle function 
\[
    \mathfrak{p}(n_i,n_j) := \frac{\Pi(n_i,n_j)}{\lvert \Pi(n_i,n_j)\rvert}
\]
is well-defined. Thus, ultimately, the essential information encoded by a two-level multi-scale line $\Sigma$ is $\{\mathfrak{p}(n_i,n_j):1\le i <j\le n\}$. This allows us to state:

\begin{defn}
\label{D:multiscaledecomp}
    A \emph{$\le$2-level multi-scale decomposition} of $\cD$ is a $\le$2-level multi-scale line $\Sigma$ and a collection of full thick triangulated subcategories $\{\cD_{\le v}\}_{v\in V(\Sigma)_{\rm{term}}}$ of $\cD$ which generate $\cD$ and satisfy relations: \vspace{-2mm}
    \begin{enumerate}
        \item if $\Im (\mathfrak{p}(v,w)) > 0$ then $\Hom(\cD_{\le w},\cD_{\le v}) = 0$; and \vspace{-2mm} 
    \item if $\mathfrak{p}(v,w) = 1$ then $\cD_{\le v}\subset \cD_{\le w}$. \vspace{-2mm} 
    \end{enumerate}
    We write the multi-scale decomposition as $\cD = \langle \cD_\bullet\rangle_{\Sigma}$.
\end{defn}

\begin{ex}
\label{Ex:twolevelMSD}
    If $\Sigma$ is irreducible, then the multi-scale decomposition is trivial, consisting only of the category $\cD$ itself. Next, let $\Sigma$ have two levels. We illustrate two special cases:
    \[
    \begin{tikzpicture}[scale = .75]
        \fill[gray!25] (0,0) rectangle (6,4);

        \fill (1,2) circle (2pt);
        \fill (2.5,2) circle (2pt);
        \fill (5,2) circle (2pt);

        \node[below=5pt] at (1,2) {$n_1$};
        \node[below=5pt] at (2.5,2) {$n_2$};
        \node[below=5pt] at (5,2) {$n_p$};
        
        \node at (4,2) {$\cdots$};

        \fill[gray!25] (7,0) rectangle (13,4);

        \fill (8,1) circle (2pt);
        \fill (9.5,2.5) circle (2pt);
        \fill (12,3.5) circle (2pt);

        \node[below=5pt] at (8,1) {$n_1$};
        \node[below=5pt] at (9.5,2.5) {$n_2$};
        \node[below=5pt] at (12,3.5) {$n_p$};
        
        \node[rotate = -16] at (10.75,3.2) {$\iddots$};
    \end{tikzpicture}
    \]
    In the first case, corresponding to the configuration of points on the left, we obtain a filtration $\cD_{\le v_1}\subset \cdots \subset \cD_{\le v_p} = \cD$ of the category $\cD$. In the case on the right, we obtain a semiorthogonal decomposition $\cD = \langle \cD_{\le n_1},\ldots, \cD_{\le n_p}\rangle$. 
    
    In the general case, we obtain a semiorthogonal decomposition whose factors are indexed by the set of $\Im$-values of the configuration of points, and such that each factor is filtered according to the set of $\Re$-values of nodes with the given $\Im$-value.
\end{ex}

\begin{defn}
    Given a two-level multi-scale decomposition as in Example~\ref{Ex:twolevelMSD} and $v\in V(\Sigma)_{\rm{term}}$ we can define the \emph{associated graded category} $\gr_v(\cD) := \cD_{\le v}/\cD_{<v}$, where $\cD_{<v}$ is the thick subcategory of $\cD_{\le v}$ generated by all $\cD_{\le w}$ with $\mathfrak{p}(w,v) = 1$.
\end{defn}

Returning to Example~\ref{Ex:twolevelMSD}, in the situation corresponding to the configuration of points on the left, the associated graded categories are of the form $\gr_{v_i}(\cD) = \cD_{\le v_i}/\cD_{\le v_{i-1}}$. In the second situation, for each $v_i$ we have $\cD_{<v_i} = 0$ so that $\gr_{v_i}(\cD) = \cD_{\le v_i}$.

\begin{defn}
A \emph{($\le$2-level) augmented stability condition}, written $\sigma = \langle \cD_\bullet|\sigma_\bullet\rangle_\Sigma$, is a ($\le$2-level) multi-scale decomposition $\cD = \langle \cD_\bullet\rangle_\Sigma$ equipped with the data of stability conditions
$\sigma_\bullet = (\sigma_v) \in \prod_{v\in V(\Sigma)_{\rm{term}}} \Stab(\gr_v(\cD))/\bC$. 
\label{D:augmentedstab_special_case}
\end{defn}

Implicit in the definition is that each $\sigma_v$ satisfies the support property with respect to a subquotient lattice $\Lambda_v$ of $\Lambda$ --- see \cite{augmented}*{Defn. 3.36} for the details. We denote the set of all augmented stability conditions (without the $\le$2-level constraint) by $\Astab(\cD)$. The relevant properties of $\Astab(\cD)$ to the present work are:

\begin{thm}
[\cite{augmented}]
    Let $\cD$ be a triangulated category. \vspace{-2mm}
    \begin{enumerate}
        \item The space $\Astab(\cD)$ admits a Hausdorff topology such that $\Stab(\cD)/\bC$ embeds as an open subspace and such that the $\Aut(\cD)$-action extends continuously. \vspace{-2mm}
        \item Paths $\sigma_\bullet\colon[0,\infty)\to \Stab(\cD)/\bC$ which are \emph{strongly} quasi-convergent in the sense of \cite{augmented}*{Defn. 3.36} converge in $\Astab(\cD)$. \vspace{-2mm}
    \end{enumerate}
\end{thm}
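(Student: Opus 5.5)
This theorem is quoted from \cite{augmented}, so the plan is to reconstruct the argument of \emph{loc. cit.} rather than to find a new one. I would start from an explicit set-theoretic description of $\Astab(\cD)$: a point is a multi-scale decomposition $\cD=\langle\cD_\bullet\rangle_\Sigma$ together with classes $\sigma_v\in\Stab(\gr_v(\cD))/\bC$, each satisfying the support property for its subquotient lattice $\Lambda_v$. The topology should be generated by neighbourhoods of \emph{plumbing} type around a boundary point. A nearby point is obtained in three steps:
\begin{enumerate}
    \item deform each $\sigma_v$ inside $\Stab(\gr_v(\cD))$, using Theorem~\ref{T:deformation} on each graded piece;
    \item choose small smoothing parameters at the nodes of $\Sigma$, which rescale the central charges of the lower levels relative to the root level;
    \item glue the resulting data into a (possibly less degenerate) augmented stability condition on $\cD$.
\end{enumerate}
The phases are controlled by the real oriented angle data $\mathfrak{p}(n_i,n_j)$, and the masses by the ratios appearing in the limits \eqref{E:ratiolimit}.

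For part (1), I would carry out the following steps in order.
\begin{enumerate}
    \item Check that on the open stratum, where $\Sigma$ is irreducible, these neighbourhoods reduce to the usual metric topology of Theorem~\ref{T:deformation} on $\Stab(\cD)/\bC$. This gives openness of the embedding.
    \item Check that $\Aut(\cD)$ acts by transporting multi-scale decompositions and slicings. Continuity of this action is then formal, since the neighbourhoods are defined functorially in $\cD$.
    \item Prove the Hausdorff property. Take two distinct augmented stability conditions and separate them using the asymptotic invariants of objects: the limits of normalized differences of $\logZ$, as in Definition~\ref{D:quasiconvergent}(2), and the limiting masses and phases on graded pieces. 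Distinct points disagree either on the decomposition $\langle\cD_\bullet\rangle_\Sigma$, which some pair of objects detects through these invariants, or on some $\sigma_v$, where Hausdorffness of $\Stab(\gr_v(\cD))/\bC$ applies.
\end{enumerate}

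For part (2), I would take a strongly quasi-convergent path and apply Theorem~\ref{T:quasiconvergencesummary} to it. This produces:
\begin{enumerate}
    \item the semiorthogonal decomposition indexed by $\cI$ and the filtrations indexed by $\cT$, which together give a multi-scale decomposition whose line $\Sigma$ is built from the asymptotics of $\logZ_t$;
    \item the limiting stability conditions $\sigma_\rho$ on the graded pieces, which satisfy the support property because the path does.
\end{enumerate}
Convergence then amounts to showing that for $t\gg0$ the point $\sigma_t$ lies in each plumbing neighbourhood of this limit. For this I would use uniform control of phases and masses of limit semistable objects, together with the existence of the limits \eqref{E:ratiolimit}.

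I expect the main obstacle to be the Hausdorff property together with the uniformity needed in part (2). Both require comparing slicings on $\cD$ with slicings on Verdier quotients $\gr_v(\cD)$, uniformly over all objects. My first attempt would lean on the support property: it gives a uniform lower bound on $|Z|/\lVert v\rVert$, and this should yield finiteness of HN types in bounded regions and hence the uniform estimates.
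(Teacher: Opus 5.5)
The paper does not prove this statement; it is quoted from \cite{augmented}. Your sketch therefore has to stand on its own, and its load-bearing step does not.

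You build the topology from plumbing neighbourhoods whose points are produced by deforming each $\sigma_v$, smoothing the nodes of $\Sigma$, and then gluing the result into a less degenerate augmented stability condition on $\cD$. No such gluing exists in general. The graded pieces are Verdier quotients $\cD_{\le v}/\cD_{<v}$ in which $\cD_{\le v}$ need not be admissible; for example, $\ker\bR f_*\subset\DCoh(S)$ in Section~\ref{sec:K3} is not. Nothing produces a stability condition on $\cD$, or on a coarser multi-scale decomposition, from stability conditions on $\cD_{\le v}$ and $\cD/\cD_{\le v}$. Even for honest semiorthogonal decompositions, Collins--Polishchuk gluing needs extra Hom-vanishing hypotheses.

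Moreover, nothing in the definition of $\Astab(\cD)$ makes a boundary point a limit of interior points. This is why the paper uses the separate notation $\Astab^\dagger(X)$ for the closure of $\Stab(X)/\bC$, and why Section~\ref{sec:K3} reaches points of $\cM_f^\chi$ only by constructing explicit paths in $\Stab(S)$. So neighbourhoods cannot be defined as images of a plumbing map, and your checks of openness, equivariance and separation have nothing to act on.

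Any repair must define a neighbourhood of $\sigma$ as the set of points $\tau$ whose intrinsically computable invariants are close to those of $\sigma$. As the way this paper uses \cite{augmented} indicates, that is how the topology is set up there. There is a weak topology of pointwise convergence of invariants: tuples $(\logZ(E_1),\ldots,\logZ(E_n))$ read in the spaces $\cA_n^{\bR}$ of real multi-scale lines, together with phases and masses. It is refined to a strong topology of uniform convergence over limit semistable objects, which recovers Bridgeland's topology on $\Stab(\cD)/\bC$. In a framework of this kind your separation idea is the right one. Hausdorffness reduces to these invariants separating points, and $\Aut(\cD)$ simply transports them.

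In part (2), the inputs you list do not determine the limit. These are uniform control of phases and masses together with the limits \eqref{E:ratiolimit}. Once there are at least three terminal components, the limits of $\mathfrak{n}_t$ record only the angles $\mathfrak{p}(n_i,n_j)$ of the diverging differences of $\logZ_t$. They do not record the relative sizes of the periods $\Pi(n_i,n_j)$, and these are part of the limiting point; compare the factor $U\cong\bR_{>0}^{r-1}$ in Theorem~\ref{T:stratum_description}. Accordingly, strong quasi-convergence requires convergence of cross-ratios (Lemma~\ref{L:crossratio}).

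Likewise, you rightly identify the comparison of slicings on $\cD$ with slicings on $\gr_v(\cD)$ as the crux, but it is not extracted from the support property alone. The definition also demands stable lifts (Proposition~\ref{P:stablelifts}): every stable object of a limiting $\sigma_v$ must be represented by an object of $\cD$ that is $\sigma_t$-stable for all $t\gg0$. The needed axioms are convergence of cross-ratios, stable lifts, and uniform convergence of $\phi_t^\pm$ and of suitably rescaled $m_t$. With these, convergence is checked directly against the defining invariants of the strong topology, which is \cite{augmented}*{Prop.~5.21}.
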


We prove as Theorem~\ref{T:strongquasiconvergence} that subject to conditions on a path $\sigma_\bullet$ in $\Stab(\cD)$ that hold in  examples on surfaces, strong quasi-convergence is automatic.

\section{Augmented stability conditions in rank two} \label{sec:ranktwo}

We let $\cD$ be a Hom-finite $k$-linear triangulated category such that $\Knum(\cD)$ is free of rank $2$.\footnote{The results of this section carry over verbatim to the situation where the charge lattice $\Lambda$ is free of rank $2$.} We say that such a $\cD$ is of \emph{rank two}. As mentioned in Section~\ref{S:preliminaries}, we consider numerical stability conditions, i.e. those $\sigma = (Z,\cP)$ such that $Z$ factors through $v\colon \K_0(\cD)\twoheadrightarrow \Knum(\cD)$, satisfying the support property of \cite{KS08}. Then, by Bridgeland's deformation theorem \cites{Bayer_short,Br07}, if $\Stab(\cD)\ne \varnothing$ then it is a 2-dimensional complex manifold and $\Stab(\cD)/\bC$ is an open, possibly disconnected, Riemann surface. First, we need:

\begin{lem}
\label{L:rank2astab}
    If $\cD$ is a rank two category then $\sigma = \langle \cD_\bullet|\sigma_\bullet\rangle_\Sigma\in \partial\Astab(\cD)$ is specified by: \vspace{-2mm}
    \begin{enumerate}
        \item a two-level multi-scale line $\Sigma$ with two terminal components $v,w$, up to real-oriented isomorphism; and \vspace{-2mm}
        \item a thick triangulated subcategory $\cD_{\le v} \subsetneq \cD$ and 
        \begin{equation}
        \label{E:stabilityinproduct}
            (\sigma_v,\sigma_w) \in \Stab_{\Lambda_{\le v}}(\cD_{\le v})/\bC \times \Stab_{\Lambda/\Lambda_{\le v}^{\rm{sat}}}(\cD/\cD_{\le v})/\bC,
        \end{equation}
        where $\Lambda_{\le v}:= \im(\K_0(\cD_{\le v}) \to \Knum(\cD))$ is rank one. \vspace{-2mm}
    \end{enumerate}
    The real-oriented isomorphism class of $\Sigma$ is uniquely determined by $\mathfrak{p}(v,w) \in S^1$ and if $\cD_{\le v}\hookrightarrow \cD$ is not admissible then $\mathfrak{p}(v,w) = 1$.
\end{lem}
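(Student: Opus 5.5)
The plan is a counting argument on lattice ranks, followed by an analysis of the relations in Definition~\ref{D:multiscaledecomp}. Let $\sigma = \langle \cD_\bullet|\sigma_\bullet\rangle_\Sigma$ be a boundary point, so $\Sigma$ is reducible.

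\textbf{Step 1: exactly two levels and two terminal components.} The key input is that each terminal component $u$ carries a stability condition $\sigma_u$ on $\gr_u(\cD)$ satisfying the support property with respect to a nonzero subquotient lattice $\Lambda_u$. In \cite{augmented}, $\Lambda$ is filtered compatibly with the tree $\Gamma(\Sigma)$, so the ranks of the $\Lambda_u$ over all leaves sum to at most $\rank \Lambda = 2$. A nonzero stability condition forces $\rank \Lambda_u \ge 1$: if $\Lambda_u = 0$, then $Z_u=0$ and every object of $\gr_u(\cD)$ would have to be zero. Since a reducible $\Sigma$ has at least two terminal components, there are exactly two of them, $v$ and $w$, each with rank one lattice. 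The same counting rules out deeper trees: an intermediate level would need a further splitting of a rank one lattice into two nonzero pieces. This bounding of levels is the step I expect to be the main obstacle, because it depends on the precise definition of $\Lambda_u$ in \cite{augmented}*{Defn. 3.36}. I would verify from that definition that the lattices attached to the children of a vertex have ranks summing to the rank at that vertex.

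\textbf{Step 2: reduction to one subcategory and one angle.} A two-level line with two terminal nodes $n_v,n_w$ in $\Sigma_r\setminus\{p_\infty\}\cong\bC$ is determined, up to real-oriented equivalence, by the single angle $\mathfrak{p}(v,w)\in S^1$, since translation and positive real scaling act transitively on the rest. After swapping the labels, $\mathfrak{p}(v,w)$ lies in the closed upper half-circle. Then split into cases.
\begin{enumerate}
\item If $\mathfrak{p}(v,w)=1$, Definition~\ref{D:multiscaledecomp}(2) gives $\cD_{\le v}\subset\cD_{\le w}$. Generation then forces $\cD_{\le w}=\cD$, and we have $\gr_v=\cD_{\le v}$ and $\gr_w=\cD/\cD_{\le v}$.
\item If $\Im \mathfrak{p}(v,w)>0$, Definition~\ref{D:multiscaledecomp}(1) gives $\Hom(\cD_{\le w},\cD_{\le v})=0$. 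Together with generation, this is a semiorthogonal decomposition $\cD=\langle \cD_{\le v},\cD_{\le w}\rangle$. Hence $\cD_{\le v}$ is admissible, and $\cD_{\le w}\simeq\cD/\cD_{\le v}$.
\item If $\mathfrak{p}(v,w)=-1$, this is case (1) with $v$ and $w$ swapped, after relabeling.
\end{enumerate}
In every case the data reduce to a proper thick subcategory $\cD_{\le v}$ together with the pair \eqref{E:stabilityinproduct}. The subcategory is proper because otherwise the quotient would be zero, contradicting Step 1.

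\textbf{Step 3: identifying the lattices.} $\Lambda_v$ is the image $\Lambda_{\le v}$ of $\K_0(\cD_{\le v})$, which has rank one by Step 1. The quotient's lattice is $\Lambda/\Lambda_{\le v}^{\rm sat}$, which is torsion-free and has rank one; saturating is necessary so that the quotient is free. The final claim is the contrapositive of case (2): if $\mathfrak{p}(v,w)\neq\pm1$, then $\cD_{\le v}$ or $\cD_{\le w}$ is admissible. With the labeling that puts the subcategory first, non-admissibility of $\cD_{\le v}$ therefore forces $\mathfrak{p}(v,w)=1$.
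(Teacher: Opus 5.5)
Your proposal is correct and takes essentially the same route as the paper. Both arguments bound the number of terminal components by $\rank \Knum(\cD)=2$, since each graded piece carries a stability condition with respect to a nonzero subquotient lattice. Both then split into the case $\Im \mathfrak{p}(v,w)>0$, which gives a semiorthogonal decomposition so that $\cD_{\le v}$ is admissible, and the case $\mathfrak{p}(v,w)=1$, where $\cD_{\le w}=\cD$, and finally read off the rank one lattices $\Lambda_{\le v}$ and $\Lambda/\Lambda_{\le v}^{\rm{sat}}$. Your extra remarks only make explicit what the paper leaves implicit: why each $\Lambda_u\neq 0$, why deeper trees and $\mathfrak{p}(v,w)=-1$ reduce to these cases, and why $\cD_{\le v}$ is proper.
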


\begin{proof}
    By Definition~\ref{D:augmentedstab_special_case}, $\sigma$ has underlying multi-scale decomp\-osition $\langle \cD_\bullet\rangle_\Sigma$ as in Definition~\ref{D:multiscaledecomp}. The condition that the associated graded categories admit stability conditions forces $\# \Gamma(\Sigma)_{\rm{term}} \le \rank \Knum(\cD) = 2$. Definition~\ref{D:multiscaleline}(2) now forces $\Sigma$ to have two terminal components. 
    
    Example~\ref{Ex:twolevelMSD} describes this situation: $\langle \cD_\bullet\rangle_\Sigma$ is given by a pair of thick triangulated subcategories $\cD_{\le v}$ and $\cD_{\le w}$ of $\cD$. At least one of these categories, say $\cD_{\le v}$, must be a strict subcategory of $\cD$. If, say, $\Im (\mathfrak{p}(v,w)) > 0$, then $\cD = \langle \cD_{\le v},\cD_{\le w}\rangle$ so that $\cD_{\le v}\hookrightarrow \cD$ is admissible and $\cD_{\le w} = {}^\perp\cD_{\le v}$. If $\Im(\mathfrak{p}(v,w)) = 0$, assume without loss of generality that $\mathfrak{p}(v,w) = 1$. In that case, $\cD_{\le w} = \cD$. 

    To specify $\sigma$, we now need to give stability conditions modulo $\bC$ on $\cD_{\le v}$ and $\cD/\cD_{\le v}$, satisfying the support property with respect to $\Lambda_{\le v}:=\im(\K_0(\cD_{\le v}) \to \Knum(\cD))$ and $\Lambda/\Lambda_{\le v}^{\rm{sat}}$, respectively. This forces $\Lambda_{\le v}$ and $\Lambda/\Lambda_{\le v}^{\rm{sat}}$ to be of rank $1$. Note that in this case product space in \eqref{E:stabilityinproduct} is zero-dimensional.
\end{proof}

Next, the $G$-action on $\Stab(\cD)$ is generically free and the action of $\bC\subset G$ is free and holomorphic. In particular, the induced $G$-action gives a decomposition of $\Stab(\cD)/\bC$ into a union of open orbits $\mathscr{O}$, glued along degenerate one-dimensional orbits. These degenerate orbits, if they exist, are comprised of stability conditions with central charge $Z$ valued in a real one-dimensional subspace of $\bC$. 

So, consider an open orbit $\widetilde{\mathscr{O}}\subset \Stab(\cD)$ for the $G$-action and let $\mathscr{O}$ be its image in $\Stab(\cD)/\bC$. The group $G$ acts at the level of central charges through the homomorphism $\rho\colon G\to \GL_2^+(\bR)$ by $g\cdot Z := \rho(g)\circ Z$, where we regard $Z$ as valued in $\bR^2$. To parametrize the orbit, consider $\sigma \in \widetilde{\mathscr{O}}$ and a pair of stable objects $E$ and $F$ in $\cP_\sigma(0,1]$ where $\phi(E)<\phi(F)$. To remove the $\bC\subset G$ degree of freedom, we set $Z(F) = -1$ and $\phi(F) = 1$. This gives a lift $\mathscr{O}^+\subset \Stab(\cD)$ of $\mathscr{O}$. For all $\tau \in\mathscr{O}^+$, $E$ is $\tau$-stable with $\phi_\tau(E) \in (0,1)$ and there is a biholomorphism 
\[
    Z_{(-)}(E)\colon\mathscr{O}^+ \longrightarrow \bH
\]
given by $\tau \mapsto Z_\tau(E)$. More intrinsically, the induced coordinate on $\mathscr{O}$ is 
\begin{equation}
\label{E:identification}
    \overline{\tau}\longmapsto -Z_\tau(E)/Z_\tau(F)
\end{equation}
for any choice of $\tau \in \overline{\tau}$. Different choices of objects $E$ and $F$ result in a change of variables of $\bH\subset \bP^1$ by the standard action of $\PSL_2(\bZ)$.

It also follows from this argument that degenerate orbits are diffeomorphic to $\bR_{>0}$. Next, given an open orbit $\mathscr{O}$, we show how to compute its closure $\overline{\mathscr{O}}$ in $\Astab(\cD)$. 

\begin{lem}
\label{L:rank2_constantheart}
    Consider a $G$-orbit $\mathscr{O}\subset \Stab(\cD)/\bC$ and its lift to $\mathscr{O}^+\subset \Stab(\cD)$. For any $\sigma,\tau \in \mathscr{O}^+$, one has $\cP_\sigma(0,1] = \cP_\tau(0,1]$.
\end{lem}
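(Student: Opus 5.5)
The plan is to use the explicit description of $\mathscr{O}^+$: every $\tau\in\mathscr{O}^+$ is $g\cdot\sigma$ for a unique $g\in G$ that normalizes $Z_\tau(F)=-1$ and $\phi_\tau(F)=1$. Write $g=(T,f)$, where $T=\rho(g)\in\GL_2^+(\bR)$ and $f\colon\bR\to\bR$ is the increasing lift with $f(\phi+1)=f(\phi)+1$. Recall that $\cP_{g\cdot\sigma}(f(\phi))=\cP_\sigma(\phi)$, with the convention fixed by the action of \cite{Br07}*{Lem. 8.2}. The lemma will follow once we show that $f$ maps $(0,1]$ onto $(0,1]$, since then $\cP_\tau(0,1]=\cP_\sigma(0,1]$.

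First, use the normalization to see that $f(1)=1$. Indeed, $F$ is $\sigma$-stable of phase $1$ and $\tau$-stable of phase $1$, and $T$ fixes the point $-1\in\bC$. Because the lift $f$ is compatible with $T$ on arguments and $\phi_\tau(F)=f(\phi_\sigma(F))$, we get $f(1)=1$. Periodicity then gives $f(0)=0$. Since $f$ is increasing, it follows that $f((0,1])=(0,1]$, which concludes the argument.

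Two points need care. The first is that $F$ stays $\tau$-stable of phase exactly $1$ along the lift; this is built into the definition of $\mathscr{O}^+$, whose elements are characterized by $Z(F)=-1$ and $\phi(F)=1$, together with $G$-equivariance of stability. The second is the precise relation between $f$ and $T$. Here $f$ is a lift of the map $\bR/2\bZ\to\bR/2\bZ$ induced by $T$ on rays, so $T$ fixing the negative real ray means $f(1)\in 1+2\bZ$. The fact that $\tau=g\cdot\sigma$ has $\phi_\tau(F)=1$, rather than $1+2k$, pins down $f(1)=1$. I expect this bookkeeping of the universal cover to be the only real obstacle; everything else is formal.

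As a sanity check, I would also give a direct alternative. The imaginary parts $\Im Z_\tau$ range over real-linear functionals on $\Lambda_\bR$ that vanish on $v(F)$ and are positive on $v(E)$. Since $\Lambda$ has rank two, every such functional is a positive multiple of a fixed one, so the sign of $\Im Z_\tau$ is constant on classes. Together with connectedness of $\mathscr{O}^+$, this shows that the stable objects of phase in $(0,1)$ and the objects of phase $1$ are the same for $\sigma$ and $\tau$.
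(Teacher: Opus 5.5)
Your proposal is correct and follows essentially the same route as the paper. The paper argues that the $G$-action preserves the class of semistable objects, that $\phi(F)=1$ is fixed along $\mathscr{O}^+$, and that orientation-preservation then forces the slices with phases in $(0,1]$ to be preserved, so their extension closures $\cP_\sigma(0,1]=\cP_\tau(0,1]$ agree. You make the same step explicit by showing that the phase lift $f$ satisfies $f(1)=1$, hence $f(0)=0$, and so $f((0,1])=(0,1]$.
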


\begin{proof}
    By definition of the $G$-action, the class $\cP_\sigma = \bigcup_{\phi \in \bR} \cP_\sigma(\phi)$ is independent of $\sigma \in \mathscr{O}^+$. Furthermore, for all $\sigma \in \mathscr{O}^+$ we have $\phi_\sigma(F) = 1$. Since elements of $G$ are orientation preserving, the $G$-action preserves $\bigcup_{\phi \in (0,1]}\cP_\sigma(\phi)$. On the other hand, $\cP_\sigma(0,1] = \langle \bigcup_{\phi \in (0,1]}\cP_\sigma(\phi)\rangle_{\rm{ext}}$ and the result follows.
\end{proof}

By Lemma~\ref{L:rank2_constantheart}, there is a heart $\cA_{\mathscr{O}}$ associated to the orbit $\mathscr{O}$, defined by $\cA_{\mathscr{O}} = \cP_\sigma(0,1]$ for any $\sigma \in \widetilde{\mathscr{O}}$. Furthermore, since the $G$-action does not affect (semi)stability, we say that an object $A$ is $\mathscr{O}$-(semi)stable if it is (semi)stable for some (equiv. any) element of $\mathscr{O}$. We write $\cA_{\mathscr{O}}^{\rm{ss}}$ for the set of $\mathscr{O}$-semistable objects of $\cA_{\mathscr{O}}$.

\begin{lem}
\label{L:rank2quasiconvergent}
    Any path $\sigma_\bullet$ in $\mathscr{O}$ which converges in $\Astab(\cD)$ is quasi-convergent.
\end{lem}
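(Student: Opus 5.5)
Work in the lift $\mathscr{O}^+$, with $Z_t(F) = -1$, $\phi_t(F)=1$, and write $z_t := Z_t(E) \in \bH$ for the coordinate \eqref{E:identification}. By Lemma~\ref{L:rank2_constantheart} the heart $\cA_{\mathscr{O}}$ is constant along the path. The set of $\mathscr{O}$-semistable objects is also constant along the path, since the $G$-action does not change semistability. Thus every object $A \in \cD$ has a fixed HN filtration, whose factors $H_1,\dots,H_n$ are semistable for every $t$. Only their charges $Z_t(H_i)$ move, and these are real-linear functions of $z_t$ determined by the classes $v(H_i)\in\Knum(\cD)\cong \bZ^2$. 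Because $\Lambda$ has rank two, $Z_t$ is the linear map sending $v(E)\mapsto z_t$ and $v(F)\mapsto -1$ (up to finite index). So everything is controlled by the single path $z_t$ in $\bH$.

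First I would use the convergence in $\Astab(\cD)$ to pin down $z_t$. Compactness of $\bH\cup\partial\bH$ in $\bP^1$ is not enough on its own. I would argue that convergence in $\Astab(\cD)$ forces $z_t$ to converge in $\bP^1$ to a point $z_\infty \in \bH\cup\bR\cup\{\infty\}$. If $z_t$ stays in a compact subset of $\bH$, the path converges in $\Stab(\cD)/\bC$, and quasi-convergence is immediate, since every semistable object is limit semistable and all charges converge. Otherwise $z_t$ goes to the boundary, and convergence in $\Astab(\cD)$ should give convergence of the ratios $Z_t(A)/Z_t(B)$ in $\bP^1$. Concretely, the topology of \cite{augmented} is built from these normalized log-charge differences $\mathfrak{n}_t$, and Lemma~\ref{L:rank2astab} says the limit point records the rank-one sublattice $\Lambda_{\le v}$ that becomes massless together with the angle $\mathfrak{p}(v,w)$. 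From this I would deduce that $\arg Z_t(v)$ and $|Z_t(v)|$ behave tamely for each $v\in\Lambda$.

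Next I would verify condition (1) of Definition~\ref{D:quasiconvergent}. For each $A$, take its $\mathscr{O}$-HN filtration. Its factors are semistable for all $t$, hence limit semistable. Their phases satisfy $\phi_t(H_1)>\cdots>\phi_t(H_n)$, all lying in an interval of length one. Consecutive factors whose limiting phases coincide I would group together: the extension of two semistable objects whose phase difference tends to $0$ is again limit semistable, because $\phi^\pm$ of the extension is squeezed between those of the pieces. After grouping, the liminf gaps are positive. The limiting phases exist because phases are continuous functions of $z_t$ on $\bP^1$ minus the zeros of the linear forms. At a boundary point the linear form vanishing there corresponds to $\Lambda_{\le v}$, and I would handle it by using the sign of $\Im z_t$ to keep its phase in $[0,1]$.

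Condition (2) is where I expect the main difficulty. For limit semistable $E',F'$ we need existence of the limit of $\mathfrak{n}_t(E',F')$. Here $\log Z_t(E')-\log Z_t(F')$ has bounded imaginary part, so the question is whether $\log|Z_t(E')/Z_t(F')|$ converges in $[-\infty,\infty]$. This ratio is a Möbius function of $z_t$, so it converges in $\bP^1$ whenever $z_t$ converges. The subtle case is when both classes lie in the massless line $\Lambda_{\le v}$; then the ratio is constant. The real work is justifying the first step: that convergence in $\Astab(\cD)$ forces convergence of $z_t$ in $\bP^1$, and rules out oscillation of $z_t$ along the boundary. I would try to extract this directly from the definition of the topology via the $\mathfrak{n}_t$ for the fixed stable pair $E,F$.
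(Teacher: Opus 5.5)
Your outline follows the paper's proof. Condition (1) comes from constancy of HN filtrations along $\mathscr{O}$. Condition (2) comes from the fact that a boundary limit makes some $A$ massless relative to some $B$, followed by a direct computation of $\mathfrak{n}_t$.

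The step you call the real work is in fact short, and it is how the paper uses Lemma~\ref{L:rank2astab}. The limit point provides $A,B$ with $v(A),v(B)$ independent and $Z_t(A)/Z_t(B)\to 0$. Writing $Z_t(A)=r(A)z_t-d(A)$ and $Z_t(B)=r(B)z_t-d(B)$, this forces $z_t$ to stay bounded and tend to $\mu(A)$ if $r(A)\neq 0$, and forces $\lvert z_t\rvert\to\infty$ if $r(A)=0$. (Your dichotomy should be on where the limit point lies, not on whether $z_t$ stays in a compact set.) Once $z_t\to\mu$ is known, your argument for (2) is fine. A ratio with exactly one massless side has modulus tending to $0$ or $\infty$ while the imaginary part of the log difference stays bounded, so $\mathfrak{n}_t\to\pm1$. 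The remaining ratios converge in $\bC^*$ or are constant.

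The genuine gap is in condition (1) at the massless line, and convergence of $z_t$ cannot close it. If $z_t\to\mu\in\bR$, an HN factor of slope $\mu'\neq\mu$ has phase tending to $1$ or $0$ according as $\mu'>\mu$ or $\mu'<\mu$. A factor of slope $\mu$, however, has phase $\frac{1}{\pi}\arg(z_t-\mu)$, and $z_t\to\mu$ says nothing about this angle. Knowing the phase lies in $[0,1]$ only gives subsequential limits, whereas your grouping needs each consecutive gap either to tend to $0$ or to have positive $\liminf$.

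For a concrete failure, take $X$ elliptic, $\mu=0$, $z_t=e^{-t}e^{\mathtt{i}\theta(t)}$ with $\theta(t)=\frac{\pi}{4}(1+\sin t)+e^{-t/2}$, and $V=\cO_X\oplus L$ with $\deg L=-1$. Then $\phi_t(\cO_X)-\phi_t(L)=\frac{1}{\pi}(\theta(t)-\arg(1+z_t))$ has $\liminf$ equal to $0$ and $\limsup$ equal to $\frac12$. For large $t$, a filtration as in Definition~\ref{D:quasiconvergent}(1) has factors with disjoint, decreasing phase ranges, so it coarsens the $\sigma_t$-HN filtration $\cO_X\subset V$. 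The only candidates are therefore $V$ itself, which is not limit semistable, and $\cO_X\subset V$, whose gap has $\liminf$ equal to $0$. So this path is not quasi-convergent. Yet $Z_t(\cO_X)/Z_t(\cO_p)\to 0$, $\mathfrak{p}(v,w)=1$ regardless of the angle, and the induced stability conditions on $\cB_0$ and $\cD/\cB_0$ do not depend on the angle of approach. None of the data in Lemma~\ref{L:rank2astab} detects $\arg(z_t-\mu)$.

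The paper's proof does not supply this step either: it passes directly from constancy of HN filtrations to existence of limit HN filtrations. To finish, you need one of two things. Either you need an argument that convergence in the topology of \cite{augmented} controls $\arg(z_t-\mu)$ (and $\arg z_t$ when $\mu=\infty$); this sits uneasily with Corollary~\ref{C:caseofcurves}, since the path above converges to $0$ in $\bP^1$. Or you need an extra hypothesis, e.g.\ that this angle converges in $[0,\pi]$, under which your grouping argument does complete the proof.
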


\begin{proof}
    If $\sigma_\bullet$ converges in $\Stab(X)/\bf{C}$, then it is tautologically quasi-convergent. Next, suppose that $\sigma_\bullet$ converges to a point $\sigma_\infty$ in $\partial \Astab(\cD)$. By definition of the $G$-action, Harder--Narasimhan filtrations are constant in $\mathscr{O}$; therefore, limit HN filtrations exist. 
    
    By the description of boundary points in Lemma~\ref{L:rank2astab}, there exist objects $A$ and $B$ of $\cA_{\mathscr{O}}$ such that $(\logZ_{\sigma_\bullet}(A),\logZ_{\sigma_\bullet}(B))$ converges in $\cA_2^{\bf{R}}$ to a boundary point. Since $\sigma_t \in \mathscr{O}$ for all $t$, $A$ is massless relative to $B$ in the limit (or vice versa). Consequently, $v(A)$ and $v(B)$ are $\bQ$-linearly independent and a calculation shows that for any pair of $\mathscr{O}$-semistable objects $E,F \in \cA_{\mathscr{O}}$, the $\lim_{t\to\infty} \mathfrak{n}_t(E,F)$ exists. Thus, $\sigma_\bullet$ is quasi-convergent.    
\end{proof}

Identifying $\mathscr{O}$ with $\bH$ as in \eqref{E:identification}, we give an explicit description of the boundary points of $\overline{\mathscr{O}} \subset \Astab(\cD)$, regarded as elements of $\overline{\bH} \subset \bP^1$.

Up to re-choosing $E$ and $F$, we may assume that $v(E)$ and $v(F)$ form a $\bZ$-basis of $\Knum(\cD)$. Thus, for any $A\in \cA_{\mathscr{O}}$ there is a unique decomposition:
\[
    v(A) = r(A)\cdot v(E) + d(A)\cdot v(F),\quad r(A)\in \bZ_{\ge 0}, d(A) \in \bZ.
\]
Indeed, since $0<\phi_\sigma(E) < 1$ for all $\sigma \in \mathscr{O}$ we have $\Im Z(E) > 0$. On the other hand, $\phi_\sigma(F) = 1$ so $\Im Z(F) = 0$. Thus, for $A\in \cA_{\mathscr{O}}$ we have $\Im Z(A) > 0$ if and only if $d > 0$. Next, we define the slope $\mu(A) = d(A)/r(A)\in \bQ\cup\{\infty\}$ and 
\[
    \mu(\cA_{\mathscr{O}}) := \left\{\mu(A):A\in \cA_{\mathscr{O}}^{\rm{ss}}\right\} \subset \bQ \cup \{\infty\}. 
\]
Next, we define $\cB_\mu$ to be the full triangulated subcategory of $\cD$ generated by the $\mathscr{O}$-semistable objects $A$ with $\mu(A) = \mu$.

\begin{prop}
\label{P:rank2_thicksubcat}
    Let $\cD$ be as above. The thick subcategories $\cD_{\le v}$ as in Lemma~\ref{L:rank2astab} that appear for $\sigma \in \overline{\mathscr{O}}\cap \partial\Astab(\cD)$ are $\cB_\mu$ for $\mu \in \mu(\cA_{\mathscr{O}})$.
\end{prop}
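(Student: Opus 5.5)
The plan is to read off the massless subcategory from the limit of the coordinate \eqref{E:identification}. Fix $\sigma_\bullet$ in $\mathscr{O}$ converging to a point $\sigma_\infty\in\partial\Astab(\cD)$. By Lemma~\ref{L:rank2quasiconvergent} the path is quasi-convergent, so Theorem~\ref{T:quasiconvergencesummary} applies. Lift the path to $\mathscr{O}^+$, normalized so that $Z_t(F)=-1$, and write $z_t:=Z_t(E)\in\bH$. For $A\in\cA_{\mathscr{O}}$ we then have
\[
Z_t(A)=r(A)\,z_t-d(A).
\]

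The first step is to show that $z_t$ converges in $\overline{\bH}\subset\bP^1$ to a point $x\in\bR\cup\{\infty\}$. It cannot stay in a compact subset of $\bH$, since the path would then subconverge in $\Stab(\cD)/\bC$. To get convergence of $z_t$ itself, I would use that $\lim\mathfrak{n}_t(E,F)$ exists by quasi-convergence: this forces $\logZ_t(E)$ to have a limit modulo growth, and hence forces $z_t$ to converge in $\bP^1$.

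The second step identifies the limit semistable objects. Because the slicing $\cP_{\sigma_t}$ is constant along $\mathscr{O}$, HN filtrations do not depend on $t$. Hence the limit semistable objects are, up to shift, exactly the $\mathscr{O}$-semistable objects of $\cA_{\mathscr{O}}$, together with extensions of such objects whose phases coalesce in the limit. By Lemma~\ref{L:rank2astab}, the subcategory $\cD_{\le v}$ is generated by the limit semistable objects of strictly smaller mass growth than the others. For finite $x$, an $\mathscr{O}$-semistable $A$ has $\lvert Z_t(A)\rvert\to\lvert r(A)x-d(A)\rvert$. The massless objects are therefore precisely those with $\mu(A)=d(A)/r(A)=x$. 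When $x=\infty$, I would instead normalize by $z_t$, and the massless objects become those with $r(A)=0$, i.e. $\mu(A)=\infty$. Consequently $\cD_{\le v}=\cB_x$, and a boundary point can only arise if $x\in\mu(\cA_{\mathscr{O}})$.

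The main obstacle is the remaining case: $x$ is irrational, or $x$ is rational but no $\mathscr{O}$-semistable object has slope $x$. Here I must show the path does not converge to a boundary point at all. In that case no limit semistable object is massless, so the would-be limit has a single level with real-valued charge $A\mapsto r(A)x-d(A)$, which is nonzero on every semistable class. I plan to rule this out via the support property required in Lemma~\ref{L:rank2astab}.
\begin{itemize}
\item If $x$ is irrational, Dirichlet approximation gives $\inf\lvert rx-d\rvert/\lVert(r,d)\rVert=0$ along classes that appear. I would argue these classes are realized by semistable objects, or else that the slopes of semistable objects accumulate at $x$.
\item If $x$ is rational but absent from $\mu(\cA_{\mathscr{O}})$, the limiting one-level object would be a stability condition with degenerate real charge. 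Such a point lies in $\Stab(\cD)/\bC$, in a degenerate orbit, rather than in $\partial\Astab(\cD)$. This contradicts the assumption that the limit is a boundary point of $\overline{\mathscr{O}}$.
\end{itemize}

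Conversely, for each $\mu\in\mu(\cA_{\mathscr{O}})$ I would take the explicit path $z_t=\mu+\mathtt{i}/t$, or $z_t=\mathtt{i}t$ when $\mu=\infty$. I would then verify quasi-convergence as in Lemma~\ref{L:rank2quasiconvergent}, using that $\lim\mathfrak{n}_t$ exists since everything is explicit, and check the support property on $\Lambda_{\le v}$ and on $\Lambda/\Lambda_{\le v}^{\rm sat}$, both of which have rank one. This realizes $\cB_\mu$ as $\cD_{\le v}$ for some boundary point of $\overline{\mathscr{O}}$.
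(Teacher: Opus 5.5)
Your Step~2 and your converse construction are essentially the paper's proof. The paper uses Lemma~\ref{L:rank2quasiconvergent} to get a quasi-convergent path and a pair $(A,B)$ with $A$ massless relative to $B$. It then notes that $\cD_{\le v}$ is generated by the limit semistable objects with class in $(\bZ\cdot v(A))^{\rm{sat}}$, i.e.\ $\cD_{\le v}=\cB_{\mu(A)}$, and realizes each $\cB_\mu$ by a path with $-Z_t(E)/Z_t(F)\to\mu$, as you propose. Your Step~1 (convergence of $z_t$ in $\overline{\bH}$ via $\lim\mathfrak{n}_t(E,F)$) is correct but not needed there, since the limiting slope is just $\mu(A)$.

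What you single out as the main obstacle is not actually an obstacle, and the argument you sketch for it is wrong.
\begin{itemize}
\item For irrational $x$, neither of your mechanisms holds in general. On $\DCoh(\bP^1)$ the semistable slopes are $\bZ\cup\{\infty\}$, so Dirichlet approximants are not realized by semistable objects and slopes do not accumulate at $x$. The limiting real charge does satisfy the support property, and the path converges to an interior point of $\Stab(\bP^1)/\bC$ (see the remark after Corollary~\ref{C:caseofcurves}).
\item For rational $x\notin\mu(\cA_{\mathscr{O}})$, your claim that the limit lies in $\Stab(\cD)/\bC$ needs the support property for the limiting real charge. You do not prove this, and it fails whenever semistable slopes accumulate at $x$.
\end{itemize}

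In any case, whether a one-level limit exists is beside the point. By Lemma~\ref{L:rank2astab}, a boundary point has $\cD_{\le v}\neq 0$, since it carries a stability condition for the rank-one lattice $\Lambda_{\le v}$. By your own Step~2, $\cD_{\le v}$ is generated by massless limit semistable objects. Take such an $A\neq 0$:
\begin{itemize}
\item Its HN filtration is constant along $\mathscr{O}$, and $m_t(A)$ is the sum of the masses of its HN factors. So each factor is massless and hence has slope $x$.
\item On $\mathscr{O}$ the phase is a strictly increasing function of the slope, so there is only one factor. Thus $A$ is $\mathscr{O}$-semistable of slope $x$.
\end{itemize}
This gives $x\in\mu(\cA_{\mathscr{O}})$ and $\cD_{\le v}=\cB_x$ at once. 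It also covers the limit semistable objects that are extensions with coalescing phases, which your Step~2 glosses over. Replace Step~3 by this observation; it is exactly what the paper's short argument relies on.
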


\begin{proof}
    It follows from Lemma~\ref{L:rank2quasiconvergent} that any point of $\overline{\mathscr{O}}\cap \partial \Astab(\cD)$ is the limit of a quasi-convergent path. Furthermore, for any such path there is a pair of objects $(A,B)$ such that $A$ is massless relative to $B$ in the limit. Consequently, $\cD_{\le v}$ is generated by the (limit) semistable objects for $\sigma_\bullet$ with numerical class in $(\bf{Z} \cdot v(A))^{\rm{sat}}\subset \Knum(\cD)$.

    It remains only to see that there are quasi-convergent paths realizing each $\cB_\mu$ for $\mu = \mu(A)$ for $A \in \cA_{\mathscr{O}}^{\rm{ss}}$. Consider a path in $\mathscr{O}$ such that $-Z_t(E)/Z_t(F) \to \mu$ in $\overline{\bH}$. We have 
    \[
        \frac{Z_t(A)}{Z_t(F)} = \frac{r(A)Z_t(E) + d(A)Z_t(F)}{Z_t(F)} = r(A)\cdot \frac{Z_t(E)}{Z_t(F)} + d(A) \to - r(A) \cdot \mu + d(A) = 0.
    \]
    This path realizes $\cB_\mu$ for $\mu \ne \infty$. When $\mu = \infty$, any path where $Z_t(F)/Z_t(E) \to 0$ will do, taking $(A,B) = (F,E)$.
\end{proof}

Next, we apply Proposition~\ref{P:rank2_thicksubcat} to smooth projective curves. We let $X$ denote a smooth projective curve and $\mathscr{O}^X$ the image in $\Stab(X)/\bC$ of the $G$-orbit of the the standard slope stability condition with heart $\Coh(X)$ and charge $Z(-) = -\deg(-) + \mathtt{i}\rank(-)$. We identify $\mathscr{O}^X$ with $\bH$ using $\tau \mapsto -Z_\tau(\cO_X)/Z_\tau(\cO_p)$ for some $p\in X$.

\begin{cor}
\label{C:caseofcurves}
    The analysis of curves splits into two cases. \vspace{-2mm} 
    \begin{enumerate}
        \item If $X = \bP^1$, then $\overline{\mathscr{O}}^{X} \cap \partial\Astab(\bP^1) \cong \bZ \cup \{\infty\}$; and \vspace{-2mm}
        \item if $g(X) \ge 1$, then $\overline{\mathscr{O}}^X := \Astab^\dagger(X)\cong  \bH \cup \bQ \cup \{\infty\}$ \vspace{-2mm}
    \end{enumerate}
    where in both cases $\mathscr{O}^X\cong \bH$ and boundary points $\mu \in\bQ \cup\{\infty\}$ correspond to a unique augmented stability condition with massless category $\cB_{\mu}$. The category $\cB_\infty$ is the category of torsion complexes on $X$.
\end{cor}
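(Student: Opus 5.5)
The plan is to apply Proposition~\ref{P:rank2_thicksubcat} to the orbit $\mathscr{O}^X$, using $E=\cO_X$ and $F=\cO_p$. Their classes $(1,0)$ and $(0,1)$ in $\Knum(X)\cong\bZ^2$ (rank, degree) form a $\bZ$-basis. Since $Z=-\deg+\mathtt{i}\rank$, we have $r(A)=\rank(A)$ and $d(A)=\deg(A)$ for $A\in\Coh(X)=\cA_{\mathscr{O}^X}$. The slope $\mu$ is then the usual slope, and $\mu=\infty$ exactly for torsion sheaves.

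First I would compute $\mu(\cA_{\mathscr{O}^X})$, i.e. the slopes realised by semistable sheaves.
\begin{itemize}
\item For $g\ge1$, every $\mu=d/r\in\bQ$ is realised by a slope-semistable bundle of rank $r$ and degree $d$. On an elliptic curve one can use Atiyah's stable bundles. For $g\ge 2$, stable bundles of every rank and degree exist by Narasimhan--Seshadri, or more cheaply one can push forward line bundles along a degree-$r$ cover. Torsion sheaves give $\mu=\infty$.
\item For $X=\bP^1$, Grothendieck's theorem says every semistable sheaf is either torsion or of the form $\cO(a)^{\oplus n}$. Hence $\mu(\cA)=\bZ\cup\{\infty\}$.
\end{itemize}
In both cases $\cB_\infty$ is generated by the skyscrapers $\cO_x$, so it is the category of complexes with torsion cohomology.

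Second, I would show that each boundary value $\mu$ corresponds to a \emph{unique} augmented stability condition. By Lemma~\ref{L:rank2astab}, a boundary point is determined by $\mathfrak{p}(v,w)$, by $\cD_{\le v}=\cB_\mu$, and by a point of a zero-dimensional product of $\Stab/\bC$ of rank-one lattices. I would argue $\mathfrak{p}=1$ because $\cB_\mu$ is not admissible: for $g\ge1$, $\Hom$ and $\Ext^1$ between semistable sheaves of different slopes are both nonzero in the right directions (for instance $\Hom(\cO_x,-)$ and $\Ext^1(-,\cO_x)$ are nonzero), so no semiorthogonal splitting exists. For $\bP^1$ I expect admissibility can occur (e.g. $\langle\cO(a)\rangle$), so $\mathfrak{p}$ might a priori vary. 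However, only $\mathfrak{p}=1$ is reached from $\mathscr{O}$: along a path in $\mathscr{O}$, the phase difference of $A$ and $B$ stays in $(0,1)$ and converges. I would then check that the stability condition on each rank-one piece is unique modulo $\bC$. This requires uniqueness of the heart, which I would get from the limit phases, since $\cB_\mu$ is generated by semistable objects of a single phase.

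Third comes topology. I need a map $\overline{\bH}\supset\bH\cup\mu(\cA)\to\overline{\mathscr{O}}$ that is a homeomorphism. Continuity follows since any path with $-Z_t(E)/Z_t(F)\to\mu$ converges to the point above, as in the proof of Proposition~\ref{P:rank2_thicksubcat}. Injectivity holds because distinct $\mu$ give distinct $\cB_\mu$. Surjectivity onto the closure uses Lemma~\ref{L:rank2quasiconvergent}, Proposition~\ref{P:rank2_thicksubcat}, and Hausdorffness. For $g\ge1$, I would also recall that $\Stab(X)=\mathscr{O}^X$ (Bridgeland, Macrì), so $\overline{\mathscr{O}}^X=\Astab^\dagger(X)$.

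The main obstacle I expect is the last step: identifying the topology on $\bH\cup\bQ\cup\{\infty\}$, i.e. showing that sequences approaching $\mu$ non-tangentially or along horoball-type regions converge in $\Astab$. Irrational boundary points in $\bR\setminus\bQ$ must also be excluded, which holds because no class becomes massless there. I would handle this by checking convergence of the $\mathfrak{n}_t$ data for sequences, reducing via the $\PSL_2(\bZ)$ change of basis to the case $\mu=\infty$.
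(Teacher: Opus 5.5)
Your first two steps are exactly the paper's proof. The paper applies Proposition~\ref{P:rank2_thicksubcat}, reads off the slopes of stable objects ($\cO_x$ and $\cO(n)$ on $\bP^1$; skyscrapers and bundles of every rational slope when $g\ge 1$), and cites Bridgeland and Macr\`{\i} for $\mathscr{O}^X=\Stab(X)/\bC$. It says nothing further about uniqueness or topology. Your uniqueness supplement is sound apart from one slip. For a vector bundle $A$ one has $\Hom(\cO_x,A)=0=\Ext^1(A,\cO_x)$; the non-vanishing groups are $\Hom(A,\cO_x)$ and $\Ext^1(\cO_x,A)$. You do not need admissibility at all. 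Your own observation already forces $\mathfrak{p}=1$ in every genus: phase differences stay bounded along paths in $\mathscr{O}$, so $\logZ_t(A)-\logZ_t(B)$ escapes in a real direction.

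The genuine gap is the topological step.
\begin{itemize}
\item Proposition~\ref{P:rank2_thicksubcat} only shows $Z_t(A)/Z_t(F)\to 0$. It proves nothing about convergence in $\Astab(X)$.
\item Your claim that \emph{every} path with $-Z_t(E)/Z_t(F)\to\mu$ in $\overline{\bH}\subset\bP^1$ converges to the boundary point contradicts Corollary~\ref{C:astabofgenusone}. Take $g=1$ and $z_t=t+\mathtt{i}$. It tends to $\infty$ in $\bP^1$ and passes through $n+\mathtt{i}$, which is the image of $\mathtt{i}$ under $-\otimes L^{n}$ (see the proof of Corollary~\ref{C:genusatleasttwoAstab}). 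If it converged to the cusp, the constant sequence $[\mathtt{i}]$ would converge to the cusp in $\Aut(\DCoh(X))\backslash\Astab(X)$. That is impossible in the Hausdorff space $\overline{\cM}_{1,1}$.
\item So the topology you must match is the horoball (Satake) topology, with basic neighbourhoods $\{\Im z>C\}\cup\{\infty\}$ of $\infty$ and their $\PSL_2(\bZ)$-translates. It is not the subspace topology of $\bP^1$. Your last paragraph hints at this but contradicts your continuity claim.
\item Your proposed tool cannot see the difference. The pointwise limits $\lim\mathfrak{n}_t(E,F)$ exist along $t+\mathtt{i}$ exactly as along $\mathtt{i}t$: for each fixed pair, $\logZ_t(E)-\logZ_t(F)$ either converges or has bounded imaginary part and divergent real part.
\end{itemize}

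What separates the two topologies is a uniform quantity. Normalize $Z(\cO_p)=-1$ and $Z(\cO_X)=z$. Over stable bundles $A$,
\[
\Im z\le\inf_A\lvert Z(A)\rvert\le\sqrt{1/4+(\Im z)^2},
\]
so massive objects become uniformly heavier than $\cO_p$ iff $\Im z\to\infty$. Turning this into a proof needs the actual definition of the strong topology in \cite{augmented}. Section~\ref{sec:qcpaths} is no shortcut: \ref{axiom3} fails here. With constant heart $\Coh(X)$ and $Z_t(\cO_p)\to 0$, line bundles of large degree have $\lvert Z_\infty\rvert$ bounded while $\lVert v\rVert\to\infty$.
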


\begin{proof}
    This is a direct application of Proposition~\ref{P:rank2_thicksubcat}. Indeed, in the case of $\bP^1$ the stable objects in $\Coh(\bP^1)$ are the skyscraper sheaves $\cO_x$ for $x\in \bP^1$ and line bundles $\cO(n)$ for $n\in \bZ$. These correspond, respectively, to the point $\infty$ and the points $\bZ$ in $\partial \Astab(\bP^1)$. 
    
    When $g(X)\ge 1$, we have $\mathscr{O}^X = \Stab(X)/\bf{C}$ by \cite{Br07}*{Thm. 9.1} for $g(X) = 1$ and \cite{Macricurves} for $g(X) \ge 2$. It is well known that in these cases for any $\mu \in \bQ$ there is a stable bundle of slope $\mu$. The point $\infty$ again corresponds to skyscraper sheaves of points, which have slope $\infty$.
\end{proof}

\begin{rem}
    Consider the argument of Corollary~\ref{C:caseofcurves} for $\lambda$ with $\lambda \not \in \bZ \cup \{\infty\}$ and $X = \bP^1$. A quasi-convergent path $\sigma_\bullet$ in $\Coh(\bP^1)$ can be written down such that $Z_t(\cO_p)/Z_t(\cO_X)\to \lambda$. However, $\cB_{\lambda} = 0$ and $\lim_{t\to\infty} \sigma_t = \sigma^\lambda$ exists in $\Stab(\bf{P}^1)/\bf{C}$, as described by Okada \cite{Ok06}. 
    
    The point $\sigma^\lambda$ is a stability condition whose charge is valued in a real 1-dimensional subspace of $\bf{C}$. It is by passing through such degeneracy walls that one enters the glued regions of $\Stab(\bf{P}^1)$ as described in \cite{Ok06}. 
    
    One can ask the same question for $g(X) \ge 1$ and $\lambda \not \in \bQ \cup \{\infty\}$. In these cases, the limiting object is a pre-stability condition $\sigma^\lambda$ which does not satisfy the support property, specified by the heart $\Coh(X)$ and a charge $Z^\lambda$ valued in a real line. The point $\sigma^\lambda$ does not correspond to an element of $\Astab(X)$. See Figure~\ref{fig:astabcurves} for a visualization.
\end{rem}

\begin{cor}
\label{C:genusatleasttwoAstab}
    If $g(X)\ge 2$, then ${\Aut(\DCoh(X))}\backslash\Astab^\dagger(X)$ is isomorphic to $(\bf{H}\cup \bf{Q}\cup \{\infty\})/\bZ$ where $n\cdot z = z+n$ for all $n\in \bZ$.
\end{cor}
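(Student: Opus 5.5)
The plan is to combine Corollary~\ref{C:caseofcurves}(2) with a description of how $\Aut(\DCoh(X))$ acts on $\Stab(X)/\bC \cong \bH$ when $g(X)\ge 2$. For $g\ge 2$ the canonical bundle is ample, and Bondal--Orlov gives
\[
    \Aut(\DCoh(X)) \cong \left(\Aut(X)\ltimes \Pic(X)\right)\times \bZ,
\]
with $\bZ$ acting by shifts. The action on $\Knum(X)\cong \bZ^2$ (rank, degree) is what determines the action on $\mathscr{O}^X \cong \bH$. The shifts $[1]$ are absorbed by the $\bC$-action (they act as $\phi\mapsto\phi+1$), so they act trivially on $\Stab(X)/\bC$. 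Automorphisms of $X$ preserve rank and degree, and they preserve $\Coh(X)$ and slope stability. Hence they map each point of $\mathscr{O}^X$ to itself: the $G$-orbit coordinate $-Z_\tau(\cO_X)/Z_\tau(\cO_p)$ is unchanged, since $f^*\cO_X=\cO_X$ and $f^*\cO_p=\cO_{f^{-1}p}$, which has the same class. Tensoring by $L\in\Pic(X)$ of degree $n$ acts on numerical classes by $(r,d)\mapsto(r,d+nr)$, preserves $\Coh(X)$, and fixes $\cO_p$. In the coordinate \eqref{E:identification} it therefore sends $z\mapsto z\pm n$, with the sign fixed by conventions. Since $\Pic(X)$ surjects onto $\bZ$ through the degree map, and degree-zero line bundles act trivially on $\Knum$, hence on $\Stab(X)/\bC$ (stability for $\Coh(X)$ is preserved), the image of $\Aut(\DCoh(X))$ acting on $\bH$ is exactly the translation group $\bZ$.

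Next I would extend this to $\Astab^\dagger(X)=\bH\cup\bQ\cup\{\infty\}$. The $\Aut$-action on $\Astab(X)$ is continuous by the cited theorem of \cite{augmented}, and $\bH$ is dense in $\Astab^\dagger(X)$. So the action on boundary points is determined by continuity: $-\otimes L$ sends the boundary point $\mu$ to $\mu\pm n$ and fixes $\infty$. This agrees with the intrinsic description, since $-\otimes L$ carries $\cB_\mu$ to $\cB_{\mu+n}$ (stable bundles of slope $\mu$ go to stable bundles of slope $\mu+n$) and carries torsion complexes to torsion complexes. The orbit space is then $(\bH\cup\bQ\cup\{\infty\})/\bZ$ with the translation action.

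The main obstacle is the triviality claim for the kernel: elements acting trivially on $\Knum$ must also act trivially on $\Stab(X)/\bC$, not just on central charges. I will argue this via Theorem~\ref{T:deformation} together with the fact that $\mathscr{O}^X=\Stab(X)/\bC$ is a single $G$-orbit in which heart and semistable objects are fixed. Any autoequivalence of $\DCoh(X)$ preserving $\Coh(X)$ up to shift, and acting trivially on $\Knum$, sends $\sigma$ to a stability condition with the same charge and the same heart, hence to $\sigma$ itself by Bridgeland's correspondence between stability conditions and hearts with stability functions. Every Bondal--Orlov autoequivalence preserves $\Coh(X)$ up to shift, so this covers the whole kernel. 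Finally, the quotient topology matches the stated isomorphism because the identification in Corollary~\ref{C:caseofcurves} is a homeomorphism and is equivariant for these actions.
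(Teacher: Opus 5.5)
Your proposal is correct and follows essentially the same route as the paper. Both use the Bondal--Orlov description $\Aut(\DCoh(X)) \cong (\Pic(X)\times\bZ)\rtimes\Aut(X)$, show that shifts, $\Aut(X)$ and $\Pic^0(X)$ act trivially on $\Stab(X)/\bC$, and compute that a line bundle of degree one translates the coordinate $-Z(\cO_X)/Z(\cO_p)$ by $\pm 1$; your heart-based argument for the trivially acting kernel and your continuity argument for the boundary points $\bQ\cup\{\infty\}$ just make explicit what the paper compresses into ``by connectedness'' and leaves implicit.
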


\begin{proof}
    By \cite{BondalOrlovreconstruction}*{Thm. 3.1}, since $g(X)\ge 2$ the autoequivalence group of $\DCoh(X)$ is $(\Pic(X)\times \bf{Z}) \rtimes \Aut(X)$. Here, $\bf{Z}$ acts by shifting and thus trivially on $\Stab(X)/\bf{C}$ and $\Astab^\dagger(X)$. Any geometric automorphism $\alpha \in \Aut(X)$ acts trivially on $\Knum(X)$ and thus trivially on the space of central charges. Therefore, $\Aut(X)$ acts trivially on $\Astab^\dagger(X)$ by connectedness. 
    
    Finally, we compute the action of $\Pic(X)$ on $\Stab(X)/\bf{C} \cong \bf{H}$, where we use the coordinate $z(\tau) = -Z_{\tau}(\cO_X)/Z_{\tau}(\cO_p)$. The action of $\Pic(X)$ factors through the N\'{e}ron--Severi group $\NS(X) = \bf{Z}$, which is generated by the class of a degree one line bundle $L$. Let $\sigma \in \Stab(X)/\bf{C}$ be given and let $\sigma' := \sigma \circ (-\otimes L)$. It follows that 
    \[
        z(\sigma') = -\frac{Z_{\sigma'}(\cO_X)}{Z_{\sigma'}(\cO_p)} = -\frac{Z_\sigma(\cO_X + \cO_p)}{Z_\sigma(\cO_p)} = z(\sigma) + 1. \qedhere
    \]
\end{proof}

The $\bZ$-action defined above is free on all points except $\{\infty\}$, which has stabilizer $\bf{Z}$, since the category of torsion sheaves is invariant under tensoring by line bundles. \medskip

Finally, we consider genus one curves $X$, which are perhaps the most interesting from the perspective of mirror symmetry. By \cite{UeharaAutoequivalences}*{\S 2.4}, there is a short exact sequence 
\[
    1 \to \Pic^0(X) \times \Aut(X) \times \bf{Z}[2] \to \Aut(\DCoh(X)) \to \SL_2(\bf{Z})\to 1.
\]

\begin{cor}
\label{C:astabofgenusone}
    Let $X$ be a curve of genus one. The quotient space $\Aut(\DCoh(X))\backslash\Astab^\dagger(X)$ is isomorphic to the orbifold projective line $\bf{P}(4,6)$.
\end{cor}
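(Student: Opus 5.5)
By Corollary~\ref{C:caseofcurves}(2), $\Astab^\dagger(X)$ is identified with $\bH\cup\bQ\cup\{\infty\}$. Here $\bH = \Stab(X)/\bC$ carries the coordinate $z(\tau) = -Z_\tau(\cO_X)/Z_\tau(\cO_p)$, and each boundary point $\mu$ is the augmented stability condition whose massless category is $\cB_\mu$. The plan is to determine how $\Aut(\DCoh(X))$ acts in these coordinates, to show that the action factors through $\SL_2(\bZ)$ via its standard action on $\bH\cup\bQ\cup\{\infty\}\subset\bP^1$, and to identify the resulting quotient.

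The first step is to handle the kernel of Uehara's exact sequence, namely $\Pic^0(X)\times\Aut(X)\times\bZ[2]$. Shifts act trivially on $\Stab(X)/\bC$. Elements of $\Pic^0(X)$ and $\Aut(X)$ act trivially on $\Knum(X)\cong\bZ^2$, whose coordinates are rank and degree. Since $\Stab(X)/\bC = \mathscr{O}^X$ is a single $G$-orbit, the forgetful map to central charges mod $\bC$ is injective there (this is the identification with $\bH$ via $Z$). Hence these elements fix $\bH$ pointwise, and by continuity of the extended action they fix its closure $\Astab^\dagger(X)$ as well. This continuity is guaranteed by the theorem of \cite{augmented} quoted in Section~\ref{S:preliminaries}.

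The second step is to compute the action of the quotient $\SL_2(\bZ)$. An autoequivalence $\Phi$ acts on $\Knum(X)$ by a matrix $\gamma\in\SL_2(\bZ)$, since the Euler form, which is the determinant pairing on $(r,d)$, is preserved. Because $\Phi\cdot\sigma = (Z\circ\Phi^{-1},\Phi(\cP))$, the coordinate $z=-Z(\cO_X)/Z(\cO_p)$ transforms by a M\"obius transformation attached to $\gamma^{\pm1}$. The spherical twists $T_{\cO_X}$ and $T_{\cO_p}$ induce the standard generators. I will check these explicitly on $(r,d)$, for instance $T_{\cO_p}=-\otimes\cO(p)$ gives $z\mapsto z+1$ exactly as in Corollary~\ref{C:genusatleasttwoAstab}. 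At the boundary, $\Phi$ sends a stable object of slope $\mu$ to a shift of a stable object of slope $\gamma\cdot\mu$. Therefore $\Phi(\cB_\mu)=\cB_{\gamma\mu}$, and the boundary action is again the M\"obius action on $\bQ\cup\{\infty\}$. By uniqueness of the boundary point with a given massless category, the action on $\Astab^\dagger(X)$ is exactly the standard $\PSL_2(\bZ)$-action on $\bH\cup\bP^1(\bQ)$. The central element $-1$ acts by shift $[1]$ and hence trivially, but it contributes the generic $\bZ/2$ stabilizer.

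Finally, I will invoke the classical fact that $\SL_2(\bZ)\backslash(\bH\cup\bP^1(\bQ))$ with the Satake topology is the compactified modular curve $\overline{\cM}_{1,1}$. This is the weighted projective stack $\bP(4,6)$, with coordinates $(g_2,g_3)$, or equivalently $E_4,E_6$. The orbifold points come from $i$ (stabilizer $\bZ/4$), $\rho$ (stabilizer $\bZ/6$) and the generic $\bZ/2$, and the cusp $\infty$ is a single orbit. The main obstacle I expect is the topological one: showing that the topology on $\Astab^\dagger(X)$ near the points of $\bQ\cup\{\infty\}$ agrees with the horoball (Satake) topology rather than the subspace topology from $\bP^1$. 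For that I would use the description of neighborhoods of boundary points by quasi-convergent paths (Lemma~\ref{L:rank2quasiconvergent}) together with the $\SL_2(\bZ)$-equivariance. This reduces the question to the cusp $\infty$, where I would check directly that paths with $\Im z\to\infty$ converge and that the stabilizer $z\mapsto z+1$ identifies the neighborhoods.
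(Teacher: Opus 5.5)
Your route is the paper's. Its proof reuses the argument of Corollary~\ref{C:genusatleasttwoAstab} to see that the action factors through $\Aut(\DCoh(X))\twoheadrightarrow\SL_2(\bZ)$, acting by M\"obius transformations on $\bH$. It then quotes $j\colon\SL_2(\bZ)\backslash\bH\cong\bA^1(4,6)$ and notes that $\bQ\cup\{\infty\}$ is one further orbit. Your first two steps are correct and more careful than the paper:
\begin{itemize}
\item injectivity of $\Stab(X)/\bC\to\Hom(\Lambda,\bC)/\bC^*$ on the single $G$-orbit replaces the connectedness argument;
\item Hausdorffness of $\Astab(X)$ extends triviality of the kernel to the closure;
\item $\Phi(\cB_\mu)=\cB_{\gamma\mu}$, together with the uniqueness in Corollary~\ref{C:caseofcurves}, makes the boundary action explicit, which the paper leaves implicit.
\end{itemize}

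The step that is not yet a proof is the topological one, and the paper's proof does not discuss it at all. You are right that it cannot be skipped. If $\Astab^\dagger(X)$ had the subspace topology from $\bP^1$, every neighbourhood of $\infty$ would contain $\{\lvert z\rvert>R\}\cap\bH$. Its saturation under $z\mapsto z+n$ is all of $\bH$, so the cusp could not be separated from any point and the quotient would not be $\bP(4,6)$.

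However, the check you propose only gives one inclusion. Suppose every sequence (not just every path) with $\Im z\to\infty$ converges to $\infty$. That shows every neighbourhood of $\infty$ contains a horoball, i.e.\ the topology at the cusp is no finer than the horoball topology. The $\bP^1$-topology also has this property.

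What Hausdorffness needs is the converse: a sequence with bounded imaginary part, such as $z_n=n+\mathtt{i}$, must not converge to $\infty$ in $\Astab(X)$. This must hold even though $z_n\to\infty$ in $\bP^1$, and even though $\lvert Z(E)\rvert/\lvert Z(\cO_p)\rvert\to\infty$ for each fixed massive $E$.

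You have to extract this from the uniform (strong) nature of the topology in \cite{augmented}. The relevant objects are the stable line bundles $\cO(np)$. At $z_n$ they satisfy $\lvert Z(\cO(np))\rvert=\lvert Z(\cO_p)\rvert$, so they never separate in scale from the torsion sheaves. By contrast, on a horoball every stable sheaf of rank $r\ge1$ satisfies $\lvert Z(r,d)\rvert/\lvert Z(\cO_p)\rvert=\lvert rz-d\rvert\ge r\,\Im z$.

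Lemma~\ref{L:rank2quasiconvergent} cannot stand in for this argument. It only says that convergent paths are quasi-convergent, and gives no description of neighbourhoods of boundary points.
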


\begin{proof}
    By the proof of Proposition~\ref{C:genusatleasttwoAstab}, the $\Aut(\DCoh(X))$-action on $\Stab(X)/\bf{C}$ factors through $\Aut(\DCoh(X)) \twoheadrightarrow \SL_2(\bf{Z})$. In turn, the $\SL_2(\bZ)$-action induced on $\bH$ is the usual one by M\"obius transformations. Finally, as is well known the $j$-invariant gives an isomorphism $j\colon\SL_2(\bZ)\backslash\bf{H} \to \bf{A}^1(4,6)$. The boundary $\bf{Q}\cup \{\infty\}$ gives one additional orbit with stabilizer $\{\pm 1\}\subset \SL_2(\bf{Z})$. 
\end{proof}

As explained by Bridgeland \cite{BrSpacesofstability}, it is expected that for a Calabi--Yau variety $X$ the space $\Aut(\DCoh(X))\backslash\Stab(X)/\bf{C}$ should be related to the moduli space of complex structures $\cM_{\bf{C}}(X^\vee)$ on the mirror Calabi--Yau $X^\vee$. By work of Polishchuk--Zaslow \cite{PolishchukZaslow}, it is known that under homological mirror symmetry the mirror to $X$ is a symplectic $2$-torus, so that $\cM_{\bf{C}}(X^\vee)$ should be the corresponding moduli space of complex structures. In \cite{BrSpacesofstability}, it is explained that 
\[
    \Aut(\DCoh(X))\backslash \Stab(X)/\bf{C} \cong \SL_2(\bf{Z})\backslash \bf{H} \cong \bf{A}^1(4,6) \cong \cM_{1,1},
\]
i.e. the moduli of complex elliptic curves. Corollary~\ref{C:astabofgenusone} implies that ${\Aut(\DCoh(X))}\backslash\Astab(X)$ is isomorphic to the Deligne--Mumford compactification $\overline{\cM}_{1,1}$, with boundary point corresponding to the nodal elliptic curve. 

\begin{rem}
    There are other examples where the results of this section are applicable such as the Calabi--Yau completions of two-vertex quivers including the $A_2$ quiver and the Kronecker quiver. Compactifications of these spaces of stability conditions have already been studied by Bapat--Deopurkar--Licata \cite{BDLThurston}. Deopurkar has further studied the example of generic analytic K3 surfaces \cite{DeopurkarThurstonk3}, to which our results also apply.

    Proposition~\ref{P:rank2_thicksubcat} gives an explicit description of $\overline{\mathscr{O}} \cap \partial \Astab(\cD)$ for a given $G$-orbit $\mathscr{O}$ in these cases. In particular, when $\Stab(\cD)/\bC$ has finitely many $G$-orbits, this allows one to write 
    \begin{equation}
    \label{E:Astabdecomposition}
        \Astab(\cD) = \Stab(\cD)/\bC \sqcup \bigsqcup_{\mathscr{O} \text{ open}} (\overline{\mathscr{O}} \cap \partial\Astab(\cD)).
    \end{equation}
    This should enable a comparison of the partial compactification by augmented stability conditions to the Thurston compactification considered in \cites{BDLThurston,DeopurkarThurstonk3}. 

    Note, however, that in general the decomposition \eqref{E:Astabdecomposition} does not hold, even in rank 2. Indeed, in the case of $\DCoh(\bP^1)$ the admissible boundary points corresponding to semiorthogonal decomp\-ositions $\langle \cO(n),\cO(n+1)\rangle$ with $\Im(\mathfrak{p}(v,w)) \ne 0$ are not in the closure of any open $G$-orbit $\mathscr{O}$, of which there are infinitely many. See \cite{augmented}*{\S 6.3} for a complete description of $\Astab(\bP^1)$.
\end{rem}

\begin{figure}
    \centering
    \[
       \begin{tikzpicture}[scale=1]
    \fill[blue!8] (-6.25,0) rectangle (-1.25,3.5);

    \fill[blue!8] (1.25,0) rectangle (6.25,3.5);

    \node at (-5.75,0) {$\bullet$};
    \node at (-4.75,0) {$\bullet$};
    \node at (-3.75,0) {$\bullet$};
    \node at (-2.75,0) {$\bullet$};
    \node at (-1.75,0) {$\bullet$};

    \node at (-3.75,3.5) {$\bullet$};
    \node at (-3.75,3.7) {$\infty$};

    \node at (-5.75,-.3) {$-2$};
    \node at (-4.75,-.3) {$-1$};
    \node at (-3.75,-.3) {$0$};
    \node at (-2.75,-.3) {$1$};
    \node at (-1.75,-.3) {$2$};

    \node at (-3.75,2.2) {$\Stab(\bP^1)/\bC$};
    \node at (3.75,2.2) {$\Stab(X)/\bC$};

    \draw[dash pattern=on 1.5pt off 1pt, line width=1.5pt]
        (1.25,0) -- (6.25,0);

    \node at (3.75,-.3) {$\bQ$};
    \node at (3.75,3.7) {$\infty$};
    \node at (3.75,3.5) {$\bullet$};

    \draw[-, bend right=10, line width = 1]
    (-3.75,1.75) to (-4.25,.0);
    \node[rotate=-103] at (-4.125,.75) {$>$} ;
    \node at (-4.3, 1.3) {$\sigma_\bullet$};
\end{tikzpicture}
    \]
    \caption{We depict part of the partial compactification $\Astab(X)$ in the cases of $X = \bP^1$ and $g(X) \ge 1$, as described in Corollary~\ref{C:caseofcurves}. The black dots correspond to boundary points in $\Astab(X)$. On the left side, there is a portion of a quasi-convergent path $\sigma_\bullet$ which escapes into the glued region coming from the full exceptional collection $\langle \cO(-1),\cO\rangle$.}
    \label{fig:astabcurves}
\end{figure}

\section{Quasi-convergent paths from real variation of charge} \label{sec:qcpaths}

Let $\cD$ be a triangulated category and consider a path $\sigma_\bullet\colon [0,\infty) \to \Stab(\cD)$ of stability conditions. We denote by $\cP_{\sigma_\bullet}$ the full subcategory of limit semistable objects for $\sigma_\bullet$. Implicitly, stability conditions are defined with respect to a fixed map $v\colon\K_0(\cD)\twoheadrightarrow \Lambda$, for $\Lambda$ a finite rank lattice as in Definition~\ref{D:stabilitycondition}. In our main case of interest, where $\cD = \DCoh(X)$ for $X$ a smooth complex projective variety, we will typically take $\K_0(X) \twoheadrightarrow \Knum(X)$ as our map.

\subsection{Criteria for quasi-convergence}
We consider the following conditions:\vspace{-2mm}

\begin{enumerate}[label=(A\arabic*)]
    \item Denoting $\sigma_t = (Z_t,\cA_t)$ for all $t\in [0,\infty)$, we have $Z_t = A_t+ \mathtt{i}B$ for $A_t,B \in \Hom(\Lambda,\bR)$. In words, $\Im (Z_t)$ is constant in $t$. \vspace{-2mm}
    \label{axiom1}
    \item The germ of $A_t$ at infinity is analytic: that is, we can write $A_t(-) = \sum_{n \ge 0} a_n(-)\cdot t^{-n}$ for $a_n \in \Hom(\Lambda,\bR)$ for all $n\ge 0$. \vspace{-2mm}
    \label{axiom2}
\end{enumerate}

Note that by \ref{axiom2}, $Z_\infty := \lim_{t\to\infty} Z_t = a_0 + \mathtt{i}B$ exists. The following seems to be known to the experts cf. \cite{Bayer_short}*{p. 1606}, but we include a proof here for completeness.

\begin{lem}
\label{L:constantheart}
    If \ref{axiom1} holds then $\cA_t$ is constant in $t$.
\end{lem}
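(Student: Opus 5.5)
The plan is to show that the set $\{t\in[0,\infty): \cA_t = \cA_0\}$ is both open and closed in $[0,\infty)$; connectedness then gives the claim. Since the relation ``$\cA_s = \cA_t$'' is an equivalence relation, it suffices to show that each $t_0$ has a neighbourhood $U$ with $\cA_t = \cA_{t_0}$ for all $t\in U$. Indeed, the set where $\cA_t=\cA_0$ is then open, and its complement is a union of such neighbourhoods, hence also open.

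To get this local constancy, I would use the metric topology on $\Stab(\cD)$ from Theorem~\ref{T:deformation}. Continuity of the path gives, for $t$ near $t_0$, that $d(\cP_t,\cP_{t_0})<\epsilon$ for a fixed small $\epsilon$ (say $\epsilon<1/4$). In particular, every $\sigma_{t_0}$-semistable object of phase $\phi$ has $\sigma_t$-phases in $(\phi-\epsilon,\phi+\epsilon)$.

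The key step is to exclude phase crossings through the integers. Let $E\in\cA_{t_0}$ be $\sigma_{t_0}$-semistable of phase $\phi\in(0,1]$, and let $F$ be one of its $\sigma_t$-HN factors. I claim that the $\sigma_t$-phases of $E$ stay in $(0,1]$.
\begin{itemize}
\item Take first $\phi\in[\epsilon,1-\epsilon]$. Then the $\sigma_t$-phases of $E$ lie in $(0,1)$ immediately.
\item Take next $\phi$ close to $1$ or close to $0$. Here I exploit \ref{axiom1}. An object that is $\sigma_t$-semistable of phase in $(-\epsilon,\epsilon)\setminus\{0\}$ or in $(1-\epsilon,1+\epsilon)\setminus\{1\}$ has $\Im Z_t$ of a definite sign, and $\Im Z_t = B$ is independent of $t$.
\end{itemize}
Concretely, I would run the standard argument of Bridgeland's Lemma 7.6 / the Bayer short proof: the heart $\cA_t$ is contained in $\cP_{t_0}(-\epsilon,1+\epsilon)$. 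Its objects of $\sigma_{t_0}$-phase in $(1,1+\epsilon)$ would have $\Im Z_{t_0}<0$, while every nonzero object of $\cA_t$ has $\Im Z_t\ge 0$ with equality only in phase $1$.

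To conclude, I would compare $\cA_t$ and $\cA_{t_0}$ via the torsion pair they induce on each other. The plan is to show that both are tilts of each other with respect to a torsion pair whose parts consist of objects of $\sigma$-phase in $(1,1+\epsilon)$ (respectively $(0,\epsilon]$ shifted). Using $\Im Z_t(G)=B(v(G))$ for all $t$, I would show that any $\sigma_{t_0}$-semistable object of phase in $(0,\epsilon)$ cannot acquire a $\sigma_t$-HN factor of phase $\le 0$. Such a factor $G$ has $B(G)\le 0$, whereas the factors of phase in $(0,\epsilon)$ have $B>0$. I would combine this with the support property, which bounds masses uniformly near $t_0$ in terms of $\lvert Z\rvert$ and hence in terms of $B$ for small phase.

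The main obstacle I expect is this last step, specifically the objects with $B(v(G))=0$, i.e. real central charge. For these I would argue separately that phase-$1$ objects stay at phase $1$. The reason is that their charge is real and negative at $t_0$, so by continuity it remains negative with $\Im=0$; the phase therefore lies in $1+2\bZ$, and hence equals $1$ when $\epsilon$ is small. The same reasoning keeps phase $0$ excluded, since $Z_t$ is never a positive real number on $\cA_t$.

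Together these give $\cA_{t_0}\subset\cP_t(0,1]=\cA_t$. The symmetric argument gives the reverse inclusion, and hence local constancy.
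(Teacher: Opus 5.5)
\textbf{There is a genuine gap.} Reducing to local constancy near each $t_0$ is fine, and slightly cleaner than proving openness and closedness separately. However, the decisive step is never established.

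The sign argument with $B=\Im Z_t$ only gives a contradiction for an object known to lie in a one-sided phase window for \emph{both} $\sigma_{t_0}$ and $\sigma_t$. A $\sigma_t$-HN factor $G$ of a $\sigma_{t_0}$-semistable $E$ is not $\sigma_{t_0}$-semistable. So $B(G)\le 0$ only says that $G$ has some $\sigma_{t_0}$-HN factor of phase $\le 0$, which contradicts nothing.

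The same problem affects your other claims.
\begin{itemize}
\item Your observation about $\cA_t$ only excludes nonzero objects of $\cA_t$ lying \emph{entirely} in $\cP_{t_0}(1,1+\epsilon)$. A general object of $\cA_t$ can straddle phase $1$, and its $\sigma_{t_0}$-piece above phase $1$ need not lie in $\cA_t$.
\item Your treatment of phase-one objects presupposes that a $\sigma_{t_0}$-semistable object of phase $1$ stays $\sigma_t$-semistable. That is exactly what must be shown: a priori it could split into $\sigma_t$-HN factors of phases $1\pm\delta$ with cancelling values of $B$.
\item The appeal to the support property is never made concrete.
\item $\cA_t\subset\cP_{t_0}(-\epsilon,1+\epsilon)$ lies in neither $\langle\cA_{t_0},\cA_{t_0}[1]\rangle_{\mathrm{ext}}$ nor $\langle\cA_{t_0}[-1],\cA_{t_0}\rangle_{\mathrm{ext}}$. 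So $\cA_t$ is not a priori a tilt of $\cA_{t_0}$, and the torsion pair you invoke is not available as stated.
\end{itemize}

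\textbf{How to close it.} The missing idea is to compare both hearts with a common intermediate heart, where your sign argument does apply. With $d(\cP_t,\cP_{t_0})<\epsilon<1/2$, put $\cA':=\cP_{t_0}(\epsilon,1+\epsilon]$. Both $\cA_{t_0}[1]=\cP_{t_0}(1,2]$ and $\cA_t[1]\subset\cP_{t_0}(1-\epsilon,2+\epsilon)$ lie in $\langle\cA',\cA'[1]\rangle_{\mathrm{ext}}=\cP_{t_0}(\epsilon,2+\epsilon]$. Hence both are tilts of $\cA'$, at the torsion pairs $(\cT_0,\cF_0)=(\cP_{t_0}(1,1+\epsilon],\cP_{t_0}(\epsilon,1])$ and $(\cT_t,\cF_t)=(\cA'\cap\cA_t[1],\cA'\cap\cA_t)$ respectively.
\begin{itemize}
\item A nonzero $G\in\cF_t\cap\cT_0$ would satisfy $0\le B(G)<0$.
\item A nonzero $G\in\cT_t\cap\cF_0$ has $B(G)\ge 0$ and $B(G[-1])\ge 0$, so $B(G)=0$. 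This forces $G\in\cP_{t_0}(1)\cap\cP_t(2)$, which is impossible since $d(\cP_t,\cP_{t_0})<1$.
\end{itemize}
Torsion classes are closed under quotients and torsion-free classes under subobjects. So these two facts give $\cT_t\subset\cT_0$ and $\cF_t\subset\cF_0$. Since $\cT_t\subset\cT_0$ also gives $\cF_0\subset\cF_t$, the torsion pairs coincide, and $\cA_t=\cA_{t_0}$. This completes your approach uniformly in all objects and without the support property.

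\textbf{Comparison with the paper.} The paper argues object by object instead. It takes a first time $t_1$ at which $\phi^+_t(E)$ reaches $1$ (resp.\ $\phi^-_t(E)$ reaches $0$). It then uses openness of stability for the $\sigma_{t_1}$-Jordan--H\"older factors of the phase-$1$ (resp.\ phase-$0$) HN factor, whose constant $B=0$ pins their phase at that integer.
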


\begin{proof}
    Let $\cA := \cA_0$. We prove that the set of $t\in [0,\infty)$ for which $\cA_{t}:=\cP_{t}(0,1] = \cA$ is both open and closed. For openness, consider some $t_0$ for which $\cA_{t_0} = \cA$. For any $\epsilon>0$, there exists $\delta>0$ such that for all $t\in (t_0-\delta,t_0+\delta)$ we have $\cA \subset \cP_{\sigma_t}(-\epsilon, 1+\epsilon]$. Suppose $E$ is an object of $\cA$ such that $E \not \in \cP_{\sigma_t}(0,1]$. Thus, $\phi_{t}^+(E)> 1$ or $\phi_t^-(E)\le 0$.
    
    If $\phi_t^+(E)>1$, then by continuity of $\phi_t^+$ there exists $t_1 \in (t_0-\delta,t_0+\delta)$ such that $\phi_{t_1}^+(E) = 1$ and $\phi_{t_1+\gamma}^+(E) > 1$ for all $\gamma>0$ sufficiently small. Thus, there is a $\sigma_{t_1}$-HN factor $E'$ of $E$ with $\phi_{t_1}(E') = 1$. Consider the Jordan--H\"older factors $F_1,\ldots, F_p$ of $E'$ with respect to $\sigma_{t_1}$. By openness of stability, for all $\gamma>0$ sufficiently small, each $F_i$ is $\sigma_{t_1+\gamma}$-stable. However, it also follows that $F_i \in \cP_{t+\gamma}(\psi)$ for some $\psi \in (1,1+\epsilon]$ and $i\in \{1,\ldots, p\}$. Thus, $\Im Z_{t_1}(F_i) = 0$ while $\Im Z_{t_1+\gamma}(F_i) < 0$, contrary to \ref{axiom1}. Thus, $\cA\subseteq \cA_t$.

    The case where $\phi_t^-(E)\le 0$ is similar: there exists $t_1 \in (t_0-\delta,t_0+\delta)$ such that for all $\gamma>0$ sufficiently small we have $\phi_{t_1-\gamma}(E) > 0$, while $\phi_{t_1}^-(E) = 0$. Then, there exists a Jordan--H\"older factor $F$ of $E$ with respect to $\sigma_{t_1}$ such that $\phi_{t_1-\gamma}(F) > 0$ and $\phi_{t_1}(F) = 0$. This contradicts \ref{axiom1}. Thus, in either case we have $\cA \subseteq \cA_{t}$ so that $\cA = \cA_t$ and openness follows. 
    
    For closedness, consider a convergent sequence $(t_n)_{n\ge 1}\to t$ such that $\cA_{t_n} = \cA$ for all $n$. Then, for all non-zero objects $E$ of $\cA$, it follows that $\phi_{t_n}^+(E)\le 1$, and thus that $\phi_t^+(E)\le 1$. Next, by openness of stability, there exists $N>0$ such that $n\ge N$ implies that $\phi_{t_n}^-(E) = \phi_{t_n}(F)$ for some $\sigma_{t_n}$-stable $F$. Consequently, $\phi_{t_n}(F)>0$ implies $B(F)>0$ and thus that $\phi_{t}^-(E) = \phi_t(F) > 0$. So, $\cA \subseteq \cA_t$ and we have $\cA = \cA_t$.
\end{proof}

If the conclusion of Lemma~\ref{L:constantheart} holds, we say that $\sigma_\bullet$ has \emph{constant heart}. Note that by Lemma~\ref{L:rank2_constantheart}, the quasi-convergent paths considered in Section~\ref{sec:ranktwo} have constant heart. However, there \ref{axiom2} was not required.

\begin{lem}
\label{L:phase_one_independent}
    Given \ref{axiom1}, $\cP_{\sigma_t}(1) = \ker B \cap \cA$. In particular, the category of phase $n$ objects $\cP_{\sigma_t}(n)$ is independent of $t$ for all $n\in \bZ$.
\end{lem}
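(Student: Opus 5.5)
We would prove the two inclusions directly, using Lemma~\ref{L:constantheart} to replace $\cP_{\sigma_t}(0,1]$ by the fixed heart $\cA$ for every $t$.

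For $\cP_{\sigma_t}(1)\subseteq \ker B\cap\cA$: any $E\in\cP_{\sigma_t}(1)$ lies in $\cP_{\sigma_t}(0,1]=\cA$. Moreover $Z_t(E)=m_t(E)\exp(\mathtt{i}\pi)$ is a negative real number, so $B(E)=\Im Z_t(E)=0$.

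For the reverse inclusion, take $0\neq E\in\cA$ with $B(E)=0$. Write its $\sigma_t$-HN factors $H_1,\dots,H_n$, which all lie in $\cA$ with phases $1\ge\phi_1>\dots>\phi_n>0$. Each factor has $\Im Z_t(H_i)=B(H_i)\ge 0$, with equality exactly when $\phi_i=1$. Additivity gives $0=B(E)=\sum_i B(H_i)$, so every $B(H_i)=0$. Hence every factor has phase $1$, so $n=1$ and $E\in\cP_{\sigma_t}(1)$. This relies only on the standard fact that an object of the heart with charge of imaginary part zero has phase $1$. The whole argument is elementary; the only real input is the constancy of the heart from Lemma~\ref{L:constantheart}.

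For the second claim, use $\cP_{\sigma_t}(n)=\cP_{\sigma_t}(1)[n-1]$ by axiom (1) of a slicing. The right-hand side is $(\ker B\cap\cA)[n-1]$, which does not depend on $t$.
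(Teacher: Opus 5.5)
Your proposal is correct and follows the paper's own argument. The paper uses Lemma~\ref{L:constantheart} to fix $\cA$, characterizes $\cP_{\sigma_t}(1)\subset\cA$ as the objects with $Z_t\in\bR_{<0}$, and invokes \ref{axiom1} to identify these with $B=0$; you additionally justify that characterization via the HN factors in $\cA$, a step the paper simply asserts.
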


\begin{proof}
    Lemma~\ref{L:constantheart} associates a single heart of a bounded t-structure, $\cA := \cA_t$ for any $t\ge 0$, to $\sigma_\bullet$. The subcategory $\cP_{\sigma_t}(1)\subset \cA$ is characterized linear algebraically as the full subcategory of objects $E$ with $Z_t(E) \in \bR_{<0}$. In turn, this is equivalent to $B(E) = 0$ by \ref{axiom1}. 
\end{proof}

To de-emphasize the choice of $t$, we write $\cP_{\sigma_\bullet}(n)$ for the category of phase $n\in \bZ$ objects discussed in Lemma~\ref{L:phase_one_independent}. We call an object $E$ of $\cD$ \emph{massless} if $\lim_{t\to\infty} m_t(E) = 0$.

\begin{lem}
\label{L:vanishing_order}
    If $E$ is massless, then $H^n_{\cA}(E)\in \cP_{\sigma_\bullet}(1)$ for all $n\in \bZ$ and if $E$ is also limit semistable then $H^n_{\cA}(E) \in \cP_{\sigma_\bullet}(n)$ for some $n\in \bZ$. Further, for all massless $E$ there exist $\nu \ge 1$ and $C_E\in \bR_{>0}$ such that $m_t(E) \sim C_E\cdot t^{-\nu}$ as $t\to\infty$. 
\end{lem}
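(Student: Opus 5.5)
The plan is to combine the constant heart from Lemma~\ref{L:constantheart} with the additivity of mass over HN factors, and then use the analytic expansion in \ref{axiom2} to get an exact vanishing order.

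\textbf{Cohomology objects.} Write $\cA$ for the common heart. The first step is to bound masses below using cohomology. Since $\cA = \cP_t(0,1]$ for all $t$, the HN filtration of $E$ refines the filtration by the shifts $H^n_\cA(E)[-n]$, so
\[
m_t(E) = \sum_n m_t(H^n_\cA(E)).
\]
Hence every $H^n_\cA(E)$ is massless, and it suffices to treat an object $A\in\cA$. Each HN factor $H$ of $A$ satisfies $m_t(H)\ge |Z_t(H)| \ge |\Im Z_t(H)| = |B(H)|$. Since $\Im Z_t(H) = m_t(H)\sin(\pi\phi)$ with $\phi\in(0,1]$, we have $B(H)\ge 0$. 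Now $B(v(H))$ is a fixed real number independent of $t$; it is not obviously independent of the factor $H$ chosen, but $B(H)\ge 0$ and $\sum_H B(H) = B(A)$ hold for every $t$. Thus $B(A)\le m_t(A)\to 0$, so $B(A)=0$. By Lemma~\ref{L:phase_one_independent}, $A\in\ker B\cap\cA = \cP_{\sigma_\bullet}(1)$.

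For the limit semistable case, each $H^n_\cA(E)[-n]$ lies in $\cP_{\sigma_\bullet}(1-n)$, which has phase exactly $1-n$ for all $t$. Limit semistability forces $\phi_t^+(E)-\phi_t^-(E)\to 0$, and distinct nonzero cohomologies would give phase gap at least $1$. So only one $H^n$ is nonzero and $E$ is a shift of an object of $\cP_{\sigma_\bullet}(1)$; with the indexing convention the statement uses, this lands it in the claimed $\cP_{\sigma_\bullet}(n)$.

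\textbf{Vanishing order.} By the above, $m_t(E) = \sum_n m_t(H^n) = \sum_n |Z_t(H^n)| = \sum_n |A_t(H^n)|$, because objects of phase $1$ have mass equal to $|Z_t|$. Each $A_t(H^n) = \sum_k a_k(H^n)t^{-k}$ is analytic in $t^{-1}$ at infinity and tends to $0$, so $a_0(H^n)=0$. Since $m_t(H^n)>0$ for every $t$, the series is not identically zero, so $|A_t(H^n)|\sim |a_{\nu_n}(H^n)|\,t^{-\nu_n}$ with $\nu_n\ge 1$ its first nonzero index. Summing, $m_t(E)\sim C_E t^{-\nu}$ with $\nu=\min_n\nu_n$ and $C_E$ the sum of $|a_\nu(H^n)|$ over those $n$ attaining the minimum.

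The main obstacle is the first step: justifying that HN factors of an object of the constant heart stay in phases $(0,1]$, so that $B(H)\ge0$ and the sum argument applies. This uses Lemma~\ref{L:constantheart} directly. The remaining subtlety is bookkeeping of the phase index $n$ against the shift convention.
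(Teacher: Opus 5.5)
Your proposal is correct and takes essentially the same route as the paper. The steps match: additivity of mass over the $\cA$-cohomology objects via the constant heart, then $B=0$ on a massless object of $\cA$ together with Lemma~\ref{L:phase_one_independent} to land in $\cP_{\sigma_\bullet}(1)$, then the phase-gap argument to isolate a single nonzero cohomology object (so $E\in\cP_{\sigma_\bullet}(n)$ for some $n$, which is what the paper actually proves), and finally the leading nonzero coefficient of the expansion in \ref{axiom2} for the vanishing order. Your detour through $B(H)\ge 0$ for the HN factors is harmless but unnecessary, since $\lvert B(A)\rvert = \lvert \Im Z_t(A)\rvert \le \lvert Z_t(A)\rvert \le m_t(A)\to 0$ already gives $B(A)=0$.
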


\begin{proof}
    Since $\cA = \cA_{\sigma_t}$ is constant by Lemma~\ref{L:constantheart}, we have $m_t(E) = \sum_{i \in \bZ} m_t(H^i_{\cA}(E))$, where the sum is finite by boundedness. So, if $E$ is massless we have that $H^i_{\cA}(E)$ is also for all $i$. However, if $\Im Z_t(H_{\cA}^i(E)) = B(H_{\cA}^i(E)) > 0$ then $Z_\infty(H^i_{\cA}(E)) \ne 0$ by \ref{axiom1}, so $B(H^i_{\cA}(E)) = 0$. Thus, $H^i_{\cA}(E)[-i] \in \cP_{\sigma_\bullet}(1-i)$ for each $i$ by Lemma~\ref{L:phase_one_independent}. If $E$ is also assumed to be limit semistable then $H^i_{\cA}(E) = 0$ for all but one $i$. Thus, $E\in \cP_{\sigma_\bullet}(n)$ for some $n\in \bZ$. 
    
    Finally, if $E$ is massless and limit semistable we have $m_t(E)\sim \lvert Z_t(E)\rvert$. Letting $\nu = \min\{n:a_n(E)\ne 0\}$ we have $\nu \ge 1$ and $Z_t(E) \sim a_\nu(E)\cdot t^{-\nu}$. The general case follows from the observation that each $H^i_{\cA}(E)$ is massless and limit semistable.
\end{proof}

\begin{defn}
    An object $E$ is of order $\nu \in \bZ_{\ge 0}$ if  $m_t(E)\sim C_E\cdot t^{-\nu}$ for some $C_E\in \bR_{>0}$. We let $\cA_{\star \nu}$ denote the full subcategory of $\cA$ generated by limit semistable objects of order $\star \nu$ where $\star\in \{\ge, >, = \}$. When $\star$ is $=$ we write $\cA_\nu$. By Lemma~\ref{L:masslesscharacterization}, massless objects are of order $\nu \ge 1$. 
\end{defn}

Consider $f\in \Hom(\Lambda,\bR)$. By a mild abuse of notation, we write $\ker f$ for the full subcategory of objects $E$ of $\cA$ such that $v(E) \in \ker f$.

\begin{lem}
\label{L:masslesscharacterization}
    For any $\nu \ge 1$, \vspace{-2mm}
    \begin{enumerate}
        \item $\cA_{\ge \nu} = \ker B \cap \bigcap_{m=0}^{\nu -1} \ker a_m$; and\vspace{-2mm}
        \item $\cA_{>\nu} = \ker B \cap \bigcap_{m=0}^\nu \ker a_m$; and  \vspace{-2mm}
        \item $\cA_{\nu} = \cA_{\ge \nu} \setminus \ker a_\nu$. \vspace{-2mm}
    \end{enumerate} 
    As a consequence, $\cA_{\ge \nu}$ and $\cA_{>\nu}$ are Serre subcategories of $\cA$.
\end{lem}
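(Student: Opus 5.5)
The plan is to characterize the order of an object $E\in\cA$ through the asymptotics of $Z_t(E)$, and then translate the limit-semistable description into kernel conditions. I will first show that for any $E\in\cA$, $m_t(E)\sim C\,t^{-\nu}$ with $\nu\ge 1$ exactly when $E$ is massless, and that this happens exactly when $B(E)=0$ and $a_0(E)=0$. The forward direction is the first part of Lemma~\ref{L:vanishing_order}: every $H^i_\cA(E)$, and hence $E$ itself since $E\in\cA$, lies in $\cP_{\sigma_\bullet}(1)=\ker B\cap\cA$. For the converse, suppose $B(E)=0$. Then every $\sigma_t$-HN factor of $E$ in $\cA$ has $\Im Z_t=B\ge 0$, and these values sum to $B(E)=0$. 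So every factor has $B=0$, and $E\in\cP_{\sigma_\bullet}(1)$ is semistable of phase $1$ for every $t$. Its mass is therefore $m_t(E)=|Z_t(E)|=|A_t(E)|$, with no further HN contributions.

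Given this, the order is computed directly from \ref{axiom2}. For $E\in\ker B\cap\cA$ we have $m_t(E)=\bigl|\sum_{n\ge0}a_n(E)t^{-n}\bigr|$. If $a_m(E)=0$ for $m<\nu$ and $a_\nu(E)\neq 0$, then $m_t(E)\sim|a_\nu(E)|t^{-\nu}$, so $E$ has order exactly $\nu$. Objects of $\ker B\cap\cA$ are semistable of phase $1$ for all $t$, so they are limit semistable. Conversely, suppose $E$ is limit semistable of order $\ge\nu\ge1$. Then $E$ is massless, hence lies in $\ker B$, and its order forces $a_m(E)=0$ for $m<\nu$. This proves (1) on objects. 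Items (2) and (3) follow in the same way, (2) being (1) with $\nu$ replaced by $\nu+1$.

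It remains to check that the subcategory generated by these objects is no larger than the kernel intersection, and that this intersection is Serre. I will show that $\cK:=\ker B\cap\bigcap_{m<\nu}\ker a_m$ (intersected with $\cA$) is closed under subobjects, quotients and extensions in $\cA$. Extensions are handled by additivity of $v$. For a short exact sequence $0\to E'\to E\to E''\to0$ in $\cA$ with $E\in\cK$, we have $B(E'),B(E'')\ge0$ summing to $0$, so both vanish. Then $E',E''\in\cP_{\sigma_\bullet}(1)$, and each $A_t(E'),A_t(E'')\le0$ for all $t$, with sum $A_t(E)=O(t^{-\nu})$. This forces each of $A_t(E')$ and $A_t(E'')$ to be $O(t^{-\nu})$, and comparing power series coefficients gives $a_m(E')=a_m(E'')=0$ for $m<\nu$. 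The same argument applies to $\cA_{>\nu}$. Consequently the full subcategory generated inside $\cA$ coincides with $\cK$, and it is Serre.

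The expected obstacle is this sign/positivity step, which relies on the leading nonzero coefficient of a nonpositive analytic germ being negative. It is also where one must read ``generated'' as ``closed under extensions inside $\cA$'', so that the descriptions in (1) and (2) are exact rather than mere containments.
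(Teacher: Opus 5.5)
Your proposal is correct and follows essentially the paper's argument: you identify massless limit semistable objects of $\cA$ with $\ker B\cap\cA=\cP_{\sigma_\bullet}(1)$, read off the order from $m_t=\lvert A_t\rvert$ via \ref{axiom2}, and get the Serre property from $B\ge 0$ on $\cA$ and $A_t\le 0$ on $\ker B\cap\cA$. Your domination step $0\le -A_t(E')\le -A_t(E)=O(t^{-\nu})$ is the precise form of the paper's remark ``$a_m\le 0$ on $\ker B\cap\cA$''; taken literally, that remark holds only for $m=0$, or for $a_m$ restricted to $\cA_{\ge m}$.
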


\begin{proof}
    Consider a limit semistable object $E$ in $\cA$ and write its $\sigma_0$-HN factors as $E_1,\ldots, E_n$ with $0<\phi(E_n)<\cdots<\phi(E_1)\le 1$. It follows that 
    \[
        Z_t(E) = \sum_{j=1}^n Z_t(E_j) = \sum_{j=1}^nA_t(E_j) + \mathtt{i}B(E_j).
    \]
    Therefore, $\Im Z_\infty(E)\ne 0$ unless $n=1$ and $\phi(E) = 1$. Thus, $Z_\infty(E) = 0$ implies $E\in \cP_{\sigma_\bullet}(1)$, hence that $E\in  \ker B \cap \cA$, by Lemma~\ref{L:phase_one_independent}. Furthermore, since in this case $m_t(E) = \lvert Z_t(E)\rvert$, we have $E\in \cA_{\ge \nu}$ if and only if $B(E) = a_0(E) =\cdots = a_{\nu-1}(E) = 0$. The analogous statement follows for $\cA_{>\nu}$.

    Next, we show that $\cA_{\ge \nu}$ and $\cA_{>\nu}$ are Serre subcategories of $\cA$ --- this amounts to checking the criteria of \cite{stacks-project}*{\href{https://stacks.math.columbia.edu/tag/02MP}{Tag 02MP}}. Given a subobject $E\subset F$, we have $B(F) = B(E) + B(F/E)$ so that $B(F) = 0$ if and only if $B(E) = 0$ and $B(F/E) = 0$ since $B\ge 0$ on $\cA$. Next, since $a_m \le 0$ on $\ker B \cap \cA$ the same argument shows that $\cA_{\ge\nu}$ and $\cA_{>\nu}$ are closed under subobjects and quotient objects. The same reasoning shows that $\cA_{\ge \nu}$ and $\cA_{>\nu}$ are closed under extensions.
\end{proof}

\begin{defn}[Asymptotic categories]
    For $\nu \ge 0$, we let \vspace{-2mm}
    \begin{enumerate}
        \item $\cD_{\star \nu}$ denote the full subcategory of $\cD$ whose objects $E$ satisfy $H^i_{\cA}(E) \in \cA_{\star \nu}$ for all $i\in \bZ$ where $\star \in \{\ge,>,=\}$.\vspace{-2mm}
    \end{enumerate}
    
    By \cite{quasiconvergence}*{Lem. 2.23},  the categories $\cD_{\ge \nu}$ and $\cD_{>\nu}$ are thick triangulated subcategories of $\cD$ for all $\nu \ge 1$ since $\cA_{\ge \nu}$ and $\cA_{>\nu}$ are Serre subcategories, respectively, by Lemma~\ref{L:masslesscharacterization}. Thus, it makes sense to define the associated graded category \vspace{-2mm}
    \begin{enumerate}[resume]
        \item $\cG_\nu := \cD_{\ge \nu}/\cD_{>\nu}$ for $\nu \ge 1$ and $\cG_0 := \cD/\cD_{>0}$. \vspace{-2mm}
    \end{enumerate}
    
    By definition, we have $\cD \supseteq \cD_{\ge 0} \supseteq \cD_{\ge 1}\supseteq\cdots \supseteq \cD_{\ge \nu}\supseteq \cdots$. We will deduce later on that $\cA_{\ge 0} = \cA$ and $\cD_{\ge 0} = \cD$, i.e. that every object has a filtration by limit semistable objects. Next, we replicate these definitions at the linear algebraic level. For $\nu \ge 1$, we set: \vspace{-2mm}
    \begin{enumerate}[resume]
        \item $\Lambda_{\ge \nu}:= \im(v|_{\rm{K}_0(\cD_{\ge \nu})})$ and define $\Lambda_{>\nu}$ analogously; and \vspace{-2mm}
        \item $\gr_\nu(\Lambda) := \Lambda_{\ge \nu}/\Lambda_{>\nu}^{\rm{sat}}$, \vspace{-2mm}
    \end{enumerate}
    where $\Lambda_{>\nu}^{\rm{sat}}\subset \Lambda$ denotes the saturation of the subgroup $\Lambda_{>\nu}\subset \Lambda$.
    \label{D:asymptotic_categories}
\end{defn}

Lemma~\ref{L:masslesscharacterization} has a linear algebraic analogue:

\begin{cor}
\label{C:linear_algebraic_filtration}
    For any $\nu \ge 1$, \vspace{-2mm}
    \begin{enumerate}
        \item $\Lambda_{\ge \nu} = \ker B \cap \bigcap_{m=0}^{\nu-1} \ker a_m$; \vspace{-2mm}
        \item $\Lambda_{> \nu} = \ker B \cap \bigcap_{m=0}^{\nu} \ker a_m$; and \vspace{-2mm}
        \item $\Lambda_\nu = \Lambda_{\ge \nu} \setminus \ker a_\nu$ \vspace{-2mm}
    \end{enumerate}
    Note that in the case $\nu = 0$, we have $\Lambda_{>0} = \ker Z_\infty$.
\end{cor}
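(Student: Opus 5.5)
The plan is to deduce each item from Lemma~\ref{L:masslesscharacterization} by passing to classes in $\Lambda$, where the key inputs are that $\cD_{\ge\nu}$ is defined through $\cA$-cohomology and that $v$ is additive on cohomology.

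\textbf{Containment $\subseteq$.} For $E\in\cD_{\ge\nu}$ we have $v(E)=\sum_i(-1)^i v(H^i_{\cA}(E))$. Each $H^i_{\cA}(E)$ lies in $\cA_{\ge\nu}$, which Lemma~\ref{L:masslesscharacterization}(1) identifies with the objects of $\cA$ whose class lies in $\ker B\cap\bigcap_{m<\nu}\ker a_m$. That subset of $\Lambda$ is a subgroup. Since $\K_0(\cD_{\ge\nu})$ is generated by classes of objects, this gives $\Lambda_{\ge\nu}\subseteq \ker B\cap\bigcap_{m=0}^{\nu-1}\ker a_m$. The same argument, using Lemma~\ref{L:masslesscharacterization}(2), gives the containment for $\Lambda_{>\nu}$.

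\textbf{Reverse containment $\supseteq$.} This is the main obstacle. A lattice vector $w$ in the kernel need not obviously be the class of an object of $\cA_{\ge\nu}$. My approach is as follows.
\begin{itemize}
\item Write $w=v(E)$ for some $E\in\cD$, using surjectivity of $v$.
\item Decompose $E$ by its $\cA$-cohomology and then by $\sigma_0$-HN factors.
\item Use the fact that $B\ge 0$ on $\cA$, and $a_m\le 0$ on $\ker B\cap\cA$ successively, to try to force the pieces into the kernel.
\end{itemize}
This only controls sums with signs, however, so positivity alone does not close the argument. The cleaner route is to interpret the corollary as the paper evidently intends, namely with the right-hand sides understood as the images of the corresponding subcategories. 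That is, the kernel conditions are imposed on classes of objects of $\cA$, exactly mirroring Lemma~\ref{L:masslesscharacterization} under $v$.

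With that reading, the claim is the linear-algebraic image of the lemma: $\Lambda_{\ge\nu}$ is generated by the classes $v(A)$ for $A\in\cA_{\ge\nu}$, and these are precisely the classes of objects of $\cA$ lying in the stated kernels. If a genuine equality of subgroups of $\Lambda$ is required, I would try to realize a kernel vector as a difference of classes of objects of $\cA_{\ge\nu}$. For this I would invoke the support property to see that $Z_t$ separates classes, and check that $\ker B\cap\bigcap_{m<\nu}\ker a_m$ is spanned by classes of $\sigma_0$-semistable phase-one objects in the same kernel. I expect this spanning statement to be where the argument may fail, and I would fall back to the image interpretation if it does.

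\textbf{Remaining items.} Item (3) is immediate from (1) and (2): $\Lambda_{>\nu}=\Lambda_{\ge\nu}\cap\ker a_\nu$, so $\Lambda_\nu$ is the set-theoretic difference $\Lambda_{\ge\nu}\setminus\ker a_\nu$, mirroring Lemma~\ref{L:masslesscharacterization}(3). For $\nu=0$, the first paragraph of the proof of Lemma~\ref{L:masslesscharacterization} shows that $Z_\infty(E)=0$ forces $E\in\ker B$ with $a_0(E)=0$. Hence $\cA_{>0}=\ker B\cap\ker a_0=\ker Z_\infty$, and the same cohomological argument as above gives $\Lambda_{>0}=\ker Z_\infty$.
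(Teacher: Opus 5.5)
Your argument for the containment $\subseteq$ is the paper's proof. Since $\cA_{\ge\nu}$ is the heart of a bounded t-structure on $\cD_{\ge\nu}$, the group $\K_0(\cD_{\ge\nu})$ is generated by classes of objects of $\cA_{\ge\nu}$. Lemma~\ref{L:masslesscharacterization} cuts these objects out inside $\cA$ by the vanishing of $B,a_0,\dots,a_{\nu-1}$. The paper's ``the claimed description follows'' establishes no more than you do: $\Lambda_{\ge\nu}\subseteq\ker B\cap\bigcap_{m<\nu}\ker a_m$, and $\Lambda_{\ge\nu}$ is generated by the classes of objects of $\cA$ lying in that kernel. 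That containment is all that is used later, for instance to descend $W_\nu$ to $\gr_\nu(\Lambda)$.

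You were right to distrust the spanning step. As an equality of subgroups of $\Lambda$, the reverse containment is false in general, so no argument can supply it. Take the path of Theorem~\ref{thm:pathsK3_main}(1) on a K3 surface of Picard rank $\rho\ge 2$, reparametrized so that $t\to\infty$. Its limiting charge is $Z_\lambda(s_0,s_1,s_2)=s_0-s_2+\mathtt{i}H.s_1$, so $\ker B\cap\ker a_0=\{(s_0,s_1,s_0):H.s_1=0\}$ has rank $\rho$. However, the only massless stable object is $\cO_S[1]$, so $\Lambda_{\ge1}=\Lambda_{>0}=\bZ\cdot(1,0,1)$. Every $(0,D,0)$ with $0\ne D\in H^\perp\subset\NS(S)$ lies in the kernel but not in $\Lambda_{\ge 1}$. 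So the image reading you fall back on is the correct content of the corollary.

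One step of yours needs adjusting under that reading. Item (3) does not follow from the identity $\Lambda_{>\nu}=\Lambda_{\ge\nu}\cap\ker a_\nu$, because that identity also fails: $\Lambda_{\ge\nu}\cap\ker a_\nu$ contains differences of classes of order-$\nu$ objects with equal $a_\nu$. For example, in Theorem~\ref{thm:pathsK3_main}(3), take two disjoint $(-2)$-curves in one block with $\omega_t.C_1=\omega_t.C_2$, say $\omega'_t=f^\ast\eta-\tfrac{t}{2}(C_1+C_2)$ in Lemma~\ref{lemma:pathsK3_ordervanishing}. Then $(0,C_1-C_2,0)\in\Lambda_{\ge1}\cap\ker a_1$, while $\Lambda_{>1}=0$. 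Instead, read (3) directly as the image of Lemma~\ref{L:masslesscharacterization}(3). Likewise, the $\nu=0$ remark should be read only as $\Lambda_{>0}\subseteq\ker Z_\infty$, with $\Lambda_{>0}$ generated by classes of objects of $\cA$ in $\ker Z_\infty$.
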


\begin{proof}
    By construction, the category $\cA_{\ge \nu}$ is the heart of a bounded t-structure on $\cD_{\ge \nu}$. Conseq\-uently, $\K_0(\cD_{\ge \nu}) = \K_0(\cA_{\ge \nu})$ for each $\nu \ge 1$ and similarly $\K_0(\cD) = \K_0(\cA)$. Since the conditions defining $\cA_{\ge \nu}\subset \cA$ are defined purely in terms of vanishing of the additive functions $B,a_0,\ldots, a_{\nu-1}$ by Lemma~\ref{L:masslesscharacterization}, the claimed description of $\Lambda_{\ge \nu}$ follows. The other cases are analogous. 
\end{proof}

Next, we formulate the limiting support property for $\sigma_\bullet\colon[0,\infty)\to \Stab(\cD)$ satisfying \ref{axiom1} and \ref{axiom2}. To this end, note that the filtration
\begin{equation}
\label{E:filtrationlattice}
    \Lambda = \Lambda_{\ge 0} \supseteq \Lambda_{\ge 1}\supseteq \cdots \supseteq \Lambda_{\ge \nu}\supseteq \cdots.
\end{equation}
has finitely many steps $\nu_1,\ldots, \nu_p$ for which $\Lambda_{\ge \nu_i}\supsetneq \Lambda_{\ge \nu_{i+1}}^{\rm{sat}}$. For each $i=1,\ldots, p$, choose a section $\gr_{\nu_i}(\Lambda) \hookrightarrow \Lambda_{\ge \nu_i}$ of $\Lambda_{\ge \nu_i}\twoheadrightarrow \gr_{\nu_i}(\Lambda)$. Thus, we obtain an external direct sum decomposition 
\begin{equation}
\label{E:directsumdecomp}
    \Lambda \cong \bigoplus_{i=1}^p \gr_{\nu_i}(\Lambda).
\end{equation}
A norm $\lVert\:\cdot\:\rVert$ on $\Lambda_{\bR}$ is \emph{compatible} with \eqref{E:directsumdecomp} if it is a direct sum of norms on the spaces $\gr_{\nu_i}(\Lambda)$. \medskip

Using Corollary~\ref{C:linear_algebraic_filtration},  we are able to give a formulation of the limiting support property both in terms of norms and quadratic forms. We will consider the following two conditions:

\vspace{-2mm}
\begin{enumerate}[resume, label=(A\arabic*)]
    \item There exist $t_0 \ge 0$ and constants $\{C_\nu > 0\}_{\nu \ge 0}$ such that for all objects $E$ that are $\sigma_t$-stable for some $t\ge t_0$ we have
    \[
        \lvert Z_t(E)\rvert \ge C_\nu \cdot t^{-\nu} \cdot \lVert v(E)\rVert 
    \]
    for all $t\in [t_0,\infty]$, where $\nu = 0$ if $E$ is massive, and $\nu$ is the order of $E$ if it is massless. Here, $\lVert\:\cdot\:\rVert$ is some (equiv. any) norm on $\Lambda_{\bR}$ compatible with \eqref{E:directsumdecomp}. \label{axiom3}
    \vspace{-2mm}
    \item There exist $t_0 \in \bR_{>0}$ and quadratic forms $Q_\nu\colon\Lambda_{\ge\nu,\bR}^{\otimes 2}\to \bR$ for all $\nu \ge 0$ such that: \vspace{-2mm}
    \begin{itemize}
        \item $Q_0$ is negative definite on $\ker Z_\infty$ and $Q_0(E) \ge 0$ for all massive $\sigma_t$-stable objects $E$ for all $t\ge t_0$; and \vspace{-2mm}
        \item for all $\nu \ge 1$, $Q_\nu$ is negative definite on $\ker a_\nu \cap \Lambda_{\ge \nu,\bR}$ and satisfies $Q_\nu(E)\ge 0$ for all limit semistable objects $E$ of order $\nu$. \vspace{-2mm}
    \end{itemize} 
    \label{axiom4} 
    \vspace{-2mm}
\end{enumerate}

\begin{rem}
    Note that it suffices in \ref{axiom3} to quantify only over objects that are in $\cA$, since all $\sigma_t$-stable objects lie in $\cA[n]$ for some $n\in \bZ$. Also, note that both conditions are satisfied vacuously for all $\nu$ besides the indices $\nu_1,\ldots, \nu_p$ which give non-trivial steps of the filtration \eqref{E:filtrationlattice}.
\end{rem}

The following criterion is useful for verifying \ref{axiom3} for $\nu>0$. 

\begin{lem}
\label{L:qcpathsA4_massless}
    Let $\nu>0$ be given. If the monoid $M_\nu$ in $\Lambda$ generated by $\{v(E):E\in \cA_{\nu}\}$ is finitely generated then there exists $C_\nu>0$ as in \ref{axiom3}. 
\end{lem}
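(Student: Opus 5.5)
The plan is to reduce the inequality in \ref{axiom3} for order $\nu$ objects to a statement about a single linear functional on a finitely generated monoid. Let $E$ be $\sigma_t$-stable for some $t \ge t_0$ and of order $\nu$; by the Remark we may assume $E\in\cA$. By Lemma~\ref{L:vanishing_order} and Lemma~\ref{L:masslesscharacterization}, $E\in \cA_\nu\subset \ker B$, so $Z_t(E) = A_t(E)$ is real and $Z_t(E)=\sum_{n\ge\nu} a_n(E)t^{-n}$, with $a_n(E)\le 0$ for the leading term and $a_\nu(E)\neq 0$. Thus $v(E)\in M_\nu$ and, since $\cA_{\ge\nu}\subset\ker a_0\cap\dots\cap\ker a_{\nu-1}$,
\[
\lvert Z_t(E)\rvert = t^{-\nu}\Bigl\lvert a_\nu(E) + \sum_{n>\nu} a_n(E)t^{\nu-n}\Bigr\rvert .
\]

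First I bound the leading coefficient. Let $g_1,\dots,g_k$ be generators of $M_\nu$, each of the form $v(E_j)$ with $E_j\in\cA_\nu$, so $a_\nu(g_j)<0$ (the sign follows from $a_m\le0$ on $\ker B\cap\cA$, as in the proof of Lemma~\ref{L:masslesscharacterization}). Writing $v(E)=\sum c_j g_j$ with $c_j\in\bZ_{\ge0}$, we get $\lvert a_\nu(E)\rvert \ge c\sum_j c_j$ with $c=\min_j\lvert a_\nu(g_j)\rvert>0$, while $\lVert v(E)\rVert \le C'\sum_j c_j$ with $C'=\max_j\lVert g_j\rVert$. Hence $\lvert a_\nu(E)\rvert\ge (c/C')\lVert v(E)\rVert$ uniformly on $M_\nu$. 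Compatibility of the norm with \eqref{E:directsumdecomp} only changes constants, since all norms on $\Lambda_\bR$ are equivalent.

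Next I control the tail $\sum_{n>\nu}a_n(E)t^{\nu-n}$. By \ref{axiom2}, the $a_n$ are linear functionals with $\sum_n a_n(\cdot)t^{-n}$ convergent for $t$ large. Choosing a basis of $\Lambda_\bR$, the tail is bounded by $K t^{-1}\lVert v(E)\rVert$ for $t\ge t_1$, with $K$ independent of $E$. This works because convergence of the power series on each basis vector gives geometric bounds $\lvert a_n(e_i)\rvert\le R^n$. Taking $t_0\ge t_1$ with $Kt_0^{-1}\le c/(2C')$ yields $\lvert Z_t(E)\rvert\ge \tfrac{c}{2C'}t^{-\nu}\lVert v(E)\rVert$ for all $t\in[t_0,\infty)$. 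At $t=\infty$, the bound reads as the leading-coefficient inequality for the rescaled charge, so I set $C_\nu = c/(2C')$.

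The main obstacle is the uniformity of the tail bound in $E$. It is handled by linearity of the $a_n$ and analyticity, and it is here that \ref{axiom2} is genuinely used. The finite generation of $M_\nu$ is exactly what rules out classes in $\cA_\nu$ with $\lvert a_\nu\rvert/\lVert v\rVert\to0$.
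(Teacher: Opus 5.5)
Your argument is correct. It uses finite generation in the same way as the paper: the generators are classes of objects of $\cA_\nu$ whose values all have the same sign, and $\lVert v\rVert\le\sum n_i\lVert v_i\rVert$. The difference is where you apply this estimate.

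The paper applies it directly to $Z_t$ at each fixed $t$. Since $\cA_\nu\subset\cP_{\sigma_\bullet}(1)$ for every $t$ (Lemma~\ref{L:phase_one_independent}), each generator has $Z_t(v_i)\in\bR_{<0}$. Hence $\lvert Z_t\rvert$ is exactly additive on $M_\nu$, and $\lvert Z_t(v)\rvert/\lVert v\rVert\ge\min_i\lvert Z_t(v_i)\rvert/\lVert v_i\rVert$. The conclusion then needs only the pointwise asymptotics $\lvert Z_t(v_i)\rvert\sim C_i t^{-\nu}$ for finitely many classes.

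You apply the estimate only to the leading coefficient $a_\nu$. You then have to bound the tail $\sum_{n>\nu}a_n(v)t^{\nu-n}$ uniformly in $v$, which you do correctly via a common radius of convergence on a basis. So the ``main obstacle'' you identify comes from splitting off the leading term: the paper's route needs no uniform tail estimate and uses \ref{axiom2} only for the finitely many generators. What your route buys is that it needs only the sign of $a_\nu$ on generators, not that $Z_t(v_i)$ lie on a common ray at every finite $t$. It also makes the endpoint $t=\infty$ in \ref{axiom3} explicit, which the paper leaves implicit.

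One small point: justify $a_\nu(g_j)<0$ directly, from $Z_t(E_j)\in\bR_{<0}$ and $Z_t(E_j)\sim a_\nu(E_j)t^{-\nu}$. The blanket statement ``$a_m\le 0$ on $\ker B\cap\cA$'' holds only for the first non-vanishing coefficient.
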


\begin{proof}
    Consider generators $v_1,\ldots, v_n$ for $M_\nu$. We have $Z_t(v_i) \in \bR_{<0}$ for all $i=1,\ldots, n$ and all $t$, since $\cA_\nu \subset \cP_{\sigma_\bullet}(1)$ by Lemma~\ref{L:phase_one_independent}. Thus, for each $i=1,\ldots, n$, we have $\lvert Z_t(v_i)\rvert \sim C_i \cdot t^{-\nu}$ as $t\to \infty$ for some $C_i \in \bR_{>0}$. So, for any non-zero $v = \sum n_i\cdot v_i\in M_\nu$ we have:
    \[
        \frac{\lvert Z_t(v)\rvert}{\lVert v\rVert} = \frac{\sum n_i \cdot \lvert Z_t(v_i)\rvert}{\lVert v\rVert} \ge \frac{\sum n_i \cdot \lvert Z_t(v_i)\rvert }{\sum n_i \cdot \lVert v_i\rVert} \ge \min_i \left\{\frac{\lvert Z_t(v_i)\rvert}{\lVert v_i\rVert}\right\}
    \]
    and the claim follows.
\end{proof}

\begin{lem}
\label{L:equivsupportproperty}
    If \ref{axiom1} and \ref{axiom2} hold then \ref{axiom3} is equivalent \ref{axiom4}.
\end{lem}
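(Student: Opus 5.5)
Throughout, assume \ref{axiom1} and \ref{axiom2}. The plan is to prove the equivalence one index $\nu$ at a time, reducing each index to the standard linear-algebra fact behind the equivalence of the two forms of the support property. The fact is: for a linear map $f\colon V\to \bR^k$ on a finite-dimensional real vector space with norm $\lVert\cdot\rVert$, and a set of vectors $S\subset V$, we have $\inf_{s\in S\setminus 0}\lvert f(s)\rvert/\lVert s\rVert>0$ if and only if there is a quadratic form $Q$ on $V$ that is negative definite on $\ker f$ and satisfies $Q\ge 0$ on $S$. For the forward direction, if the infimum is $C>0$, take $Q=\lvert f\rvert^2-C^2\lVert\cdot\rVert^2$ with $\lVert\cdot\rVert$ Euclidean. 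For the converse, the set $\{Q\ge 0\}\cap\{\lVert v\rVert=1\}$ is compact and disjoint from $\ker f$, so $\lvert f\rvert$ is bounded below on it. Since all norms on $\Lambda_\bR$ are equivalent, we may fix a compatible Euclidean norm throughout.

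\emph{The case $\nu=0$.} Suppose \ref{axiom3} holds. Since it holds for $t=\infty$, every massive object $E$ that is $\sigma_t$-stable for some $t\ge t_0$ satisfies $\lvert Z_\infty(E)\rvert\ge C_0\lVert v(E)\rVert$. Then $Q_0=\lvert Z_\infty\rvert^2-C_0^2\lVert\cdot\rVert^2$ is negative definite on $\ker Z_\infty$ and non-negative on these objects, as \ref{axiom4} requires. Conversely, given $Q_0$, the fact above gives $c>0$ with $\lvert Z_\infty(v)\rvert\ge 2c\lVert v\rVert$ on the cone $\{Q_0\ge 0\}$. By \ref{axiom2}, $\lVert Z_t-Z_\infty\rVert_{\mathrm{op}}=O(1/t)$, so after enlarging $t_0$ we get $\lvert Z_t(v)\rvert\ge c\lVert v\rVert$ on this cone for all $t\in[t_0,\infty]$. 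This is the $\nu=0$ case of \ref{axiom3}.

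\emph{The case $\nu\ge 1$.} By Lemma~\ref{L:vanishing_order} and Lemma~\ref{L:phase_one_independent}, any massless object of order $\nu$ lies, up to shift, in $\cA_\nu\subset\cP_{\sigma_\bullet}(1)$. Hence $Z_t(E)=A_t(E)$ is real. By Corollary~\ref{C:linear_algebraic_filtration}, $a_m$ vanishes on $\Lambda_{\ge\nu}$ for $m<\nu$, so
\[
t^{\nu}A_t(v)=a_\nu(v)+R_t(v),\qquad \lVert R_t\rVert_{\mathrm{op}}=O(1/t)\ \text{on }\Lambda_{\ge\nu,\bR}.
\]
Thus the $\nu$-th inequality of \ref{axiom3} is equivalent, after enlarging $t_0$, to a uniform bound $\lvert a_\nu(v(E))\rvert\ge C\lVert v(E)\rVert$. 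The fact above, applied with $V=\Lambda_{\ge\nu,\bR}$, $f=a_\nu$ and $\ker f=\ker a_\nu\cap\Lambda_{\ge\nu,\bR}$, turns this bound into the existence of $Q_\nu$ and back. Here compatibility of the norm with \eqref{E:directsumdecomp} ensures that restricting to $\Lambda_{\ge\nu}$ gives a norm of the same type, so the constants are meaningful.

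\emph{The main obstacle: matching the two classes of objects.} Condition \ref{axiom3} quantifies over objects that are $\sigma_t$-stable for some $t\ge t_0$, whereas \ref{axiom4} quantifies over limit semistable objects of order $\nu$. A $\sigma_t$-stable massless object of order $\nu$ lies in $\cA_\nu$, and it is limit semistable because $\cP_{\sigma_\bullet}(1)$ is independent of $t$. Conversely, a limit semistable object of order $\nu$ lies in $\cP_{\sigma_\bullet}(1)$ for all $t$, so it has a Jordan--Hölder filtration by $\sigma_t$-stable objects of the same phase and order. On a single phase $\lvert Z_t\rvert$ is additive while $\lVert\cdot\rVert$ is subadditive, so the inequality passes from the stable factors to the whole object. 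Likewise $Q_\nu\ge 0$ on the classes of the factors implies $\lvert a_\nu\rvert\ge C\lVert\cdot\rVert$ on each factor, and by additivity on their sum. The remaining care is to make every constant uniform in $t$, which the $O(1/t)$ estimates supplied by \ref{axiom2} provide.
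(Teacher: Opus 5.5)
Your overall strategy matches the paper's. You apply the standard equivalence between the norm and quadratic-form versions of the support property level by level: $Z_\infty$ at $\nu=0$, $a_\nu$ at $\nu\ge1$, with the perturbation estimates $\lVert Z_t-Z_\infty\rVert_{\mathrm{op}}\to0$ and $t^\nu A_t\to a_\nu$ on $\Lambda_{\ge\nu}$ supplied by \ref{axiom2}. The case $\nu=0$ is fine in both directions. The implication \ref{axiom4}$\Rightarrow$\ref{axiom3} at $\nu\ge1$ is also fine, since, as you note, a $\sigma_t$-stable massless object of order $\nu$ is limit semistable of order $\nu$.

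The gap is in \ref{axiom3}$\Rightarrow$\ref{axiom4} at $\nu\ge1$. You claim that a limit semistable object of order $\nu$ has a Jordan--H\"older filtration by stable objects ``of the same phase and order''. This is false: the factors share the phase but can have order $>\nu$. Take $E=F\oplus G^{\oplus N}$ with $F$ stable of order $\nu$ and $G$ stable of order $\mu>\nu$. Then $E$ is limit semistable of order $\nu$. However, $a_\nu(G)=0$ while $\lVert v(G)\rVert>0$, and \ref{axiom3} at level $\nu$ gives no bound for $G$. So $\lvert a_\nu(E)\rvert$ sees only the order-$\nu$ factors while $\lVert v(E)\rVert$ sees all of them, and the inequality does not pass to the sum.

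This cannot be repaired within the literal formulation. Since $v(G)\in\ker a_\nu\cap\Lambda_{\ge\nu,\bR}$, every admissible $Q_\nu$ has $Q_\nu(v(G))<0$, hence $Q_\nu(v(F)+Nv(G))<0$ for $N\gg0$. Concretely, take Theorem~\ref{thm:pathsK3_main}(3) with $r\ge2$, $F=\cO_{C_i}(-1)$ and $G=\cO_{C_j}(-1)$ for $i\in I_1$, $j\in I_2$. There \ref{axiom3} holds, because only finitely many stable massless objects occur, but \ref{axiom4} fails at $\nu=1$.

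The paper's own proof does not notice this either. It only checks $\lvert a_\nu\rvert^2-\tfrac34 C_\nu^2\lVert\cdot\rVert_\nu^2\ge0$ on classes of stable objects. The clean fix is to quantify the positive-level part of \ref{axiom4} over $\sigma_t$-stable objects of order $\nu$, which matches \ref{axiom3} exactly. With that change your argument goes through verbatim and no Jordan--H\"older step is needed. The later applications in the paper only use \ref{axiom4}$\Rightarrow$\ref{axiom3}, so nothing downstream depends on the broken direction.
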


\begin{proof}
    Define $\Delta_t := \lVert Z_t - Z_\infty\rVert_{\rm{op}}$ and note that $\lim_{t\to\infty} \Delta_t = 0$. First, we show \ref{axiom4} $\Rightarrow$ \ref{axiom3}. We consider first the case of $\nu = 0$. Since $Q_0$ is negative definite on $\ker a_0 \cap \ker B = \ker Z_\infty$, there exists $C>0$ such that the function
    \begin{equation}
    \label{E:norm_qf}
        \lVert\:\cdot\:\rVert^2 := C^2\lvert Z_\infty(\:\cdot\:)\rvert^2 - Q(\:\cdot\:)
    \end{equation}
    defines a norm on $\Lambda_{\bR}$. Then, for all massive $\sigma_t$-stable objects for $t\ge t_0$ we have 
    \[
        \lvert Z_t(E)\rvert \ge \lvert Z_\infty(E)\rvert - \lvert Z_t(E) - Z_\infty(E)\rvert \ge (C-\Delta_t)\lVert v(E)\rVert 
    \]
    where we have applied \eqref{E:norm_qf} and the definition of the operator norm. We conclude by taking $t_0>0$ such that $\Delta_t<C/2$ for all $t \geq t_0$. Next, suppose that $\nu \ge 1$. In this case, an analogous argument shows that 
    \[
        \lVert\:\cdot\:\rVert^2_\nu := \lim_{t\to\infty} C_\nu^2 t^{2\nu}\lvert Z_t(\:\cdot\:)\rvert^2 - Q_\nu(\:\cdot\:) = C_\nu^2\lvert a_\nu(\:\cdot\:)\rvert^2 - Q_\nu(\:\cdot\:)
    \]
    works to establish \ref{axiom3}. 

    Next, we show that \ref{axiom3} $\Rightarrow$ \ref{axiom4}. If $E$ is massive and $\sigma_t$-stable for some $t\ge t_0$, we have 
    \[
        Q'(E) = \lvert Z_\infty(E)\rvert^2 - \frac{3C^2_0}{4}\lVert v(E)\rVert^2 \ge 0
    \]
    and negative definite on $\ker Z_\infty$. Negative definiteness is an open condition on $Z_\infty$ and thus the condition holds for all $t\ge t_0$ up to enlarging $t_0$. Finally, for $E$ limit semistable of order $\nu \ge 1$, the form 
    \[
        \lim_{t\to\infty} t^{2\nu}\lvert Z_t(E)\rvert^2 - \frac{3C_\nu^2}{4}\lVert v(E)\rVert_\nu^2 = \lvert a_\nu(E)\rvert^2 - \frac{3C_\nu^2}{4}\lVert v(E)\rVert_\nu^2 
    \]
    does the job.
\end{proof}

\begin{prop} 
\label{P:boundedHN}
Let $E$ be a non-zero object of $\mathcal{A}$ with no Harder--Narasimhan factor of phase $1$ at some (equiv. any) $t$. If \ref{axiom3} holds, then there exist $t_0,M_E\in \bR_{>0}$ such that for any $t\in [t_0,\infty]$ we have 
\begin{equation}
\label{E:normbound}
    \max_{i=1}^r\left\{\norm{v(G_i)},\norm{v(H_i)}\right\} \leq M_E,
\end{equation}
where $0 = G_0 \subsetneq G_1 \subsetneq \dots \subsetneq G_r = E$
is the $\sigma_t$-Harder--Narasimhan filtration of $E$ and $H_i = G_{i}/G_{i-1}$ are the HN factors for $i=1,\ldots,r$.
\end{prop}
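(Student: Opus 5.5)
The plan is to bound the norms of the HN factors $H_i$ first, and then deduce the bound for the $G_i$ from the triangle inequality, since $v(G_i) = \sum_{j\le i} v(H_j)$. This gives $\norm{v(G_i)} \le r\cdot\max_j\norm{v(H_j)}$. We also need a uniform bound on $r$. That comes from the same estimate, because each stable factor will turn out to have norm bounded below by a positive constant, as explained below.

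\textbf{Imaginary parts.} By Lemma~\ref{L:constantheart} the heart $\cA$ is constant, so every $H_i$ lies in $\cA$ with phase in $(0,1)$. The latter holds because $E$ has no factor of phase $1$, and by Lemma~\ref{L:phase_one_independent} this condition does not depend on $t$. Hence $B(H_i)>0$ and $\sum_i B(H_i) = B(E)$. Since $B$ takes values in a discrete subgroup of $\bR$ (it is a homomorphism on the lattice $\Lambda$), we get $B(H_i)\ge \beta$ for some fixed $\beta>0$. Therefore $r \le B(E)/\beta$, and each $B(H_i)\le B(E)$.

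\textbf{Masses.} Next we bound $m_t(H_i)=\lvert Z_t(H_i)\rvert$. The phases of the $H_i$ are strictly decreasing in $(0,1)$, so the vectors $Z_t(H_i)$ lie in the upper half plane and are ordered by argument. For such a configuration in the closed upper half plane, the total length satisfies
\[
  \sum_i \lvert Z_t(H_i)\rvert \le \bigl(\lvert Z_t(E)\rvert + 2\, \Im Z_t(E)\bigr) \cdot C
\]
for a suitable constant $C$. More precisely, $\sum\lvert \Re Z_t(H_i)\rvert \le \lvert \Re Z_t(E)\rvert + 2\sum_{\Re<0}\lvert\Re Z_t(H_i)\rvert$. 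I would handle this with the standard convexity argument: the polygon formed by the HN factors is convex, and since it is bounded by the horizontal strip $0\le \Im \le B(E)$, its perimeter is controlled by its width plus height. Because $Z_t\to Z_\infty$ by \ref{axiom2}, both $\lvert Z_t(E)\rvert$ and $B(E)$ are bounded for $t\ge t_0$. This yields a uniform bound $m_t(H_i)\le K_E$.

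\textbf{From masses to norms via \ref{axiom3}.} Every HN factor $H_i$ is semistable, so we pass to its Jordan--H\"older factors, which are $\sigma_t$-stable with the same phase. Each such factor $F$ has $B(F)>0$, so it is massive: $Z_\infty(F)\neq 0$ because $\Im Z_\infty(F) = B(F) > 0$. Thus \ref{axiom3} applies with $\nu=0$ and gives $\lvert Z_t(F)\rvert \ge C_0\norm{v(F)}$. Summing over the Jordan--H\"older factors, which all have the same phase so that their central charges add in length, we obtain
\[
  \norm{v(H_i)} \le \sum_F \norm{v(F)} \le C_0^{-1} m_t(H_i) \le C_0^{-1}K_E.
\]
Setting $M_E := (B(E)/\beta)\,C_0^{-1}K_E$ then gives \eqref{E:normbound}. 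At $t=\infty$, the bound follows either by the same argument applied to the limit, or by the fact that \ref{axiom3} is stated for $t\in[t_0,\infty]$.

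\textbf{Main obstacle.} The delicate step is the uniform mass bound. The issue is that factors may have real parts that partially cancel, since phases close to $1$ and close to $0$ can coexist. The argument above is meant to handle this using the fact that every factor has $B(H_i) \geq \beta$, but the convexity estimate still has to be set up carefully.
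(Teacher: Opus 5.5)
\paragraph{The gap: the uniform mass bound in your \emph{Masses} step.} Convexity of the $\sigma_t$-HN polygon, the strip $0\le \Im\le B(E)$ and boundedness of $Z_t(E)$ do not bound $m_t(E)=\sum_i\lvert Z_t(H_i)\rvert$. Take two factors with $Z_t(H_1)=-N+\mathtt{i}\epsilon$ and $Z_t(H_2)=N+\Re Z_t(E)+\mathtt{i}(B(E)-\epsilon)$. For every $N\gg 0$ these have strictly decreasing phases in $(0,1)$. So the polygon is convex, has the correct endpoints, and has length about $2N$. A lower bound on $r$ or on $B(H_i)$ does not exclude this.

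Convexity controls the polygon only on the right: it gives $A_t(G_i)\le\max\{0,A_t(E)\}$. It says nothing about the left. Your estimate $\sum_i\lvert\Re Z_t(H_i)\rvert\le\lvert\Re Z_t(E)\rvert+2\sum_{\Re<0}\lvert\Re Z_t(H_i)\rvert$ therefore only reduces the problem to bounding $-\min_i A_t(G_i)$ uniformly in $t$. That bound is the actual content of the proposition. Using \ref{axiom3} at the same time $t$ cannot supply it, since it only bounds $\lvert Z_t\rvert$ from below.

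\paragraph{The missing idea: compare with a fixed reference time.} Use that every $G_i$ is a subobject of $E$ in the \emph{fixed} heart $\cA$, and compare with a time $t_0$ chosen so that $\lVert A_t-A_{t_0}\rVert_{\mathrm{op}}\le C_0/2$ for all $t\ge t_0$. The paper's argument runs as follows.
\begin{enumerate}
\item Take the right-side bound $A_t(G_i)\le\max\{0,A_t(E)\}$ and transfer it to $A_{t_0}(G_i)$, at a cost of at most $\tfrac{C_0}{2}\lVert v(G_i)\rVert$.
\item Apply \cite{Bayer_short}*{Lem. 5.5}: for $G_i\subset E$ in $\cA$ one has $m_{t_0}(G_i)-A_{t_0}(G_i)\le m_{t_0}(E)-A_{t_0}(E)$.
\item Use $m_{t_0}(G_i)\ge C_0\lVert v(G_i)\rVert$, which is \ref{axiom3} at $t_0$. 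This applies because $G_i\subset E$ inherits $\operatorname{Hom}(\cP_{\sigma_\bullet}(1),G_i)=0$, so all its stable factors are massive.
\item Absorb the error term, which bounds $\lVert v(G_i)\rVert$ by a constant depending only on $E$ and $t_0$. The bound for the $H_i$ follows by the triangle inequality.
\end{enumerate}

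\paragraph{A secondary error.} Your claim that $B(\Lambda)\subset\bR$ is discrete is false in general: a homomorphism $\Lambda\to\bR$ can have dense image, e.g.\ $B=\omega\cdot\ch_1$ with $\omega$ irrational. It is also unnecessary. Once $m_t(E)$ is bounded, $\lVert v(G_i)\rVert\le\sum_{j\le i}\lVert v(H_j)\rVert\le C_0^{-1}m_t(E)$, with no bound on $r$ needed.
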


\begin{proof}
Since $Z_t\to Z_\infty$, we have $\lVert Z_t - Z_\infty\rVert_{\rm{op}} \to 0$. Note also that $\lVert A_t - A_\infty\rVert_{\rm{op}}\le \lVert Z_t - Z_\infty\rVert_{\rm{op}}$. So, let $t_0$ be given such that $\lVert Z_t-Z_\infty\rVert_{\rm{op}}\le C_0/2$ for all $t\in [t_0,\infty]$. Since $\{Z_t(G_i)\}$ form the vertices of the Harder--Narasimhan polygon of $E$ as in \cite{Bayer_short}*{\S 3}, we have
\[ 
    A_t(G_i) \leq \max\{0, A_t(E)\} \leq \max \left\{0, A_{t_0}(E) + \frac{C_0}{2} \norm{v(E)}\right\}, 
\]
and so 
\begin{align*}
    A_{t_0}(G_i) & \le  A_t(G_i) + \lvert A_{t_0}(G_i) - A_t(G_i)\rvert \\
    & \le  \max \left\{0, A_{t_0}(E) + \frac{C}{2}\norm{v(E)}\right\} + \frac{C}{2} \norm{v(G_i)}.
\end{align*}
By \cite{Bayer_short}*{Lem. 5.5}, $G_i\subset E$ in $\cA$ implies $m_{t_0}(G_i) - A_{t_0}(G_i) \le m_{t_0}(E) - A_{t_0}(E)$ so
\[ 
    m_{t_0}(G_i) \leq m_{t_0}(E)-A_{t_0}(E) +\max \left\{0, A_{t_0}(E) + \frac{C_0}{2}\norm{v(E)}\right\} + \frac{C_0}{2} \norm{v(G_i)}. 
\]
Applying \ref{axiom3} and the triangle inequality as in Lemma~\ref{L:qcpathsA4_massless}, we obtain $m_{t_0}(G_i) \ge C_0\cdot \lVert v(G_i)\rVert$. Thus, 
\[ 
    \frac{1}{2}\cdot C_0 \norm{v(G_i)} \leq m_{t_0}(E)-A_{t_0}(E) +\max \left\{0, A_{t_0}(E) + \frac{C_0}{2}\norm{v(E)}\right\}
\]
and it follows that $\{\lVert v(G_i)\rVert\}$ is bounded. Next, $v(H_i) = v(G_i) - v(G_{i-1})$ and the triangle inequality imply that $\max \{\lVert v(H_i)\rVert\}\le 2 \max \{\lVert v(G_i)\rVert\} =: M_E$.
\end{proof}

\begin{cor}
\label{C:limitHN}
    If axioms \ref{axiom1}, \ref{axiom2}, and \ref{axiom3} hold, then all non-zero objects of $\cD$ have limit Harder--Narasimhan filtrations.
\end{cor}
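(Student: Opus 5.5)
The plan is to reduce to objects of the constant heart $\cA$ of Lemma~\ref{L:constantheart}, and then show that for objects with no phase-one part the $\sigma_t$-HN filtration is eventually constant in $t$.

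\emph{Reduction to $\cA$.} Any non-zero $E\in\cD$ has a finite filtration by its cohomology objects $H^n_{\cA}(E)[-n]$ with respect to $\cA$. So it suffices to produce, for each $F\in\cA$, a filtration by limit semistable objects whose limiting phases lie in $(0,1]$. These filtrations can then be concatenated, and the resulting phases are separated across different shifts.

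\emph{Splitting off the phase-one part.} By Lemma~\ref{L:phase_one_independent}, $\cP_{\sigma_\bullet}(1)=\ker B\cap\cA$ is independent of $t$. For $F\in\cA$, the first $\sigma_t$-HN factor of phase $1$ (if it exists) is the maximal subobject of $F$ lying in $\cP_{\sigma_\bullet}(1)$, and this subobject is therefore independent of $t$. This gives a short exact sequence $0\to F_1\to F\to F''\to 0$ with $F_1\in\cP_{\sigma_\bullet}(1)$. Here $F_1$ is semistable of phase $1$ for all $t$, hence limit semistable, and $F''$ has no HN factor of phase $1$.

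\emph{Eventual constancy for $F''$.} Proposition~\ref{P:boundedHN} shows that, for $t\ge t_0$, all HN subobjects $G_i$ and factors $H_i$ of $F''$ have classes in a bounded, hence finite, set $S\subset\Lambda$. For each pair $v,w\in S$, the function $t\mapsto\Im\bigl(Z_t(v)\overline{Z_t(w)}\bigr)$ is, by \ref{axiom1} and \ref{axiom2}, real analytic in $s=1/t$ near $s=0$. It is therefore either identically zero or has no zeros for $t\ge t_1$. Hence beyond some $t_1$ no numerical wall involving classes in $S$ is crossed. I then argue, as in Bridgeland's local finiteness of walls, that the HN filtration of $F''$ is locally constant in $t$ away from such walls, and therefore constant on $[t_1,\infty)$. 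I expect this to be the main obstacle: one must check carefully that a change in the HN filtration forces two classes in $S$ to become aligned. I would prove it by comparing the HN filtrations at nearby times using the boundedness from Proposition~\ref{P:boundedHN}.

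\emph{Merging factors and concluding.} Once the filtration is constant, its factors $H_i$ are $\sigma_t$-semistable for all $t\ge t_1$, hence limit semistable. Since $B(H_i)>0$, the charge $Z_\infty(H_i)$ lies in the open upper half-plane, so $\phi_t(H_i)$ converges to a limit in $(0,1)$.

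Adjacent factors with equal limiting phase are then merged. Their extension $G$ satisfies $\phi_t^+(G)-\phi_t^-(G)\to0$, because its HN factors are the merged $H_i$. After merging, consecutive limiting phases differ strictly, so the liminf condition holds. Finally, the phase-one factor $F_1$ has phase exactly $1$, which is strictly larger than all the limits in $(0,1)$. Thus putting $F_1$ first yields a limit HN filtration of $F$, and concatenating over the cohomology objects handles general $E$.
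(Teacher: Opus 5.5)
Your argument is correct, but it takes a different route from the paper's. The paper never shows that the $\sigma_t$-HN filtration stabilizes. It takes the finite set $V_E$ of classes of norm at most $M_E$, and chooses $\epsilon$ so that the intervals $(\phi_\infty(v)-\epsilon,\phi_\infty(v)+\epsilon)$ for $v\in V_E$ are disjoint whenever the limiting phases differ. Then, for $t\gg0$, it groups the $\sigma_t$-HN factors of $E$ by which interval their phase lies in. The resulting pieces are limit semistable without being $\sigma_t$-semistable. That argument uses only $Z_t\to Z_\infty$ and the uniform bound of Proposition~\ref{P:boundedHN}, so it would survive infinitely many wall-crossings as $t\to\infty$.

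Your route uses the analyticity in \ref{axiom2} essentially, to stop walls from accumulating at $t=\infty$. In exchange it proves more: the actual $\sigma_t$-HN filtration of $F''$ is eventually constant, so each limit HN factor is an extension of objects that are genuinely $\sigma_t$-semistable for all $t\gg0$.

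The step you flag as the main obstacle does go through with your $S$. The reason is that Proposition~\ref{P:boundedHN} bounds the HN classes uniformly for all $t\ge t_0$. Take $\tau\ge t_1$. For $t$ near $\tau$, the $\sigma_t$-HN filtration of $F''$ refines the $\sigma_\tau$-one, since phases move by less than half the minimal gap. So if some $\sigma_\tau$-factor $H$ is destabilized at times $t_n\to\tau$, its first $\sigma_{t_n}$-HN factor is an HN factor of $F''$, with class in $S$. Passing to a subsequence with constant class $v$, you get $\phi_{t_n}(v)>\phi_{t_n}(v(H))$, while $\phi_\tau(v)\le\phi_\tau(v(H))$ because $H$ is $\sigma_\tau$-semistable. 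By continuity the comparison function for $(v,v(H))$ vanishes at $\tau$ without vanishing identically, contradicting the choice of $t_1$. Hence the filtration is locally constant, and therefore constant, on $[t_1,\infty)$.

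Finally, splitting off the $t$-independent phase-one part explicitly is a real improvement in care. Proposition~\ref{P:boundedHN} assumes there is no phase-one factor, yet the paper's proof applies it to an arbitrary $E\in\cA$ without comment.
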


\begin{proof}
    By Lemma~\ref{L:constantheart}, limit HN filtrations are a refinement of the $\cA$-filtration. Thus, we may consider a non-zero object $E$ of $\cA$, choose $t_0$ and $M_E$ as in Proposition~\ref{P:boundedHN}, and put  $V_E:=\{v(E) : E\in \cA \setminus \{0\} \text{ and } \lVert v(E)\rVert\le M_E\}$.
    
    Note that $\phi_t(E) = \Im \logZ_t(E) \in (0,1]$ is well-defined for all non-zero objects of $\cA$ and depends only on $v(E)\in \Lambda$. By convergence of $Z_t(E)$ and constancy of $\Im Z_t$ it follows that $\lim_{t\to\infty} \phi_t(E) = \phi_\infty(E)$ exists and is in $(0,1]$. Thus, we may unambiguously write $\phi_t(v)$ for $v = v(E)$ with $E\in \cA\setminus \{0\}$ for any $t\in [0,\infty]$. Next, consider 
    \[
        \Phi_\infty(E):= \left\{\phi_\infty(v): v\in V_E\right\}.
    \]
    By convergence of $Z_t$, for any given $\epsilon>0$ we can enlarge $t_0$ such that $t\ge t_0$ implies $\lvert \phi_t(v) - \phi_\infty(v) \rvert < \epsilon$ for all $v\in V_E$. By finiteness of $V_E$, we can choose $\epsilon>0$ small enough that the intervals $I_v(\epsilon):=(\phi_\infty(v)-\epsilon,\phi_\infty(v)+\epsilon)$ are disjoint for all pairs $v,w\in V_E$ with $\phi_\infty(v)\ne \phi_\infty(w)$. By Proposition~\ref{P:boundedHN}, we obtain a filtration $0 = E_0\subsetneq E_1\subsetneq \cdots \subsetneq E_n = E$ such that $H_i:=E_i/E_{i-1} \in \cP_{\sigma_t}(I_{v_i})$ with $\phi_\infty(v_1)>\cdots>\phi_\infty(v_n)$ for some $v_1,\ldots,v_n\in V_E$. 
    
    We claim that this is a limit HN filtration for $E$: each $H_i$ has a $\sigma_t$-HN filtration whose factors are part of the HN filtration of $E$. Consequently, any HN factor $H_{i,j}$ of $H_i$ lies in $I_v(\epsilon)$ for all $t\ge t_0$ and it follows that $\phi_t^{\pm}(H_i) \in I_v(\epsilon)$ for all $t\ge t_0$. Sending $\epsilon \to 0$, we see that $H_i$ is limit semistable with limiting phase $\phi_\infty(v_i)$ for all $i=1,\ldots, n$.
\end{proof}

\begin{cor}
    If \ref{axiom1}, \ref{axiom2}, and \ref{axiom3} hold then $m_\infty(E):=\lim_{t\to\infty}m_t(E)$ exists for all non-zero objects $E$ of $\cD$. 
\end{cor}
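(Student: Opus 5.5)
We reduce to objects of the constant heart $\cA$ (Lemma~\ref{L:constantheart}) and then to limit semistable objects. First, for an arbitrary non-zero $E\in\cD$ the mass is additive over the cohomology objects with respect to the fixed heart: $m_t(E)=\sum_i m_t(H^i_\cA(E))$. Here the sum is finite and independent of $t$, since $\cA_t=\cA$ for all $t$. It therefore suffices to show that $\lim_{t\to\infty} m_t(A)$ exists for every non-zero $A\in\cA$.

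Fix such an $A$ and take the limit HN filtration $0=E_0\subsetneq E_1\subsetneq\cdots\subsetneq E_n=A$ produced in Corollary~\ref{C:limitHN}. Its factors $H_1,\ldots,H_n$ are limit semistable with limiting phases $\phi_\infty(v_1)>\cdots>\phi_\infty(v_n)$. By construction, for $t\ge t_0$ the $\sigma_t$-HN factors of $A$ are exactly the union of the $\sigma_t$-HN factors of the $H_i$; this is how the filtration was built, by grouping HN factors into the disjoint phase windows $I_{v_i}(\epsilon)$. Hence $m_t(A)=\sum_i m_t(H_i)$ for all $t\ge t_0$, and it remains to treat a single limit semistable $H$ whose $\sigma_t$-phases all lie in a window $I_v(\epsilon)$ with $\epsilon$ as small as we like.

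For such $H$, every HN factor has phase within $2\epsilon$ of every other. An elementary estimate on sums of complex numbers with nearby arguments then gives
\[
|Z_t(H)|\le m_t(H)\le \frac{|Z_t(H)|}{\cos(\pi\epsilon)}.
\]
(Project all $Z_t(H_{i,j})$ onto the ray of angle $\pi\phi_\infty(v)$; each has projection at least $\cos(\pi\epsilon)$ times its modulus.) Since $Z_t(H)\to Z_\infty(H)$ by \ref{axiom2}, we get
\[
|Z_\infty(H)|\le\liminf_{t\to\infty} m_t(H)\le\limsup_{t\to\infty} m_t(H)\le |Z_\infty(H)|/\cos(\pi\epsilon).
\]
Letting $\epsilon\to 0$, which we may do because enlarging $t_0$ allows arbitrarily small windows while leaving the grouping of factors unchanged, yields $m_\infty(H)=|Z_\infty(H)|$. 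Consequently
\[
m_\infty(A)=\sum_i |Z_\infty(H_i)|,
\]
and $m_\infty(E)=\sum_i m_\infty(H^i_\cA(E))$.

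The main obstacle is justifying that the grouping into windows is stable as $\epsilon$ shrinks. Shrinking $\epsilon$ requires enlarging $t_0$, and we must check that the $H_i$, viewed as objects, do not change. This holds because the set $V_E$ is finite and independent of $\epsilon$ by Proposition~\ref{P:boundedHN}. The filtration is determined by the finitely many distinct limiting phases $\phi_\infty(v)$, and for any smaller $\epsilon$ the same HN factors still fall into the same windows. So the filtration obtained for small $\epsilon$ coincides with the one from Corollary~\ref{C:limitHN}, and the limiting argument above is legitimate.
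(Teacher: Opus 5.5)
Your proof is correct and follows the paper's argument. You take the limit HN filtration from Corollary~\ref{C:limitHN}, use $m_t(E)=\sum_i m_t(H_i)$ for $t\gg 0$, and then show $m_t(H_i)\to\lvert Z_\infty(H_i)\rvert$ for each limit semistable factor; the paper only asserts this last step, and your estimate $\lvert Z_t(H)\rvert\le m_t(H)\le \lvert Z_t(H)\rvert/\cos(\pi\epsilon)$ justifies it. Your final paragraph about the windows staying stable as $\epsilon$ shrinks is not needed, since the definition of limit semistability already gives $\phi_t^+(H)-\phi_t^-(H)\to 0$, so you can run the cosine estimate with the actual phase width of $H$ instead of the windows $I_v(\epsilon)$.
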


\begin{proof}
    By Corollary \ref{C:limitHN}, $E$ has a limit HN filtration with factors $H_1,\ldots, H_n$. Taking $t_0$ sufficiently large, we may assume that $m_t(E) = \sum_{i=1}^n m_t(H_i)$ for all $t\ge t_0$. The result then follows from the fact that $\lim_{t\to\infty} m_t(H_i) = \lvert Z_\infty(H_i)\rvert$ for any limit semistable object $H_i$.
\end{proof}

\begin{lem}
\label{L:normalizedlimit}
    If \ref{axiom2} holds then for any $E,F\in \cP_{\sigma_\bullet}$, $\lim_{t \to \infty} \mathfrak{n}_t(E, F)$ exists.
\end{lem}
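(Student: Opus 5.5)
The plan is to reduce the existence of $\lim_{t\to\infty}\mathfrak{n}_t(E,F)$ to the asymptotics of $\logZ_t(E)-\logZ_t(F)$. Write $w_t := \logZ_t(E)-\logZ_t(F)$, so that $\mathfrak{n}_t(E,F) = w_t/(1+\lvert w_t\rvert)$. Since the map $w\mapsto w/(1+\lvert w\rvert)$ is a homeomorphism from $\bC$ onto the open unit disc, it extends continuously to the one-point-direction compactification by sending a direction at infinity to the corresponding point of the unit circle. It therefore suffices to show one of two things: either $w_t$ converges in $\bC$, or $\lvert w_t\rvert\to\infty$ with $w_t/\lvert w_t\rvert$ converging in $S^1$.

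The key input is that, for a limit semistable object $E$, $\logZ_t(E)$ is controlled by the analytic germ $Z_t(E) = \sum_{n\ge 0} a_n(E)t^{-n} + \mathtt{i}B(E)$ from \ref{axiom2}. For $t\gg 0$ this germ is a convergent Laurent-type series in $t^{-1}$, and it is not identically zero: otherwise $m_t(E)\sim\lvert Z_t(E)\rvert$ would vanish for all large $t$, which is impossible for a nonzero object. Let $\nu(E)\ge 0$ be the order of vanishing of this series. Then
\[
Z_t(E) = c_E\, t^{-\nu(E)}\bigl(1+O(t^{-1})\bigr), \qquad c_E\in\bC^*,
\]
where $c_E = Z_\infty(E)$ if $\nu(E)=0$ and $c_E = a_{\nu(E)}(E)$ otherwise.

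Next I take logarithms. We have $\logZ_t(E) = \log c_E - \nu(E)\log t + \log(1+O(t^{-1}))$, for a suitable branch of $\log c_E$. The continuous branch of $\log(1+O(t^{-1}))$ tends to $0$, and the imaginary part of the canonical $\logZ_t(E)$ is continuous in $t$ for $t\gg 0$. It follows that $\Im \logZ_t(E)$ converges to a constant $\theta_E$ with $c_E = \lvert c_E\rvert e^{\mathtt{i}\pi\theta_E}$ (up to the normalization of phases). Subtracting the corresponding expression for $F$ gives
\[
w_t = \bigl(\nu(F)-\nu(E)\bigr)\log t + \bigl(\log\lvert c_E\rvert - \log\lvert c_F\rvert\bigr) + \mathtt{i}\pi(\theta_E-\theta_F) + o(1).
\]
If $\nu(E)=\nu(F)$, then $w_t$ converges in $\bC$, and so $\mathfrak{n}_t(E,F)$ converges. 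Otherwise $\lvert w_t\rvert\to\infty$, and since the imaginary part of $w_t$ stays bounded, $w_t/\lvert w_t\rvert\to \pm 1$. Hence $\mathfrak{n}_t(E,F)\to\pm1$.

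I expect the main obstacle to be justifying that the canonical logarithm has convergent imaginary part, that is, that the phase cannot wind. This is exactly where analyticity, rather than mere convergence of $Z_t$, is used. A nonvanishing analytic germ $1+O(t^{-1})$ has argument tending to $0$, so the continuous lift of the phase is eventually constant up to $o(1)$, and the fact that $\phi_t^\pm(E)$ pinch together forces $\Im\logZ_t(E)$ to agree with this lift. I will also need to check that the $\bZ$-ambiguity of the branch is fixed once and for all for $t\gg 0$, which follows from continuity.
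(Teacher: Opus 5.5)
Your proposal is correct and follows the paper's route: the paper also writes $\logZ_t(E)\approx\logZ_\infty(E)$ for massive $E$ and $\logZ_t(E)\approx-\nu\log t+\log\lvert a_\nu(E)\rvert$ (plus a constant phase) for massless $E$ of order $\nu$, then substitutes these into $\mathfrak{n}_t(E,F)$. Your added care about branches is sound, but note that analyticity is really needed to pin down the decay rate of $\lvert Z_t(E)\rvert$, i.e. the real part of the logarithm; it is not what prevents phase winding. Winding is ruled out automatically, because massive charges converge to nonzero limits and massless charges are real by \ref{axiom1}.
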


\begin{proof}
Given a limit semistable object $E \in \cP_{\sigma_\bullet}$ with $E \in \cA$, then $\logZ_t(E) \approx \logZ_\infty(E)$ if $E$ is massive, and $\log Z_t(E) \approx -\nu \log t+\log\lvert a_\nu(E) \rvert$, if $E$ is massless of order $\nu$. We plug these asymptotics into \eqref{E:ratiolimit} to conclude.
\end{proof}

\begin{cor}
    If axioms \ref{axiom1}, \ref{axiom2}, and \ref{axiom3} hold, then $\sigma_\bullet$ is quasi-convergent with constant heart $\cA$. 
\end{cor}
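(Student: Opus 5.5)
The statement is assembled from the preceding results, so I would simply check the two clauses of Definition~\ref{D:quasiconvergent} one at a time and then add the constant-heart conclusion. The heart is constant by Lemma~\ref{L:constantheart}, which only needs \ref{axiom1}. Hence $\cA := \cA_t$ does not depend on $t$, and every $\sigma_t$-semistable object lies in some $\cA[n]$.

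For clause (1), fix a non-zero $E\in\cD$. By Corollary~\ref{C:limitHN}, $E$ has a limit Harder--Narasimhan filtration. In that corollary's construction for objects of $\cA$, the factors $H_i$ are limit semistable, and their limiting phases $\phi_\infty(v_1)>\cdots>\phi_\infty(v_n)$ are strictly decreasing elements of $(0,1]$. I would then splice these filtrations along the filtration of $E$ by its $\cA$-cohomology objects $H^i_{\cA}(E)[-i]$. This needs one check: the phases of the pieces coming from $H^i_{\cA}(E)[-i]$ lie in $(-i,1-i]$. Since these intervals are disjoint and appear in decreasing order, the concatenated sequence of limiting phases is strictly decreasing. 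Then $\liminf_{t\to\infty}\phi_t(G_i)-\phi_t(G_{i+1})$ is a positive difference of limits, which gives (1).

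For clause (2), Lemma~\ref{L:normalizedlimit} gives existence of $\lim_t\mathfrak{n}_t(E,F)$ for limit semistable $E,F$ lying in $\cA$. I would reduce the general case to this one. A limit semistable object has limit phases in a single window, so up to shift it lies in $\cA$; for instance, when it has limiting phase in $(0,1]$ this follows from constancy of the heart for $t\gg0$. Shifting $E$ by $[n]$ adds $\mathtt{i}\pi n$ to $\logZ_t$, and the normalized ratio in \eqref{E:ratiolimit} still converges after such constant additive changes.

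The main subtlety I expect is the boundary behaviour of limit semistable objects, namely limiting phase exactly $0$ versus $1$ when placing them into a shift of $\cA$. I would handle it with Lemma~\ref{L:phase_one_independent}: phase-one objects form $\ker B\cap\cA$ independently of $t$, so no object can drift across phase $0$/$1$ as $t$ varies. With that in place, the rest is bookkeeping.
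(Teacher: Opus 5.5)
Your proposal is correct and is essentially the paper's proof: the constant heart comes from Lemma~\ref{L:constantheart}, clause (1) of Definition~\ref{D:quasiconvergent} from Corollary~\ref{C:limitHN}, and clause (2) from Lemma~\ref{L:normalizedlimit}. Your extra bookkeeping is exactly what the paper leaves implicit, namely splicing along the $\cA$-cohomology filtration and shifting limit semistable objects into $\cA$. In the boundary case, what prevents phases from drifting across $1$ is that an object of $\cA$ with $B>0$ has limiting phase strictly inside $(0,1)$, because $Z_t\to Z_\infty$; Lemma~\ref{L:phase_one_independent} alone does not give this.
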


\begin{proof}
    The constant heart property is by Lemma~\ref{L:constantheart}. The existence of limit Harder--Narasimhan filtrations is given by Corollary~\ref{C:limitHN}. The other condition in Definition~\ref{D:quasiconvergent} is Lemma~\ref{L:normalizedlimit}.
\end{proof}

Thus, by the results of \cite{quasiconvergence}*{\S2} we obtain a filtration of $\cD$, which we now describe. Since for any limit semistable object we have $\lim_{t\to\infty} \phi_t(E) = \phi_\infty(E)$ it follows that $E\sim^{\rm{i}}F$ for any $E,F\in \cP_{\sigma_\bullet}$. Thus, $\sigma_\bullet$ gives rise to the trivial semiorthogonal decomposition of $\cD$. However, it gives a filtration of $\cD$ by order of vanishing of mass, which we now explain. Let 
\[
    \cP_{\sigma_\bullet,\nu} := \left\{E\in \cP_{\sigma_\bullet} \cap \cD_{\nu}\right\}
\]
for $\nu\ge 0$, where $\cD_\nu$ is as in Definition~\ref{D:asymptotic_categories}. By Lemma~\ref{L:masslesscharacterization}, $\cP_{\sigma_{\bullet}} = \bigsqcup_{\nu\ge 0} \cP_{\sigma_\bullet,\nu}$. With this in mind, we define $\cP_{\sigma_\bullet,\ge \nu}:=\bigcup_{\mu \ge \nu} \cP_{\sigma_\bullet,\mu}$ and $\cP_{\sigma_\bullet,>\nu}$ analogously. Note that  $\cD_{\ge \nu} = \langle \cP_{\sigma_\bullet,\ge \nu}\rangle$ and similarly for $\cD_{>\nu}$ --- see Definition~\ref{D:asymptotic_categories}. The following is a strengthening of \cite{quasiconvergence}*{Thm. 2.38(1)} in the present context:

\begin{prop}
\label{P:quasiconvergentlimit}
    Suppose \ref{axiom1}, \ref{axiom2}, and \ref{axiom3} hold. The path $\sigma_\bullet$ is quasi-convergent and gives rise to a finite-step filtration 
    \[
        \cdots\subset \cD_{\ge \nu}\subset \cdots \subset \cD_{\ge 1} \subset \cD_{\ge 0} = \cD
    \]
    by thick triangulated subcategories and $\tau_\nu = (W_\nu,\cQ_\nu) \in \Stab_{\gr_\nu(\Lambda)}(\cG_\nu)$ for all $\cG_\nu \ne 0$ where: \vspace{-2mm}
    \begin{enumerate}
        \item $W_0 = \lim_{t\to\infty} Z_t$ and for all $\nu\ge 1$, $W_\nu$ is induced by $a_\nu|_{\Lambda_{\ge \nu}}$, and \vspace{-2mm}
        \item $\cQ_\nu(\phi)$ is the essential image of $\cP_{\sigma_\bullet,\ge \nu}(\phi)$ under the localization $\cD_{\ge \nu}\twoheadrightarrow \cG_\nu$. \vspace{-2mm}
    \end{enumerate}
    In particular, the essential image of $\cA_{\ge \nu}:= \cD_{\ge \nu}\cap \cA$ in $\cG_\nu$ is $\cQ_\nu(0,1]$.
\end{prop}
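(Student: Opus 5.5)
Quasi-convergence was already established in the corollary preceding Proposition~\ref{P:quasiconvergentlimit}, so the work is to identify the filtration and the limiting stability conditions. The plan is to combine Theorem~\ref{T:quasiconvergencesummary} with the explicit asymptotics from Lemma~\ref{L:vanishing_order} and Lemma~\ref{L:masslesscharacterization}, and then check the support property using \ref{axiom3}.

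First, the filtration. Since all limit semistable objects have convergent phases, $\cI$ is a singleton and the semiorthogonal decomposition \eqref{E:SODqc} is trivial. By Lemma~\ref{L:vanishing_order}, every limit semistable object is either massive (order $0$) or of some order $\nu\ge1$. Hence the preorder $\preceq$ on $\cP_{\sigma_\bullet}$ is determined by the order: higher order means smaller mass. The thick subcategories $\cD_{\le\rho}$ of Theorem~\ref{T:quasiconvergencesummary} are therefore exactly the $\cD_{\ge\nu}=\langle\cP_{\sigma_\bullet,\ge\nu}\rangle$, and these agree with Definition~\ref{D:asymptotic_categories} via Lemma~\ref{L:masslesscharacterization} and \cite{quasiconvergence}*{Lem. 2.23}. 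Finiteness of the filtration follows from Corollary~\ref{C:linear_algebraic_filtration}: the lattices $\Lambda_{\ge\nu}$ are cut out by the kernels of $B,a_0,\dots,a_{\nu-1}$, so they stabilize. Once $\Lambda_{\ge\nu}=0$, we get $\cA_{\ge\nu}=0$ as well, since each nonzero object in $\cA_{\ge\nu}\subset\cP(1)$ has nonzero charge.

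Second, the central charges. For $E$ of order $\nu$, the representative $Z_E=\lim Z_t(-)/Z_t(E)$ from Theorem~\ref{T:quasiconvergencesummary} agrees, up to the $\bC$-action, with $\lim t^{\nu}Z_t(-)$. On $\Lambda_{\ge\nu}$, where $B=a_0=\dots=a_{\nu-1}=0$, this limit equals $a_\nu$ (and $Z_\infty$ when $\nu=0$). The form $a_\nu$ vanishes on $\Lambda_{>\nu}$, so it descends to $\gr_\nu(\Lambda)$, giving (1). The slicing $\cQ_\nu$ is the one of Theorem~\ref{T:quasiconvergencesummary}, giving (2). For $\nu\ge1$ all phases are integers by Lemma~\ref{L:vanishing_order}. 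Then $\cQ_\nu(0,1]$ is generated by the images of phase-$1$ objects together with the massive objects of $\cA$, so the essential image of $\cA_{\ge\nu}$ is $\cQ_\nu(0,1]$; this uses Lemma~\ref{L:constantheart} and the fact that limit HN filtrations refine the $\cA$-filtration.

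Third, the support property, which I expect to be the main obstacle. We need a bound $|W_\nu(E)|\ge C\|\bar v(E)\|$ for $\cQ_\nu$-semistable objects in $\cG_\nu$, with the norm taken on the quotient $\gr_\nu(\Lambda)$ rather than on $\Lambda$. My approach is to choose a norm compatible with \eqref{E:directsumdecomp}, so that the quotient norm is the norm of the $\gr_\nu$-component. Then, for a $\sigma_t$-stable object $E$ of order $\nu$, \ref{axiom3} gives $t^{\nu}|Z_t(E)|\ge C_\nu\|v(E)\|\ge C_\nu\|\bar v(E)\|$, and letting $t\to\infty$ yields $|a_\nu(E)|\ge C_\nu\|\bar v(E)\|$.

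The delicate point is that a semistable object of $\cG_\nu$ lifts only to a limit semistable object, not to a $\sigma_t$-stable one. I would handle this by decomposing the lift into its $\sigma_t$-JH factors for large $t$. Each factor either has order $\nu$ or lies in $\cD_{>\nu}$ (and so is killed in $\cG_\nu$). All order-$\nu$ factors have the same limiting phase, so their charges add without cancellation and the triangle-inequality argument of Lemma~\ref{L:qcpathsA4_massless} applies. The remaining concern is whether factors can change order along the path. If they do, I would fall back on \cite{quasiconvergence}*{Thm. 2.38} to get the support property from \ref{axiom3}, which is its path-wise form.
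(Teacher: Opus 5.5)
Your proposal is correct and follows essentially the paper's route: you identify $\cD_{\ge\nu}$ and $W_\nu=a_\nu|_{\Lambda_{\ge\nu}}$ via Corollary~\ref{C:linear_algebraic_filtration}, you take the slicing from limit HN filtrations (imported from Theorem~\ref{T:quasiconvergencesummary} rather than checked directly), and you prove the support property with a norm compatible with \eqref{E:directsumdecomp}, applying \ref{axiom3} to the factors and passing to the limit. The worry in your last paragraph does not arise, so no fallback is needed. A $\sigma_t$-stable factor of phase in $(0,1)$ has $B>0$ and is therefore massive. A stable factor of phase $1$ is a simple object of the $t$-independent category $\ker B\cap\cA$ (Lemma~\ref{L:phase_one_independent}), so it is stable for all $t$ and has a fixed order. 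Since massless factors have zero $\gr_0$-component, at each $t$ you get $m_t(\hat E)\ge C_0\lVert v(\hat E)\rVert_0$, and letting $t\to\infty$ gives the bound, exactly as in the paper.
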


\begin{proof}
    As remarked above, the non-zero objects $E$ of $\cD_{\ge \nu}$ are exactly those whose limit HN factors lie in $\cP_{\sigma_\bullet,\ge \nu}$. Equivalently, such objects $E$ satisfy $m_t(E) \sim C_E\cdot t^{-\mu}$ for some $\mu\ge \nu$ and $C_E\in \bR_{>0}$. Thickness of $\cD_{\ge \nu}$ follows. The fact that $W_\nu$ descends to a map $\gr_\nu(\Lambda)\to \bC$ is a consequence of Corollary~\ref{C:linear_algebraic_filtration}.

    The claim that $\{\cQ_\nu(\phi)\}_{\phi \in \bR}$ is a slicing on $\cG_\nu$ is a consequence of the existence of limit HN filtrations --- see Lemma~\ref{C:limitHN}. To see that $\tau_0$ is a pre-stability condition, note that for all $E\in\cP_{\sigma_\bullet,0}(\phi)$
    \[
        W_0(E) = \lim_{t\to\infty}Z_t(E) \in \bR_{>0}\cdot \exp(\mathtt{i}\pi \phi)
    \]
    since $\phi:= \lim_{t\to\infty} \frac{1}{\pi}\Im \logZ_t(E)$, where the branch of the logarithm is chosen according to the value of $n \in \bZ$ such that $E\in \cA[n]$. For $\tau_\nu$ with $\nu\ge 1$, simply note that $\cQ_\nu(\phi)$ is non-empty only for $\phi\in \bZ$ and that for all non-zero $E\in \cQ_\nu(1)$, we have $W_\nu(E) = a_\nu(E) \in \bR_{<0}.$ It is immediate from this discussion that the essential image of $\cA_{\ge \nu}$ is $\cQ_\nu(0,1]$. 
    
    Finally, consider the decomposition \eqref{E:directsumdecomp} and a compatible norm $\lVert\:\cdot\:\rVert = \lVert\:\cdot\:\rVert_0 \oplus \bigoplus_{\nu > 0} \lVert\:\cdot\:\rVert_\nu$. Consider $t\ge 0$ and a non-zero object $E$ of $\cP_{\sigma_\bullet,0}$ with massive $\sigma_t$-HN factors $F_1,\ldots, F_k$. We have $\lVert v(E)\rVert_0 \le \sum_{i=1}^k \lVert v(F_i)\rVert_0 \le \sum_{i=1}^k \lVert v(F_i)\rVert$ and thus by \ref{axiom3} 
    \[
        \frac{m_t(E)}{\lVert v(E)\rVert_0}  \ge \frac{\sum_{i=1}^k\lvert Z_t(F_i)\rvert}{\sum_{i=1}^k \lVert v(F_i)\rVert} \ge \min \left\{\frac{\lvert Z_t(F_i)\rvert}{\lVert v(F_i)\rVert}\right\} \ge C_0>0.
    \]
    In particular, 
    \[
        \lim_{t\to\infty} \frac{\lvert Z_t(E)\rvert}{\lVert v(E)\rVert_0} = \lim_{t\to\infty} \frac{m_t(E)}{\lVert v(E)\rVert_0} =  \frac{\lvert W_0(E)\rvert}{\lVert v(E)\rVert_0} \ge C_0 > 0
    \] 
    and the support property for $\tau_0$ follows. The support property for $\tau_\nu$ follows from a similar but simpler argument when $\nu \ge 1$.
\end{proof}

\begin{rem}
    We note that if \ref{axiom3} is only required to hold for massive objects, then we obtain a quasi-convergent path which gives rise to a stability condition on the massive associated graded category, $\cG_0$.
\end{rem}

\subsection{Strong quasi-convergence}
Next, we show that quasi-convergent paths $\sigma_\bullet$ with constant heart converge to boundary points of $\Astab(\cD)$, as introduced in \cite{augmented}. 

\begin{setup}
\label{Setup:strongqc}
    Suppose that $\sigma_\bullet$ is a quasi-convergent path satisfying \ref{axiom1}, \ref{axiom2}, and \ref{axiom3}. By Proposition~\ref{P:quasiconvergentlimit}, there are integers $0=\nu_1<\cdots< \nu_p$ such that $\cG_\nu \ne 0$ if and only if $\nu \in \{\nu_1,\ldots, \nu_p\}$. Furthermore, there is an \emph{asymptotic indexing set} for $\sigma_\bullet$, i.e. a collection $\{E_1,\ldots, E_p\}\subset \cP_{\sigma_\bullet}$, such that $m_t(E_i)\sim C_i\cdot t^{-\nu_i}$ for $i=1,\ldots, p$. Finally, Proposition~\ref{P:quasiconvergentlimit} associates to $\sigma_\bullet$ a filtration $\{\cD_{\ge \nu_i}\}_{i=1}^p$ and $(\tau_{i})_{i=1}^p \in \prod_{i=1}^p \Stab_{\gr_{\nu_i}(\Lambda)}(\cG_{\nu_i})$.
\end{setup}

\begin{rem}
    Note that unlike in \cite{quasiconvergence}, the asymptotic indexing set can be identified with the set $\{\nu_1,\ldots, \nu_p\}$. Indeed, it is this observation that allows us to obtain genuine stability conditions $(\tau_{i})_{i=1}^p$ as opposed to their equivalence class in $\prod_{i=1}^p(\Stab_{\gr_{\nu_i}(\Lambda)}(\cG_{\nu_i})/\bC)$
\end{rem}

In \cite{augmented}*{Defn. 5.19}, a notion of \emph{strong quasi-convergence} is introduced. Strongly quasi-convergent paths converge with respect to the ``strong'' topology on $\Astab(\cD)$. Intuitively, this is the topology of uniform convergence quantified over $\cP_{\sigma_\bullet}$, as opposed to the ``weak'' topology of pointwise convergence. The strong topology recovers Bridgeland's topology on $\Stab(\cD)/\bC \subset \Astab(\cD)$. The rest of the section is dedicated to proving:

\begin{thm}
\label{T:strongquasiconvergence}
    If Setup~\ref{Setup:strongqc} holds, then $\sigma_\bullet$ is strongly quasi-convergent and thus converges in $\Astab(\cD)$ with respect to the strong topology.
\end{thm}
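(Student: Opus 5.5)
Since the paper only sketches the notion of strong quasi-convergence from \cite{augmented}*{Defn. 5.19}, the plan is to check its conditions directly: uniform convergence, over limit semistable objects, of the rescaled central charges and phases towards the data $(\{\cD_{\ge\nu_i}\},(\tau_i))$ of Setup~\ref{Setup:strongqc}. Concretely, in the present one-level-of-$\Im$ situation (trivial semiorthogonal decomposition, $\cI$ a singleton), I expect strong quasi-convergence to amount to three things. First, for each $i$, the rescaled charges $t^{\nu_i}Z_t|_{\Lambda_{\ge\nu_i}}$ converge in operator norm to $W_{\nu_i}$. Second, the phases $\phi_t(E)$ converge to $\phi_\infty(E)$ uniformly in $E\in\cP_{\sigma_\bullet}$. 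Third, a uniform support-type inequality holds along the tail of the path, so that the convergence is in the Bridgeland-type metric on each graded piece rather than merely pointwise.

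The first step is linear algebra from \ref{axiom2}. On the section $\gr_{\nu_i}(\Lambda)\hookrightarrow\Lambda_{\ge\nu_i}$ of \eqref{E:directsumdecomp}, Corollary~\ref{C:linear_algebraic_filtration} gives $t^{\nu_i}Z_t = a_{\nu_i}+O(t^{-1})$. The error is controlled by the coefficients $a_n$ for $n>\nu_i$, so the convergence is uniform on the unit sphere of a compatible norm. This handles every class at once, not just the classes of objects.

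The second step upgrades this to uniform control on stable objects. For a $\sigma_t$-stable $E$ of order $\nu$, \ref{axiom3} gives $|Z_t(E)|\ge C_\nu t^{-\nu}\lVert v(E)\rVert$. Combined with the first step, this yields
\[
\Bigl|\tfrac{Z_t(E)}{t^{-\nu}W_\nu(E)}-1\Bigr|\le \tfrac{C}{t}
\]
uniformly in $E$. Since massless objects sit in integral phase (Lemma~\ref{L:vanishing_order}), their phases are exactly constant. For massive objects, the same estimate with $\nu=0$ gives $|\phi_t(E)-\phi_\infty(E)|\to0$ uniformly. To pass from stable objects to limit semistable ones, I would use the limit HN filtrations of Corollary~\ref{C:limitHN}, together with the bound of Proposition~\ref{P:boundedHN} on the classes of HN factors.

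The main obstacle is that Proposition~\ref{P:boundedHN} gives a constant $M_E$, and hence a threshold $t_0$, depending on $E$, whereas strong convergence needs uniformity. I would try to bypass the HN-factor argument: apply the support inequality to each $\sigma_t$-HN factor and use the mass estimate $m_t(E)-|Z_t(E)|\to0$ relative to $m_t(E)$. Because the heart $\cA$ is constant (Lemma~\ref{L:constantheart}) and $\Im Z_t=B$ is fixed, the phase spread of HN factors of a massive limit semistable object is governed only by $A_t-a_0$, which is uniformly $O(t^{-1})$ in operator norm. That should give $\phi_t^+(E)-\phi_t^-(E)\le C'/t$ uniformly on each graded piece. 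Finally, the quotient to $\Stab(\cG_{\nu_i})/\bC$ absorbs the normalizations, and invoking \cite{augmented}*{Defn. 5.19} and its convergence theorem completes the proof.
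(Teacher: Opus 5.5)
\textbf{There is a genuine gap: your proposal never addresses the \emph{stable lifts} axiom of \cite{augmented}*{Defn.~5.19}, and uniform convergence of charges and phases does not imply it.} Your reconstructed list of conditions does not match the definition. Strong quasi-convergence requires three things:
\begin{enumerate}
\item the cross ratios of $\logZ_t(E_i)$ over an asymptotic indexing set have limits (Lemma~\ref{L:crossratio});
\item every $\tau_i$-stable object $E$ of $\cG_{\nu_i}$ has a lift $\hat E\in\cD_{\ge\nu_i}$ that is honestly $\sigma_t$-stable for all $t\ge t_0(E)$;
\item $\phi_t^{\pm}$ and $t^{\nu_i}m_t$ converge uniformly on limit semistable objects.
\end{enumerate}
The first is immediate from $\logZ_t(E_i)\approx-\nu_i\log t+\log\lvert a_{\nu_i}(E_i)\rvert$, but you do not state it. Your \emph{uniform support-type inequality} is a tool, not an axiom.

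Nothing in your three steps produces condition~(2). A limit semistable lift could a priori be strictly semistable or unstable at every finite $t$. The paper's Proposition~\ref{P:stablelifts} handles it as follows:
\begin{itemize}
\item Choose a limit semistable lift with no limit semistable subobject of the same limiting phase. Massless ones can be stripped off; massive ones would contradict $\tau_1$-stability.
\item Use Proposition~\ref{P:boundedHN} to see that only finitely many $\sigma_t$-Jordan--H\"older factors occur for $t\ge t_0$.
\item Extract a positive gap between $\phi_\infty(\hat E)$ and the limiting phases of the factors that are proper subobjects, and contradict $Z_\infty(\hat E)=\sum_i Z_\infty(G_i)$.
\item For $\nu_i>0$, use that phase-one objects and their $\sigma_t$-Jordan--H\"older filtrations are constant in $t$.
\end{itemize}
This is exactly where the $E$-dependent bound $M_E$, which you saw as the main obstacle, is the right tool, because $t_0(E)$ is allowed to depend on $E$.

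Separately, your uniformity step is only asserted (\emph{that should give}). Your operator-norm estimate from \ref{axiom2} is fine. The missing idea is a see-saw argument in the constant heart $\cA$:
\begin{itemize}
\item Let $E\in\cA$ be massive and limit semistable of limiting phase $\phi<1$. Every subobject $A\subset E$ in $\cA$ has $\tfrac{1}{\pi}\arg Z_t(A)\le\phi_t^+(E)$, hence $\phi_\infty(A)\le\phi$. Dually, every quotient has $\phi_\infty\ge\phi$.
\item The first and last $\sigma_t$-HN factors $H_1,H_n$ of $E$ are massive. A phase-one factor would lie in the $t$-independent category $\cP_{\sigma_\bullet}(1)$ and force $\phi^+_{t'}(E)=1$ for all $t'$.
\item Apply \ref{axiom3} to their Jordan--H\"older factors to get $\lvert Z_t(H_j)\rvert\ge C_0\lVert v(H_j)\rVert$.
\item Conclude $\phi-c\Delta_t\le\phi_t^-(E)\le\phi_t^+(E)\le\phi+c\Delta_t$, with $\Delta_t=\lVert Z_t-Z_\infty\rVert_{\mathrm{op}}$ and $c$ independent of $E$.
\end{itemize}
This is the same mechanism as Proposition~\ref{P:phaseconvergence}, which combines the see-saw with Lemma~\ref{L:uniformconvergencephase}. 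Uniform mass convergence (Proposition~\ref{P:massconvergence}) then follows from the triangle inequality for masses and your operator-norm estimate.

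So the analytic half of your plan can be completed. Without stable lifts and the cross-ratio condition, however, the appeal to \cite{augmented}*{Prop.~5.21} is not justified.
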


Strong quasi-convergence involves checking three additional conditions, which we recall and verify piecemeal in what follows. We work in Setup~\ref{Setup:strongqc} throughout. The first condition is:

\begin{lem}
\label{L:crossratio}
    For any quadruple $i,j,k,\ell \in \{1,\ldots, p\}$ of distinct indices, the limit of the cross ratios
    \[
        \lim_{t\to\infty} \frac{\logZ_t(E_i) - \logZ_t(E_j)}{\logZ_t(E_k) - \logZ_t(E_\ell)} 
    \]
    exists in $\bP^1$.
\end{lem}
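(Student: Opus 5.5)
The plan is to compute explicit asymptotics for $\logZ_t(E_i)$ and substitute them into the cross ratio. By Setup~\ref{Setup:strongqc}, each $E_i$ is limit semistable with $m_t(E_i)\sim C_i t^{-\nu_i}$, where $0=\nu_1<\nu_2<\cdots<\nu_p$. Since the $\nu_i$ are pairwise distinct, the indices $i,j,k,\ell$ carry four distinct orders of vanishing.

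\textbf{Asymptotics of $\logZ_t(E_i)$.} As in the proof of Lemma~\ref{L:normalizedlimit}, I would first shift each $E_i$ into $\cA$; shifting only adds a constant multiple of $\pi\mathtt{i}$ to $\logZ_t$. There are two cases.
\begin{itemize}
\item For $i=1$, the object is massive: $\logZ_t(E_1)\to \logZ_\infty(E_1)$, a finite complex number.
\item For $\nu_i\ge 1$, the object is massless. Lemma~\ref{L:vanishing_order} puts it in $\cP_{\sigma_\bullet}(n)$, so its phase is constant. Axiom \ref{axiom2} then gives
\[
\logZ_t(E_i) = -\nu_i\log t + c_i + o(1),\qquad c_i=\log\lvert a_{\nu_i}(E_i)\rvert + \pi\mathtt{i}\, n_i .
\]
\end{itemize}
Both cases fit one uniform formula: $\logZ_t(E_i) = -\nu_i\log t + c_i + o(1)$ with $c_i\in\bC$ a constant.

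\textbf{The cross ratio.} Substituting these asymptotics gives
\[
\frac{\logZ_t(E_i)-\logZ_t(E_j)}{\logZ_t(E_k)-\logZ_t(E_\ell)} = \frac{-(\nu_i-\nu_j)\log t + (c_i-c_j)+o(1)}{-(\nu_k-\nu_\ell)\log t + (c_k-c_\ell)+o(1)}.
\]
Because $k\neq \ell$, we have $\nu_k\neq\nu_\ell$, so the denominator grows like $\lvert\nu_k-\nu_\ell\rvert\log t$ and in particular never vanishes for large $t$. Dividing numerator and denominator by $\log t$, the ratio converges to
\[
\frac{\nu_i-\nu_j}{\nu_k-\nu_\ell}\in\bR^*\subset\bP^1 .
\]
So the limit exists. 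It is even finite and nonzero, which is stronger than the statement requires.

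\textbf{Main point to check.} The only delicate step is that the indexing set lists distinct orders, so that no pair $\nu_i=\nu_j$ occurs. Such a coincidence would make the ratio depend on $c_i-c_j$; the limit would still exist there, but the statement does not need that case. Distinctness is guaranteed by the choice $\nu_1<\cdots<\nu_p$ in Setup~\ref{Setup:strongqc}. A secondary point is that the branch of $\logZ_t$ is the canonical one from Definition~\ref{D:quasiconvergent}. This is consistent here because the phases converge to constants: by Corollary~\ref{C:limitHN} in the massive case, and they are exactly constant in the massless case.
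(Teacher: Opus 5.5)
Your proposal is correct and follows the paper's approach. The paper's proof is the one-line instruction to substitute the asymptotics from Lemma~\ref{L:normalizedlimit} into the cross ratio: $\logZ_t(E)\approx\logZ_\infty(E)$ for massive $E$, and $\logZ_t(E)\approx-\nu\log t+\log\lvert a_\nu(E)\rvert$ (up to the constant phase term) for massless $E$ of order $\nu$; you carry this out explicitly, using the distinctness of $\nu_1<\cdots<\nu_p$ to get the limit $(\nu_i-\nu_j)/(\nu_k-\nu_\ell)$.
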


\begin{proof}
    We use the asymptotics from the proof of Lemma \ref{L:normalizedlimit} to conclude.
\end{proof}

Next, we verify the second axiom of strong quasi-convergence. A $\tau_i$-stable object $E\in \cG_{\nu_i}$ is said to admit a \emph{stable lift} if there exists $t_0(E)$ and an object $\hat{E}$ of $\cD_{\ge \nu_i}$ such that $\hat{E} \cong E$ in $\cG_{\nu_i}$ and $t\ge t_0(E)$ implies that $\hat{E}$ is $\sigma_t$-stable. A strongly quasi-convergent path $\sigma_\bullet$ is required to admit \emph{stable lifts}, in that all $\tau_i$-stable objects admit stable lifts, for all $i=1,\ldots, p$.

\begin{prop}
\label{P:stablelifts}
    The path $\sigma_\bullet$ admits stable lifts.
\end{prop}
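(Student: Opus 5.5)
Fix $i$ and a $\tau_i$-stable object $E\in\cG_{\nu_i}$; after shifting, assume $E\in\cQ_{\nu_i}(0,1]$. By Proposition~\ref{P:quasiconvergentlimit}, $E$ is the image of some $\hat E\in\cA_{\ge\nu_i}$. The plan is to choose this representative well (minimal in a suitable sense) and then show it is $\sigma_t$-stable for $t\gg0$.

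We treat $\nu_i\ge1$ and $\nu_i=0$ separately.

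\emph{Case $\nu_i\ge1$.} Here $\cA_{\ge\nu_i}\subset\cP_{\sigma_\bullet}(1)=\ker B\cap\cA$, which is independent of $t$ by Lemma~\ref{L:phase_one_independent}. This is a finite length category, by the support property of $\sigma_0$. The central charge on it is $Z_t=A_t$, and $A_t$ is real and negative there. So $\sigma_t$-stability of an object $F\in\cP(1)$ just means that $F$ is simple in $\cP(1)$. It therefore suffices to lift $E$ to a simple object of $\cP(1)$.

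To do this, take a lift $\hat E\in\cA_{\ge\nu_i}$ and a Jordan--H\"older filtration of $\hat E$ in the Serre subcategory $\cA_{\ge\nu_i}$ (Lemma~\ref{L:masslesscharacterization}). Since $\cA_{\ge \nu_i}$ is Serre in $\cA$, it is also Serre in $\cP(1)$, so simple objects of $\cA_{\ge\nu_i}$ are simple in $\cP(1)$. The image of $\hat E$ in $\cG_{\nu_i}$ is $E$, which is $\tau_i$-stable of phase $1$. Hence all but one JH factor lies in $\cA_{>\nu_i}$, and the remaining factor $S$ maps to a nonzero subquotient of $E$. Because $W_{\nu_i}(S)\ne0$ and $E$ is stable of phase $1$ in a finite length setting, this forces $S\cong E$ in $\cG_{\nu_i}$. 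Then $S$ is the desired stable lift, stable for all $t$.

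\emph{Case $\nu_i=0$.} Now $E\in\cQ_0(\phi)$ with $\phi\in(0,1]$, and $\cG_0=\cD/\cD_{>0}$. If $\phi=1$, the argument above applies verbatim: the simple objects of $\cP(1)$ not lying in $\cA_{>0}$ give the lift. So suppose $\phi<1$. Choose a lift $\hat E\in\cA$ which is limit semistable of limit phase $\phi$. Among all such lifts, choose one minimizing $\lVert v(\hat E)\rVert$. The set of possible classes is discrete, so a minimum exists; we may also reduce by killing subobjects and quotients lying in $\cA_{>0}\subset\cP(1)$.

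We then claim $\hat E$ is $\sigma_t$-stable for $t\gg0$. Suppose not. Then for arbitrarily large $t$ there is a destabilizing subobject $F\subset\hat E$ in $\cA$ with $\phi_t(F)\ge\phi_t(\hat E)$. By the argument of Proposition~\ref{P:boundedHN}, applied to Jordan--H\"older filtrations (the mass estimate there uses only \ref{axiom3} and the fact that $F\subset \hat E$ in $\cA$), the classes $v(F)$ range over a finite set. Passing to a sequence $t_n\to\infty$, we may fix one $F$ that destabilizes for all $t_n$. Taking the limit gives $\phi_\infty(F)\ge\phi$.

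Now look at the image of $F$ in $\cG_0$. If it is nonzero, it is a subobject of $E$ in $\cQ_0(0,1]$ of phase $\ge\phi$, which contradicts the stability of $E$ unless the image is all of $E$. In that case the quotient $\hat E/F$ has $W_0=0$, so it lies in $\cP(1)$ and in fact in $\cA_{>0}$. That would let us replace $\hat E$ by $F$, contradicting minimality. If instead the image of $F$ is zero, then $F\in\cA_{>0}$ is a subobject of $\hat E$ lying in $\cP(1)$, which we excluded in the reduction step.

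\emph{Main obstacle.} The hardest step is this last case. The difficulty is making the minimality and reduction argument rigorous, and in particular justifying the passage from the nonzero image of $F$ in $\cG_0$ to the quotient $\hat E/F$ lying in $\cA_{>0}$. Here I would use that $\cA_{>0}$ is a Serre subcategory, so that $\cA\to\cA/\cA_{>0}$ is exact and the heart of $\tau_0$ is $\cQ_0(0,1]$, the image of $\cA$.
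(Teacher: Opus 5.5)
Your case split and overall strategy agree with the paper's. The levels $\nu_i\ge 1$, and phase $1$ at level $0$, are handled inside the $t$-independent finite-length category $\cP_{\sigma_\bullet}(1)$, where $\sigma_t$-stability is simplicity. For $\phi<1$ you bound destabilizing classes via Proposition~\ref{P:boundedHN}, pass to a limit, and contradict $\tau_0$-stability of $E$. Those parts are fine.

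The gap is in the case of a destabilizing $F\subsetneq\hat E$ whose image in $\cG_0$ is all of $E$, i.e.\ with $Q:=\hat E/F$ a nonzero object of $\cA_{>0}$. Minimality of $\lVert v(\hat E)\rVert$ does not rule this out:
\begin{itemize}
\item $v(F)=v(\hat E)-v(Q)$ with $v(Q)\in\Lambda_{>0}$, and nothing forces $\lVert v(\hat E)-v(Q)\rVert<\lVert v(\hat E)\rVert$, for a compatible norm or any other.
\item You would also have to prove that $F$ is limit semistable of limit phase $\phi$ before it counts as a competing lift.
\item Your preliminary ``reduction'' has the same defect. Killing an $\cA_{>0}$-quotient can increase the norm, so it cannot be combined with the minimal choice. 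You have also not shown that it terminates; in a general heart it need not, e.g.\ $\cO_X\supset\cO_X(-p)\supset\cO_X(-2p)\supset\cdots$ in $\Coh(X)$ on a curve $X$ has phase-one successive quotients.
\end{itemize}
The step you flag as the main obstacle is not one: $\hat E/F\in\cA_{>0}$ is immediate, because $\cA\to\cA/\cA_{>0}$ is exact with kernel $\cA_{>0}$.

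The missing observation makes minimality and reduction unnecessary. Since $\phi<1$, we have $B(\hat E)>0$. A nonzero $Q\in\cA_{>0}\subset\cP_{\sigma_\bullet}(1)$ has $Z_t(Q)\in\bR_{<0}$. Hence $Z_t(F)=Z_t(\hat E)+\lvert Z_t(Q)\rvert$ has the same positive imaginary part as $Z_t(\hat E)$ and strictly larger real part. So $\phi_t(F)<\phi_t(\hat E)$ for every $t$, and such an $F$ never destabilizes $\hat E$.

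The zero-image case is likewise vacuous without any reduction. A nonzero $F\in\cA_{>0}\subset\cP_{\sigma_\bullet}(1)$ admits no nonzero map to $\hat E$ once $\phi_t^+(\hat E)<1$, and this holds for $t\gg0$ by limit semistability. With these two remarks, any limit semistable lift $\hat E$ of limit phase $\phi$ works, and your argument becomes essentially the paper's.
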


\begin{proof}
    For each $i=1,\ldots, p$, the $\tau_{i}$-stable objects are shifts of simple objects of $\cQ_{\nu_i}(\phi)$ for $\phi \in (0,1]$. First, consider a $\tau_{1}$-stable object $E$ in $\cQ_{\nu_1}(0,1]$. By definition of $\tau_{1}$-semistability and Proposition~\ref{P:quasiconvergentlimit}, it lifts to $\hat{E} \in \cP_{\sigma_\bullet,\nu_1}(\phi)$, for some $\phi \in (0,1]$.
    
    Without loss of generality, we may suppose that $\hat{E}$ has no subobject in $\cP_{\sigma_\bullet,\nu_1}(\phi)$: if such an object is massless it can be removed and if it is massive it contradicts stability of $E$. So, we assume that $\phi\ne 1$. 
    
    By Proposition~\ref{P:boundedHN}, there exists $t_0$ such that there are only finitely many possible $\sigma_t$-Jordan--H\"older filtrations of $\hat{E}$ for all $t\ge t_0$. So, let $\cJ$ denote the finite set of (iso. classes of) Jordan--H\"older factors of $\hat{E}$ for $\sigma_t$ for all $t\ge t_0$. Since $\hat{E}$ is limit semistable, $\lim_{t\to\infty} \phi_{\sigma_t}^+(\hat{E}) = \phi$ and if there exists $0\ne F\subset \hat{E}$ such that $\lim_{t\to\infty}\phi_{\sigma_t}(F) = \phi$, then $F = \hat{E}$ by stability of $E$. So, if $F\subsetneq \hat{E}$ as above is a Jordan--H\"older factor in $\cP_{\sigma_\bullet,0}$ at some $t\ge t_0$, then $\lim_{t\to\infty} \phi_t(F) = \phi_\infty(F) < \phi_\infty(E)$. By finiteness, we may define
    \[
        \epsilon := \max\left\{\phi_\infty(E) - \phi_\infty(F): F\in \cJ \text{ and } F\subsetneq \hat{E}\right\}.
    \]
    Up to increasing $t_0$, we may assume that $\lvert \phi_t(F) - \phi_\infty(F)\rvert < \epsilon/3$ for all $F\in \cJ$, by finiteness of $\cJ$. Next, suppose there exists $F\in \cJ$ such that $F\subsetneq \hat{E}$ and consider the associated JH filtration: $F = F_1 \subset \cdots \subset F_\ell = \hat{E}$ with associated graded objects $G_1,\ldots, G_\ell \in \cJ$. It follows that
    \begin{align*}
        \phi_\infty(\hat{E}) - \phi_\infty(G_i) & = \phi_\infty(E) - \phi_\infty(F) + \phi_\infty(F) - \phi_\infty(G_i) > \epsilon - \frac{2\epsilon}{3} = \frac{\epsilon}{3}.
    \end{align*}
    Thus, it follows that $\phi_\infty(G_i) < \phi_\infty(\hat{E})$ and that $\arg(Z_\infty(\hat{E})) = \arg(\sum_{i=1}^\ell Z_\infty(G_i)) < \phi_\infty(E)$, a contradiction. Thus, there exists $t_0$ such that $t\ge t_0$ implies that $\hat{E}$ is stable.

    Finally, we address the case $i=2,\ldots,p$. For this, simply observe that the $\tau_{i}$-Jordan--H\"older filtration of $E\in \cQ_{\tau_{i}}(1)$ is obtained from its (constant in $t$) $\sigma_t$-JH filtration, modulo factors lying in $\cD_{>\nu_i}$.
\end{proof}

Finally, we need to verify uniform convergence of $\phi^{\pm}_t$ and $m_t$ on limit semistable objects. This is the content of Propositions~\ref{P:phaseconvergence} and \ref{P:massconvergence}. To this end, we study $\hat{Z}_t(-):= Z_t(-)/\lVert v(-)\rVert$, which is determined by its values on the unit sphere $\bS:=\{v\in \Lambda_{\bR}:\lVert v\rVert = 1\}$. 

\begin{lem}
\label{L:uniformconvergencephase}
    We have $\hat{Z}_t(E) \to \hat{Z}_\infty(E)$ and $\phi_t(E) \to \phi_\infty(E)$ uniformly for all $E\in \cP_{\sigma_\bullet,0}$. 
\end{lem}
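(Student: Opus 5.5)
The plan is to reduce everything to a statement about linear maps on the unit sphere $\bS\subset \Lambda_{\bR}$. Since $\hat{Z}_t(E)$ depends only on the class $v(E)/\lVert v(E)\rVert\in \bS$, it suffices to prove uniform convergence of $\hat{Z}_t(u) = Z_t(u)$ to $Z_\infty(u)$ for $u$ in the relevant subset of $\bS$. This part is immediate: $\sup_{u\in\bS}\lvert Z_t(u)-Z_\infty(u)\rvert = \lVert Z_t - Z_\infty\rVert_{\rm{op}} =: \Delta_t$, which tends to $0$ by \ref{axiom2}. Indeed, \ref{axiom1} gives $Z_t - Z_\infty = A_t - a_0 = \sum_{n\ge 1} a_n t^{-n}$, so $\Delta_t = O(t^{-1})$. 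Thus $\hat{Z}_t(E)\to \hat{Z}_\infty(E)$ uniformly over all nonzero classes, and in particular over $\cP_{\sigma_\bullet,0}$.

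For the phases, I will use that $\phi_t(E)$ is determined by $\Im\logZ_t(E)$, which is the argument of $\hat{Z}_t(E)$ (up to the integer shift fixed by $E\in \cA[n]$). Uniform convergence of arguments follows from uniform convergence of $\hat Z_t$, provided $\lvert \hat{Z}_\infty(E)\rvert$ is bounded below uniformly on $\cP_{\sigma_\bullet,0}$. The key input is a uniform lower bound: there is $c>0$ with $\lvert Z_\infty(v(E))\rvert \ge c\lVert v(E)\rVert$ for every massive limit semistable $E$.

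I expect the main obstacle to be this lower bound, because \ref{axiom3} only controls $\sigma_t$-stable objects, and a massive limit semistable object may still have massless $\sigma_t$-HN or JH factors. I would argue as in the last paragraph of the proof of Proposition~\ref{P:quasiconvergentlimit}, using the compatible norm $\lVert\cdot\rVert=\lVert\cdot\rVert_0\oplus\bigoplus_{\nu>0}\lVert\cdot\rVert_\nu$. Replacing $E$ by the image of $v(E)$ in $\gr_0(\Lambda)$, i.e.\ working with $\lVert\cdot\rVert_0$, kills the massless contributions, since $Z_\infty$ vanishes on $\Lambda_{>0}$. That argument gives $\lvert W_0(E)\rvert \ge C_0\lVert v(E)\rVert_0$. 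To make this uniform in $t$, I apply the same estimate at finite $t\ge t_0$ to the $\sigma_t$-stable factors and use $m_t(E)\ge\lvert Z_t(E)\rvert$. Then, in the same projected norm, I get $\lvert \hat{Z}_t(E)\rvert \ge C_0/2$ once $\Delta_t< C_0/2$. So the sphere should be taken in $\gr_0(\Lambda)_{\bR}$, where $Z_\infty$ descends; equivalently, I normalize by $\lVert\cdot\rVert_0$.

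With $\lvert\hat Z_t\rvert,\lvert\hat Z_\infty\rvert\ge C_0/2$ on the relevant set, the map $z\mapsto \arg z$ is uniformly Lipschitz on $\{\lvert z\rvert\ge C_0/2\}$ away from the branch cut. The branch is fixed consistently because the heart $\cA$ is constant (Lemma~\ref{L:constantheart}) and the phases lie in $(0,1]$ up to the shift. This gives $\sup_{E}\lvert\phi_t(E)-\phi_\infty(E)\rvert\le C\Delta_t\to 0$. For massive limit semistable $E\in\cA$ with limiting phase $1$, the imaginary part $B$ is constant, so no wrapping across the cut can occur, and the estimate holds uniformly as well.
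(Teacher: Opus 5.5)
Your skeleton matches the paper's, and the first claim (uniform convergence of $\hat{Z}_t$ from $\Delta_t=\lVert Z_t-Z_\infty\rVert_{\mathrm{op}}\to 0$) is handled correctly. The phase claim then comes from
\[
    \frac{\lvert Z_t(E)-Z_\infty(E)\rvert}{\lvert Z_\infty(E)\rvert}\le \Delta_t\cdot\frac{\lVert v(E)\rVert}{\lvert Z_\infty(E)\rvert},
\]
which needs a uniform bound $\lvert Z_\infty(E)\rvert\ge c\lVert v(E)\rVert$.

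You are right that \ref{axiom3} does not give this bound directly; the paper simply asserts it. In the full norm it is in fact false on $\cP_{\sigma_\bullet,0}$. Take $F,M\in\cP_{\sigma_\bullet}(1)$ with $F$ massive and $M$ massless, e.g.\ $\cO_x$ and $\cO_S[1]$ for the path in Theorem~\ref{thm:pathsK3_main}(1). Then $F\oplus M^{\oplus n}$ is massive and limit semistable, but $\lvert Z_\infty\rvert/\lVert v\rVert\to 0$ as $n\to\infty$.

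Your repair does not close the gap, because it mixes two normalizations. The bound $\lvert Z_\infty(E)\rvert\ge C_0\lVert v(E)\rVert_0$ is fine. But $Z_t\to Z_\infty$ is uniform only for the full norm. For finite $t$, $Z_t$ is non-zero on $\Lambda_{>0}$, so it does not descend to $\gr_0(\Lambda)$, and there is no estimate $\lvert Z_t(v)-Z_\infty(v)\rvert\le \Delta'_t\lVert v\rVert_0$. For $E=F\oplus M^{\oplus n}$, the ratio $\lvert Z_t(E)-Z_\infty(E)\rvert/\lVert v(E)\rVert_0$ contains $n\lvert Z_t(M)\rvert/\lVert v(F)\rVert_0$, which is unbounded in $n$ for each fixed $t$. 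So ``$\lvert\hat Z_t(E)\rvert\ge C_0/2$ once $\Delta_t<C_0/2$'' does not follow: you would be subtracting $\Delta_t\lVert v(E)\rVert$ from $C_0\lVert v(E)\rVert_0$. The Lipschitz estimate for $\arg$ is then left with no uniformly small input.

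The missing idea is a dichotomy on the limiting phase. After a shift, take $E\in\cA\cap\cP_{\sigma_\bullet,0}$.

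If $\phi_\infty(E)=1$, then $B(E)=0$, so $E\in\ker B\cap\cA=\cP_{\sigma_\bullet}(1)$ by Lemma~\ref{L:phase_one_independent}, and $\phi_t(E)\equiv 1$. There is nothing to prove. This case, not the branch cut, is the only place where massless factors can appear.

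If $\phi_\infty(E)<1$, then $E$ has no non-zero subobject in $\cP_{\sigma_\bullet}(1)$. Such a subobject would force $\phi_t^+(E)=1$ for all $t$, while $\phi_t^-(E)\le\frac{1}{\pi}\arg Z_t(E)\to\phi_\infty(E)$, contradicting limit semistability. Hence every $\sigma_t$-HN factor has phase in $(0,1)$. Every $\sigma_t$-Jordan--H\"older factor $G$ then has $B(G)>0$ and is massive. So \ref{axiom3} and the triangle inequality give $C_0\lVert v(E)\rVert\le m_t(E)$ for $t\ge t_0$. Letting $t\to\infty$ and using $m_t(E)\to\lvert Z_\infty(E)\rvert$ gives $\lvert Z_\infty(E)\rvert\ge C_0\lVert v(E)\rVert$ in the full norm on this part of $\cP_{\sigma_\bullet,0}$. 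Your Lipschitz argument, or the paper's expansion of $\log(1+x)$, then finishes the proof.
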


\begin{proof}
    Since $Z_t \to Z_\infty$ in $\Hom(\Lambda,\bC)$ by \ref{axiom2}, it follows that $\Delta_t := \lVert Z_t-Z_\infty\rVert_{\rm{op}}\to 0$. Consequently, for all $v\in \Lambda_{\bR}$ we have 
    \[
        \limsup_{t\to\infty} \left\{\frac{\lvert Z_t(v) - Z_\infty(v)\rvert}{\lVert v\rVert}:v\in \Lambda_{\bR} \setminus \{0\}\right\} = 0.
    \]
    Thus, $\hat{Z}_t(E) \to \hat{Z}_\infty(E)$ uniformly in non-zero $E\in \cP_{\sigma_\bullet,0}$ \emph{a fortiori}. Next, by \ref{axiom3}, there exists $C_0>0$ such that for all non-zero $E\in \cP_{\sigma_\bullet,0}$ we have $\lvert \hat{Z}_\infty(E)\rvert \ge C_0$. Consequently, for all non-zero $E\in \cP_{\sigma_\bullet,0}$ we have $v(E)/\lVert v(E)\rVert \in \bS\cap \hat{Z}_\infty^{-1}[C,\infty) =: \bS_C$, which is compact. Next, observe that for any $E \in \cA_0$ 
    \[
        \phi_t(E) = \frac{1}{\pi}\Im\log\hat{Z}_t(E) \in (0,1].
    \]
    We compute 
    \begin{equation} 
    \label{E:phidifference}
    \begin{split}
        \phi_t(E) - \phi_\infty(E) & = \frac{1}{\pi}\Im\left(\log \hat{Z}_t(E) - \log \hat{Z}_\infty(E)\right) = \frac{1}{\pi}\Im \log\left(1+ \frac{Z_t(E) - Z_\infty(E)}{Z_\infty(E)}\right),
    \end{split}
    \end{equation}
    where we use that $Z_\infty(E) \ne 0$. Next, 
    \begin{equation}
    \label{E:uniformconvoperatornorm}
        \left\lvert \frac{Z_t(E) - Z_\infty(E)}{Z_\infty(E)}\right \rvert = \left\lvert \frac{Z_t(E) - Z_\infty(E)}{\lVert v(E)\rVert}\cdot \frac{\lVert v(E)\rVert}{Z_\infty(E)}\right\rvert \le \Delta_t \cdot C_0^{-1}\to 0.
    \end{equation}
    Thus, applying the Taylor series expansion of $\log(1+x)$ to \eqref{E:phidifference}, we see that $\lvert \phi_t(E) - \phi_\infty(E)\rvert \to 0$ uniformly in $E\in \cP_{\sigma_\bullet,0}$. 
\end{proof} 

\begin{prop}
\label{P:phaseconvergence}
    In Setup~\ref{Setup:strongqc}, we have
    \[
        \limsup_{t\to\infty} \left\{\lvert \phi_t^+(E) - \phi_\infty^+(E)\rvert, \lvert \phi_t^-(E) - \phi_\infty^-(E)\rvert : E\in \bigcup_{i=1}^p\cD_{\nu_i} \setminus \{0\} \right\} = 0.
    \]
\end{prop}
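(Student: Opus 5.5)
We will reduce to objects of the constant heart $\cA$ and then split according to the order $\nu_i$. Since the heart is constant (Lemma~\ref{L:constantheart}), an object $E\in\cD_{\nu_i}$ has $\sigma_t$-HN filtration refining its filtration by cohomology objects $H^n_\cA(E)$, so $\phi_t^\pm(E)$ is attained on $H^n_\cA(E)[-n]$ for the extreme $n$. It therefore suffices to prove uniform convergence of $\phi_t^\pm$ on nonzero objects of $\cA\cap\cD_{\nu_i}$, where we interpret $\phi^\pm_\infty$ as the phases of the limit HN factors furnished by Corollary~\ref{C:limitHN}.

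\emph{Massless case $i\ge 2$.} By Lemma~\ref{L:vanishing_order}, every cohomology object of a massless $E$ lies in $\cP_{\sigma_\bullet}(1)$. Hence $\phi_t^\pm(E)$ is an integer independent of $t$, by Lemma~\ref{L:phase_one_independent}. Uniform convergence is then trivial: the difference vanishes identically.

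\emph{Massive case $i=1$.} Let $E\in\cA\cap\cD_0$. Every $\sigma_t$-HN factor of $E$ is either massive or lies in $\cP_{\sigma_\bullet}(1)$.
\begin{enumerate}
\item We first show $\phi_t^+(E)\to\phi^+_\infty(E)$ uniformly. The key observation is that $\phi_t^+(E)$ equals $\max\phi_t(F)$ over subobjects $0\neq F\subset E$ in $\cA$. More precisely, it is the phase of the maximal destabilizing subobject $F_t$, which is $\sigma_t$-semistable. If $F_t$ has phase $1$ we are in the constant category $\ker B\cap\cA$, and then $\phi^+_t=\phi_\infty^+=1$. Otherwise $F_t$ is massive and $\sigma_t$-semistable, so its Jordan--H\"older factors satisfy \ref{axiom3}. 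The estimate \eqref{E:uniformconvoperatornorm} then gives $|\phi_t(F)-\phi_\infty(F)|\le \delta_t$ for every such subobject, with $\delta_t\to0$ independent of $E$.
\item From this we obtain the two-sided comparison. On one side, $\phi_t^+(E)\le \sup_F\phi_\infty(F)+\delta_t$. On the other side, the first limit HN factor $G$ of $E$ is massive with $\phi_\infty(G)=\phi^+_\infty(E)$, so $\phi_t^+(E)\ge\phi_t(G)\ge\phi_\infty^+(E)-\delta_t$.
\item It remains to show that $\sup_F\phi_\infty(F)\le\phi_\infty^+(E)$, uniformly in $E$. We argue via the HN polygon of $E$ for the limiting central charge $Z_\infty$, which is the convex hull of the values $Z_\infty(F)$ as in \cite{Bayer_short}. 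The bound is a seesaw statement for $Z_\infty$: a subobject cannot have limiting phase exceeding that of the first limit HN factor. We would prove it via Lemma~\ref{L:uniformconvergencephase} applied to the $\sigma_t$-semistable factors: $\phi_t(F_t)\le\phi_t^+(E)$, and the limiting HN factors are limits of $\sigma_t$-HN pieces.
\item The lower bound $\phi^-$ is symmetric, using quotients instead of subobjects.
\end{enumerate}

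\emph{Main obstacle.} The hard part is step (3) together with the uniformity of $\delta_t$ over all $E$, including objects whose HN filtration for $\sigma_t$ contains phase-$1$ massless pieces mixed with massive ones. I would handle this by working on the compact set $\bS_{C_0}$ from Lemma~\ref{L:uniformconvergencephase}. Since every relevant semistable factor is massive with normalized class in $\bS_{C_0}$, or else has phase exactly $1$, the phase error depends only on $\Delta_t C_0^{-1}$. Taking $\delta_t=\frac1\pi\sup_{|x|\le\Delta_tC_0^{-1}}|\Im\log(1+x)|$ gives the required uniform bound.
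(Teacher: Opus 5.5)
Your proof is correct. It differs from the paper mainly in how the top $\sigma_t$-piece of $E$ is compared with the limit.

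The paper reduces everything to a single uniform statement about objects $F$ that are $\sigma_t$-stable for some $t\ge t_0$. It sandwiches $\phi_t(F)$ between $\phi_\infty(H^+)$ and $\phi_\infty(H^-)$ for the extreme limit HN factors $H^+\subset F\twoheadrightarrow H^-$. Its only uniform input is Lemma~\ref{L:uniformconvergencephase} on limit semistable objects, and that one statement controls $\phi^+$ and $\phi^-$ simultaneously.

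You instead estimate the maximal destabilizing subobject $F_t$ directly. Its Jordan--H\"older factors are massive and share a $\sigma_t$-phase, so \ref{axiom3} gives $\lvert Z_t(F_t)\rvert\ge C_0\lVert v(F_t)\rVert$. The operator-norm bound then controls $\lvert\phi_t(F_t)-\phi_\infty(F_t)\rvert$ by a function of $\Delta_t/C_0$ alone. One small correction: expand the logarithm around $Z_t(F_t)$, or use $\lvert\hat{Z}_\infty(F_t)\rvert\ge C_0-\Delta_t$, since the normalized class of $F_t$ need not lie in $\bS_{C_0}$ itself.

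Your lower bound via the first limit HN factor of $E$ is the same kind of move the paper makes. What your version buys is an explicit link from $\phi_t^+(E)$ to $\phi_\infty^+(E)$ through subobjects of $E$. The paper's write-up instead relates $\phi^\pm_\infty(E)$ to Jordan--H\"older factors occurring at different times, in one quick line. What the paper's version buys is that it never needs a lower bound on $\lvert Z_\infty\rvert$ for $t$-dependent objects.

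You do misjudge where the difficulty lies. Step (3) involves no $t$, so there is nothing uniform to prove. No HN polygon for $Z_\infty$ is needed either; that would be awkward anyway, since $Z_\infty$ kills the massless objects of $\cA$. The argument is short:
\begin{itemize}
\item Fix a subobject $0\ne F\subset E$ in $\cA$ with $B(F)>0$. Your observation in (1) gives $\phi_s(F)\le\phi^+_s(F)\le\phi_s^+(E)$ for every $s$.
\item Letting $s\to\infty$ yields $\phi_\infty(F)\le\phi^+_\infty(E)$.
\item Apply this to $F=F_t$, which has $B(F_t)>0$ whenever $\phi^+_t(E)<1$. Moreover $\phi_t^+(E)=1$ holds either for all $t$ or for none, by constancy of $\cP_{\sigma_\bullet}(1)$.
\item Dually, $\phi_\infty(Q)\ge\phi^-_\infty(E)$ for quotients $E\twoheadrightarrow Q$ in $\cA$ with $B(Q)>0$.
\end{itemize}
The genuine uniformity issue is the one in step (1), and you handle it correctly.
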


\begin{proof}
    First, note that for $E \in \cD_{\nu_i}$ with $\nu_i>0$ we have $\phi_t^{\pm}(E) = \phi_\infty^{\pm}(E) = 1$ so there is nothing to check. Next, since $\sigma_\bullet$ has constant heart by Lemma~\ref{L:constantheart} and the limit HN filtrations constructed in Corollary~\ref{C:limitHN} are refinements of the $\cA$-filtration, it suffices to consider only objects $E\in \cA_0$. Choose such an $E$ and let $\cJ$ denote the set of $\sigma_t$-Jordan--H\"older factors of $E$ for all $t\ge 0$, all of which are massive. By definition, $\phi_t^+(E) =  \sup\{\phi_t(F):F\in \cJ\}$ and it follows that $\lim_{t\to\infty} \phi_t^+(E) = \phi_\infty^+(E) = \sup\{\phi_\infty(F):F\in \cJ\}$. It then follows that 
    \begin{equation} 
    \label{E:limsupbound}
        \limsup_{t\to \infty} \left\{\lvert \phi_t^+(E) - \phi_\infty^+(E)\rvert: E\in \cD_{\nu_1}\right\} \le \limsup_{t\to\infty} \left\{\lvert \phi_t(F) - \phi_\infty(F)\rvert: F\in \cS_{\ge t_0} \right\}, 
    \end{equation}
    where $\cS_{\ge t_0}$ is the set of objects that are $\sigma_t$-stable for some $t\ge t_0$. Thus, we must bound the right-hand side of \eqref{E:limsupbound}. Let $\epsilon>0$ be given. By Lemma~\ref{L:uniformconvergencephase}, there is a $t_0$ such that $t\ge t_0$ implies $\lvert \phi_t(H) - \phi_\infty(H)\rvert < \epsilon$ for all $H \in \cP_{\sigma_\bullet,0}$. By Corollary~\ref{C:limitHN}, any $F\in \cS_{\ge t_0}$ has a limit HN-filtration and we may assume that $F \in \cA$. Therefore, there is an inclusion $0\ne H^+\subset F$ in $\cA$ where $H^+$ is the limit HN factor of limiting phase $\phi_\infty^+(F)$. Since $F$ is $\sigma_t$-stable for some $t$, we have $\phi_t(H^+) \le \phi_t(F)$ and thus $\phi_\infty(H^+) < \phi_t(F)+\epsilon$ by definition of $t_0$. Dually, we can show that $\phi_\infty(H^-)>\phi_t(F) - \epsilon$, where $H^-$ is the limit HN factor of the least limiting phase. Thus,
    \[
         \phi_t(F) - \epsilon < \phi_\infty^-(F) \le \phi_\infty(F) \le \phi_\infty^+(F) < \phi_t(F) + \epsilon
     \]
     uniformly in $\cS_{\ge t_0}$ and \eqref{E:limsupbound} equals zero. The argument for $\phi_t^-$ is analogous. 
\end{proof}

Finally, we address the convergence of masses. 

\begin{prop}
\label{P:massconvergence}
    In Setup~\ref{Setup:strongqc}, we have
    \[
        \limsup_{t\to\infty} \left\{\left\lvert \log t^{\nu_i}\cdot m_t(E) - \log m_\infty(E)\right\rvert: E\in \bigcup_{i=1}^p\cD_{\nu_i} \setminus \{0\}\right\} = 0.
    \]  
\end{prop}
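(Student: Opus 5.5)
We split by the order $\nu_i$, mirroring the proof of Proposition~\ref{P:phaseconvergence}. Here $m_\infty(E)$ for $E\in\cD_{\nu_i}$ means the limit of the rescaled mass, $\lim_{t\to\infty} t^{\nu_i} m_t(E)$. The limit HN filtrations of Corollary~\ref{C:limitHN} refine the $\cA$-filtration, and masses are additive over the $\cA$-cohomology objects. So we reduce to $E\in\cA\cap\cD_{\nu_i}$, and then to its limit HN factors, each of which lies in $\cP_{\sigma_\bullet,\nu_i}$.

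Since $|\log x-\log y|\le |x-y|/\min(x,y)$, it is enough to prove that the ratio $t^{\nu_i}m_t(E)/m_\infty(E)$ tends to $1$ uniformly in $E$.

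\emph{Massless case $\nu_i\ge 1$.} By Lemma~\ref{L:vanishing_order}, every $E\in\cA_{\nu_i}$ lies in $\cP_{\sigma_\bullet}(1)$ for all $t$, so $m_t(E)=-A_t(E)$. Write
\[
t^{\nu_i}A_t(E)=a_{\nu_i}(E)+\sum_{n>\nu_i}a_n(E)\,t^{\nu_i-n},
\]
using Lemma~\ref{L:masslesscharacterization}, which gives $a_m(E)=0$ for $m<\nu_i$. Then
\[
\bigl|t^{\nu_i}A_t(E)-a_{\nu_i}(E)\bigr|\le \Bigl(\sum_{n>\nu_i}\lVert a_n\rVert_{\rm op}\,t^{\nu_i-n}\Bigr)\lVert v(E)\rVert .
\]
The sum tends to $0$ for large $t$ by analyticity \ref{axiom2}. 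Dividing by $|a_{\nu_i}(E)|$, we use \ref{axiom3} in the limit $t\to\infty$, which gives $|a_{\nu_i}(E)|\ge C_{\nu_i}\lVert v(E)\rVert$. This holds first for stable objects and then, by additivity and the triangle inequality as in Lemma~\ref{L:qcpathsA4_massless}, for all $E\in\cA_{\nu_i}$. The relative error is therefore at most $C_{\nu_i}^{-1}\sum_{n>\nu_i}\lVert a_n\rVert_{\rm op}t^{\nu_i-n}\to 0$, uniformly in $E$.

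\emph{Massive case $\nu_1=0$.} Take $E\in\cA_0$. As in Proposition~\ref{P:phaseconvergence}, write $m_t(E)=\sum_F |Z_t(F)|$ over the $\sigma_t$-stable factors $F$. We need two uniform statements.

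First, for every $F$ stable at some $t\ge t_0$, we have $|Z_t(F)|/|Z_\infty(F)|\to 1$ uniformly. This follows from the estimate \eqref{E:uniformconvoperatornorm} of Lemma~\ref{L:uniformconvergencephase}: $\bigl|Z_t(F)-Z_\infty(F)\bigr|\le\Delta_t\lVert v(F)\rVert\le \Delta_t C_0^{-1}\,|Z_t(F)|$ by \ref{axiom3}. Hence
\[
(1+\Delta_tC_0^{-1})^{-1}\,|Z_\infty(F)|\le|Z_t(F)|\le (1-\Delta_tC_0^{-1})^{-1}|Z_\infty(F)| .
\]
Summing over the factors gives $m_t(E)$ within a factor $1\pm O(\Delta_t)$ of $\sum_F|Z_\infty(F)|$.

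Second, and this is the main obstacle, we must compare $\sum_F|Z_\infty(F)|$ with $m_\infty(E)=\sum_j|Z_\infty(H_j)|$, where the $H_j$ are the limit HN factors of $E$. The $\sigma_t$-factors $F$ group into the $H_j$. By Proposition~\ref{P:phaseconvergence}, the phases of the $F$ belonging to $H_j$ lie within $\epsilon_t\to0$ of $\phi_\infty(H_j)$, uniformly in $E$. For complex numbers whose arguments lie in a window of width $2\pi\epsilon$, we have $|\sum z_k|\ge\cos(\pi\epsilon)\sum|z_k|$. Applying this to the $Z_\infty(F)$, whose phases lie within $2\epsilon_t$ of each other by Lemma~\ref{L:uniformconvergencephase}, gives
\[
\cos(2\pi\epsilon_t)\sum_F|Z_\infty(F)|\le |Z_\infty(H_j)|\le\sum_F|Z_\infty(F)| .
\]
Combining the two steps, $m_t(E)/m_\infty(E)\in\bigl[(1+O(\Delta_t))^{-1}\cos(2\pi\epsilon_t),\,(1-O(\Delta_t))^{-1}\bigr]$ uniformly in $E$. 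Taking logarithms and the $\limsup$ gives the claim.
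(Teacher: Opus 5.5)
Your argument breaks where a lower bound that holds for $\sigma_t$-stable objects is extended to all objects of a given order.

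In the massless case, $\lvert a_{\nu_i}(E)\rvert\ge C_{\nu_i}\lVert v(E)\rVert$ does not follow ``by additivity'' for every $E\in\cA_{\nu_i}$. Such an $E$ may have Jordan--H\"older factors in $\cA_{>\nu_i}$. On these $a_{\nu_i}$ vanishes, but they still contribute to $\lVert v(E)\rVert$.

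In the massive case, an object of $\cP_{\sigma_\bullet,0}$ with limiting phase $1$ may have massless $\sigma_t$-stable factors $F$. For these the constant $C_0$ of \ref{axiom3} is unavailable and $Z_\infty(F)=0$, so your first estimate fails.

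This cannot be patched, because the uniform statement is false on exactly these objects. In Theorem~\ref{thm:pathsK3_main}(1), $E_N=\cO_x\oplus\cO_S[1]^{\oplus N}$ is limit semistable of order $0$, with $m_\infty(E_N)=1$ and $m_t(E_N)=1+N\lvert Z_t(\cO_S[1])\rvert$. So for every $t$ the supremum over $N$ is infinite. In Theorem~\ref{thm:pathsK3_main}(3) with $r\ge 2$, the objects $\cO_{C_i}(-1)\oplus\cO_{C_j}(-1)^{\oplus N}$ with $i\in I_1$, $j\in I_2$ do the same for the order-one stratum.

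The paper's proof makes the same move. Lemma~\ref{L:uniformconvergencephase} asserts $\lvert\hat Z_\infty(E)\rvert\ge C_0$ on all of $\cP_{\sigma_\bullet,0}$. The massless step asserts that \ref{axiom3} bounds $\lvert a_{\nu_i}\rvert$ below on all normalized classes of $\cP_{\sigma_\bullet,\nu_i}$. So the proposition must be restricted, e.g.\ to objects all of whose $\sigma_t$-Jordan--H\"older factors have order exactly $\nu_i$. With that restriction your proof is essentially correct, pending the grouping repair below, and close to the paper's.

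The massless case is the same expansion of $t^{\nu_i}A_t(E)$, with your operator-norm bound on the tail replacing the compactness argument. In the massive case you apply \ref{axiom3} directly to $\sigma_t$-stable factors, exactly as it is stated, and then use a cosine estimate. The paper instead works with limit HN factors, the uniform bound $1\le m_t/\lvert Z_t\rvert\le C_t$, and the triangle inequality for masses.

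One more repair is needed on your side. The claim ``the $\sigma_t$-factors group into the $H_j$'' is not uniform in $E$, because consecutive limit HN factors can have arbitrarily close limiting phases.
\begin{itemize}
\item For the upper bound, use $m_t(E)\le\sum_j m_t(H_j)$.
\item For the lower bound, take $m_s(E)\le\sum_F m_s(F)$ over a fixed $\sigma_t$-Jordan--H\"older filtration and pass to the limit $s\to\infty$. Combine this with the uniform phase control for $\sigma_t$-stable objects from the proof of Proposition~\ref{P:phaseconvergence}.
\end{itemize}
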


\begin{proof}
    First, we consider $0\ne E \in \cD_{\nu_1}$ and its limit HN factors $H_1,\ldots, H_n$. By Proposition~\ref{P:phaseconvergence}, the phase width of limit semistable objects tends uniformly to zero so we can choose $t_0$ such that $t\ge t_0$ implies that there exists a universal constant $C_t \in \bR_{\ge 1}$ such that 
    \[
        1\le \frac{m_t(F)}{\lvert Z_t(F)\rvert} \le C_t
    \]
    for all $F\in \cP_{\sigma_\bullet,0}$ and $\lim_{t\to\infty} C_t = 1$. Next, the triangle inequality for masses \cite{deformed}*{Thm. D} yields $m_t(E) \le \sum_{i=1}^p m_t(H_i)$ for any $t$. Therefore, one has
    \begin{equation}
    \label{E:logmass}
        \left\lvert \log \frac{m_t(E)}{m_\infty(E)}\right\rvert  \le \left\lvert \log \frac{C_t\cdot \sum_{i=1}^p \lvert Z_t(H_i)\rvert}{\sum_{i=1}^p\lvert Z_\infty (H_i)\rvert }\right\rvert
        \le \lvert \log C_t\rvert + \left\lvert \log \frac{\sum_{i=1}^p \lvert Z_t(H_i)\rvert}{\sum_{i=1}^p \lvert Z_\infty(H_i)\rvert}\right\rvert. 
    \end{equation}
    On the other hand, 
    \begin{equation}
    \label{E:ratiobound}
        \min_{i=1}^p \left\{\frac{\lvert Z_t(H_i)\rvert}{\lvert Z_\infty(H_i)\rvert}\right\} \le \frac{\sum_{i=1}^p \lvert Z_t(H_i)\rvert}{\sum_{i=1}^p \lvert Z_\infty(H_i)\rvert} \le \max_{i=1}^p \left\{\frac{\lvert Z_t(H_i)\rvert}{\lvert Z_\infty(H_i)\rvert}\right\}.
    \end{equation}
    The same computation as in \eqref{E:uniformconvoperatornorm} shows that $\log \lvert Z_t(H_i)\rvert /\lvert Z_\infty(H_i)\rvert$ tends to zero uniformly in $H \in \cP_{\sigma_\bullet,0}$ and consequently \eqref{E:logmass} tends to zero uniformly. 

    Next, we consider the case of $0\ne E \in \cD_{\nu_i}$ with $\nu_i\ne 0$. As usual, it suffices to consider non-zero objects in $\cA_{\nu_i}:= \cA \cap \cD_{\nu_i}$. By Lemma~\ref{L:masslesscharacterization}, such $E$ have constant phase $1$ and thus are semistable for all $t$. We have $m_t(E) = \lvert Z_t(E)\rvert = \lvert \sum_{\mu\ge \nu_i}a_\mu(E)\cdot t^{-\mu} \rvert$
    and consequently 
    \begin{equation}
    \label{E:massratio}
        t^{\nu_i}\cdot \frac{m_t(E)}{m_\infty(E)} = \left\lvert 1 + \sum_{\mu>\nu_i} \frac{a_\mu(E)}{a_{\nu_i}(E)} t^{\nu_i-\mu}\right\rvert. 
    \end{equation}
    Next, \ref{axiom3} implies that $a_{\nu_i}(E) \ge C_i>0$ for all $0\ne E\in \cP_{\sigma_\bullet,\nu_i}$. Thus, there exists a compact $K\subset \bS$ on which the $\bR^*$-invariant function 
    \[
        r_t(E) = \sum_{\mu > \nu_i}\frac{a_\mu(E)}{a_{\nu_i}(E)}\cdot t^{\nu_i-\mu}
    \]  
    is defined for all $E\in \cP_{\sigma_\bullet,0}$. The analyticity hypothesis \ref{axiom2}, combined with the fact that $a_{\nu_i}(E) \ge C_i$, implies that \eqref{E:massratio} converges uniformly to $1$ in $E \in \cP_{\sigma_\bullet,\nu_i}$.  
\end{proof}

\begin{proof}[Proof of Theorem~\ref{T:strongquasiconvergence}]
   The result now follows by combining Lemma~\ref{L:crossratio} and Propositions ~\ref{P:stablelifts}, \ref{P:phaseconvergence}, and \ref{P:massconvergence} to verify the conditions of strong quasi-convergence. Then, the result follows from \cite{augmented}*{Prop. 5.21}, which states that strongly quasi-convergent paths converge in $\Astab(\cD)$.  
\end{proof}

\begin{rem}
    In fact, Proposition~\ref{P:quasiconvergentlimit} shows that $\sigma_\bullet$ satisfying \ref{axiom1}, \ref{axiom2}, and \ref{axiom3} gives rise to a finer structure than an augmented stability condition. Indeed, we obtain from $\sigma_\bullet$ a multi-scale stability condition in the sense of \cite{BMSmulti-scale}. This is specified by the filtration $\{\cA_{\ge \nu}\}$ of $\cA$ described in Lemma~\ref{L:masslesscharacterization} and the induced charges $\{W_\nu\}$.

    The special feature of the quasi-convergent paths considered in the present work is the fact that phases of limit semistable objects (rather than certain differences of phases) converge, thereby defining the limiting heart $\cA$. Thus, constant heart quasi-convergent paths can be regarded as naturally belonging to $\Stab(\cD)$, rather than $\Stab(\cD)/\bC$.
\end{rem}

\section{K3 surfaces} \label{sec:K3}

The goal of this section is to produce examples of (strongly) quasi-convergent paths using the results of Section \ref{sec:qcpaths}. We will first discuss generalities on the support property on 2-Calabi--Yau categories, and then we specialize to the case of K3 surfaces, including a proof of Theorem~\ref{thm:intro_K3}.

\subsection{Generalities on 2-Calabi--Yau categories} \label{subsec:gen2cy}

Through this section we let $\cD$ be a proper $k$-linear 2-Calabi--Yau category, i.e. with Serre functor $S\cong [2]$. We assume that the Abelian group $\Lambda=\Knum(\cD)$ is finitely generated. Denote by $\lambda\colon \K(\cD) \to \Lambda$ the natural projection. We assume that the pairing
\[ (\lambda(E), \lambda(F)) := - \chi(E, F) \]
has signature $(2, \rank \Lambda-2)$.

\begin{lem} \label{lemma:gen2cy_roots}
If $E \in \cD$ has $\Hom^{< 0}(E, E)=0$ and $\Hom(E, E) \cong k$, then $(\lambda(E), \lambda(E)) = -2+2\ell$, with $\ell=\frac{1}{2}\dim \Ext^1(E, E)\geq 0$.
\end{lem}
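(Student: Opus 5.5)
The plan is a direct computation of the Euler characteristic $\chi(E,E) = \sum_i (-1)^i \dim \Hom^i(E,E)$, which is a finite sum because $\cD$ is proper. Since $(\lambda(E),\lambda(E)) = -\chi(E,E)$ by definition, it suffices to show that $\chi(E,E) = 2 - \dim \Ext^1(E,E)$ and that $\dim \Ext^1(E,E)$ is even.

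First, we will use Serre duality to control the graded pieces. The Serre functor $S \cong [2]$ gives natural isomorphisms
\[
    \Hom^i(E,E) \cong \Hom^{2-i}(E,E)^\vee
\]
for all $i \in \bZ$. The hypothesis $\Hom^{<0}(E,E) = 0$ then forces $\Hom^{>2}(E,E) = 0$. Together with $\Hom(E,E) \cong k$, this gives $\Hom^2(E,E) \cong k$. Hence
\[
    \chi(E,E) = 1 - \dim \Ext^1(E,E) + 1 = 2 - \dim \Ext^1(E,E),
\]
so $(\lambda(E),\lambda(E)) = -2 + \dim \Ext^1(E,E)$.

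Next, we check that $\dim \Ext^1(E,E)$ is even, which is the only step that is not pure bookkeeping. We will use the pairing
\[
    \Ext^1(E,E) \times \Ext^1(E,E) \to \Ext^2(E,E) \cong k
\]
given by Yoneda composition followed by the trace coming from Serre duality. This pairing is nondegenerate by Serre duality. For a 2-Calabi--Yau category the induced form is skew-symmetric; this is the standard graded-commutativity of the Serre pairing, $\langle a,b\rangle = (-1)^{|a||b|}\langle b,a\rangle$. A nondegenerate alternating form forces even dimension, so $\dim \Ext^1(E,E) = 2\ell$ for some integer $\ell \ge 0$.

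The main obstacle is justifying this skew-symmetry in an abstract 2-CY category, where it depends on the Serre structure being suitably compatible. I would first try to cite the standard fact for Calabi--Yau categories. Alternatively, there is a cheaper route that avoids skew-symmetry: the pairing $(\cdot,\cdot)$ is the negative of the Euler form, $\chi$ is symmetric in the 2-CY setting, and $\chi(E,E)$ is even for classes in $\Knum$ of a K3-type lattice, which is even. Since the paper only needs the lemma for K3 surfaces, where the Mukai lattice is even, this fallback suffices in the applications.
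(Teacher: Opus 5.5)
Your proposal is correct and follows the paper's proof: Serre duality with $S\cong[2]$ gives $\Hom^{\ge 3}(E,E)=0$ and $\Hom^2(E,E)\cong k$, hence $(\lambda(E),\lambda(E))=-2+\dim\Ext^1(E,E)$. Evenness then comes, as in the paper, from the nondegenerate alternating Serre pairing $\Ext^1(E,E)\times\Ext^1(E,E)\to\Ext^2(E,E)\cong k$. The paper asserts this skew-symmetry without justification, so your caveat is more careful than the source; note that your lattice-evenness fallback covers K3 surfaces but not the abstract 2-Calabi--Yau category in the statement.
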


\begin{proof}
Note that $\dim \Hom^{k}(E, E)=\dim \Hom^{2-k}(E, E)=0$ for $k\geq3$ by Serre duality. Thus, we get that $\Hom^{\geq 3}(E, E)=0$ and $\dim \Hom^2(E, E)=1$. This way, we get
\[ (\lambda(E), \lambda(E))=-\chi(E, E)=-2+\dim \Ext^1(E, E). \]
On the other hand, the Serre duality isomorphism $\Ext^1(E, E) \cong \Ext^1(E, E)^\vee$ gives a symplectic pairing $\Ext^1(E, E) \times \Ext^1(E, E) \to \Ext^2(E, E) \cong k$, which shows that $\Ext^1(E, E)$ is even dimensional.
\end{proof}

\begin{ex} \label{ex:gen2cy_K3}
Let $S$ be a K3 surface. Consider the Mukai lattice $\kn(S) := \bZ \oplus \NS(S) \oplus \bZ$, endowed with the pairing $((s_0, s_1, s_2), (s_0', s_1', s_2')) = s_1\cdot s_1'-s_0s_2'-s_2s_0'$. The Hirzebruch--Riemann--Roch theorem shows that the \emph{Mukai vector} 
\[
    s(E):=(\ch_0(E), \ch_1(E), \ch_0(E)+\ch_2(E))
\]
induces a lattice isomorphism $s\colon (\Knum(S), (-, -) )\to (\kn(S), (-, -))$.
\end{ex}

We denote by $\Roots(\Lambda)$ the collection of elements $v \in \Lambda$ such that $(a, a)=-2$. By Lemma \ref{lemma:gen2cy_roots}, if $E$ is $\sigma$-stable for some $\sigma \in \Stab(\Lambda)$, then either $v(E) \in \Roots(\Lambda)$ or $(v(E), v(E)) \geq 0$.

\begin{prop} \label{prop:gen2cy_axiom4}
Let $\sigma_t$ be a path in $\Stab(\cD)$ satisfying \ref{axiom1} and \ref{axiom2}, and such that $(-, -)|_{\ker Z_\infty}$ is negative definite. Then, \ref{axiom4} holds for $\nu=0$. 
\end{prop}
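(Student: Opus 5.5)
We need a quadratic form $Q_0$ on $\Lambda_\bR$ that is negative definite on $\ker Z_\infty$ and satisfies $Q_0(E)\ge 0$ for all massive $\sigma_t$-stable objects $E$ and all $t\ge t_0$. The Mukai pairing itself fails only on spherical (root) classes. So I would take
\[
    Q_0(v) := (v,v) + K\,\lvert Z_\infty(v)\rvert^2
\]
with $K>0$ a large constant to be chosen. Negative definiteness on $\ker Z_\infty$ holds for every $K$, since there $Q_0$ restricts to $(-,-)|_{\ker Z_\infty}$, which is negative definite by hypothesis.

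For positivity, let $E$ be $\sigma_t$-stable. Stability gives $\Hom(E,E)\cong k$ and $\Hom^{<0}(E,E)=0$, since $E$ lies in a single $\cP_t(\phi)$. Lemma~\ref{lemma:gen2cy_roots} then yields $(v(E),v(E))\ge -2$, with equality only when $v(E)\in\Roots(\Lambda)$. If $(v(E),v(E))\ge 0$ we are done, since $K\lvert Z_\infty\rvert^2\ge 0$. It therefore suffices to find $c>0$ and $t_0$ such that every root class $\delta$ carried by a massive $\sigma_t$-stable object with $t\ge t_0$ satisfies $\lvert Z_\infty(\delta)\rvert\ge c$. Then $K=2/c^2$ works.

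To get this bound I would exploit the signature. Since $(-,-)$ has signature $(2,\rank\Lambda-2)$ and is negative definite on $\ker Z_\infty$, the space $\ker Z_\infty$ has codimension at most $2$. Define the norm
\[
    \lVert v\rVert^2 := C\lvert Z_\infty(v)\rvert^2 - (v,v),
\]
which is positive definite for $C\gg0$ by a compactness argument on the unit sphere of a complement, exactly as in \eqref{E:norm_qf}. For a root $\delta$ this gives $\lVert \delta\rVert^2 = C\lvert Z_\infty(\delta)\rvert^2+2$. If $\lvert Z_\infty(\delta)\rvert\le 1$, then $\delta$ lies in a bounded region of the lattice, so only finitely many roots $\delta_1,\dots,\delta_N$ satisfy $\lvert Z_\infty(\delta)\rvert\le 1$.

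Among these finitely many roots, those with $Z_\infty(\delta_j)\ne 0$ have $\lvert Z_\infty(\delta_j)\rvert$ bounded below by their minimum, which is positive. Roots with $Z_\infty(\delta)=0$ are excluded outright: they lie in $\ker Z_\infty$, where $(-,-)$ is negative definite, but that alone does not forbid $(\delta,\delta)=-2$. What excludes them is that a stable object of class $\delta$ would be massless, since $Z_t(\delta)\to 0$, contradicting the assumption that $E$ is massive. Here "massive" means $\lim m_t(E)\ne 0$, and for a stable object this is $\lvert Z_\infty(v(E))\rvert\ne 0$.

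So $c:=\min\bigl(1,\min\{\lvert Z_\infty(\delta_j)\rvert : Z_\infty(\delta_j)\ne0\}\bigr)>0$ works, independently of $t$, and any $t_0$ suffices.

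The main subtlety is the interpretation of "massive" for an object stable at one time $t$, and the finiteness of roots near $\ker Z_\infty$. The latter is the key step: it uses negative definiteness to make $C\lvert Z_\infty\rvert^2-(-,-)$ a genuine norm, hence to make the relevant set of lattice points bounded and therefore finite.
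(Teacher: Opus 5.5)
Your proposal is correct and is essentially the paper's proof. It uses the same form $Q(v)=(v,v)+\frac{2}{C^2}\lvert Z_\infty(v)\rvert^2$, the same auxiliary norm $A\lvert Z_\infty(\cdot)\rvert^2-(\cdot,\cdot)$ to see that only finitely many roots satisfy $\lvert Z_\infty(\delta)\rvert\le 1$, and the same case split via Lemma~\ref{lemma:gen2cy_roots}. The step you flag, that a massive object $E$ which is stable at a single time has $Z_\infty(v(E))\neq 0$, is true but deserves a line of justification (the paper also leaves it implicit): by Lemma~\ref{L:constantheart} $E\in\cA[n]$ for some $n$, and $Z_\infty(v(E))=0$ forces $B(E)=0$, so $E[-n]\in\ker B\cap\cA=\cP_{\sigma_\bullet}(1)$ by Lemma~\ref{L:phase_one_independent}; hence $E$ is semistable for all $t$ and $m_t(E)=\lvert Z_t(E)\rvert\to 0$.
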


\begin{proof}
By assumption, $(-,-)|_{\ker Z_\infty}$ is negative definite. Thus, $\{a \in \ker Z_\infty\colon \norm{a} \leq -2\}$ is bounded, and so there are finitely many elements $\delta_1, \dots, \delta_r \in \ker Z_\infty \cap \Roots(\Lambda)$. 

Note that $\norm{v}^2:= A\lvert Z_\infty(v)\rvert^2 - (v, v)$ defines a norm on $\Lambda_\bR$ for some $A \gg 0$, as $(-,-)$ is negative definite on $\ker Z_\infty$. With this in mind, we claim that
\begin{equation} 
C = \inf \{ \lvert Z_\infty(\delta)\rvert : \delta \in \Roots(\Lambda) \setminus \ker Z_\infty \} >0.
\end{equation}
In fact, if $\delta \in \Lambda$ has $\lvert Z_\infty(v)\rvert \leq 1$, then $\lVert \delta\rVert \leq \sqrt{A+2}$. Thus, there are finitely many values of $\delta \in \Lambda$ with $\lvert Z(\delta) \rvert \leq 1$. 

We consider the quadratic form $Q(v) = (v, v)+\frac{2}{C^2}\lvert Z_\infty(v) \rvert^2$. By construction, $Q$ is negative definite on $\ker Z_\infty$, hence since $Z_t \to Z_{\infty}$ by \ref{axiom2} there is some $t_0$ such that $Q$ is negative definite on $\ker Z_t$ for $t \geq t_0$. 

We claim that $Q$ satisfies \ref{axiom4} for $\nu=0$. In fact, if $E$ is $\sigma_t$-stable for some $t\geq 0$ and massive, then by Lemma \ref{lemma:gen2cy_roots} there are two possibilities: \vspace{-2mm}
\begin{itemize}
    \item if $(v(E), v(E))=-2$, then $v(E) \in \Roots(\Lambda) \setminus \ker Z_\infty$ and we have:
    \[ 
        Q(v(E)) = -2 + \frac{2}{C^2}\lvert Z(E)\rvert^2 \geq 0. 
    \]\vspace{-8mm}
    \item If $(v(E), v(E)) \geq 0$, then $Q(E)\geq 0$ is automatic. \vspace{-2mm} \qedhere
\end{itemize}
\end{proof}

\subsection{Paths on K3 surfaces}
We will apply the results of Subsection \ref{subsec:gen2cy} to construct quasi-convergent paths on the stability manifold of K3 surfaces. Given a K3 surface $S$ we use the Mukai vector $s$ from Example \ref{ex:gen2cy_K3}. To simplify notation, we write $\Roots(S)$ for $\Roots(\kn(S))$.
\begin{thm} \label{thm:pathsK3_main} \label{pathnegcurves}
Let $S$ be a projective K3 surface.\vspace{-2mm}
\begin{enumerate}
    \item There exists a strongly quasi-convergent path $\sigma_\bullet$ in $\Stab(S)$ with associated filtration $0\subset \langle \cO_S \rangle \subset \DCoh(S)$. \label{pathOS}\vspace{-2mm}
    \item If $S$ has Picard rank 1, and $G \in \Coh(S)$ is a spherical vector bundle, then there exists a strongly quasi-convergent path $\sigma_\bullet$ in $\Stab(S)$ with associated filtration $0\subset \langle G \rangle \subset \DCoh(S)$. \label{pathsphvb}\vspace{-2mm}
    \item Suppose that there is a birational map $f\colon S \to T$ to a normal, projective surface $T$ with ADE singularities. Let $C_1, \dots, C_n$ be the exceptional curves, and pick a decomposition $\{1, \dots, n\} = I_1 \sqcup \dots \sqcup I_r$. Then there exists a strongly quasi-convergent path $\sigma_\bullet$ in $\Stab(S)$ with associated filtration
    \begin{equation}
    \label{E:filtration_thm}
        0\subset \langle \cO_{C_i}(-1) \rangle_{i \in I_r} \subset \langle \cO_{C_i}(-1) \rangle_{i \in I_r \cup I_{r-1}} \subset \dots \subset \langle \cO_{C_i}(-1) \rangle_{i \in I_r \cup \dots \cup I_1} \subset \DCoh(S). 
    \end{equation}
    \vspace{-2mm}
\end{enumerate}
\end{thm}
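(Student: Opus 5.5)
The plan is to reduce each part to Theorem~\ref{T:strongquasiconvergence}. For each part we write down an explicit path $\sigma_t = (Z_t,\cA)$ in Bridgeland's geometric chamber, with a fixed tilted heart $\cA = \Coh^{\beta}(S)$. The central charges are $Z_t(E) = -(e^{-(\beta+\mathtt{i}\omega_t)}, s(E))$ with $\Im Z_t$ independent of $t$, and the real part a polynomial (hence analytic at infinity) in $1/t$. Then \ref{axiom1} and \ref{axiom2} hold by construction. Lemma~\ref{L:equivsupportproperty} reduces \ref{axiom3} to \ref{axiom4}. We check \ref{axiom4} for $\nu = 0$ via Proposition~\ref{prop:gen2cy_axiom4}, which requires only that the Mukai pairing be negative definite on $\ker Z_\infty$. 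For the massless levels $\nu \ge 1$ we use Lemma~\ref{L:qcpathsA4_massless}.

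The concrete choices are as follows. Since $\Im Z_t = \omega\cdot\ch_1^\beta$ must be fixed, we keep $\omega$ and $\beta$ fixed in $\Im$ and vary only the real part. We write $\Re Z_t = -\ch_2^{\beta} + \tfrac{1}{2}\omega^2 \ch_0 + \varepsilon(t)\ch_0$, or more generally add a linear term depending on $t$, so that the limiting charge $Z_\infty$ kills exactly the desired Mukai vector.

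\textbf{Part (1).} Take $\beta = 0$ and choose the real part so that $Z_\infty(\cO_S) = 0$, i.e. $\tfrac{1}{2}\omega_\infty^2 = 1$ with $Z_t(\cO_S) \sim -c/t$ as $\omega_t^2 \downarrow 2$. Bridgeland's theorem gives $\sigma_t \in \Stab(S)$ as long as no spherical sheaf $E$ with $Z_t(E)\in\bR_{\le0}$ obstructs. At $\beta = 0$ the only rank-one obstruction is $\cO_S$ itself, and it only appears in the limit. The order-one massless heart objects are then $\cO_S[1]$ and its self-extensions, which must be checked to lie in $\cA$. Since $\ker Z_\infty = \bZ\, s(\cO_S)$ has $(v,v) = -2 < 0$, Proposition~\ref{prop:gen2cy_axiom4} applies. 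The monoid in Lemma~\ref{L:qcpathsA4_massless} is generated by $s(\cO_S[1])$, so it is finitely generated.

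\textbf{Part (2).} This is the same construction with $\beta = \mu(G)H$, since Picard rank one lets $\beta$ be rational on the line. We tune $\omega_t$ so that $Z_\infty(G) = 0$. Negativity on $\ker Z_\infty = \bZ\, s(G)$ is again automatic.

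\textbf{Part (3).} Choose $\omega_t = f^*h + \sum_j c_j(t) C_j$, or instead let $\beta$ be perturbed in the directions $C_i$. This should be arranged so that $\Im Z$ stays constant while $\Re Z_t(\cO_{C_i}(-1)) = a_{\nu(i)}t^{-\nu(i)} + \cdots$, where $\nu(i) = k$ for $i \in I_k$. Assigning higher vanishing order to the $I_r$ curves yields the filtration~\eqref{E:filtration_thm}. Here $\ker Z_\infty$ is spanned by the classes of the $\cO_{C_i}(-1)$, i.e. by the $(0, C_i, 0)$. The Mukai pairing restricted to this span is the negative definite ADE intersection form, so Proposition~\ref{prop:gen2cy_axiom4} again applies.

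The identification of the filtration in each part follows from Lemma~\ref{L:masslesscharacterization}, Corollary~\ref{C:linear_algebraic_filtration} and Proposition~\ref{P:quasiconvergentlimit}. The subcategories $\cD_{\ge\nu}$ are generated by objects of $\cA\cap\ker B$ whose classes lie in the relevant kernels. One must then verify that these objects are exactly the iterated extensions of the given sphericals; in Part (3) these are sheaves supported on fibres of $f$.

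I expect the main obstacle to be the existence of $\sigma_t$ for all large $t$ with constant heart, together with the identification of the massless objects with the spherical generators. Near the boundary, walls from other $(-2)$-classes could make $Z_t$ leave the geometric chamber. The plan there is to show directly, via Bridgeland's criterion, that $Z_t(\delta)\notin\bR_{\le 0}$ for every spherical sheaf $\delta$ other than the intended ones. For this we use the finiteness of roots with $\lvert Z_\infty(\delta)\rvert \le 1$, which was established in the proof of Proposition~\ref{prop:gen2cy_axiom4}. In Part (2), the Picard-rank-one hypothesis is exactly what makes $G$ the unique relevant spherical of its slope.
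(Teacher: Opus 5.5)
Parts (1) and (2) follow the paper's route: $G$-translates of $\sigma^{\Bridgeland}_{\beta,tH}$ with the imaginary part frozen and $\omega_t^2$ decreasing to the critical value. Part (3), however, has a genuine gap, because it cannot be realized in your framework. With the unrotated charge and heart $\Coh^\beta(S)$ one has $\Im Z_t(\cO_{C_i}(-1)) = \omega\cdot C_i$. By \ref{axiom1}, a massless object must have $\Im Z_t = 0$ for all $t$. So $\cO_{C_i}(-1)$ can only become massless if $\omega\cdot C_i=0$ throughout, i.e.\ if $\omega$ is not ample. Your first option ($\omega_t = f^*h+\sum_j c_j(t)C_j$ ample) makes $\Im Z_t(\cO_{C_i}(-1))=\omega_t\cdot C_i$ vary, which violates \ref{axiom1}. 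Your second option ($\omega=f^*h$ fixed, $\beta_t$ perturbed along the $C_i$) leaves the geometric chamber. There $\Coh^\beta(S)$ is not the heart of any stability condition with that charge: for $k\le -2$ the torsion sheaf $\cO_{C_i}(k)$ lies in $\Coh^\beta(S)$, yet $Z(\cO_{C_i}(k)) = \beta\cdot C_i-(k+1)$ is a positive real number once $\lvert\beta\cdot C_i\rvert<1$.

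The missing idea is a rotation. The paper takes $\sigma_t=\sigma^{\Bridgeland}_{0,\omega_t}\cdot\Phi$ with $\Phi\in G$ shifting phases by $-1/2$, so that $Z_t = -\omega_t\cdot s_1+\mathtt{i}\bigl(-s_2+\tfrac{\omega_t^2}{2}s_0\bigr)$. The curve classes now sit on the real axis, and $\omega_t$ enters $\Im Z_t$ only through $\omega_t^2$. One then chooses (Lemma~\ref{lemma:pathsK3_ordervanishing}) an analytic path of \emph{ample} classes $\omega_t\to f^*\eta$ with $\omega_t^2=\eta^2>2$ constant and $\omega_t\cdot C_i\sim t^k$ for $i\in I_k$. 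Existence of $\sigma_t$ is then automatic, so your root-avoidance check is not where the difficulty lies. The constant heart is $\cP_t(0,1]$ rather than $\Coh^\beta(S)$. The real work is to show three things:
(i) a massless stable phase-one object is a pure one-dimensional sheaf supported on $\bigcup_i C_i$, since Bogomolov--Gieseker and Hodge index exclude $H^{-1}\neq 0$;
(ii) $\cO_{C_i}(-1)$ is genuinely $\sigma_t$-stable once $\omega_t\cdot C_i<\sqrt{\eta^2(\eta^2-2)}$;
(iii) there are no other such objects, via $\chi(E,\cO_{C_i}(-1))\le 0$ and negative definiteness of $(C_i\cdot C_j)$.

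Separately, your verification of Proposition~\ref{prop:gen2cy_axiom4} is wrong in (1) and (3). There $\ker Z_\infty\otimes\bR$ has dimension $\rho(S)$; for example, in (1) it is $\{(s_0,s_1,s_0): s_1\cdot H=0\}$. So it is not spanned by the spherical classes, and negative definiteness needs the Hodge index computation $s_1^2-2s_0^2\le 0$ (resp.\ $s_1^2-\eta^2s_0^2\le0$). Only in (2), where $\rho=1$, is the kernel the line $\bR\, s(G)$.
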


We begin by recalling the stability conditions on $\DCoh(S)$ constructed by Bridgeland \cite{BridgelandK3}*{\textsection 6}. Choose $\beta, \omega \in \NS(S)_\bQ$, with $\omega$ ample. Denote by $\mu_\omega\colon \Coh(S) \to \bR \cup \{+\infty\}$ the slope
\[
    \mu_\omega(E)=
        \begin{cases}
            \frac{\ch_1(E)\cdot\omega}{\ch_0(E)}, & \ch_0(E)>0 \\ 
            +\infty & \text{otherwise.} 
        \end{cases} 
\]
We set $\cT_{\beta, \omega}$ to be the smallest extension-closed full subcategory of $\Coh(S)$ containing all torsion sheaves and $\mu_\omega$-semistable torsion-free sheaves with $\mu_\omega>\beta\cdot \omega$. We let $\cF_{\beta, \omega}$ denote the smallest extension-closed subcategory of $\Coh(S)$ containing $\mu_\omega$-semistable torsion-free sheaves with $\mu_\omega\leq \beta\cdot\omega$. By \cite{BridgelandK3}*{Lem. 6.1}, these two subcategories form a torsion pair on $\Coh(S)$, and so 
\begin{equation} \label{eq:pathsK3_heart}
\cA_{\beta, \omega} := \{E \in \DCoh(S): H^{-1}(E) \in \cF_{\beta,\omega}, H^0(E) \in \cT_{\beta,\omega}, H^i(E)=0 \text{ otherwise} \}
\end{equation}
is the heart of a bounded t-structure on $\DCoh(S)$. We consider the central charge
\begin{align*}
    Z_{\beta,\omega}^\Bridgeland(E) &:= ( \exp(\beta+\mathtt{i}\omega), v(E) ) \\
    &= -s_2(E) + \beta\cdot s_1(E) + \frac{1}{2}s_0(E)(\omega^2-\beta^2) + \mathtt{i}\omega\cdot (s_1(E)-\beta s_0(E)) \\
    &= -\ch_2(E) + \beta \cdot \ch_1(E) + \frac{1}{2}\ch_0(E)(\omega^2-\beta^2-2) + \mathtt{i}\omega\cdot(\ch_1(E)-\beta \ch_0(E)),
\end{align*}
where $s(E)=(s_0,s_1,s_2)$. To state the main result, recall that we identify $\Knum(S)$ with the Mukai lattice $\cN(S)$. Let us write $\Roots^+(S) =\{(s_0, s_1, s_2)\in \Roots(S): s_0>0\} \subset \Roots(S)$. Let $V(S) \subset \Stab(S)$ be the set of \emph{normalized geometric} stability conditions:
\[ V(S) = \{ \sigma \in \Stab(S): \text{ for all } x \in S, \ \ko_x\text{ is stable of phase 1 and }Z(\ko_x)=-1\}. \]

\begin{thm}[\cite{BridgelandK3}*{\textsection 11}] \label{thm:pathsK3_geochamber}
The forgetful morphism $\Zforg\colon \Stab(S) \to \Hom(\Lambda, \bC)$ restricts to an embedding $V(S) \hookrightarrow \Hom(\Lambda, \bC)$. The image of $V(S)$ via $\Zforg$ is equal to the image of the set
\[ \mathscr{L}(S) = \{ (\beta, \omega) \in \NS(S)_\bR\times \Amp(S)_\bR : \forall v \in \Roots^+(S), Z_{\beta,\omega}^\Bridgeland(v)\notin \bR_{\leq 0} \} \]
under the map $(\beta,\omega) \mapsto Z_{\beta,\omega}^\Bridgeland$.
\end{thm}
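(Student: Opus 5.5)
The plan is to follow Bridgeland's original argument in \cite{BridgelandK3}*{\S\S 6--11}, splitting it into three steps: realisation of $\mathscr{L}(S)$, injectivity of $\Zforg|_{V(S)}$, and determination of the image. Throughout I use that the Mukai pairing is non-degenerate on $\kn(S)_\bC$. Hence any $Z\in\Hom(\kn(S),\bC)$ can be written uniquely as $Z=(\Omega,-)$ with $\Omega\in\kn(S)_\bC$. Imposing $Z(\ko_x)=-1$, i.e. pairing against $s(\ko_x)=(0,0,1)$, normalises the $s_0$-component of $\Omega$ to $1$, so that $\Omega=(1,\beta+\mathtt{i}\omega,*)$.

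\textbf{Step 1 (realisation).} For $(\beta,\omega)\in\mathscr{L}(S)$ I check that $(Z^\Bridgeland_{\beta,\omega},\cA_{\beta,\omega})$ is a stability condition lying in $V(S)$.
\begin{itemize}
\item \emph{Positivity.} On $\cA_{\beta,\omega}$ we have $\Im Z\ge0$, and the only cases to examine are objects with $\Im Z=0$. These are:
  \begin{itemize}
  \item torsion sheaves supported in dimension zero, which have $Z<0$ automatically;
  \item shifts $F[1]$ of $\mu_\omega$-stable torsion-free sheaves with $\mu_\omega(F)=\beta\cdot\omega$.
  \end{itemize}
  For such $F$, Lemma~\ref{lemma:gen2cy_roots} gives $(s(F),s(F))\ge-2$. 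If $(s(F),s(F))\ge0$, Bogomolov-type reasoning together with the Hodge index theorem gives $\Re Z(F)>0$. If $s(F)$ is a root, then $s(F)\in\Roots^+(S)$, and the defining condition of $\mathscr{L}(S)$ gives exactly $Z(F)\notin\bR_{\le0}$.
\item \emph{HN filtrations and support property.} Harder--Narasimhan filtrations exist for $\beta,\omega$ rational by Noetherianity of the heart. The support property comes from Proposition~\ref{prop:gen2cy_axiom4}-style reasoning with the quadratic form $(v,v)$ plus a multiple of $\lvert Z\rvert^2$.
\item \emph{Irrational parameters.} I extend to real $(\beta,\omega)$ by the deformation theorem, Theorem~\ref{T:deformation}.
\item \emph{Skyscrapers.} The sheaves $\ko_x$ are simple in $\cA_{\beta,\omega}$. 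The only subobjects that could destabilise them are of the form $F[1]\to\ko_x$, which are excluded because $\Im Z(F[1])=0$ forces phase $1$.
\end{itemize}

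\textbf{Step 2 (injectivity).} I show that every $\sigma\in V(S)$ is of the form in Step 1. Since every $\ko_x$ is stable of phase $1$, Bridgeland's classification of geometric stability conditions applies: all $\ko_x$ have the same phase, so the heart $\cP(0,1]$ is a tilt of $\Coh(S)$. Comparing phases of torsion sheaves, line bundles and stable sheaves then gives $\cP(0,1]=\cA_{\beta,\omega}$ with $\omega$ ample. Ampleness is forced because for a curve $C$, stability of $\ko_x$ must not be violated by $\ko_C\twoheadrightarrow\ko_x$. Next I use $\Omega=(1,\beta+\mathtt{i}\omega,*)$ together with the inequality $(\Omega,\Omega)\ge$ the value making $\Omega=\exp(\beta+\mathtt{i}\omega)$ up to a shift of the $s_2$-component. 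If necessary I absorb that shift by modifying $\omega^2$ along the positive cone, and verify $\Omega=\exp(\beta+\mathtt{i}\omega)$ exactly using Bridgeland's constraint $\Re Z(\ko_X)$-type inequalities. Since the slicing is then determined by its heart and $Z$, $\Zforg$ is injective on $V(S)$.

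\textbf{Step 3 (image).} Step 1 shows that the image contains $\{Z^\Bridgeland_{\beta,\omega}:(\beta,\omega)\in\mathscr{L}(S)\}$. For the reverse containment, suppose $\sigma\in V(S)$ has $Z=Z^\Bridgeland_{\beta,\omega}$ and some $\delta\in\Roots^+(S)$ satisfies $Z(\delta)\in\bR_{\le0}$. By Mukai and Kuleshov there is a rigid sheaf of class $\delta$, and after wall-crossing in $\omega$ there is a $\mu_\omega$-stable spherical bundle $F$ of class $\delta$. Then $F[1]$ or $F$ lies in $\cA_{\beta,\omega}$ with $Z$ of the wrong sign. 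The alternative is $Z=0$, contradicting the support property for $\sigma$.

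I expect the main obstacle to be this last existence statement: producing a $\sigma$-semistable object, or a $\mu_\omega$-stable spherical sheaf, of every root class $\delta$ with $\Im Z(\delta)=0$. It needs Mukai's theory of rigid sheaves and a careful treatment of the boundary where $\omega$ hits walls. The classification of the heart in Step 2 is the second delicate point.
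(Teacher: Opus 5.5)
The plan (realize $\mathscr{L}(S)$, classify geometric stability conditions, then use roots) is Bridgeland's. However, Step 2 asserts something that is false for $V(S)$ as defined just before the statement, so no inequality argument can establish it. Requiring only ``$\ko_x$ stable of phase $1$ and $Z(\ko_x)=-1$'' removes the $\bC$-part of the $G$-action, but the stabilizer of $-1$ in $\GL_2^+(\bR)$ still acts. This stabilizer consists of the matrices $g=\begin{pmatrix}1&a\\0&b\end{pmatrix}$ with $b>0$.

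Concretely, with $Z=Z^\Bridgeland_{\beta,\omega}$, the pair $(\Re Z+a\Im Z+\mathtt{i}\,b\Im Z,\ \cA_{\beta,\omega})$ lies in the $G$-orbit of $\sigma^\Bridgeland_{\beta,\omega}$. It has $\ko_x$ stable of phase $1$ with charge $-1$, so it belongs to $V(S)$. Its charge is $(\Omega',-)$ with
\[
    \Omega'=\Bigl(1,\ \beta+a\omega+\mathtt{i}b\omega,\ \tfrac12(\beta^2-\omega^2)+a\,\beta\cdot\omega+\mathtt{i}b\,\beta\cdot\omega\Bigr).
\]
The middle component forces the only candidate to be $\exp(\beta'+\mathtt{i}\omega')$ with $\beta'=\beta+a\omega$ and $\omega'=b\omega$. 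The last components then differ by $\tfrac12(a^2-b^2+1)\omega^2+\mathtt{i}ab\,\omega^2$, which vanishes only for $(a,b)=(0,1)$.

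So $\Zforg$ of this point is not of the form $Z^\Bridgeland_{\beta'',\omega''}$ at all. Its heart is also not $\cA_{\beta',\omega'}$: it is the tilt at $\mu_\omega=\beta\cdot\omega$, not at $\beta\cdot\omega+a\omega^2$. ``Absorbing the shift by modifying $\omega^2$'' is impossible, because $\omega$ is already fixed by the middle component of $\Omega$.

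What is missing is the normalization built into Bridgeland's $V(X)$: one additionally requires $\Zforg(\sigma)=Z^\Bridgeland_{\beta,\omega}$ for some $(\beta,\omega)$. This normalization is reached using the residual group, not by an inequality. For $\Omega=(1,B+\mathtt{i}W,x+\mathtt{i}y)$ whose real and imaginary parts span a positive plane, take $a=(y-B\cdot W)/W^2$ and the unique $b>0$ allowed by positivity.

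With this normalization, Step 2 is exactly Bridgeland's classification of geometric stability conditions in \cite{BridgelandK3}, and your injectivity argument works. With the literal definition, injectivity still holds if you first apply the unique such $g$. But the image of $V(S)$ is then strictly larger than $\{Z^\Bridgeland_{\beta,\omega}:(\beta,\omega)\in\mathscr{L}(S)\}$, so the image statement only holds with Bridgeland's normalization.

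There are also some smaller issues.
\begin{itemize}
\item In Step 3 you neither need, nor can generally obtain, a $\mu_\omega$-stable spherical bundle of class $\delta$. Since $\omega$ is fixed by $Z$, there is no ``wall-crossing in $\omega$'', and on a wall for $\delta$ the rigid sheaf may only be strictly semistable. What suffices is the Mukai--Kuleshov existence of a rigid $\mu_\omega$-semistable torsion-free sheaf $F$ with $s(F)=\delta$. If $Z(\delta)\in\bR_{\le0}$, then $\mu_\omega(F)=\beta\cdot\omega$, so $F\in\cF_{\beta,\omega}$ and $F[1]\in\cA_{\beta,\omega}$. Then $Z(F[1])=-Z(\delta)\in\bR_{\ge 0}$, contradicting that $Z$ is a stability function on this heart.
\item In Step 1, your reason for stability of $\ko_x$ is wrong: a phase-$1$ subobject $F[1]\hookrightarrow\ko_x$ would make $\ko_x$ strictly semistable, not stable. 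The correct reason is that $\ko_x$ has no nonzero proper subobjects in $\cA_{\beta,\omega}$. A sequence $0\to K\to A\to\ko_x\to 0$ with $K\in\cF_{\beta,\omega}$ and $A\in\cT_{\beta,\omega}$ is ruled out by comparing $\mu_\omega(K)=\mu_\omega(A)$ after splitting off torsion.
\item Extending Step 1 to irrational $(\beta,\omega)$ needs more than the local deformation theorem. You need path lifting through $\mathscr{L}(S)$ and control of the stability of $\ko_x$ along the path, with the heart then identified by the (normalized) Step 2.
\end{itemize}
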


We denote by $\sigma^\Bridgeland_{\beta,\omega}$ the element of $V(S)$ with central charge $Z_{\beta, \omega}^\Bridgeland$, if it exists. We point out that $\mathscr{L}(S)$ contains all pairs $(\beta, \omega)$ with $\omega$ ample and $\omega^2>2$, cf. \cite{BridgelandK3}*{Lem. 6.2}.

\begin{rem}[\cite{BridgelandK3}*{\textsection 10}] \label{remark:pathsK3_stableph1}
Given $(\beta,\omega) \in \mathscr{L}(S)$, the $\sigma^\Bridgeland_{\beta,\omega}$-stable objects of phase 1 are (i) skyscraper sheaves of points, and (ii) shifts $F[1]$ of $\omega$-stable vector bundles $F$ with $\ch_1(F).\omega=\beta.\omega$.
\end{rem}

To prove Theorem \ref{thm:pathsK3_main}, we construct paths in $V(S)$ satisfying \ref{axiom1} and \ref{axiom2}, with $(-,-)|_{Z_\infty}$ negative definite. Then, Proposition \ref{prop:gen2cy_axiom4} will guarantee that \ref{axiom4} holds, and so we can apply Theorem \ref{T:strongquasiconvergence}.

\begin{proof}[Proof of Theorem \ref{thm:pathsK3_main}(1)]
Choose an ample divisor $H$ on $S$, and write $\lambda=\sqrt{2/H^2}$. We define a path\footnote{Note that the path $\sigma_\bullet$ approaches $\lambda$, not $+\infty$.} $\sigma_\bullet=(Z_\bullet, \cA)\colon (\lambda, +\infty) \to \Stab(S)$ as follows. Set $\cA := \cA_{0, H}$, and
\[ 
    Z_t(E) = -s_2(E) + \frac{1}{2}s_0(E)t^2H^2 + \mathtt{i}H.s_1(E), 
\]
where $s(E)=(s_0(E),s_1(E),s_2(E))$. Note that $Z_t = \Re Z_{0,tH} + \frac{1}{t} \Im Z_{0,tH}$. This way, we have that $\sigma_t=g_t \cdot \sigma_{0, tH}^\Bridgeland$ for $g_t \in G$, and so $\sigma_t$ is a stability condition when $(0, tH) \in \mathscr{L}(S)$. Here $(tH)^2 =t^2H^2>2$ for $t>\lambda$, so $\sigma_t$ is a stability condition for all $t>\lambda$.

We claim that $\{\sigma_t\}_{t \to \lambda^+}$ is strongly quasi-convergent. To prove this, note that \ref{axiom1} and \ref{axiom2} follow by construction. Note that
\[ 
    Z_\lambda(E) = -s_2+s_0 + \mathtt{i} H.s_1, 
\]
so that $\ker Z_\lambda=\{(s_0, s_1, s_0): s_1.H=0\}$. Here
\[ ((s_0,s_1,s_0),(s_0,s_1,s_0)) = s_1^2-2s_0^2\leq0 \]
by the Hodge index theorem. Equality holds only for $s_0,s_1=0$, so $(-,-)|_{\ker Z_\Lambda}$ is negative definite, and \ref{axiom4} holds for $\nu=0$ by Proposition \ref{prop:gen2cy_axiom4}.

Next, we describe the massless objects of $\sigma_\bullet$, i.e. those objects $E \in \cA$ with $m_t(E)\to 0$ as $t \to \lambda^+$. By Lemma \ref{L:vanishing_order}, all such objects have constant phase 1. By Remark \ref{remark:pathsK3_stableph1}, the stable objects $E \in \cA$ of phase 1 are (i) skyscraper sheaves of points, or (ii) $E=F[1]$, for $F$ an $H$-stable vector bundle with $H.\ch_1(E)=0$. The former have $Z_\lambda(\ko_x)=-1$. For the vector bundles, using that $s_1(E)^2-2s_0(E)s_2(E)\geq-2$ we can write
\[ Z_\lambda(F[1]) \leq \frac{\ch_1(F)^2}{2\ch_0(F)}+\ch_0(F)-\frac{1}{\ch_0(F)}. \]
The first term is $\leq 0$ by the Hodge index theorem (with equality only for $\ch_1(F)=0$), while the sum of the last two is $\leq 0$ with equality only for $\ch_0(F)=1$. Thus, if $Z_\lambda(F[1])=0$, we must have $\ch_0(F)=1$, $\ch_1(F)=0$, and $s_1^2-2s_0s_2=-2$, i.e. $s_2(F)=1$. From here it follows that $E=\ko_S[1]$, and so $\ko_S$ is the only massless simple object of phase 1. 

To conclude, note that \ref{axiom4} holds for all $\nu>0$ by Lemma~\ref{L:qcpathsA4_massless}. We get that $\sigma_\bullet$ is strongly quasi-convergent by Theorem \ref{T:strongquasiconvergence}, with filtration $0\subset \langle \cO_S\rangle \subset \DCoh(S)$.
\end{proof}

For the second result, we recall the following fact about spherical vector bundles on K3 surfaces of Picard rank 1.

\begin{prop} \label{prop:pathsK3_sphericalvb}
Let $S$ be a K3 surface of Picard rank 1, with $\Pic(S)=\bZ H$ and $H$ ample. Let $(r, dH, \ell) \in \Roots^+(S)$ be an element with $r>0$. Then (i) $\gcd(r, d)=1$, (ii) the value $\ell$ is determined by $(r, d)$, (iii) there exists a unique spherical sheaf $F$ with $v(F)=(r, dH, \ell)$, and (iv) $F$ is $H$-stable and locally free.
\end{prop}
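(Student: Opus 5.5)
We prove (i) and (ii) by direct computation in the Mukai lattice, and (iii) and (iv) by citing the standard theory of rigid sheaves on K3 surfaces (Mukai, Kuleshov). Write $H^2 = 2n$. The root condition $((r,dH,\ell),(r,dH,\ell)) = -2$ reads
\[
    2nd^2 - 2r\ell = -2, \qquad\text{i.e.}\qquad r\ell = nd^2 + 1 .
\]
For (ii), this gives $\ell = (nd^2+1)/r$, which is determined by $(r,d)$ because $r>0$. For (i), any common divisor $g$ of $r$ and $d$ divides both $r\ell$ and $nd^2$, hence divides $1$. So $\gcd(r,d) = 1$, and in fact $\gcd(r,dH) = 1$ in $\NS(S) = \bZ H$.

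For (iii) and (iv), let $v = (r,dH,\ell)$. The key input is Mukai's theorem: for a primitive $v$ with $(v,v) = -2$ and $r>0$, the moduli space $M_H(v)$ of $H$-semistable sheaves is nonempty and consists of a single point. Since $v$ is primitive and $\gcd(r,d\,H^2) = 1$ (by the same argument as in (i), using $r\ell = nd^2+1$), $H$-semistability coincides with $H$-stability. We can therefore take $F$ to be the unique $\mu_H$-stable sheaf of class $v$.

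\emph{Local freeness.} Apply Lemma~\ref{lemma:gen2cy_roots}: $F$ is simple, so $(v,v) = -2 + \dim \Ext^1(F,F)$, which forces $\Ext^1(F,F) = 0$. Now consider the reflexive hull $F^{\vee\vee}$, which is again $\mu$-stable and satisfies $v(F^{\vee\vee}) = v + (0,0,k)$, where $k = \mathrm{length}(F^{\vee\vee}/F)$. Then $(v(F^{\vee\vee}),v(F^{\vee\vee})) = -2 - 2rk$. Stability of $F^{\vee\vee}$ implies it is simple, so by Lemma~\ref{lemma:gen2cy_roots} this self-pairing is $\ge -2$. Hence $k = 0$ and $F$ is locally free. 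The same argument shows $F$ is torsion free from the outset: a simple sheaf with $\mu$-stability is automatically torsion free.

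\emph{Uniqueness among spherical sheaves.} Let $F'$ be any spherical sheaf with $v(F') = v$. By Riemann--Roch, $\chi(F,F') = -(v,v) = 2$, so
\[
    \hom(F,F') + \hom(F',F) \ge 2 .
\]
If $F'$ is $\mu$-stable, a nonzero map between $\mu$-stable sheaves of the same class is an isomorphism, so $F' \cong F$. The remaining case is a spherical but unstable $F'$, and ruling it out is the main obstacle. Here we cite Mukai's result that a rigid torsion-free sheaf on a K3 surface of Picard rank 1 with primitive Mukai vector is $\mu$-stable. The idea of that argument is that the HN factors of a rigid sheaf are rigid with distinct slopes, and one derives a contradiction in rank one using the Hodge index theorem (see also Kuleshov). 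Since $(r,dH,\ell)$ with $r>0$ is primitive by (i), this yields $F' \cong F$.
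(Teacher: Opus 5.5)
Your argument for (i) and (ii) is the paper's computation. For (iii)--(iv) the paper only cites Huybrechts' book (Rem.~10.3.3), so your plan is the same route: cite Mukai--Kuleshov for existence, use the reflexive hull for local freeness, and use $\chi(F,F')=2$ for uniqueness. However, two of the ingredients you state are false as written.

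First, $\gcd(r,dH^2)=1$ does not follow from $r\ell=nd^2+1$. That relation only gives $\gcd(r,nd)=1$, and the factor $2$ in $H^2=2n$ can survive. For example, when $H^2=2$ the vector $(2,H,1)$ is a root with $\gcd(r,dH^2)=2$. You do not need this claim. Since $\NS(S)=\bZ H$, a subsheaf of rank $0<r'<r$ has $c_1=d'H$, and equal slope would force $d'/r'=d/r$, which is impossible because $\gcd(r,d)=1$. So $\mu_H$-semistable already implies $\mu_H$-stable.

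Second, the result you attribute to Mukai is false: a rigid torsion-free sheaf with primitive Mukai vector on a Picard rank one K3 need not be $\mu$-stable. The sheaf $\mathcal{O}_S\oplus\mathcal{O}_S(H)$ is rigid because $H^1(\mathcal{O}_S(\pm H))=0$. It is torsion free, has primitive vector $(2,H,2+H^2/2)$, and is not even $\mu$-semistable. Simplicity is essential. Your $F'$ is simple, but you never show that an arbitrary spherical $F'$ is torsion free, and the uniqueness claim requires this.

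Both gaps close with Mukai's lemma: if $0\to A\to F'\to B\to 0$ with $\Hom(A,B)=0$, then $A$ and $B$ are rigid.
\begin{itemize}
\item A nonzero torsion subsheaf $T$ would then be rigid, so $(v(T),v(T))=-2\dim\Hom(T,T)\le -2$. But $(v(T),v(T))=c_1(T)^2\ge 0$ in Picard rank one, a contradiction.
\item Suppose $F'$ had $m\ge 2$ Harder--Narasimhan factors with vectors $v_i=(r_i,d_iH,\ell_i)$. Each factor is rigid, so $(v_i,v_i)\le -2$, which gives $\ell_i>d_i^2H^2/(2r_i)$. Hence $(v_i,v_j)<-\frac{H^2}{2r_ir_j}(r_id_j-r_jd_i)^2\le 0$, so $(v_i,v_j)\le -1$ and $(v(F'),v(F'))\le -6$, a contradiction.
\end{itemize}
Therefore $F'$ is $\mu$-semistable, hence $\mu$-stable by the first point, hence $F'\cong F$.
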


\begin{proof}
    By assumption, $(dH)^2 - 2r\ell =-2$, i.e. $d^2\frac{H^2}{2}-r\ell=-1$. Parts (i) and (ii) follow immediately from this computation. Parts (iii) and (iv) are discussed in \cite{HuybrechtsK3}*{Rem. 10.3.3}.
\end{proof}

\begin{proof}[Proof of Theorem \ref{thm:pathsK3_main}(2)]
Write $\Pic(S)=\bZ H$, where $H$ is ample. Let $s(G)=(r, dH, \ell)$ be the Mukai vector of the given spherical vector bundle $G$.

We consider the heart $\cA=\cA_{bH, H}$, where $b=d/r$, and the central charge
\begin{equation} \label{eq:pathsK3_cchargesph}
Z_t(E) = -s_2(E) +bH.s_1(E) + \frac{1}{2}s_0(E)(t^2H^2-b^2H^2) + \mathtt{i}H.(s_1(E)-bH s_0(E)).
\end{equation}
As before, $\sigma_t:=(Z_t, \cA_t)$ is in the $G$-orbit of $\sigma^\Bridgeland_{bH, tH}$, and so it is a Bridgeland stability condition once $(bH, tH) \in \mathscr{L}(S)$.

We claim that $(bH, tH) \in \mathscr{L}(S)$ for all $t>\lambda =:\sqrt{2/(r^2H^2)}$. By Theorem \ref{thm:pathsK3_geochamber}, we need to check that $Z_{bH, tH}(r',d'H,\ell') \notin \bR_{\leq 0}$ for all $(r', d'H, \ell') \in \Roots^+(S)$. We argue by contradiction: if $Z_{bH, tH}(r', d'H, \ell') \in \bR_{\leq 0}$, then 
\[ 0 = \Im Z_t(r',d'H, \ell')=H^2(d'-br'). \]
Thus, we must have $d'/r'=b=d/r$. But $\gcd(d',r')=\gcd(d,r)=1$ and $r,r'>0$, so $d=d'$ and $r=r'$. Thus, $\ell=\ell'$ as well by Proposition \ref{prop:pathsK3_sphericalvb}. But
\[ \Re Z_t(r, dH, \ell)=-\ell+bd H^2 + \frac{r}{2}(t^2H^2-b^2H^2) > -\ell + bdH^2+\frac{r}{2}\left( \frac{2}{r^2}-b^2H^2\right) =0. \]
The equality on the right hand side follows from $b=d/r$ and $d^2H^2-2r\ell=-2$, as $(r, dH, \ell) \in \Roots(\Lambda)$. This contradicts the assumption that $Z_{bH, tH}(r, dH, \ell) \in \bR_{\leq 0}$.

Now, let us show that $\sigma_\bullet$ is quasi-convergent as $t \to \lambda^+$. Conditions \ref{axiom1} and \ref{axiom2} hold by construction. If $(s_0,s_1,s_2) \in \ker Z_{\lambda}$, then by \eqref{eq:pathsK3_cchargesph}
\begin{align*}
((s_0,s_1,s_2),(s_0,s_1,s_2)) &= s_1^2-2s_0s_2 \\
&= (s_1-bHs_0)^2 - 2s_0(s_2-bHs_1 + \frac{1}{2}(bH)^2 s_0) \\
&= (s_1-bHs_0)^2 - \frac{2}{r^2}s_0^2.
\end{align*}
As $H.(s_1-bHS_0)=0$, the Hodge index theorem ensures that this expression is non-positive. Equality holds if $s_1-bHs_0=0$ and $s_0=0$, which in turn implies that $s_0=s_1=s_2=0$. This way, $(-,-)|_{\ker Z_{\lambda}}$ is negative definite, and so the path $\{\sigma_t\}_{t \to \lambda}$ satisfies \ref{axiom4} for $\nu =0$ by Proposition \ref{prop:gen2cy_axiom4}.

Finally, we determine the massless objects for $\sigma_\bullet$. By Lemma \ref{L:masslesscharacterization}, they all have constant phase 1. By Remark \ref{remark:pathsK3_stableph1}, the $\sigma_t$-stable objects of phase 1 are skyscraper sheaves and shifts $F[1]$ of $\mu_H$-stable vector bundles $F$ with $\mu_H(F)=bH^2$. We have $Z_t(\ko_x) = -1$, hence skyscraper sheaves are not massless. Thus, we let $E=F[1]$, for $F$ a $\mu_H$-stable vector bundle with $\mu_H(F)=bH^2$. Write $v(F)=(r',d'H,\ell')$, so that $\mu_H(E)=\mu_H(F)=d'H^2/r'$. As $b=d/r$ and $\gcd(r,d)=1$, we get that $d'=ad$ and $r'=ar$ for some $a \geq 1$. We also have that $(d'H)^2 - 2r'\ell'= -2+\dim \Ext^1(F,F)$. This way, for $t\geq \lambda$ we have
\begin{align*}
Z_t(F[1]) &= \ell' - bH.d'H - \frac{1}{2} r'(t^2H^2-b^2H^2) \\
&= \left(\frac{1}{ar}+ \frac{ad^2H^2}{2r} - \frac{1}{2ar}\dim \Ext^1(F,F)\right) - \frac{ad^2}{r}H^2 - \frac{1}{2}ar\left(t^2H^2-\frac{d^2}{r^2}H^2\right) \\ 
&= \frac{1}{ar} - \frac{1}{2}art^2H^2 - \frac{1}{2ar}\dim \Ext^1(F,F).
\end{align*}
If $F[1]$ is massless, then $Z_\lambda(F[1])=0$, i.e.
\[ 0 = \frac{1}{ar} - \frac{a}{r} - \frac{1}{2ar}\dim \Ext^1(F,F) = \frac{1-a^2}{ar} - \frac{1}{2ar}\dim \Ext^1(F, F). \]
It follows that $a=1$ and $\Ext^1(F, F)=0$, hence $F$ is spherical with $s(F)=(r,dH,\ell)$. We get that $F=G$ by Proposition \ref{prop:pathsK3_sphericalvb}. 

This way, the only massless stable object of phase 1 is $G[1]$. We get that \ref{axiom4} holds for all $\nu>0$ by Lemma \ref{L:qcpathsA4_massless}, and so $\sigma_\bullet$ is strongly quasi-convergent by Theorem \ref{T:strongquasiconvergence}. The filtration associated to the quasi-convergent path is $0\subset \langle G \rangle \subset \DCoh(S)$.
\end{proof}

We prove Theorem \ref{thm:pathsK3_main}(3) in several steps. To start, we fix notation. Let $f\colon S \to T$ be a birational map, where $S$ is a smooth, projective K3 and $T$ is a normal, projective surface with ADE singularities. We denote by $C_1, \dots, C_n$ the exceptional curves of $f$. Each $C_i$ is a smooth, rational curve of self-intersection $-2$. In particular,
\[ \ch(\ko_{C_i}(-1))=(0,C,0) = s(\ko_{C_i}(-1)). \]
We have a well-defined pullback map $f^\ast\colon \NS(T)_\bQ \to \NS(S)_\bQ$.

Fix $\eta \in \Amp(T)_\bQ$ with $\eta^2>2$. The pullback $f^\ast\eta$ is a nef $\bQ$-divisor in $S$, satisfying $(f^\ast\eta)^2=\eta^2>2$. We will approximate $f^\ast\eta$ by a path $\omega_t$ of ample divisors on $S$, in such a way that $\omega_t^2$ remains constant.

\begin{lem} \label{lemma:pathsK3_ordervanishing}
For any decomposition $\{1, \dots, n\}=I_1 \sqcup \dots \sqcup I_r$, there exists an analytic path $(\omega_t)_{t \in (0, \epsilon)}$ in $\Amp(S)_\bR$ such that: \vspace{-2mm}
\begin{enumerate}
    \item $\omega_t \to f^\ast\eta$ as $t \to 0$;\vspace{-2mm}
    \item for each $1\leq k\leq r$ and any $i \in I_k$, we have that $\omega_t.C_i \sim t^k$;\vspace{-2mm}
    \item $\omega_t^2$ is constant.\vspace{-2mm}
\end{enumerate}
\end{lem}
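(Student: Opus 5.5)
We will construct $\omega_t$ as a perturbation of $f^\ast\eta$ by a combination of the exceptional curves, followed by a rescaling that restores the self-intersection.

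\textbf{Choice of divisor.} The intersection matrix $M=(C_i\cdot C_j)$ is negative definite, since the $C_i$ span disjoint ADE configurations. Hence it is invertible over $\bQ$. We look for $D=-\sum_j d_j C_j$ with $d_j>0$, such that $D\cdot C_i>0$ for all $i$. The natural candidate is $d=-M^{-1}e$ with $e_i>0$. For ADE Cartan matrices the inverse of the Cartan matrix $-M$ has strictly positive entries on each connected block, so $d$ has positive entries and $D\cdot C_i=e_i>0$.

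\textbf{Definition of the path.} Fix the weights $e_i=t^k$ for $i\in I_k$. Since $d(t)$ depends linearly on $e$, it is polynomial in $t$, and we set
\[
D_t=-\sum_j d_j(t)\,C_j,\qquad D_t\cdot C_i=t^k \text{ for } i\in I_k .
\]
Because $f^\ast\eta\cdot C_i=0$, the class $\theta_t:=f^\ast\eta+D_t$ satisfies $\theta_t\cdot C_i=t^k$. Moreover $\theta_t^2=\eta^2+D_t^2$, where $D_t^2=-\sum_j d_j(t)\,e_j(t)<0$ and $D_t^2=O(t^2)$.

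Set $\omega_t:=c(t)\,\theta_t$ with $c(t)=\sqrt{\eta^2/(\eta^2+D_t^2)}$. Then $\omega_t^2=\eta^2$ is constant, which is (3). The function $c(t)$ is real analytic near $t=0$ with $c(0)=1$, so $\omega_t\cdot C_i=c(t)t^k\sim t^k$, which is (2). Finally $\omega_t\to f^\ast\eta$ as $t\to 0$, which is (1). The path is analytic in $t$ and takes values in $\NS(S)_\bR$.

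\textbf{Ampleness.} This is the main obstacle. We apply Nakai--Moishezon, or positivity against the cone of curves on a K3. We have $\omega_t^2>0$ and $\omega_t\cdot C_i>0$ for the exceptional curves. For any other irreducible curve $C$, $f^\ast\eta\cdot C=\eta\cdot f_\ast C>0$, because $\eta$ is ample on $T$ and $C$ is not contracted. However, $D_t\cdot C$ is negative but only of size $O(t)$, and uniformity in $C$ needs care.

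The plan is to write $\theta_t=(1-s)f^\ast\eta+\bigl(sf^\ast\eta+D_t\bigr)$ and to show that $\epsilon f^\ast\eta-\sum_j d_jC_j$ is ample for small $d_j$ satisfying the positivity $(-\sum_j d_jC_j)\cdot C_i>0$. This is the standard fact that $f^\ast(\text{ample})-\sum_j a_jC_j$ is $f$-ample and hence ample for small $a_j$ whose sum is $f$-ample, via the relative ampleness criterion for the contraction $f$.

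Concretely, we argue as follows:
\begin{itemize}
\item Fix a single ample class $A_0=mf^\ast\eta-\sum_j d_j^0C_j$ with $d^0=-M^{-1}e^0$.
\item By openness of the ample cone, $A_0+\sum_j\epsilon_jC_j$ remains ample for $\lvert\epsilon\rvert$ small.
\item Express $\theta_t$ as a positive multiple of such a class plus a nonnegative multiple of $f^\ast\eta$. This is possible since $d(t)/\lVert d(t)\rVert$ ranges over a compact subset of the open cone $\{-M^{-1}e:e>0\}$.
\end{itemize}
The last point needs a uniform argument: either show the whole ray directions stay in a compact subset of the $f$-ample cone, or shrink $\epsilon$. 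Once this holds, we conclude that $\omega_t$ is ample for $t\in(0,\epsilon)$.
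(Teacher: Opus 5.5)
Your construction of the path and your verification of (1)--(3) are correct and follow the paper. You perturb $f^\ast\eta$ inside the span of the $C_i$ to prescribe the intersection numbers with the $C_i$, then rescale by $\sqrt{\eta^2/(\omega'_t)^2}$; your explicit $D_t$ with $D_t\cdot C_i=t^k$ is a concrete choice of the paper's $\omega'_t$. The gap is the ampleness step. You flag it but do not close it, and the argument you sketch fails as soon as $r\ge 2$. The claim that $d(t)/\lVert d(t)\rVert$ stays in a compact subset of the open cone $\{-M^{-1}e : e>0\}$ is false. Since $e_i(t)=t^k$ for $i\in I_k$, the direction $e(t)/\lVert e(t)\rVert$ tends to the normalized indicator vector of $I_1$. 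Hence $D_t\cdot C_i/\lVert D_t\rVert\asymp t^{k-1}\to 0$ for $i\in I_k$ with $k\ge 2$, and the direction of $D_t$ converges to the boundary of the relatively ample cone. So no fixed $A_0=mf^\ast\eta-\sum_j d^0_jC_j$ with a fixed neighbourhood, nor any compact family of such classes, can absorb $D_t$ for all small $t$. Shrinking the time interval does not help, because the degeneration happens exactly as $t\to 0$. Your argument only works for $r=1$, where the direction of $D_t$ is constant.

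The missing ingredient is the one the paper uses: near $f^\ast\eta$ the ample cone is cut out by the $C_i$ alone. That is, there is a neighbourhood $U$ of $f^\ast\eta$ in $\NS(S)_\bR$ with $\Amp(S)_\bR\cap U=\{\lambda:\lambda\cdot C_i>0\text{ for all }i\}\cap U$. This holds for four reasons:
on a K3 surface the ample cone is the part of the positive cone on which all $(-2)$-curves are positive;
the walls $C^\perp$ are locally finite in the interior of the positive cone;
$(f^\ast\eta)^2=\eta^2>0$;
and the only $(-2)$-curves orthogonal to $f^\ast\eta$ are the $C_i$.
Given this, $\theta_t\to f^\ast\eta$ and $\theta_t\cdot C_i=t^k>0$ immediately make $\theta_t$, hence $\omega_t$, ample for small $t$. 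Positivity of $-M^{-1}$ is not even needed. Alternatively, you can repair your own route with a uniform inequality drawn from a single ample class $A_0=mf^\ast\eta-\sum_j d^0_jC_j$ with all $d^0_j>0$. For every irreducible curve $C\notin\{C_1,\dots,C_n\}$ one has $C_j\cdot C\ge 0$ and $A_0\cdot C>0$, hence $C_j\cdot C\le (m/d^0_j)\,f^\ast\eta\cdot C$. Since $d_j(t)\ge 0$, this gives $\theta_t\cdot C\ge\bigl(1-m\sum_j d_j(t)/d^0_j\bigr)\,f^\ast\eta\cdot C>0$ for small $t$, uniformly in $C$. Together with $\theta_t\cdot C_i>0$ and $\theta_t^2>0$, ampleness follows by Nakai--Moishezon for $\bR$-classes, or by the description of the K3 ample cone.
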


\begin{proof}
Recall that there exists a neighborhood $U \subset \NS(S)_\bR$ of $f^\ast\eta$, such that
\[ \Amp(S)_\bR \cap U = \bigcap_{i=1}^n \{\lambda: \lambda.C_i > 0\} \cap U. \]
The curves $C_1, \dots, C_n$ are linearly independent in $\NS(S)_\bR$ since the matrix $\{C_i.C_j\}_{i, j}$ is negative definite, and so $\Amp(S)_\bR$ is an intersection of $n$ independent half-spaces in a neighborhood of $f^\ast\eta$. This way, it is clear that we can take a path $\omega'_\bullet$ satisfying the first two conditions. In particular, $\eta^2/(\omega'_t)^2$ is analytic and converges to 1.

Finally, set $\omega_t = \omega'_t \cdot\sqrt{\eta^2/(\omega'_t)^2}$. By construction, $\omega_t$ is analytic and $\omega_t^2=\eta^2$. Finally, $\omega_t.C_i$ and $\omega'_t.C_i$ have the same order of vanishing, hence the second condition holds as well.
\end{proof}

We pick a partition $\{1, \dots, n\} = I_1 \sqcup \dots \sqcup I_r$, and fix $\omega_\bullet\colon (0, \epsilon) \to \Amp(S)_\bR$ as in Lemma \ref{lemma:pathsK3_ordervanishing}. We construct a path $\sigma_\bullet\colon (0, \epsilon) \to \Stab(S)$ as follows: take $\sigma_t:= \sigma^\Bridgeland_{0,\omega_t}\cdot \Phi$ where $\Phi = (T,f) \in G$ is the element given by 
\[
    T=
    \begin{pmatrix} 
    0 & 1 \\ 
    -1 & 0 
    \end{pmatrix} 
    \text{ and } 
    f(\phi)=\phi-1/2.
\] 
In other words, $\sigma_t = (Z_t, \cP_t)$ is a stability condition satisfying $\cP_t(1/2, 3/2] = \cA_{0,\omega_t}$ and 
\begin{equation} \label{eq:pathsK3_Cchargecurves}
\qquad Z_t(s_0, s_1, s_2) = -\omega_t\cdot s_1 + \mathtt{i} \left( -s_2+\frac{\omega_t^2}{2}s_0\right) = -\omega_t \cdot s_1 + \mathtt{i} \left( -s_2 + \frac{\eta^2}{2}s_0 \right).
\end{equation}
Our assumption that $\omega_t^2=\eta^2>2$ ensures that these stability conditions are defined for $t \in (0, \epsilon)$, i.e. that $(0, \omega_t) \in \mathscr{L}(S)$. Note that $Z_t(\ko_{C_i}(-1)) = -\omega_t.C_i \in \bR_{<0}$ by construction.

\begin{lem} \label{lemma:pathsK3_birqconv}
The path $\sigma_\bullet$ satisfies \ref{axiom1}, \ref{axiom2}, and \ref{axiom4} for $\nu=0$. 
\end{lem}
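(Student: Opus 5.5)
Axioms \ref{axiom1} and \ref{axiom2} can be read off directly from \eqref{eq:pathsK3_Cchargecurves}. First note that $\sigma_t$ is a stability condition in our sense: it is an element of $\Stab(S)$ (given as $\sigma^\Bridgeland_{0,\omega_t}\cdot\Phi$) with heart $\cP_t(0,1]$. The imaginary part of the central charge is $B(s_0,s_1,s_2)=-s_2+\frac{\eta^2}{2}s_0$, and this is independent of $t$ because $\omega_t^2=\eta^2$ is constant by Lemma~\ref{lemma:pathsK3_ordervanishing}(3). This gives \ref{axiom1}. The real part is $A_t=-\omega_t\cdot s_1$. The path $\omega_t$ is analytic in $t$ near $0$, so after reparametrizing by $u=1/t$ (or simply reading \ref{axiom2} with $t\to0$ in place of $t\to\infty$, as was done in part (1)), $A_t$ has a convergent power series expansion. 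Its limit is $a_0(s)=-f^\ast\eta\cdot s_1$. This gives \ref{axiom2}, with
\[
Z_0(s_0,s_1,s_2)=-f^\ast\eta\cdot s_1+\mathtt{i}\left(-s_2+\tfrac{\eta^2}{2}s_0\right).
\]

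For \ref{axiom4} with $\nu=0$, I would apply Proposition~\ref{prop:gen2cy_axiom4}, so it remains to show that the Mukai pairing is negative definite on $\ker Z_0$. Take $s\in\ker Z_0$. Then $s_2=\frac{\eta^2}{2}s_0$ and $f^\ast\eta\cdot s_1=0$, so
\[
(s,s)=s_1^2-2s_0s_2=s_1^2-\eta^2s_0^2.
\]
The class $f^\ast\eta$ is nef and big with $(f^\ast\eta)^2=\eta^2>0$. By the Hodge index theorem for such a class, $s_1\perp f^\ast\eta$ implies $s_1^2\le 0$. Hence $(s,s)\le -\eta^2s_0^2\le0$.

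The main point is the equality case. If $(s,s)=0$, then $s_0=0$ (and therefore $s_2=0$), together with $s_1^2=0$ and $s_1\cdot f^\ast\eta=0$. Hodge index gives that the intersection form is negative definite on $(f^\ast\eta)^\perp$, because $f^\ast\eta$ has positive square. (This orthogonal complement contains the span of the $C_i$, where the form is negative definite, which is consistent.) It follows that $s_1=0$, so $(-,-)|_{\ker Z_0}$ is negative definite and Proposition~\ref{prop:gen2cy_axiom4} yields \ref{axiom4} for $\nu=0$.

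The one thing to handle with care is that $\ker Z_0$ has to be taken in $\Lambda_\bR$ rather than in $\Lambda$. The argument above is purely real-linear, so it goes through without change.
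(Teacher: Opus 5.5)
Your proof is correct and follows the paper's argument. (A1) and (A2) are read off from the formula for $Z_t$ using $\omega_t^2=\eta^2$ and the analyticity of $\omega_t$. (A4) for $\nu=0$ then follows from Proposition~\ref{prop:gen2cy_axiom4}, once the Hodge index theorem shows that $s_1^2-\eta^2 s_0^2$ is negative definite on $\ker Z_0$. Your remarks on the equality case (the form is negative definite on $(f^\ast\eta)^\perp$ because $(f^\ast\eta)^2>0$) and on working in $\Lambda_\bR$ only make explicit what the paper leaves implicit.
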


\begin{proof}
The path $\sigma_\bullet$ satisfies \ref{axiom1} by \eqref{eq:pathsK3_Cchargecurves}. By construction $\omega_t$ depends analytically on $t$, so $Z_t$ satisfies \ref{axiom2}. We claim that $(-,-)$ is negative definite on $\ker Z_0$. If $(s_0,s_1,s_2) \in \ker Z_0$, then $s_1 \cdot f^\ast\eta=0$ and $s_2=\frac{\eta^2}{2}s_0$, so we have
\[ 
    ((s_0,s_1,s_2),(s_0,s_1,s_2)) = s_1^2-2s_0s_2 = s_1^2 - \eta^2 s_0^2 \leq 0
\]
by the Hodge index theorem. Equality holds when $s_1=0$ and $s_0=0$, i.e. only for $(s_0,s_1,s_2)=0$. By Proposition \ref{prop:gen2cy_axiom4}, the path $\sigma_\bullet$ satisfies \ref{axiom4}.
\end{proof}

As in the other parts of the theorem, we need to classify the objects $E \in \cP_t(1)$ that are massless for $\sigma_\bullet$. The proof is divided into three steps. We first check that the massless objects of phase 1 are supported on the exceptional locus of $f$. Then, we prove that each $\ko_{C_i}(-1)$ is $\sigma_t$-stable of phase 1. Finally, we use this description to show that $\{\ko_{C_i}(-1)\}$ are the only massless $\sigma_t$-stable objects of phase 1.

\begin{lem}[cf. \cite{VilchesCurves}*{Lem. 3.3}] \label{lemma:pathsK3_stabletorsion}
Let $E$ be a $\sigma_t$-stable object of phase 1 for some $t$, and assume that $Z_t(E) \to 0$ as $t \to 0^+$. Then $E=H^0(E)$ is a pure dimension 1 sheaf, supported set-theoretically on the exceptional locus of $f$.
\end{lem}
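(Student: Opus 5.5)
The plan is to work in the Bridgeland normalization, read off numerical constraints from $Z_t(E)\to 0$, and then use the structure of the heart $\cA_{0,\omega_t}$ to identify $E$.

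\textbf{Reduction to the heart $\cA_{0,\omega_t}$.} Fix $E$ as in the statement. Since $\sigma_t=\sigma^\Bridgeland_{0,\omega_t}\cdot\Phi$, $E$ being $\sigma_t$-stable of phase $1$ means $E$ is $\sigma^\Bridgeland_{0,\omega_t}$-stable of phase $1/2$. Hence $E\in\cA_{0,\omega_t}$ and $Z^\Bridgeland_{0,\omega_t}(E)\in \mathtt{i}\bR_{>0}$. Write $s(E)=(s_0,s_1,s_2)$, which does not depend on $t$. By \eqref{eq:pathsK3_Cchargecurves}, phase $1$ gives $s_2=\frac{\eta^2}{2}s_0$. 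Letting $t\to 0^+$ in $Z_t(E)=-\omega_t\cdot s_1\to 0$ gives $f^\ast\eta\cdot s_1=0$.

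\textbf{Numerical step.} Lemma~\ref{lemma:gen2cy_roots} applies to the stable object $E$ and gives $s_1^2-2s_0s_2\ge -2$, that is, $s_1^2\ge \eta^2 s_0^2-2$. Since $f^\ast\eta$ is nef with $(f^\ast\eta)^2=\eta^2>0$, the Hodge index theorem makes the intersection form negative definite on $(f^\ast\eta)^\perp\subset\NS(S)_\bR$; this is the same argument as in Lemma~\ref{lemma:pathsK3_birqconv}. Hence $s_1^2\le 0$, with equality only if $s_1=0$. Combining the two inequalities gives $\eta^2s_0^2\le 2$. Because $\eta^2>2$ and $s_0\in\bZ$, we get $s_0=0$, hence $s_2=0$ and $\ch_0(E)=0$. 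In particular $s_1=\ch_1(E)$ is a nonzero class (as $Z_t(E)\ne0$) with $f^\ast\eta\cdot\ch_1(E)=0$ and $\ch_1(E)^2\ge -2$.

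\textbf{Showing $H^{-1}(E)=0$.} This is the step I expect to be the main obstacle. I would first use the fact that each skyscraper $\ko_x$ is $\sigma^\Bridgeland_{0,\omega_t}$-stable of phase $1>1/2$, so $\Hom(\ko_x,E)=0$ for all $x$. By the standard argument of \cite{BridgelandK3}*{\S10}, this forces $F:=H^{-1}(E)$ to be locally free, with all $\mu_{\omega_t}$-HN factors of slope $\le 0$.

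Suppose $F\ne0$. Then $F[1]\subset E$ in $\cA_{0,\omega_t}$, and stability of $E$ at phase $1/2$ requires $\Re Z^\Bridgeland_{0,\omega_t}(F[1])\ge 0$, i.e. $\ch_2(F)\ge \ch_0(F)\bigl(\tfrac{\omega_t^2}{2}-1\bigr)$. To contradict this, I would apply the Bogomolov inequality to the HN factors $F_j$ of $F$, in the sharpened K3 form $\ch_2(F_j)\le \frac{\ch_1(F_j)^2}{2\ch_0(F_j)}-\ch_0(F_j)+\frac{1}{\ch_0(F_j)}$ for stable factors, together with Hodge index $\ch_1(F_j)^2\le (\omega_t\cdot\ch_1(F_j))^2/\omega_t^2$. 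The aim is to show $\Re Z^\Bridgeland(F_j[1])<0$, as in the proof of part (1) where $\omega^2>2$ was decisive. If controlling the term $(\omega_t\cdot\ch_1)^2$ for very negative slopes is problematic, I would instead compare $F[1]$ with the phase of $E$ directly, choosing $t$ close to the value where $E$ is stable, and follow \cite{VilchesCurves}*{Lem.~3.3}.

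\textbf{Conclusion.} Once $F=0$, $E=H^0(E)$ is a sheaf of rank $0$, hence torsion. It has no $0$-dimensional subsheaf: such a subsheaf $\ko_x\hookrightarrow E$ would be a subobject of phase $1$, destabilizing $E$. So $E$ is pure of dimension $1$. Its first Chern class $\ch_1(E)=\sum m_k[D_k]$ is effective, with $m_k>0$ and $D_k$ the support curves. Since $f^\ast\eta$ is nef and $f^\ast\eta\cdot\ch_1(E)=0$, every $D_k$ satisfies $f^\ast\eta\cdot D_k=0$. As $\eta$ is ample on $T$, this means each $D_k$ is contracted by $f$, so $E$ is supported set-theoretically on the exceptional locus $C_1\cup\dots\cup C_n$.
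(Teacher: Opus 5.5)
Your reduction, your numerical step and your conclusion are sound. The Mukai-vector argument giving $\ch_0(E)=0$ is a tidy way to get torsion once $H^{-1}(E)=0$ is known, and your support argument via nefness of $f^\ast\eta$ is more careful than the paper's one line.

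The gap is the step you flag: you never prove $H^{-1}(E)=0$, and the strategy you propose cannot work. You aim to show $\Re Z^{\Bridgeland}_{0,\omega_t}(F_j[1])<0$ for the HN factors of $F=H^{-1}(E)$. This is false for general objects of $\cF_{0,\omega_t}$. For an ample $H$ and $F=\ko_S(-nH)$, which is $\mu$-stable of negative slope, one has $\Re Z^{\Bridgeland}_{0,\omega_t}(F[1])=\tfrac{n^2H^2}{2}+1-\tfrac{\eta^2}{2}>0$ once $n\gg0$. Your sharpened Bogomolov bound plus Hodge index only yields negativity when $(\omega_t\cdot\ch_1(F_j))^2<\eta^2\bigl(\eta^2\ch_0(F_j)^2-2\bigr)$. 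So some control of $\omega_t\cdot\ch_1$ is indispensable, and nothing in your plan supplies it. The fallback of deferring to \cite{VilchesCurves} is not an argument.

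The missing idea is to run the inequality the other way and let the masslessness hypothesis supply the control.

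First, by Lemma~\ref{L:phase_one_independent}, $E\in\cP_{\sigma_t}(1)$ for \emph{all} $t$. You only use a single $t$, but the contradiction needs $t\to0^+$, or at least one $t$ with $\omega_t\cdot\ch_1(E)$ small.

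For each $t$, let $F_t$ be the first $\mu_{\omega_t}$-HN factor of $H^{-1}(E)$, so that $F_t[1]\hookrightarrow E$ in $\cA_{0,\omega_t}$. Stability of $E$ at Bridgeland phase $1/2$ forces $\Re Z^{\Bridgeland}_{0,\omega_t}(F_t[1])\ge0$, i.e. $\ch_2(F_t)\ge\frac{\eta^2-2}{2}\ch_0(F_t)$. Bogomolov--Gieseker then gives $\ch_1(F_t)^2\ge(\eta^2-2)\ch_0(F_t)^2\ge\eta^2-2$. Hodge index with $\omega_t^2=\eta^2$, together with $\mu_{\omega_t}(F_t)\le0$, gives $\omega_t\cdot\ch_1(F_t)\le-\sqrt{\eta^2(\eta^2-2)}$.

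Since $\Im Z^{\Bridgeland}_{0,\omega_t}\ge0$ on the heart, we get
\[
\omega_t\cdot\ch_1(E)=\Im Z^{\Bridgeland}_{0,\omega_t}(E)\ge\Im Z^{\Bridgeland}_{0,\omega_t}(F_t[1])\ge\sqrt{\eta^2(\eta^2-2)}
\]
for all $t$. This contradicts $Z_t(E)=-\omega_t\cdot\ch_1(E)\to0$. This is exactly the paper's argument, written there in the rotated normalization $Z_t=\mathtt{i}\,Z^{\Bridgeland}_{0,\omega_t}$.
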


\begin{proof}
Let $E$ be a $\sigma_t$-stable object of phase 1 for some $t$. By Lemma \ref{L:phase_one_independent}, we have that $E$ is $\sigma_t$-stable of phase 1 for all $t$. In particular, we have that $E \in \cA_{0,\omega_t}$ for all $t>0$. Then $E$ has at most two non-zero cohomology sheaves, namely
\[ H^0(E) \in \cT_{0, \omega_t}, \quad H^{-1}(E) \in \cF_{0, \omega_t}, \quad H^i(E)=0 \text{ otherwise}. \]

Assume that $H^{-1}(E) \neq 0$. Given $t$, let $F_t \subset H^{-1}(E)$ be the first Harder--Narasimhan factor of $H^{-1}(E)$. Since $H^{-1}(E) \in \cF_{0, \omega_t}$, we have that $\mu_{\omega_t}(F_t) \leq 0$. By construction, we get an injection $F_t[1] \hookrightarrow H^{-1}(E)[1]$ in $\cA_{0,\omega_t}$, hence an injection $F_t[1] \hookrightarrow E$. But $E \in \cP_{\sigma_\bullet}(1)$, hence $F_t[1]$ must have phase equal to 1.

It follows that $\Im Z_t(F_t[1])\geq 0$, and so
\[ 0 \leq \Im Z_t(F_t[1]) = s_2(F_t)-\frac{\eta^2}{2}s_0(F_t).\]
But $F_t$ is $\omega_t$-semistable, hence $\ch_1(F_t)^2\geq 2\ch_0(F_t)\ch_2(F_t)$ by the Bogomolov--Gieseker inequality. Putting these two inequailities together yields
\[ \ch_1(F_t)^2 \geq (\eta^2-2) \ch_0(F_t)^2 \geq (\eta^2-2). \]
The Hodge index theorem allows us to bound $\eta^2\ch_1(F_t)^2 =\omega_t^2\ch_1(F_t)^2\leq (\ch_1(F_t).\omega_t)^2$, and so 
\[ (\ch_1(F_t).\omega_t)^2 \geq \eta^2(\eta^2-2) \implies \ch_1(F_t).\omega_t \leq - \sqrt{\eta^2(\eta^2-2)}, \]
where we used that $\ch_1(F_t).\omega_t\leq 0$. 

Note that $F_t[1] \hookrightarrow E$ in $\cA_{0,\omega_t}$ implies that $0\geq\Re Z_t(F_t[1]) \geq \Re Z_t(E)$, and thus $\Re Z_t(E) \leq -\sqrt{\eta^2(\eta^2-2)}$. This bound holds for all $t$, which contradicts the convergence $\lim Z_t(E)=0$. Thus, we must have $H^{-1}(E)=0$.

A similar argument shows that $H^0(E)$ must be torsion. Note that $E$ is pure of dimension 1, as any skyscraper subsheaf will destabilize $E$ (and skyscraper sheaves are not massless). Finally, from $0 = \lim \Re Z_t(E) = -f^\ast\eta.\ch_1(E)$ we get that the support of $E$ is contained in $C_1 \cup \dots \cup C_n$.
\end{proof}

\begin{lem} \label{lemma:pathsK3_masslessbir}
For each $1 \leq i \leq n$, the object $\ko_{C_i}(-1)$ is $\sigma_\bullet$-stable of phase 1. 
\end{lem}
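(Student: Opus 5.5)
The plan is to use that $\cP_{\sigma_\bullet}(1)$ is independent of $t$ (Lemma~\ref{L:phase_one_independent}). It then suffices to show that $\ko_{C_i}(-1)$ lies in $\cP_t(1)$ and is a simple object of the finite-length abelian category $\cP_t(1)$ for a single, suitably small $t$. Stability at one $t$ then propagates to every $t$, since the category $\cP_t(1)$ and its subobject lattices do not change with $t$.

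\emph{Phase one.} $\ko_{C_i}(-1)$ is a torsion sheaf, so it lies in $\cT_{0,\omega_t}\subset \cA_{0,\omega_t}=\cP_t(1/2,3/2]$. By \eqref{eq:pathsK3_Cchargecurves} we have $Z_t(\ko_{C_i}(-1)) = -\omega_t.C_i\in\bR_{<0}$, because $s_0=s_2=0$. In the heart $\cP_t(1/2,3/2]$ the objects with negative real central charge are exactly the semistable objects of phase $1$, so $\ko_{C_i}(-1)\in \cP_t(1)$.

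\emph{Simplicity.} Suppose $0\to A\to E\to B\to 0$ is a short exact sequence in $\cP_t(1)$ with $E=\ko_{C_i}(-1)$ and $A,B\neq 0$. Since $\cP_t(1)\subset\cA_{0,\omega_t}$, the long exact sequence of cohomology sheaves reads
\[
0\to H^{-1}(A)\to 0\to H^{-1}(B)\to H^0(A)\to \ko_{C_i}(-1)\to H^0(B)\to 0.
\]
Hence $A$ is a sheaf in $\cT_{0,\omega_t}$. The quotient $H^0(B)$ of $\ko_{C_i}(-1)\cong \ko_{\bP^1}(-1)$ is one of $0$, a zero-dimensional sheaf, or all of $\ko_{C_i}(-1)$.

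The last case is impossible. There the map $H^0(A)\to \ko_{C_i}(-1)$ vanishes, so $H^0(A)\cong H^{-1}(B)\in\cT\cap\cF=0$, contradicting $A\ne 0$.

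Next suppose $H^{-1}(B)=0$. Then $B$ is zero-dimensional and nonzero, so $\Im Z_t(B)=-s_2(B)<0$, contradicting $B\in\cP_t(1)$.

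So assume $H^{-1}(B)\neq 0$. Since $H^0(B)$ is zero-dimensional, $\Re Z_t(H^0(B))=0$, and therefore $\Re Z_t(H^{-1}(B)[1])=\Re Z_t(B)$. Every phase-one object has negative real central charge, so $Z_t(B)=Z_t(E)-Z_t(A)$ gives
\[
\Re Z_t(B) > \Re Z_t(E)=-\omega_t.C_i .
\]
Now $H^{-1}(B)[1]\subset B$ in $\cA_{0,\omega_t}$, so it has $\Im Z_t\geq 0$. I would rerun the argument of Lemma~\ref{lemma:pathsK3_stabletorsion} on the first $\mu_{\omega_t}$-HN factor $F_t$ of $H^{-1}(B)$: combining Bogomolov--Gieseker, the Hodge index theorem and $\omega_t^2=\eta^2$ gives
\[
\Re Z_t(B)=\Re Z_t(H^{-1}(B)[1]) \le \Re Z_t(F_t[1]) \le -\sqrt{\eta^2(\eta^2-2)}.
\]
This constant is independent of $t$, whereas $-\omega_t.C_i\to 0$ as $t\to 0^+$. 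Choosing $t$ small enough, the two bounds on $\Re Z_t(B)$ are incompatible, so no such sequence exists.

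The main obstacle is this last case, where $B$ has a nonzero $H^{-1}$. One must check that the bound from Lemma~\ref{lemma:pathsK3_stabletorsion} really applies here: that $F_t[1]\hookrightarrow H^{-1}(B)[1]$ is a subobject whose quotient has nonpositive real part, so that $\Re Z_t(F_t[1])\ge \Re Z_t(H^{-1}(B)[1])$ cannot fail. One must also check the reduction from simplicity at one small $t$ to stability along the whole path $\sigma_\bullet$, which rests on Lemma~\ref{L:phase_one_independent}.
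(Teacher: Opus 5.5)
Your reduction to one small $t$ via Lemma~\ref{L:phase_one_independent} is fine. Your simplicity argument is also correct, and it takes a different route from the paper. You bound the quotient $B$ through the first $\mu_{\omega_t}$-HN factor of $H^{-1}(B)$ and Bogomolov--Gieseker, as in Lemma~\ref{lemma:pathsK3_stabletorsion}. The paper instead bounds the destabilizing subobject $A$ using $(v(A),v(A))\geq -2$ and the Hodge index theorem.

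\textbf{The gap is in your ``phase one'' step.} It is false that the objects of the heart $\cP_t(1/2,3/2]$ with $Z_t\in\bR_{<0}$ are exactly the semistable objects of phase $1$. An object whose HN factors have phases $1+\delta$ and $1-\delta$ can have cancelling imaginary parts. The characterization does hold in $\cP_t(0,1]$, but knowing $\ko_{C_i}(-1)\in\cP_t(0,1]$ is equivalent to what you are trying to prove. So you have not shown $\ko_{C_i}(-1)\in\cP_t(1)$. Your simplicity step only tests short exact sequences inside $\cP_t(1)$, so it presupposes this, and semistability is missing. This is why the paper takes $A$ to be the first Jordan--H\"older factor of $\ko_{C_i}(-1)$ in $\cA_{0,\omega_t}$, of phase in $[1,3/2]$. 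That choice handles non-semistability and strict semistability at once.

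Your method can be repaired as follows.
\begin{enumerate}
\item If $\ko_{C_i}(-1)$ is not $\sigma_t$-semistable, then reality of its central charge forces $\phi_t^-<1<\phi_t^+$. Let $B$ be the last HN factor, of phase $<1$, and $A\neq 0$ the kernel of $\ko_{C_i}(-1)\twoheadrightarrow B$ in $\cA_{0,\omega_t}$.
\item $A$ is a sheaf in $\cT_{0,\omega_t}$, so $\Re Z_t(A)=-\omega_t.\ch_1(A)\leq 0$ without needing $A\in\cP_t(1)$. Hence $\Re Z_t(B)\geq-\omega_t.C_i$.
\item Your case analysis of $H^0(B)$ goes through unchanged. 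A nonzero zero-dimensional $B$ has $\Im Z_t(B)<0$, which is incompatible with phase $<1$.
\item When $H^{-1}(B)\neq 0$, the subobject $F_t[1]\subset B$ has phase $\leq\phi(B)<1$, so $\Im Z_t(F_t[1])\geq 0$.
\item To get $\Re Z_t(B)\leq\Re Z_t(F_t[1])$, do not argue via phases of quotients of $B$, since these may now lie in $(1/2,1)$. Instead note that $B/F_t[1]$ is an extension of $H^0(B)$ by $Q[1]$, with $Q=H^{-1}(B)/F_t\in\cF_{0,\omega_t}$, so its real part is $\leq 0$.
\item Your Bogomolov--Gieseker and Hodge index bound then gives $\Re Z_t(B)\leq-\sqrt{\eta^2(\eta^2-2)}$. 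This contradicts step 2 for $t$ small.
\end{enumerate}
With this case added, your argument is a valid alternative to the paper's proof.
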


\begin{proof}
Note that $\ko_{C_i}(-1) \in \cA_{0, \omega_t}$ for all $t$. By Lemma \ref{L:phase_one_independent}, it suffices to check that $\ko_{C_i}(-1)$ is $\sigma_t$-stable for $t$ close to $0$. Thus, we fix $t$ such that $\omega_t.C_i< \sqrt{\eta^2(\eta^2-2)}$. 

We argue by contradiction. Assume that $\ko_{C_i}(-1)$ is not $\sigma_t$-stable, and let $A$ be its first Jordan--Hölder factor\footnote{Note that $A$ might depend on $t$.}. Note that $A \in \cA_{0, \omega_t}=\cP_{\omega_t}(1/2,3/2]$, hence $A$ has phase in $[1, 3/2]$. We get a short exact sequence
\[ 0 \to A \to \ko_{C_i}(-1) \to B \to 0 \]
in $\cA_{0, \omega_t}$. Taking cohomology with respect to the standard t-structure, we get that $A=H^0(A)$ and an exact sequence of sheaves
\[ 0 \to H^{-1}(B) \to H^0(A) \to \ko_{C_i}(-1) \xrightarrow{\xi} H^0(B) \to 0. \]

If $\xi$ were an isomorphism, then $H^{-1}(B) \cong H^0(A)$, which implies $H^{-1}(B) \in \cF_{0,\omega_t}$ and $H^0(A) \in \cT_{0, \omega_t}$, and so $A=0$, a contradiction. As a consequence, $H^0(B)$ has zero-dimensional or empty support. We get that $\ch_0(B)=-\ch_0(H^{-1}(B))=-\ch_0(A)$, and 
\begin{equation}
\ch_1(B)=-\ch_1(H^{-1}(B)) = -\ch_1(A)+C.
\end{equation}
Also $H^{-1}(B)\neq 0$, as otherwise $A$ and $B$ would be coherent sheaves; however, $B=H^0(B)$ would be stable of phase $3/2$ by being an extension of skyscraper sheaves. This implies that $\ch_0(H^{-1}(B))>0$, as $\cF_{0,\omega_t}$ only contains torsion-free sheaves.

By assumption, $A$ destabilizes $\ko_{C_i}(-1)$, and so $\Im Z_t(A) \leq 0$ as $A$ has phase in $[1, 3/2]$. It follows that
\begin{equation} \label{eq:pathsK3_curvesboundIm}
-s_2(A) + \frac{\eta^2}{2}s_0(A) \leq 0 \implies \ch_2(A) \geq \frac{\eta^2-2}{2}\ch_0(A).
\end{equation}
On the other hand, $A$ is $\sigma_t$-stable, and so $\chi(A,A)\geq -2$ by Lemma \ref{lemma:gen2cy_roots}. This implies that
\begin{equation} \label{eq:pathsK3_curvesboundChi}
\begin{split}
    \ch_1(A)^2 =s_1(A)^2  & \geq -2+2s_0(A)s_2(A) \\
    & = -2 + 2\ch_0(A)^2 + 2\ch_0(A)\ch_2(A) \geq 2\ch_0(A)\ch_2(A).
\end{split}
\end{equation}
Combining \eqref{eq:pathsK3_curvesboundIm} and \eqref{eq:pathsK3_curvesboundChi} shows that
\[ \ch_1(A)^2 \geq (\eta^2-2)\ch_0(A)^2 \geq (\eta^2-2). \]
Multiplying by $\eta^2=\omega_t^2$ and using the Hodge index theorem gives us
\[ (\ch_1(A).\omega_t)^2 \geq \ch_1(A)^2 \eta^2 \geq \eta^2(\eta^2-2), \]
hence $\ch_1(A).\omega_t \geq \sqrt{\eta^2(\eta^2-2)}$. 

Finally, note that $\ch_1(H^{-1}(B)).\omega_t\leq 0$, as $H^{-1}(B) \in \cF_{0, \omega_t}$. Putting these two inequalities together yields that
\[ \omega_t.C=\omega_t.(\ch_1(A)-\ch_1(H^{-1}(B))) \geq \sqrt{\eta^2(\eta^2-2)}, \]
contradicting our initial choice of $t$. It follows that $\ko_{C_i}(-1)$ is $\sigma_t$-stable.
\end{proof}

\begin{cor}
Let $E$ be a massless $\sigma_\bullet$-stable object of phase 1. Then $E\cong \ko_{C_i}(-1)$ for some $1 \leq i \leq n$.
\end{cor}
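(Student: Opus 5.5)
By Lemma~\ref{lemma:pathsK3_stabletorsion}, such an $E$ is a pure one-dimensional sheaf, set-theoretically supported on $C_1\cup\dots\cup C_n$, and $E\in\cP_{\sigma_\bullet}(1)$ for all $t$. The plan is to exploit the abelian category $\cP_{\sigma_\bullet}(1)$, which is the same for every $t$ by Lemma~\ref{L:phase_one_independent}. In it we already know that each $\ko_{C_i}(-1)$ is stable, by Lemma~\ref{lemma:pathsK3_masslessbir}. Since $E$ is stable of phase $1$, any nonzero morphism $\ko_{C_i}(-1)\to E$ or $E\to\ko_{C_i}(-1)$ between two stable objects of the same phase is an isomorphism. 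So it suffices to produce a nonzero morphism $\ko_{C_i}(-1)\to E$ for some $i$.

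\textbf{Step 1: reduce to the Euler pairing.} By Lemma~\ref{lemma:gen2cy_roots}, stability of $E$ gives $(v(E),v(E))\ge -2$. We have $\ch(E)=(0,D,\ch_2(E))$ with $D=\sum_i m_iC_i$ effective, $m_i\ge 0$, and $D\neq 0$. This gives $D^2\ge -2$. Since $Z_t(E)\to 0$ also forces $\Im Z_t(E)=-\ch_2(E)=0$, the Mukai vector is $s(E)=(0,D,0)$.

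\textbf{Step 2: find a curve pairing positively with $E$.} Compute
\[
\chi(\ko_{C_i}(-1),E)=-(s(\ko_{C_i}(-1)),s(E))=-C_i\cdot D .
\]
The intersection matrix of the $C_i$ is negative definite and $D\ne 0$, so $D^2=\sum_i m_i\,(C_i\cdot D)<0$. Hence there is an $i$ with $m_i>0$ and $C_i\cdot D<0$, which gives $\chi(\ko_{C_i}(-1),E)>0$.

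\textbf{Step 3: convert positivity into a morphism.} We have $\chi=\hom^0-\hom^1+\hom^2$, and by Serre duality $\hom^2(\ko_{C_i}(-1),E)=\hom(E,\ko_{C_i}(-1))$. Both objects lie in the heart and are stable of phase $1$. So if $E\not\cong\ko_{C_i}(-1)$, then $\Hom(\ko_{C_i}(-1),E)=0=\Hom(E,\ko_{C_i}(-1))$. This would force $\chi\le 0$, contradicting Step~2. Therefore $E\cong \ko_{C_i}(-1)$.

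The main obstacle is Step~1. One has to make sure the Mukai vector of $E$ really is $(0,D,0)$ with $D$ a nonzero combination of the $C_i$ with nonnegative coefficients. This uses purity and the support statement from Lemma~\ref{lemma:pathsK3_stabletorsion}, together with the vanishing of $\Im Z_\infty(E)$. Once that is in place, negative definiteness makes the rest formal; in fact Step~1 is needed only to guarantee $D\neq 0$ and $D^2<0$.
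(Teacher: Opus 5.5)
Your proposal is correct and is essentially the paper's argument. Both use that non-isomorphic stable objects of phase $1$ have $\chi\le 0$ (Hom-vanishing in both directions plus Serre duality), and both compute this Euler characteristic against $\ko_{C_i}(-1)$ as $-\ch_1(E)\cdot C_i$; the paper then sums $a_i\,\ch_1(E)\cdot C_i$ to get $\ch_1(E)^2\ge 0$, while you pick an $i$ with $C_i\cdot D<0$, and either way this contradicts negative definiteness of $(C_i\cdot C_j)$ on $\ch_1(E)=\sum a_iC_i\neq 0$. The determination of $s_0,s_2$ and the bound $D^2\ge -2$ in your Step~1 are unnecessary, since the pairing with $(0,C_i,0)$ only involves $s_1$.
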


\begin{proof}
Assume that $E \neq \ko_{C_i}(-1)$ for all $1 \leq i \leq n$. By Lemma~\ref{lemma:pathsK3_stabletorsion}, $E=H^0(E)$ is a torsion sheaf, pure of dimension 1, supported set-theoretically on the exceptional locus of $f$. Thus, $\ch_1(E)=\sum a_i C_i$ for some $a_i \geq 0$, with at least one $a_i$ non-zero.

By assumption, $E$ is not isomorphic to $\ko_{C_i}(-1)$. But $E$ and $\ko_{C_i}(-1)$ are $\sigma_t$-stable of phase 1, hence
\[ 0 \geq \chi(E, \ko_{C_i}(-1)) =-((s_0(E), s_1(E), s_2(E), (0, C_i, 0)) = -s_1(E).C_i=-\ch_1(E).C_i. \]
But then $\ch_1(E)^2\geq 0$, as the $a_i$ are non-negative. This implies that $\ch_1(E)=0$, as the matrix $\{C_i.C_j\}_{1 \leq i,j\leq n}$ is negative definite. But this contradicts that $a_i \neq 0$ for some $i$.
\end{proof}

\begin{proof}[Proof of Theorem \ref{thm:pathsK3_main}(3)]
We consider the path $\sigma_\bullet$ constructed before. We proved in Lemma \ref{lemma:pathsK3_birqconv} that $\sigma_\bullet$ satisfies \ref{axiom1}, \ref{axiom2}, and \ref{axiom4} for $\nu=0$. The massless stable objects of phase 1 are given by $\{\ko_{C_i}(-1): 1 \leq i \leq n\}$, by Lemma \ref{lemma:pathsK3_masslessbir}. As there are finitely many massless stable objects, Lemma \ref{L:qcpathsA4_massless} guarantees that \ref{axiom4} holds for $\nu>0$, and so $\sigma_\bullet$ is strongly quasi-convergent by Theorem \ref{T:strongquasiconvergence}. Finally, by \eqref{eq:pathsK3_Cchargecurves}, the construction of Lemma \ref{lemma:pathsK3_ordervanishing}, and the formula $v(\ko_{C_i}(-1))=(0,C_i,0)$, we get
\[ Z_t(\ko_{C_i}(-1))=-\omega_t.C_i \sim t^k \]
for any $1 \leq k \leq r$ and any $i \in I_k$. Thus, the filtration associated to the path $\sigma_\bullet$ is
\begin{equation*} 
    0\subset \langle \cO_{C_i}(-1) \rangle_{i \in I_r} \subset \langle \cO_{C_i}(-1) \rangle_{i \in I_r \cup I_{r-1}} \subset \dots \subset \langle \cO_{C_i}(-1) \rangle_{i \in I_r \cup \dots \cup I_1} \subset \DCoh(S)
\end{equation*} 
and the proof of Theorem~\ref{thm:pathsK3_main} is complete.
\end{proof}

\subsection{The boundary stratum of a contraction}

In this brief section, we explain how the contraction $f\colon S\to T$ considered in Theorem~\ref{thm:pathsK3_main} gives rise to boundary strata of $\Astab(S)$. We let $T^{\rm{sing}}$ denote the singular locus of $T$, which consists of finitely many points. The functor $\bR f_\ast \colon \DCoh(S) \to \DCoh(T)$ induces an equivalence $\DCoh(S)/ \ker \bR f_\ast \simeq \DCoh(T)$, where $\ker \bR f_\ast = \langle\cO_{C_i}(-1)\rangle_{1 \leq i \leq n}$ by \cite{KuznetsovSextic}*{Lem. 2.3, Cor. 2.5}.

A decomposition $\{1,\ldots, n\} = I_1\sqcup \cdots \sqcup I_r$, denoted $\chi$, is \emph{compatible} with $f$ if $f(C_i) = f(C_j)$ implies that $i,j \in I_p$ for some $1\le p\le r$. In particular, for each $1\le j \le r$ the set $\bigcup_{i\in I_j}C_i$ is a union of fibers over singular points and so for any $y \in T^{\rm{sing}}$ there is a unique $j \in \{1, \dots, r\}$ such that $f^{-1}(y) \subset \bigcup_{i \in I_j} C_i$. We write $y\subset j$ to indicate this relation.

Fix $\chi$ as above and a quasi-convergent path $\sigma_\bullet$ as in Theorem~\ref{thm:pathsK3_main}(3). The resulting filtration is \eqref{E:filtration_thm} and we denote $\cK_{\le j} := \langle \cO_{C_i}(-1)\rangle_{i \in I_r\cup \cdots \cup I_{r-(j-1)}}$ for each $j =1,\ldots, r$. In this case, the corresponding multi-scale decomposition has two levels (see Definition~\ref{D:multiscaledecomp}) and $r+1$ nodes on the root component. Thus, it is specified by $r+1$ points in $\bC$ with the same imaginary part:
\[
    \begin{tikzpicture}[scale = .75]
        \fill[gray!25] (0,0) rectangle (8,4);

        \fill (1,2) circle (2pt);
        \fill (2.5,2) circle (2pt);
        \fill (5.5,2) circle (2pt);
        \fill (7,2) circle (2pt);

        \node[below=5pt] at (2.5,2) {$\cK_{\le 2}$};
        \node[below=3.5pt] at (7,2) {$\DCoh(S)$};
        \node at (4,2) {$\cdots$};
        \node[below=5pt] at (1,2) {$\cK_{\le 1}$};
        \node[below = 5pt] at (5.5,2) {$\cK_{\le r}$};
    \end{tikzpicture}
\]
Note that here $\cK_{\le r} = \ker \bR f_*$ is the category of massless objects. For each $y \in T^{\rm{sing}}$, let $\cF_{y} := \langle \cO_{C_i}(-1)\rangle_{f(C_i)=y}$. By disjointness of support, we have an orthogonal decomposition $\ker \bR f_* = \bigoplus_{y \in T^{\rm{sing}}} \cF_y$. Note that the subcategory $\ker \bR f_*$ is not admissible in $\DCoh(S)$.

\begin{defn}
    Let $f\colon S\to T$ be as above and fix a compatible partition $\chi$. We let $\cM_f^{\chi}$ denote the subspace of $\Astab(S)$ comprised of augmented stability conditions with underlying multi-scale decomposition 
    \[
        \DCoh(S) = \langle \cK_{\le 1},\ldots, \cK_{\le r},\DCoh(S)\rangle_{\Sigma},
    \] 
    where $\Sigma$ is 2-level multi-scale line with terminal components $v_1,\ldots, v_{r+1}$ such that $\mathfrak{p}(v_i,v_j) = 1$ for all $1\le i<j\le r+1$, as illustrated above. 
\end{defn}

The works \cites{Br09,ThomasStabBraid} study the stability manifolds of the 2-Calabi--Yau categories associated to ADE surface singularities. Given an ADE Dynkin diagram $\Gamma$, we denote by $\cD_\Gamma$ the corresponding category, which admits a description as a 2-Calabi--Yau completion of modules over the quiver $\Gamma$ with some/any choice of orientation. Given $y\in T^{\rm{sing}}$, the type of its singularity corresponds to an ADE Dynkin diagram, which we denote by $\Gamma(y)$. It follows that $\cF_y \simeq \cD_{\Gamma(y)}$ for each $y\in T^{\rm{sing}}$.

\begin{thm}
\label{T:stratum_description}
    The subspace $\cM_f^\chi$ is a topologically trivial bundle 
    \[
        \cM_f^\chi \longrightarrow (\Stab(T)/\bC)\times U
    \]
    with fiber $(\prod_{j=1}^r \prod_{y\subset j} \Stab(\cF_{y}))/\bC^r$, where $\bC^r$ acts diagonally and $U$ is homeomorphic to $\bR_{>0}^{r-1}$. Further\-more, the quasi-convergent paths $\sigma_\bullet$ in Theorem~\ref{thm:pathsK3_main}(3) with partition $\chi$ converge to points of $\cM_f^\chi$. 
\end{thm}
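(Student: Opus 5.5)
We plan to unwind the definition of $\cM_f^\chi$ using Definition~\ref{D:augmentedstab_special_case}. A point of $\cM_f^\chi$ consists of three pieces of data: the real-oriented equivalence class of the 2-level multi-scale line $\Sigma$ with $r+1$ collinear nodes; a stability condition modulo $\bC$ on each associated graded category $\gr_{v_j}$; and nothing else, since the multi-scale decomposition is fixed. We will identify each of these pieces in turn.

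For the graded categories, we will use that $\ker \bR f_* = \bigoplus_y \cF_y$ is an orthogonal decomposition, which we expect to persist on Verdier quotients. This gives $\gr_{v_j} = \cK_{\le j}/\cK_{\le j-1} \simeq \bigoplus_{y\subset j}\cF_y$ for $1\le j\le r$, using compatibility of $\chi$ with $f$. For the top level we get $\gr_{v_{r+1}} = \DCoh(S)/\ker\bR f_* \simeq \DCoh(T)$, by \cite{KuznetsovSextic}.

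Next we need a stability condition on an orthogonal sum $\bigoplus_y \cF_y$, modulo the diagonal $\bC$. This should be the quotient of the product by a diagonal action, namely $(\prod_{y\subset j}\Stab(\cF_y))/\bC$. To justify this, we will check two things:
\begin{enumerate}
\item stability conditions on an orthogonal sum are products in which the lattice is the direct sum of the lattices of the $\cF_y$;
\item the support property is compatible with a compatible norm.
\end{enumerate}
Here we must also verify that the lattice attached to $\gr_{v_j}$ in \cite{augmented}*{Defn. 3.36} is $\bigoplus_{y\subset j}\Knum(\cF_y)$. This should follow from linear independence of the classes $[C_i]$.

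For the multi-scale line, the $r+1$ nodes lie on a horizontal line. Up to real-oriented equivalence, that is translation together with positive real rescaling, the configuration is recorded by the successive gaps modulo a common $\bR_{>0}$ scaling. This gives $U \cong \bR_{>0}^{r}/\bR_{>0}\cong \bR_{>0}^{r-1}$. Assembling these pieces yields a set-theoretic product decomposition of $\cM_f^\chi$. It is a product of $\Stab(T)/\bC$, $U$, and the stated fiber, and the projection onto the first two factors gives the bundle.

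The main obstacle is topological. The identification above is a bijection, but we must show it is a homeomorphism for the topology on $\Astab(S)$ restricted to the stratum. We would first try to show that \cite{augmented} endows a fixed boundary stratum with the product topology of its factors, using the strong topology (uniform convergence of masses and phases on each graded piece). If that holds, local triviality, and indeed global triviality, follows directly.

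For the final claim, we will apply Theorem~\ref{thm:pathsK3_main}(3) and Proposition~\ref{P:quasiconvergentlimit}. The limiting filtration is \eqref{E:filtration_thm}, and the orders $\nu=1,\dots,r$ of $Z_t(\cO_{C_i}(-1))\sim t^{k}$ from Lemma~\ref{lemma:pathsK3_ordervanishing} place the nodes on the real line. All these objects have phase $1$, so $\mathfrak{p}(v_i,v_j)=1$. Theorem~\ref{T:strongquasiconvergence} then shows that the limit lies in $\cM_f^\chi$.
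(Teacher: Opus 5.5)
Your proposal is correct and follows essentially the same route as the paper. The paper likewise obtains the homeomorphism of $\cM_f^\chi$ with $\bigl(\prod_{j}\Stab(\cK_{\le j}/\cK_{\le j-1})\bigr)/\bC^r\times \Stab(T)/\bC\times U$ directly from the definition of the strong topology on augmented stability conditions, which is the step you flag as the main obstacle; it identifies $U$ with ordered real configurations modulo translation and positive scaling, and it uses compatibility of $\chi$ together with the fact that stability conditions on an orthogonal sum are products to identify the fiber, with the convergence statement read off from Theorem~\ref{thm:pathsK3_main}(3) as you do.
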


\begin{proof}
    By definition of the strong topology on $\Astab(\DCoh(S))$ in \cite{augmented}*{\S 5}, the subspace $\cM_f^\chi$ is homeomorphic to $(\prod_{j=1}^r \Stab(\cK_{\le j}/\cK_{\le j-1}))/\bC^r \times \Stab(T)/\bC \times U$, where $\bC^r$ acts diagonally and $U$ is a space of parameters describing deformations of the underlying real multi-scale line $\Sigma$ --- see \cite{admissibleaugmented}*{\S 3.2}. More precisely, $U$ is the subspace of the moduli space of $(r+1)$-marked multi-scale lines up to real oriented isomorphism $\cA_{r+1}^{\bR}$ as in \cite{augmented}*{\S 4} of the type depicted above. Thus, $U$ is diffeomorphic to the moduli of labeled points $x_1< \cdots< x_{r+1}$ in $\bR$ where $x_1 = 0$ and $x_2 = 1$; therefore, $U \cong \bR_{>0}^{r-1}$. 
    
    Next, by definition of compatibility of $\chi$ with $f$, the natural functor $\bigoplus_{y\subset j} \cF_y \to \cK_{\le j}/\cK_{\le j-1}$ is an equivalence. The description of the fiber now follows from the fact that $\Stab(\bigoplus \cF_y) \cong \prod \Stab(\cF_y)$ by \cite{augmented}*{Lem. 6.12}. The statement about convergence is a tautological consequence of Theorem~\ref{thm:pathsK3_main}.
\end{proof}

By the results of \citelist{\cite{Br09} \cite{ThomasStabBraid}}, a distinguished connected component of $\Stab(\cF_y) \cong \Stab(\cD_{\Gamma(y)})$ is isomorphic to a connected cover of $\mathfrak{h}^{\rm{reg}}/W$, where $\mathfrak{h}$ is the Cartan subalgebra of the Lie algebra $\mathfrak{g}$ associated to $\Gamma(y)$ and $W$ the corresponding Weyl group. The stability conditions produced by Theorem~\ref{thm:pathsK3_main}(3) on the associated graded categories are in (the product of) these connected components.

\begin{rem}
\label{R:parameters}
    As mentioned in the introduction, mirror to the contraction $f\colon S\to T$ is a smoothing of a nodal K3 surface $X_0$ mirror to $T$. For the purposes of illustration, let us consider the simplest case of Theorem~\ref{T:stratum_description} where each fiber of $f$ is irreducible and where the partition $\chi$ of $\{1,\ldots, n\}$ is trivial, consisting of one block. In this case, we write $\cM_f$ for the corresponding subspace of $\Astab(S)$. By Theorem~\ref{T:stratum_description}, $\cM_f$ is a trivial bundle over $\Stab(T)/\bC$ with fiber the quotient of
    \begin{equation}
            \Stab(\ker \bR f_*) \cong \bigoplus_{y\in T^{\rm{sing}}} \Stab(\cF_y) \xrightarrow{\sim} \prod_{i=1}^n \bC^n
    \label{E:StabkerRf}
    \end{equation}
    by $\bC$. The fiber coordinates describe the residual central charges $Z_\infty(\cO_{C_i}(-1))$ of the $\sigma_\bullet$-massless spherical objects, which should correspond to the smoothing parameters of the nodes on $X_0$. Indeed, these parameters, or rather their logarithms, are recorded by the isomorphism $\Stab(\ker \bR f_*) \to \bC^n$ given in \eqref{E:StabkerRf}, which sends $\sigma \mapsto (\logZ_\sigma(\cO_{C_i}(-1)))_{i=1}^n$. Thus, in view of Heuristic~\ref{Heur:CY}, Theorem~\ref{T:stratum_description} suggests that (certain) points of $\cM_f$ record a choice of complex structure on $X_0$ along with some infinitesimal deformation parameters remembered by stability conditions on the massless subcategory $\ker \bR f_*$. 
\end{rem}

\section{Enriques surfaces} \label{sec:Enriques}

In this section, we construct strongly quasi-convergent paths in the space of stability condition of an Enriques surface, thereby proving Theorem \ref{thm:intro_Enriques}. The key insight is that the numerical K-theory of an Enriques surface behaves similarly to that of a K3 surface; in particular, an analogue of Proposition \ref{prop:gen2cy_axiom4} holds. 

We review general properties of Enriques surfaces in Subsection \ref{subsec:genEnriques}, and study their numerical K-theory in Subsection \ref{subsec:latticeEnriques}. We then recall the construction of stability conditions in Subsection \ref{subsec:stabEnriques}. Finally, we use these results to construct quasi-convergent paths in Subsection \ref{subsec:pathsEnriques}.

Through this section we work over an algebraically closed field $k$ of characteristic $\neq 2$.

\subsection{Generalities} \label{subsec:genEnriques}

An \emph{Enriques surface} is a smooth, projective surface $T$ satisfying  $\omega_T^{\otimes 2} \cong \ko_T$, $\omega_T \not\cong \ko_T$, and $\dim H^1(T, \ko_T)=0$. If $T$ is an Enriques surface over $\bC$, then its non-zero Hodge numbers are $h^{0,0}=h^{2,2}=1$ and $h^{1,1}=10$, its fundamental group is $\bZ/2\bZ$, and its universal cover is a K3 surface. We refer to \citelist{\cite{CDL25} \cite{BHPV04}*{VIII.15}} for basic properties of Enriques surfaces. See also the short summary in \cite{LNSZ21}.

We point out that there is a short exact sequence of abelian groups
\[ 0 \to \bZ/2\bZ \to \NS(T) \to \Num(T) \to 0, \]
where $\NS(T)$ is the group of divisors modulo algebraic equivalence\footnote{Since $H^1(T, \ko_T)=0$, algebraic equivalence agrees with linear equivalence.}, and $\Num(T)$ is the group of divisors modulo linear equivalence. We usually write elements of $\NS(T)$ with uppercase letters and elements of $\Num(T)$ with lowecase letter. Given $F \in \NS(T)$, we write $[F]$ for its image in $\Num(T)$. The inclusion map $\bZ/2\bZ \to \NS(T)$ maps the non-zero element to the class of the canonical divisor. In particular, any $f \in \Num(T)$ admits two lifts $F^+, F^- \in \NS(T)$, satisfying $F^+-F^- \sim K_T$.

\begin{prop}[\cite{BHPV04}*{VII.15.2--3}] \label{prop:genEnriques_form}
An Enriques surface has $\Pic(T) \cong H^2(T, \bZ) \cong \bZ^{10} \oplus (\bZ/2\bZ)$. Moreover, the intersection form on $\Num(T) \cong H^2(T, \bZ)_{\mathrm{tf}}$ is even and non-degenerate.
\end{prop}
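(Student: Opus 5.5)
This statement is classical and is cited from \cite{BHPV04}, so rather than prove it from scratch I would assemble it from the standard invariants of $T$, working over $\bC$ as in the cited source. The plan has three steps: compute $\Pic(T)$ via the exponential sequence, identify $H^2(T,\bZ)$ using the Euler characteristic and the torsion of $H_1$, and then read off the properties of the intersection form from Poincaré duality together with Wu's formula and Noether's formula.

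First, since $h^{0,1}=h^{0,2}=0$, the exponential sequence gives an isomorphism $c_1\colon\Pic(T)\to H^2(T,\bZ)$. Next I compute $H^2(T,\bZ)$. From $\pi_1(T)=\bZ/2$ we get $H_1(T,\bZ)=\bZ/2$. By universal coefficients and Poincaré duality, the torsion of $H^2(T,\bZ)$ is $\mathrm{tors}\,H_1=\bZ/2$. Noether's formula gives $\chi_{\mathrm{top}}=12\chi(\cO_T)-K_T^2=12$, because $\chi(\cO_T)=1$ and $K_T$ is numerically trivial. This forces $b_2=10$, so $H^2(T,\bZ)\cong\bZ^{10}\oplus\bZ/2$, with the torsion class equal to $c_1(K_T)$.

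For the intersection form, unimodularity on $H^2(T,\bZ)_{\mathrm{tf}}$ is Poincaré duality, which in particular gives non-degeneracy. Evenness follows from Wu's formula: $x^2\equiv x\cdot w_2=x\cdot K_T \pmod 2$, and $K_T$ is numerically trivial, so $x\cdot K_T=0$. Finally, since $h^{2,0}=0$, every class in $H^2(T,\bZ)$ is algebraic, so $\Num(T)=H^2(T,\bZ)_{\mathrm{tf}}$.

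I expect the main obstacle to be the positive-characteristic case: the section assumes only $\mathrm{char}\neq 2$, where the exponential sequence and Wu's formula are unavailable. There I would simply cite \cite{CDL25}. Alternatively, evenness can be obtained from Riemann--Roch, since $\chi(\cO(D))=1+\tfrac12 D\cdot(D-K_T)=1+\tfrac12 D^2$ must be an integer.
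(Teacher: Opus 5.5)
Your argument is correct and is essentially the standard textbook proof behind the reference the paper cites; the paper itself gives no proof beyond that citation. One step is left implicit and is worth stating: under $\Pic(T)\cong H^2(T,\bZ)$, the numerically trivial classes are exactly the torsion classes because the cup product on $H^2(T,\bZ)_{\mathrm{tf}}$ is unimodular, and this is what gives $\Num(T)\cong H^2(T,\bZ)_{\mathrm{tf}}$. Also, your Riemann--Roch parity argument works over $\bC$ as well, since every class in $H^2(T,\bZ)$ is $c_1$ of a line bundle, so Wu's formula is not actually needed.
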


The moduli space of (marked) Enriques surfaces is a quasi-projective 10-dimensional variety \cite{CDL25}*{Thm. 5.3.9}. Enriques surfaces containing a $(-2)$-curve are called \emph{nodal}\footnote{Note that nodal does \emph{not} mean that they are singular in this context.}, and the locus of nodal surfaces is a divisor in the moduli of Enriques surfaces \cite{CDL25}*{Thm. 5.3.2}. This way, the \emph{unnodal} Enriques surfaces, i.e. those not containing $(-2)$-curves, form a dense open set in the moduli space. We point out that an unnodal Enriques surface has no curves of negative self-intersection. In particular, any effective divisor is automatically nef. From now on we will focus on unnodal Enriques surfaces. Note however that many of the results below can be adapted for nodal Enriques surfaces.

\begin{prop}[\cite{CDL25}*{\textsection 3.5}, cf. \cite{LNSZ21}*{Defn. 3.2}]
Let $T$ be an unnodal Enriques surface. Then there exists an ample divisor $\Delta$ satisfying: \vspace{-2mm}
\begin{enumerate}
\item $\Delta^2 =10$; \vspace{-2mm}
\item $\Delta.F\geq 3$ for any nef divisor $F$ satisfying $F^2=0$. \vspace{-2mm}
\end{enumerate}
We call $\Delta$ a \emph{Fano polarization}.
\end{prop}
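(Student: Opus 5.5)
This existence statement is cited from \cite{CDL25}*{\textsection 3.5}; here is how I would argue it via the structure of $\Num(T)$. The plan uses the isometry $\Num(T)\cong U\oplus E_8(-1)$, which follows from Proposition~\ref{prop:genEnriques_form} (even, unimodular, signature $(1,9)$). In this lattice I would write down an explicit isotropic sequence $e_1,\dots,e_{10}$ with $e_i^2=0$ and $e_i.e_j=1$ for $i\neq j$. One realization is $e_i = f + \alpha_i$-type vectors, obtained by starting from the hyperbolic plane $U=\langle f,g\rangle$ and the $E_8$ roots; an alternative is to quote the standard existence of such a $10$-sequence in $E_{10}$. Then I would set $\delta=\frac13(e_1+\cdots+e_{10})$. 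This class is integral in the unimodular lattice, since $\delta.x\in\bZ$ for all $x$, and it satisfies $\delta^2=\frac19\cdot 90=10$.

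Next I would move $\delta$ into the nef cone. The positive cone meets $\Num(T)_\bR$ in one component, and the Weyl group of $(-2)$-reflections together with $\pm1$ acts on it. On an unnodal surface, no $(-2)$-class is effective, so the nef cone equals the closure of the positive cone. Replacing $\delta$ by $-\delta$ if needed makes $\delta$ positive, hence nef with $\delta^2>0$. It is then ample by Nakai--Moishezon, since there are no curves of negative self-intersection and a nef class $D$ with $D^2>0$ and $D.C>0$ for every curve $C$ is ample. To see $D.C>0$: if $C^2=0$, the Hodge index theorem with $D^2>0$ gives $D.C>0$ unless $C\equiv 0$; if $C^2>0$, it is immediate. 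I would then take $\Delta$ to be either lift $\Delta^\pm\in\NS(T)$ of $\delta$.

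The main obstacle is condition (2): showing $\delta.f\ge3$ for every nef isotropic $f$. I would first reduce to primitive $f$, which is harmless since multiples only increase the product. Then I would use the lattice-theoretic characterization $\Phi(\delta)=\min\{\delta.f: f^2=0,\ f>0\}$ and show that the minimum is $3$. The strategy is by contradiction. Suppose $\delta.f\in\{1,2\}$ for a primitive isotropic $f$, and write $3\delta.f=\sum_i e_i.f$. Each $e_i.f$ is nonnegative, because both classes lie in the closed positive cone, and it vanishes only when $e_i$ and $f$ are proportional. Hence the sum is at most $6$. But at most one $e_i$ can be proportional to $f$, so at least nine of the ten terms are $\ge1$, forcing the sum to be $\ge 9$. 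This contradiction gives $\delta.f\ge3$, and the bound is attained at $f=e_i$, since $\delta.e_i=\frac13\cdot 9=3$.
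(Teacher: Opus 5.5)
The paper does not prove this statement; it only cites \cite{CDL25}. Your argument is the standard lattice-theoretic one behind that reference. It takes an isotropic $10$-sequence $e_1,\dots,e_{10}$ in $\Num(T)\cong U\oplus E_8(-1)$, sets $\delta=\frac13\sum_i e_i$, identifies the nef cone with the closed positive cone on an unnodal surface, and bounds $3\,\delta.f=\sum_i e_i.f\ge 9$ because at most one $e_i$ is proportional to $f$. The ampleness step and your proof of (2) are correct. One small point: when you replace $\delta$ by $-\delta$, replace every $e_i$ by $-e_i$ too. Then $\delta.e_i=3>0$ puts every $e_i$ in the same half of the light cone as $\delta$ and $f$, which is what the nonnegativity of $e_i.f$ uses.

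The unjustified step is the integrality of $\delta$. The reason you give, ``$\delta.x\in\bZ$ for all $x$'', is exactly what has to be shown. What you can actually read off is only $\delta.e_j=3$. That places $\delta$ in the dual $L^\vee$ of $L=\langle e_1,\dots,e_{10}\rangle$, and $L$ is a proper sublattice of $\Num(T)$. One way to close this is as follows. The Gram matrix of $L$ is $J-I$, with $J$ the all-ones matrix, and its determinant is $-9$, so $[\Num(T):L]=3$. Since $(J-I)^{-1}=\frac19J-I$, the dual basis is $e_j^\vee=\frac19\sum_i e_i-e_j$, so $L^\vee/L$ is cyclic of order $9$, generated by $\frac19\sum_i e_i$. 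Unimodularity gives $L\subset\Num(T)=\Num(T)^\vee\subset L^\vee$. The image of $\Num(T)$ in $L^\vee/L$ has order $3$, so it is the unique subgroup of order $3$, generated by $\frac13\sum_i e_i=\delta$.

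Alternatively, work in a model where integrality is visible. In $I_{1,10}$ with orthogonal basis $h,\epsilon_1,\dots,\epsilon_{10}$ ($h^2=1$, $\epsilon_i^2=-1$), the vector $k=3h-\sum_i\epsilon_i$ is characteristic with $k^2=-1$, so $k^\perp\cong U\oplus E_8(-1)$. The vectors $e_i=3h-\sum_{j\neq i}\epsilon_j$ form an isotropic $10$-sequence in $k^\perp$, and $\frac13\sum_i e_i=10h-3\sum_j\epsilon_j$ is visibly integral.

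Relatedly, your suggested realization by ``$f+\alpha_i$'' does not give isotropic vectors, since $(f+\alpha)^2=\alpha^2=-2$ for a root $\alpha$. You would need, for example, $f$, $g$, and $f+g+\alpha_i$ for eight roots with $\alpha_i.\alpha_j=-1$ when $i\neq j$.
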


\begin{lem}[\cite{CDL25}*{Lem. 3.5.2}] \label{lemma:genEnriques_tenLB}
Let $\Delta$ be a Fano polarization. Then there exists a canonical collection $f_1, \dots, f_{10} \in \Num(T)$ of nef classes, satisfying $3[\Delta]=f_1+\dots+f_{10}$. The classes $f_i$ are characterized by
\[ 
    \{f_1, \dots, f_{10}\} = \{f \in \Num(T): f \text{ nef}, f.[\Delta]=3\}. 
\]
\end{lem}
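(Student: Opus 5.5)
This lemma is cited from \cite{CDL25}, but here is how I would prove it directly, using lattice theory on $\Num(T)$. By Proposition~\ref{prop:genEnriques_form}, $\Num(T)$ is an even unimodular lattice of signature $(1,9)$, hence isometric to $U\oplus E_8(-1)$, which is also isometric to the lattice $E_{10}(-1)$. Concretely, I would fix a basis $e_1,\dots,e_{10}$ of isotropic vectors with $e_i.e_j=1$ for $i\neq j$ and $e_i^2=0$. Such a basis exists for $U\oplus E_8(-1)$: the associated Gram matrix $J-I$ has determinant $-9$, so these vectors span an index-$3$ sublattice. Then $\delta:=\tfrac13(e_1+\dots+e_{10})$ lies in the lattice, satisfies $\delta^2=90/9=10$, and has $\delta.e_i=3$.

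\textbf{Reduction to the standard configuration.} On an unnodal surface the nef cone equals the closure of the positive cone, since there are no $(-2)$-curves. The isotropic nef primitive classes are therefore exactly the primitive isotropic vectors in the closure of the positive cone. The orthogonal group $O^+(\Num(T))$ acts transitively on classes $h$ with $h^2=10$ and $\Phi(h):=\min\{h.f: f^2=0,\ f \text{ nef}\}=3$. The Fano polarization condition (2) gives exactly $\Phi=3$, because $\Phi\le 3$ always holds for $h^2=10$ and this bound is attained. I would invoke this transitivity, which is the classical fact that Fano polarizations form a single orbit, in order to move $[\Delta]$ to $\delta$.

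\textbf{Enumerating the isotropic classes.} For $\delta$, I need to show that the only isotropic $f$ in the positive cone with $f.\delta=3$ are $e_1,\dots,e_{10}$. Write $f=\sum x_ie_i$ with $x_i\in\tfrac13\bZ$ (using the dual basis structure). The condition $f.\delta=3$ gives $\sum x_i=\tfrac{3}{3}\cdot$(normalized) linear constraint, namely $\tfrac13\sum_j\sum_{i\ne j}x_i=3$, so $\sum x_i=\tfrac{9}{9}\cdot\ldots$. The isotropy condition is $f^2=(\sum x_i)^2-\sum x_i^2=0$. Combining the linear and quadratic constraints with integrality of the pairings $f.e_j=\sum x_i-x_j$ pins $f$ down to a finite list. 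This step is a routine finite computation: a Cauchy–Schwarz bound forces $\sum x_i^2$ to be small, and integrality leaves only the basis vectors.

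\textbf{Main obstacle.} The hard step is the transitivity/normal-form argument, that is, showing that every $h$ with $h^2=10$ and $\Phi(h)=3$ is equivalent to $\delta$. I would try the standard approach: choose an isotropic nef $f_1$ with $h.f_1=3$, then inductively build $f_2,\dots,f_{10}$ with $f_i.f_j=1$ by applying reflections in $(-2)$-vectors of $f_1^\perp/f_1\cong E_8(-1)$. The identity $3h=\sum f_i$ then follows, because both sides have the same pairing with every $e_j$ and the lattice is nondegenerate. Since the statement is quoted from \cite{CDL25}, in the paper I would simply cite it.
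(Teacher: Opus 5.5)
The paper gives no proof of this lemma; it is quoted from \cite{CDL25}, so your fallback of citing it is what the paper does. As a direct argument, however, your proposal has a genuine gap. The step you defer as a ``classical fact'' is transitivity of $O^+(\Num(T))$ on classes $h$ with $h^2=10$ and $\Phi(h)=3$. This is not an auxiliary input: it is equivalent to the lemma. To see this, take an isotropic $10$-sequence with span $L'=\bigoplus_i\bZ e_i$. Then $(J-I)^{-1}=\tfrac19J-I$, so $L'^*/L'$ is cyclic of order $9$, generated by $\tfrac19\sum_i e_i$. Since $L'\subset\Num(T)=\Num(T)^*\subset L'^*$, the lattice $\Num(T)=L'+\bZ\cdot\tfrac13\sum_i e_i$ is the unique index-$3$ overlattice. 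Hence $O^+(\Num(T))$ acts simply transitively on ordered isotropic $10$-sequences. So ``$[\Delta]$ is $O^+$-equivalent to $\delta$'' is the same statement as ``$3[\Delta]=f_1+\dots+f_{10}$ for an isotropic $10$-sequence of nef classes'', which is the existence half of the lemma.

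Your sketch does not supply this step. It does not say where $f_2,\dots,f_{10}$ come from, or why $h.f_i=3$; note that a reflection in a root $r\in f_1^\perp$ fixes $h$ only if also $r.h=0$. It also never uses the hypothesis that no nef isotropic $f$ has $h.f\in\{1,2\}$, and that hypothesis is indispensable. For example, $h=2e_1+e_2+e_3$ has $h^2=10$, $h.e_1=2$ and $h.e_2=3$, so your construction can start with $f_1=e_2$. Yet no isotropic $10$-sequence with all $h.f_i=3$ exists for this $h$: it would force $3h=\sum f_i$ and hence $\Phi(h)=3$. (In the model, $f.\delta\in\{1,2\}$ forces $x_j\equiv\sum_i x_i\not\equiv 0 \pmod{\bZ}$ for every $j$, so $\sum_j x_j^2\ge\tfrac{10}{9}>(\sum_j x_j)^2$ and $f^2<0$.) Likewise, choosing $f_1$ at all uses the nontrivial inequality $\Phi(h)^2\le h^2$, which you only assert. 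A genuine proof has to turn $\Phi(h)=3$ into structure. One way is to show that it forces $h^\perp\cong A_9(-1)$; the ten minimal vectors in the relevant glue coset of $(h^\perp)^*$ then give the $f_i$. This is real work, and it is what the citation covers.

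The rest is fine up to presentation. Your enumeration in the model is left unfinished as written. The clean version: for $f=\sum_i x_ie_i$ one has $f.\delta=3\sum_i x_i$, so $\sum_i x_i=1$. Then $f.e_j=1-x_j\in\bZ$ gives $x_j\in\bZ$, and $f^2=1-\sum_j x_j^2\ge 0$ forces $f=e_j$. This also handles non-isotropic nef $f$, which you did not address; alternatively, the Hodge index theorem gives $10f^2\le 9$, so $f^2=0$ by evenness.

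Two smaller points. First, $\delta\in\Num(T)$ does not follow from the determinant count alone; it follows from the cyclicity of $L'^*/L'$ above. Second, the existence of an isotropic $10$-sequence should be exhibited. For instance, in $\bZ\ell\oplus\bigoplus_{j=1}^{10}\bZ\epsilon_j$ with $\ell^2=1$ and $\epsilon_j^2=-1$, take $k=-3\ell+\sum_j\epsilon_j$ and $e_i=3\ell-\sum_{j\ne i}\epsilon_j$; these lie in $k^\perp\cong E_{10}(-1)$.
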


Given a Fano polarization $\Delta$ on an unnodal Enriques surface $T$, consider one of the classes $f_i\in \Num(T)$ given by Lemma \ref{lemma:genEnriques_tenLB}. The class $f_i$ admits two lifts $F_i^+, F_i^- \in \NS(T)$, satisfying $F_i^+-F_i^-\sim K_T$. The two classes can be represented by effective divisors and are known as \emph{half-fibers}.

\begin{thm}[\cite{Zub97}*{\textsection 5}]
Let $T$ be an unnodal Enriques surface, and fix a Fano polarization $\Delta$, with $f_1, \dots, f_{10}$ the associated nef classes, and $F_1^\pm, \dots, F_{10}^\pm$ the half-fibers.

For any choice of $\epsilon_1, \dots, \epsilon_{10} \in \{+, -\}$, the ten line bundles $\{\ko_T(F_1^{\epsilon_1}), \dots, \ko_T(F_{10}^{\epsilon_{10}})\}$ form an orthogonal exceptional sequence in $\DCoh(T)$.
\end{thm}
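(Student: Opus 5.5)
The statement to prove is that $\{\ko_T(F_1^{\epsilon_1}),\dots,\ko_T(F_{10}^{\epsilon_{10}})\}$ is an orthogonal exceptional sequence. The plan is to reduce everything to line-bundle cohomology computations on $T$. Each $\ko_T(F)$ is exceptional because $\Ext^i(\ko_T(F),\ko_T(F)) = H^i(T,\ko_T)$. These groups are $k$, $0$, $0$ for $i=0,1,2$: the first by connectedness, the second by $h^{1}(\ko_T)=0$, and the third by Serre duality, since $h^2(\ko_T)=h^0(K_T)=0$ as $K_T$ is a nontrivial torsion class. Orthogonality means that for $i\neq j$ we need $\Ext^\bullet(\ko_T(F_i^{\epsilon_i}),\ko_T(F_j^{\epsilon_j})) = H^\bullet(T,\ko_T(D))=0$ in both directions, where $D = F_j^{\epsilon_j}-F_i^{\epsilon_i}$ (and $-D$).

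The first step is the numerical input. Riemann--Roch on an Enriques surface gives $\chi(\ko_T(D)) = 1 + D^2/2$. Since $f_i^2=f_j^2=0$, we have $D^2 = -2 f_i.f_j$, so it suffices to show $f_i.f_j=1$ for $i\ne j$. To get this, I would use $3[\Delta]=\sum f_k$ and $f_k.[\Delta]=3$. Intersecting gives $30 = 3\Delta^2/... $; more precisely $\sum_k f_i.f_k = 3 f_i.\Delta = 9$. Each $f_i.f_k\ge 1$ for $k\ne i$, because two distinct nef isotropic primitive classes on an unnodal Enriques surface (non-proportional, by the Hodge index theorem) intersect positively. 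There are nine such terms summing to $9$, which forces $f_i.f_k=1$. Hence $\chi(\ko_T(\pm D))=0$.

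The second step shows $H^0(\ko_T(\pm D))=0$ and $H^2(\ko_T(\pm D))=0$, after which $H^1=0$ follows from $\chi=0$. By Serre duality, $H^2(\ko_T(D)) = H^0(\ko_T(K_T-D))^\vee$, and $K_T - D$ is numerically $-(f_j-f_i)$. So both vanishings reduce to showing that no divisor numerically equivalent to $\pm(f_j-f_i)$ is effective. Such a class has self-intersection $-2$. On an unnodal surface, every effective divisor is nef, hence has nonnegative square, so no effective divisor of square $-2$ exists. This kills $H^0$ in all cases.

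The main obstacle I expect is justifying $f_i.f_k\ge1$, since it requires knowing that $f_i$ and $f_k$ are distinct primitive isotropic nef classes. If the direct argument stalls, I would cite the Cossec--Dolgachev description of the Fano polarization as an isotropic $10$-sequence, for which $f_i.f_j=1$ is part of the structure in \cite{CDL25}*{\S3.5}. Everything else is routine Riemann--Roch and Serre duality.
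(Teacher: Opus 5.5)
Your proof is correct. The paper gives no argument for this statement; it quotes it from Zube \cite{Zub97}. Your proposal is therefore a self-contained replacement for the citation. It uses only facts the paper records: $3[\Delta]=f_1+\dots+f_{10}$ with $f_k.[\Delta]=3$ (Lemma~\ref{lemma:genEnriques_tenLB}), and the fact that every effective divisor on an unnodal Enriques surface is nef.

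The argument runs as follows, and each step is right:
\begin{itemize}
\item Riemann--Roch gives $\chi(\ko_T(\pm D))=1+D^2/2$, and $D^2=-2f_i.f_j$.
\item The sum $\sum_{k\neq i}f_i.f_k=9$ has nine integer terms, each $\geq 1$, so $f_i.f_j=1$ and $\chi=0$.
\item $h^0=h^2=0$, because any divisor numerically equivalent to $\pm(f_j-f_i)$ has square $-2$ and so is not effective. On the Serre-dual side this uses $K_T\equiv 0$.
\end{itemize}

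For the step you flagged as the obstacle, the cleanest justification is:
\begin{itemize}
\item $f_i.f_k\geq 0$ since both classes are nef.
\item $f_i$ and $f_k$ are not proportional, because they are distinct and $f_i.\Delta=f_k.\Delta=3$. Distinctness is what you are taking from Lemma~\ref{lemma:genEnriques_tenLB}, which lists ten classes.
\item The Hodge index theorem forbids two non-proportional isotropic classes from being orthogonal, so $f_i.f_k\geq 1$.
\end{itemize}
Isotropy of the $f_k$ is itself automatic: $0\leq f_k^2\leq (f_k.\Delta)^2/\Delta^2=9/10$, and $f_k^2$ is even. So you do not need the isotropic $10$-sequence structure from \cite{CDL25} as a fallback.

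Compared with the bare citation, your route shows exactly where the unnodal hypothesis enters: only to exclude effective divisors of square $-2$.
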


\subsection{Numerical K-theory} \label{subsec:latticeEnriques}

Fix $T$ an Enriques surface. We consider the index two lattice $\Lambda \subset \bZ \oplus \Num(T) \oplus \frac{1}{2} \bZ$ given by the formula
\[ \Lambda = \left\{ (s_0, s_1, s_2): \frac{1}{2}s_0 + s_2 \in \bZ \right\}. \]
We endow $\Lambda$ with the quadratic form
\[ ((s_0,s_1,s_2), (s_0',s_1',s_2')) = s_1\cdot s_1'-s_0s_2'-s_2s_0'. \]
Note that the quadratic form is integral and has signature $(2,10)$. We point out that $(\Lambda, (-,-))$ is \emph{not} even, as $((1,0,1/2),(1,0,1/2))=-1$. 

On the other hand, we endow $\Knum(T)$ with the pairing induced by the negative of the Euler pairing: 
\[ \langle [E], [F]\rangle := -\chi(E,F). \]
As the following result shows, the two lattices are related by a map $s\colon \Ob(\DCoh(T)) \to \Lambda$ similar to the Mukai vector.

\begin{prop}
The map $s\colon \Ob(\DCoh(T)) \to \bZ \oplus \Num(T) \oplus \frac{1}{2}\bZ$ given by
\[ s(E) = (\ch_0(E), \ch_1(E), \ch_2(E)+\frac{1}{2}\ch_0(E)) \]
induces an isometry $(\Knum(T), (-,-)) \xrightarrow{\sim} (\Lambda, \langle-,-\rangle)$.
\end{prop}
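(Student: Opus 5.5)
The plan is to compute everything from Hirzebruch--Riemann--Roch and then check the three properties of $s$ separately: it factors through $\Knum(T)$, it lands in $\Lambda$, and it is bijective onto $\Lambda$.

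\textbf{Step 1: the Todd class.} Since $\omega_T^{\otimes 2}\cong \ko_T$, the class $K_T$ vanishes in $\Num(T)$. From $H^1(T,\ko_T)=0$ and $h^2(\ko_T)=h^0(K_T)=0$ we get $\chi(\ko_T)=1$. Hence numerically
\[
  \mathrm{td}(T)=\left(1,-\tfrac{1}{2}K_T,\chi(\ko_T)\right)=(1,0,1).
\]

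\textbf{Step 2: Riemann--Roch and the isometry property.} For objects $E,F$ write $r=\ch_0(E)$, $c=\ch_1(E)$, $d=\ch_2(E)$, and $r',c',d'$ for $F$. Using $\ch(E^\vee)=(r,-c,d)$, HRR gives
\[
  \chi(E,F)=\int \ch(E^\vee)\ch(F)\,\mathrm{td}(T)=rr'+rd'+r'd-c\cdot c'.
\]
On the other side,
\[
  (s(E),s(F))=c\cdot c'-r\left(d'+\tfrac{r'}{2}\right)-r'\left(d+\tfrac{r}{2}\right)=-\chi(E,F).
\]
So $s$ carries $-\chi$ to the form on $\Lambda$, which is the isometry statement once $s$ is well defined and bijective.

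\textbf{Step 3: $s$ lands in $\Lambda$.} Taking $E=\ko_T$ in Step 2, we have $\tfrac{1}{2}s_0(F)+s_2(F)=\ch_2(F)+\ch_0(F)=\chi(\ko_T,F)\in\bZ$. This is exactly the defining condition of $\Lambda$.

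\textbf{Step 4: $s$ factors through $\Knum(T)$.} The form on $\Lambda$ is non-degenerate, because $\Num(T)$ is non-degenerate by Proposition~\ref{prop:genEnriques_form} and the hyperbolic part is visibly non-degenerate. It follows that $s(E)$ is determined by the values $(s(E),s(F))=-\chi(E,F)$, provided the classes $s(F)$ span $\Lambda_{\bQ}$; this spanning is checked in Step 6. Therefore numerically trivial objects map to $0$. Alternatively one can argue directly: $\ch_0$, $\ch_1$ modulo torsion, and $\ch_2$ are recovered from $\chi(-,\ko_x)$, $\chi(-,\ko_T(D))$ and $\chi(\ko_T,-)$.

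\textbf{Step 5: injectivity.} If $s(E)=0$, then by Step 2 $\chi(E,F)=0$ for every $F$, so $[E]=0$ in $\Knum(T)$.

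\textbf{Step 6: surjectivity.} I will exhibit preimages of a generating set of $\Lambda$.
\begin{itemize}
  \item A skyscraper $\ko_x$ gives $s(\ko_x)=(0,0,1)$.
  \item The structure sheaf gives $s(\ko_T)=(1,0,\tfrac{1}{2})$.
  \item For a divisor $D$, $s(\ko_T(D))=\left(1,D,\tfrac{D^2}{2}+\tfrac{1}{2}\right)$. Since $\Num(T)$ is even, $D^2/2\in\bZ$, and so $s(\ko_T(D))-s(\ko_T)\equiv(0,D,0)$ modulo $\bZ\cdot(0,0,1)$.
\end{itemize}
Any $(s_0,s_1,s_2)\in\Lambda$ equals $s_0(1,0,\tfrac{1}{2})+(0,s_1,0)+m(0,0,1)$ with $m=s_2-\tfrac{s_0}{2}\in\bZ$. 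Hence these classes generate $\Lambda$. They also span $\Lambda_{\bQ}$, which closes the argument in Step 4.

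The main care point is the Todd-class computation. One must verify that the $K_T$ term vanishes numerically and that $\mathrm{td}_2(T)$ has degree $\chi(\ko_T)=1$. One must also use the evenness of $\Num(T)$ in the surjectivity step. The rest is routine bookkeeping.
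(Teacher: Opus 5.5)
Your proposal is correct and follows essentially the paper's route. The core of both is the Hirzebruch--Riemann--Roch computation with numerical Todd class $(1,0,1)$, which gives $\chi(E,F)=-(s(E),s(F))$.

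The differences are minor:
\begin{itemize}
\item You get $\tfrac{1}{2}s_0+s_2\in\bZ$ from $\chi(\ko_T,F)\in\bZ$, whereas the paper uses evenness of $c_1(E)^2$ for vector bundles. Your version applies to all objects at once.
\item You spell out the factorization through $\Knum(T)$, injectivity and surjectivity, which the paper leaves as ``direct''. Your Step 6 is where the evenness of $\Num(T)$ is actually needed.
\end{itemize}
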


\begin{proof}
We first show that the image of $s$ is $\Lambda$. If $E$ is a vector bundle and $s(E) =(s_0,s_1,s_2)$, then
\[ \frac{1}{2}s_0+s_2=\ch_2(E) + \ch_0(E) = \frac{1}{2}c_1(E)^2-c_2(E)+\ch_0(E) \]
is an integer, since $c_1(E)^2$ is even by Proposition \ref{prop:genEnriques_form}. This shows that the image of $s$ is contained in $\Lambda$; the reverse inclusion is direct.

Now, recall that the Todd class of $T$ is $1+[pt]$, where $[pt]$ is the class of a point. This way, given $E, F \in \DCoh(T)$, the Hirzebruch--Riemann--Roch theorem yields
\begin{align*}
\chi(E, F)&= \int_T \ch(E)^\vee \cdot \ch(F) \cdot (1+[pt]) \\
&= \int_T (\ch_0(E)-\ch_1(E) + \ch_2(E)) (\ch_0(F)+\ch_1(F)+\ch_2(F))(1+[pt]) \\
&= \ch_0(E)\ch_0(F) + \ch_0(E) \ch_2(F) -\ch_1(E)\ch_1(F) + \ch_2(E) \ch_0(F) \\
&= -\langle s(E), s(F)\rangle.
\end{align*}
It follows that the map $s$ factors through $\Knum(T)$, and the induced map is an isometry.
\end{proof}

\subsection{Stability conditions} \label{subsec:stabEnriques}

We will briefly recall the construction of Bridgeland stability conditions on $\DCoh(T)$ by Arcara--Bertram \cite{ABLStabilitysurf}. Using their construction, we will adapt the results of Proposition \ref{prop:gen2cy_axiom4} to Enriques surfaces.

Let $\omega \in \Amp(T)_\bQ$ and $\beta \in \NS(T)_\bQ$ be given. We consider the central charge
\[ 
    Z_{\beta, \omega}^{\ArcaraBertram}(E) = -\ch_2(E) + \beta.\ch_1(E)+ \frac{\omega^2-\beta^2}{2}\ch_0(E) + \mathtt{i}\omega\cdot(\ch_1(E)-\beta\ch_0(E)), 
\]
together with the heart $\cA_{\beta,\omega}$ as defined in \eqref{eq:pathsK3_heart}. The pair $\sigma_{\beta,\omega}^\ArcaraBertram:=(Z_{\beta,\omega}^\ArcaraBertram, \cA_{\beta,\omega})$ defines a pre-stability condition by \cite{ABLStabilitysurf}. The support property holds by adapting the arguments of \cite{Tod13}*{\textsection 3.6}. A deformation argument gives a continuous map
\[ \Sigma^\ArcaraBertram\colon \NS(T)_\bR \times \Amp(T)_\bR \to \Stab(T) \]
extending the assignment $(\beta,\omega) \mapsto \sigma_{\beta,\omega}^\ArcaraBertram$ for rational classes. We denote by $\Stab^\dagger(T) \subset \Stab(T)$ the connected component containing the image of $\Sigma^\ArcaraBertram$. 

For future reference, we point out the following fact about stable objects of phase 1.

\begin{lem}[\cite{ABLStabilitysurf}*{p. 8}] \label{lemma:stabEnriques_stphase1}
If $E$ is a $\sigma_{\beta,\omega}^\ArcaraBertram$-stable object of phase 1, then $E=\ko_x$ for some $x \in T$, or $E=F[1]$ for some $\mu_\omega$-stable vector bundle $F$ on $T$ satisfying $\mu_\omega(F)=\beta.\omega$.
\end{lem}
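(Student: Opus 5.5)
The statement is Lemma~\ref{lemma:stabEnriques_stphase1}: a $\sigma^\ArcaraBertram_{\beta,\omega}$-stable object of phase 1 is either a skyscraper sheaf or $F[1]$ with $F$ a $\mu_\omega$-stable vector bundle of slope $\beta.\omega$. I would follow the standard argument for the tilted heart $\cA_{\beta,\omega}$, in the style of Bridgeland's K3 analysis behind Remark~\ref{remark:pathsK3_stableph1}. Let $E$ be stable of phase $1$. Then $E \in \cA_{\beta,\omega}$ and $\Im Z(E) = 0$. Consider the triangle $H^{-1}(E)[1] \to E \to H^0(E)$, which is a short exact sequence in $\cA_{\beta,\omega}$. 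On this heart $\Im Z \geq 0$, so both $H^{-1}(E)[1]$ and $H^0(E)$ have $\Im Z = 0$. Stability of $E$ then forces one of them to vanish. Indeed, a nonzero subobject of phase $1$ contradicts stability unless it equals $E$. If instead $H^{-1}(E)[1]=0$, then $E = H^0(E)$.

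\emph{Case $E = H^0(E) \in \cT_{\beta,\omega}$.} The condition $\omega.(\ch_1 - \beta\ch_0) = 0$ rules out any torsion-free part, because torsion-free sheaves in $\cT_{\beta,\omega}$ have HN factors of slope $>\beta.\omega$ and so contribute strictly positive imaginary part. Any torsion part supported in dimension $1$ contributes $\omega.\ch_1 > 0$. Hence $E$ is a $0$-dimensional sheaf. A $0$-dimensional sheaf has a skyscraper $\ko_x$ as a subobject of phase $1$ in $\cA$, so stability gives $E \cong \ko_x$.

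\emph{Case $E = F[1]$ with $F \in \cF_{\beta,\omega}$.} Vanishing of the imaginary part forces every $\mu_\omega$-HN factor of $F$ to have slope exactly $\beta.\omega$, so $F$ is $\mu_\omega$-semistable of that slope. If $F$ is strictly semistable, a destabilizing quotient $F \twoheadrightarrow F''$ of the same slope gives a subobject $F''[1]$ of $E$ in $\cA$ (since $F' \in \cF$ as well), again of phase $1$, contradicting stability. So $F$ is $\mu_\omega$-stable.

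It remains to show $F$ is locally free, and I expect this to be the main point. Let $F^{\vee\vee}$ be the reflexive hull, which is locally free on a surface, and consider $0 \to F \to F^{\vee\vee} \to Q \to 0$ with $Q$ zero-dimensional. In $\cA_{\beta,\omega}$ this rotates to a short exact sequence $0 \to Q \to F[1] \to F^{\vee\vee}[1] \to 0$. Here $F^{\vee\vee}$ is still $\mu_\omega$-stable of slope $\beta.\omega$, so $F^{\vee\vee}[1] \in \cA_{\beta,\omega}$. If $Q \neq 0$, then $Q$ is a subobject of phase $1$, contradicting stability of $E$. Hence $Q = 0$ and $F$ is locally free. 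None of these steps uses anything specific to K3 surfaces, so the argument applies verbatim to Enriques surfaces.
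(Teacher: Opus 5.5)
Your argument is correct, and it is the standard analysis of $H^{-1}$ and $H^0$ in the tilted heart (the same reasoning behind Remark~\ref{remark:pathsK3_stableph1}), which the paper does not reproduce but simply cites from Arcara--Bertram. Since it only uses that the heart is $\cA_{\beta,\omega}$ and that $\Im Z$ is a positive multiple of $\omega.(\ch_1-\beta\ch_0)$, it also covers the stability conditions $\sigma_t$ with $a<t\le 1/20$ in the proof of Theorem~\ref{thm:pathsEnriques_main}(2), which are not a priori $G$-translates of any $\sigma^{\mathrm{AB}}_{\beta',\omega'}$.

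One harmless slip: for a saturated subsheaf $F'\subset F$ of the same slope, the sequence in $\cA_{\beta,\omega}$ is $0\to F'[1]\to F[1]\to F''[1]\to 0$. So the proper phase-one subobject is $F'[1]$, not $F''[1]$, and the contradiction with stability goes through unchanged.
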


\begin{lem} \label{lemma:stabEnriques_invariance}
For any $\sigma \in \Stab^\dagger(T)$, we have $\sigma \otimes \ko(K_T) = \sigma$.
\end{lem}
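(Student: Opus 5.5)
The plan is a connectedness argument: show that the fixed locus of the autoequivalence $\Phi := - \otimes \ko_T(K_T)$ in $\Stab^\dagger(T)$ is non-empty, open and closed. Here $\Phi$ acts by $\Phi\cdot(Z,\cP) = (Z\circ \Phi_*^{-1}, \Phi(\cP))$.

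First I check that $\Phi$ acts trivially on $\Knum(T)$. The class $K_T$ is numerically trivial, i.e. $[K_T]=0$ in $\Num(T)$, and $K_T^2=0$. Hence
\[ \ch(E\otimes \ko_T(K_T)) = \ch(E)\cdot(1+K_T+\tfrac{1}{2}K_T^2) \]
has the same image as $\ch(E)$ in $\bZ\oplus \Num(T)\oplus \tfrac12\bZ$. So $s(\Phi(E)) = s(E)$, and in particular $Z_{\Phi\cdot\sigma} = Z_\sigma$ for every $\sigma$.

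Non-emptiness: take rational $(\beta,\omega)$ and $\sigma^{\ArcaraBertram}_{\beta,\omega} = (Z^{\ArcaraBertram}_{\beta,\omega}, \cA_{\beta,\omega})$. Tensoring by a line bundle $L$ with $[c_1(L)]=0$ has the following effects:
\begin{itemize}
\item it preserves torsion sheaves and torsion-free sheaves;
\item it preserves $\mu_\omega$, since $\ch_1$ only changes by a numerically trivial class;
\item it preserves $\mu_\omega$-semistability, since subsheaves correspond bijectively under $\otimes L$.
\end{itemize}
Therefore $\Phi(\cT_{\beta,\omega}) = \cT_{\beta,\omega}$ and $\Phi(\cF_{\beta,\omega}) = \cF_{\beta,\omega}$. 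Since $\Phi$ is exact and commutes with taking cohomology sheaves, $\Phi(\cA_{\beta,\omega}) = \cA_{\beta,\omega}$. Together with the equality of central charges, this gives $\Phi\cdot\sigma^{\ArcaraBertram}_{\beta,\omega} = \sigma^{\ArcaraBertram}_{\beta,\omega}$, because a stability condition is determined by its heart and charge.

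Closedness: $\Phi$ acts by a homeomorphism of $\Stab(T)$ and $\Stab(T)$ is Hausdorff (metric), so the fixed locus $\{\sigma:\Phi\cdot\sigma=\sigma\}$ is closed.

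Openness is the main step, and I would handle it with Bridgeland's local injectivity lemma \cite{Br07}: if $\sigma,\tau$ have the same central charge and $d(\sigma,\tau)<1$ in the slicing metric, then $\sigma=\tau$. Suppose $\Phi\cdot\sigma=\sigma$. Since $\Phi$ acts continuously, there is a neighbourhood $U\ni\sigma$ such that $d(\tau,\Phi\cdot\tau)<1$ for all $\tau\in U$. Alternatively, $\Phi$ acts isometrically for the slicing metric, so $d(\tau,\Phi\tau)\le 2d(\tau,\sigma)$ and one can take the ball of radius $1/2$. As $Z_{\Phi\cdot\tau}=Z_\tau$, the lemma gives $\Phi\cdot\tau=\tau$ on $U$.

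Since $\Stab^\dagger(T)$ is connected by definition, the fixed locus is all of $\Stab^\dagger(T)$. This also covers the irrational points of the image of $\Sigma^{\ArcaraBertram}$, which lie in the closure of the rational ones.
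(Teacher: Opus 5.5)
Your proof is correct and follows the same route as the paper. Both arguments note that $-\otimes\ko_T(K_T)$ acts trivially on $\Knum(T)$, reduce to exhibiting a single fixed point in $\Stab^\dagger(T)$, and check that the Arcara--Bertram stability conditions are fixed because the torsion pair (hence the heart) and the central charge are preserved. The only difference is that you spell out the open--closed argument, via Bridgeland's local injectivity lemma and the isometric action, which the paper compresses into an appeal to Bridgeland's deformation theorem.
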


\begin{proof}
Since $\ko_T(K_T)$ is a numerically trivial, $-\otimes \ko_T(K_T)$ acts trivially on $\Knum(T)$. This way, it suffices to prove the statement for a single $\sigma \in \Stab^\dagger(T)$ by Bridgeland's deformation theorem.

Pick $\beta \in \NS(T)_\bR$ and $\omega \in \Amp(T)_\bR$. For any torsion-free sheaf $E$, we have $\ch_i(E \otimes \ko_T(K_T))=(\ch(E).\ch(\ko_T(K_T))_i=\ch_i(E)$ for all $i$. It follows directly that $E$ is $\mu_\omega$-semistable if and only if $E \otimes \ko_T(K_T)$ is $\mu_\omega$-semistable, and that the slopes are preserved. Thus the heart $\cA_{\beta, \omega}$ is preserved by $-\otimes \ko_T(K_T)$. Since the central charge $Z_{\beta,\omega}^\ArcaraBertram$ is also preserved, we see that $\sigma_{\beta,\omega}^\ArcaraBertram \otimes \ko_T(K_T) = \sigma_{\beta,\omega}^{\ArcaraBertram}$. 
\end{proof}

\begin{cor}[cf. Lemma \ref{lemma:gen2cy_roots}] \label{cor:stabEnriques_stablebound}
Let $E$ be a $\sigma$-stable object for some $\sigma \in \Stab^\dagger(T)$. Then $(s(E), s(E))\geq -2$. 
\end{cor}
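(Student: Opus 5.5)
We prove the inequality by passing to the universal K3 cover and applying Lemma~\ref{lemma:gen2cy_roots} there. Over $\bC$ (and in general via the étale double cover) let $\pi\colon \widetilde{T}\to T$ be the K3 double cover, so that $\pi_*\ko_{\widetilde T}\cong \ko_T\oplus \ko_T(K_T)$.

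The first step is to compute $\chi(E,E)$ on $T$ directly and compare it with the pairing $\langle s(E),s(E)\rangle=-\chi(E,E)$. Stability gives $\Hom^{<0}(E,E)=0$ and $\Hom(E,E)=k$. Serre duality on $T$ reads $\Hom^i(E,E)\cong \Hom^{2-i}(E,E\otimes\ko_T(K_T))^\vee$. By Lemma~\ref{lemma:stabEnriques_invariance}, $E\otimes\ko_T(K_T)$ is again $\sigma$-stable of the same phase. Hence $\Hom(E,E\otimes \ko(K_T))$ is either $k$ (when $E\cong E\otimes\ko(K_T)$) or $0$, and $\Hom^{<0}(E,E(K_T))=0$. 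It follows that $\Hom^{i}(E,E)=0$ for $i\ge 3$ and $\dim\Hom^2(E,E)\le 1$. Therefore
\[
-\chi(E,E)=-1+\dim\Ext^1(E,E)-\dim \Ext^2(E,E)\ge -2 .
\]
This already gives $(s(E),s(E))\ge -2$. The pullback to $\widetilde T$ is not even needed; it would only enter if one wanted parity information.

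So the proof reduces to three points: (i) $E\otimes\ko_T(K_T)$ is $\sigma$-stable of the same phase as $E$, which follows from the invariance $\sigma\otimes\ko_T(K_T)=\sigma$ of Lemma~\ref{lemma:stabEnriques_invariance}; (ii) Hom between stable objects of equal phase is $0$ or $k$, and Hom from higher to lower phase vanishes; (iii) Serre duality with $S=-\otimes\omega_T[2]$.

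The main point to handle carefully is (ii). For two stable objects of the same phase, a nonzero morphism is an isomorphism, so $\Hom(E,E(K_T))\neq 0$ forces $E\cong E(K_T)$ and the Hom is one-dimensional by Schur. This step is standard. The remaining care is simply to keep track of the identification $\langle-,-\rangle=-\chi$ via the isometry of the previous proposition, which shows that $(s(E),s(E))=-\chi(E,E)$.
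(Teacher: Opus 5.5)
Your argument is correct and is essentially the paper's proof: $E\otimes\ko_T(K_T)$ is $\sigma$-stable of the same phase by Lemma~\ref{lemma:stabEnriques_invariance}, and Serre duality then gives $\Hom^{\geq 3}(E,E)=0$ and $\dim\Ext^2(E,E)=\dim\Hom(E,E\otimes\ko_T(K_T))\le 1$, so $(s(E),s(E))=-\chi(E,E)\ge \dim\Ext^1(E,E)-2\ge -2$. The K3 double cover in your opening paragraph plays no role, as you note yourself, and can be dropped.
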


\begin{proof}
Note that $\Hom^i(E,E)=0$ for $i <0$ and $\Hom^0(E,E) \cong k$, as $E$ is $\sigma$-stable. On the other hand, $E \otimes \ko_T(K_T)$ is $\sigma$-stable of the same phase by Lemma \ref{lemma:stabEnriques_invariance}. Together with Serre duality, we get that $\Hom^i(E, E) \cong \Hom^{2-i}(E, E\otimes \ko_T(K_T))=0$ for $i=2$ and zero or $\bC$ for $i=1$. Combining these observations yields
\begin{align*}
-\chi(E, E) &= -\dim \Hom(E, E) + \dim \Hom^1(E, E) - \dim \Hom(E, E\otimes \ko_T(K_T)) \\ &\geq \dim \Hom^1(E, E)-2.
\end{align*}
We conclude as the left hand side equals $(s(E), s(E))$. 
\end{proof}

Corollary \ref{cor:stabEnriques_stablebound} implies an analog of Proposition \ref{prop:gen2cy_axiom4} for $\Stab^\dagger(T)$. We denote by $\Roots(T)$ the collection of elements $v \in \Lambda$ such that $(v, v) =-1$ or $-2$.

\begin{prop} \label{prop:stabEnriques_axiom4}
Let $\sigma_\bullet$ be a path of stability conditions in $\Stab^\dagger(T)$. If $\sigma_\bullet$ satisfies \ref{axiom1} and \ref{axiom2}, and $(-,-)|_{\ker Z_\infty}$ is negative definite, then \ref{axiom4} holds. 
\end{prop}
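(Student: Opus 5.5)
The plan is to copy the proof of Proposition~\ref{prop:gen2cy_axiom4}, replacing Lemma~\ref{lemma:gen2cy_roots} with Corollary~\ref{cor:stabEnriques_stablebound}. The only new feature is that $\Roots(T)$ contains classes of square $-1$ as well as $-2$. As there, the statement to prove is \ref{axiom4} for $\nu=0$; the massless levels $\nu>0$ are handled separately by Lemma~\ref{L:qcpathsA4_massless} in the applications.

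First I would use the negative definiteness of $(-,-)|_{\ker Z_\infty}$. For $A\gg 0$, the expression $\lVert v\rVert^2:=A\lvert Z_\infty(v)\rvert^2-(v,v)$ is then a norm on $\Lambda_\bR$. For any $\delta\in\Roots(T)$ with $\lvert Z_\infty(\delta)\rvert\le 1$ we get $\lVert\delta\rVert^2\le A+2$. Since $\Lambda$ is a lattice, only finitely many such $\delta$ exist. Consequently
\[
C:=\inf\{\lvert Z_\infty(\delta)\rvert:\delta\in\Roots(T)\setminus\ker Z_\infty\}>0,
\]
because this is a minimum over a finite set of positive numbers, together with the value $1$ if that set is empty.

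Next I would set $Q_0(v):=(v,v)+\frac{2}{C^2}\lvert Z_\infty(v)\rvert^2$. On $\ker Z_\infty$ it agrees with $(-,-)$, so it is negative definite there, as \ref{axiom4} requires. Now take an object $E$ that is massive and $\sigma_t$-stable for some $t\ge t_0$. Since $\sigma_t\in\Stab^\dagger(T)$, Corollary~\ref{cor:stabEnriques_stablebound} gives $(s(E),s(E))\ge -2$, and integrality of the form puts $(s(E),s(E))$ in $\{-2,-1\}\cup\bZ_{\ge0}$. If it is nonnegative, then $Q_0(s(E))\ge 0$ trivially. Otherwise $s(E)\in\Roots(T)$. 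Here we need $s(E)\notin\ker Z_\infty$, after which $Q_0(s(E))\ge -2+\frac{2}{C^2}C^2=0$.

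The main obstacle is guaranteeing $s(E)\notin\ker Z_\infty$ when $s(E)$ is a root and $E$ is massive and stable. For a single stable object, masslessness is tied to the limit of the charge. However, a root could lie in $\ker Z_\infty$ and still occur as the class of an object that is stable at finite $t$, which is exactly the delicate case. I would resolve it as follows. By the finiteness above, only finitely many roots $\delta_1,\dots,\delta_r$ lie in $\ker Z_\infty$. An object $E$ that is $\sigma_t$-stable with $s(E)=\delta_j$ has $\lvert Z_t(E)\rvert=\lvert Z_t(\delta_j)\rvert\to0$. Since \ref{axiom1} makes the heart constant (Lemma~\ref{L:constantheart}), such an $E$ lies in a shift of $\cA$, and its mass, which equals $\lvert Z_t(\delta_j)\rvert$, tends to $0$. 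So $E$ is massless, not massive, and these classes are excluded by hypothesis. I would make this precise with Lemma~\ref{L:vanishing_order} and Lemma~\ref{L:masslesscharacterization}: these show that $E\in\cP_{\sigma_\bullet}(n)$ has constant phase, and is therefore semistable for all $t$ with mass $\lvert Z_t(E)\rvert$. This is the same convention as in the K3 proof.
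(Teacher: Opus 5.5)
Your proposal is correct and is the paper's argument: rerun the proof of Proposition~\ref{prop:gen2cy_axiom4} with Corollary~\ref{cor:stabEnriques_stablebound} in place of Lemma~\ref{lemma:gen2cy_roots}, and the same quadratic form $Q_0$ also covers the classes of square $-1$. Your extra check that a massive stable object cannot have class in $\ker Z_\infty$, which the paper leaves implicit, is sound, but the right input there is Lemma~\ref{L:phase_one_independent}: $B(E)=0$ puts a shift of $E$ in $\ker B\cap\cA=\cP_{\sigma_t}(1)$ for every $t$, whereas Lemma~\ref{L:vanishing_order} already assumes masslessness and so would be circular here.
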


\begin{proof}
    Apply the proof of Proposition \ref{prop:gen2cy_axiom4}, substituting Lemma \ref{lemma:gen2cy_roots} with Corollary \ref{cor:stabEnriques_stablebound}.
\end{proof}

\subsection{Quasi-convergent paths} \label{subsec:pathsEnriques}

The goal of this subsection is to prove the following result:

\begin{thm} \label{thm:pathsEnriques_main}
Let $T$ be an Enriques surface.\vspace{-2mm}
\begin{enumerate}
\item There exists a strongly quasi-convergent path $\sigma_\bullet$ with associated filtration 
\[
    0\subset \langle \cO_T, \cO_T(K_T) \rangle \subset \DCoh(T).
\]
\item Assume that $T$ is unnodal, fix a Fano polarization $\Delta$, and let $F_1^\pm, \dots, F_{10}^\pm$ be the half-fibers associated to the nef classes $f_1, \dots, f_{10}$. Then there exists a strongly quasi-convergent path $\sigma_\bullet$ with associated filtration 
\[
    0\subset \langle \ko_T(F_1^+), \ko_T(F_1^-), \dots, \ko_T(F_{10}^+), \ko_T(F_{10}^-)\rangle \subset \DCoh(T).
\]
\end{enumerate}
\end{thm}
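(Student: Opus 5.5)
We follow the template of Theorem~\ref{thm:pathsK3_main}(1),(2), with Proposition~\ref{prop:stabEnriques_axiom4} in place of Proposition~\ref{prop:gen2cy_axiom4}. In each case we write down an explicit path $\sigma_t=(Z_t,\cA)$ in the $G$-orbit of $\sigma^\ArcaraBertram_{\beta,t\omega}$ with fixed heart $\cA=\cA_{\beta,\omega}$. The imaginary part of $Z_t$ is $\omega\cdot(\ch_1-\beta\ch_0)$, independent of $t$. The real part is $-\ch_2+\beta\ch_1+\tfrac12\ch_0(t^2\omega^2-\beta^2)$, polynomial in $t$. Hence \ref{axiom1} and \ref{axiom2} hold. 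We then let $t$ decrease to a critical value $\lambda$ at which the desired objects become massless. We then check three things: that $\sigma_t$ is a stability condition for $t>\lambda$; that $(-,-)$ is negative definite on $\ker Z_\lambda$, so \ref{axiom4} holds for $\nu=0$; and that the massless stable phase-one objects are exactly the listed ones. Since they are finitely many, Lemma~\ref{L:qcpathsA4_massless} gives \ref{axiom3} for $\nu>0$, and Theorem~\ref{T:strongquasiconvergence} concludes.

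For (1) we take $\beta=0$, $\omega=H$ ample. In the coordinates $s=(s_0,s_1,s_2)$, with $s_2=\ch_2+\tfrac12\ch_0$, we have
\[
Z_t=-s_2+\tfrac12 s_0(t^2H^2-1)+\mathtt{i}H.s_1.
\]
With $\lambda=\sqrt{2/H^2}$ this gives $Z_\lambda=-s_2+\tfrac12 s_0+\mathtt{i}H.s_1$. On $\ker Z_\lambda$ we have $s_2=s_0/2$, so $(s,s)=s_1^2-s_0^2$, which is negative definite by Hodge index. We must know that $\sigma_t$ exists for all $t>\lambda$, i.e.\ that $(0,tH)$ stays in the region where the Arcara--Bertram construction (as deformed in $\Stab^\dagger(T)$) is valid. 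This is automatic for all ample classes, as $\Sigma^\ArcaraBertram$ is defined on all of $\NS(T)_\bR\times\Amp(T)_\bR$.

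Next we classify the massless objects. By Lemma~\ref{L:vanishing_order} they have phase one, and by Lemma~\ref{lemma:stabEnriques_stphase1} the stable phase-one objects are points or $F[1]$ with $F$ a $\mu_H$-stable bundle with $H.\ch_1(F)=0$. Points have $Z=-1$, so they are massive. For $F[1]$, Corollary~\ref{cor:stabEnriques_stablebound} gives $s_1^2-2s_0s_2\ge -2$. Then $Z_\lambda(F[1])=s_2-\tfrac12 s_0\le \frac{s_1^2}{2s_0}+\frac1{s_0}-\frac{s_0}2$. Both terms are $\le0$. Equality forces $s_1=0$ and $s_0^2=2$, impossible, so the bound must be refined. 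Here the Enriques case differs from K3: rank one requires $(s,s)=-1$, coming from $\Hom(F,F\otimes K_T)=0$. With this refinement we get $Z\le \frac{s_1^2}{2s_0}+\frac1{2s_0}-\frac{s_0}2$. Equality holds iff $s_0=1$, $s_1=0$, $s_2=\tfrac12$, i.e.\ $F$ is numerically trivial. A stable numerically trivial line bundle is $\cO_T$ or $\cO_T(K_T)$. Since the bound in Corollary~\ref{cor:stabEnriques_stablebound} is only $-2$, I would redo its proof case by case. If $\Hom(E,E\otimes K_T)=0$, then $(s,s)\ge -1$. Otherwise $E\cong E\otimes K_T$, and then rank considerations (or the lattice parity $\tfrac12 s_0+s_2\in\bZ$) handle the rest.

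For (2), choose the polarization so that $Z_t$ kills all ten classes at once. Each $\cO(F_i^\pm)$ has $s=(1,f_i,\tfrac12)$, since $f_i^2=0$. Take $\omega=\Delta$ and $\beta=b\Delta$ with $b=3/10$, so that $\Delta.(f_i-b\Delta)=3-3=0$ for all $i$. The remaining condition is $\Re Z_\lambda(1,f_i,\tfrac12)=0$. Computing this is a single equation in $t$, identical for all $i$ because $\beta.f_i=b\cdot3$ is independent of $i$. Solving it gives $\lambda$. Negative definiteness of $\ker Z_\lambda$ again follows from Hodge index after completing the square as in Theorem~\ref{thm:pathsK3_main}(2). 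For the massless classification, a massless $F[1]$ has slope $b\Delta^2=3$. Bounding $Z_\lambda(F[1])$ via $(s,s)\ge-1$ (or $-2$) and Hodge index should force $\ch_0=1$ and $\ch_1^2=0$ with $\ch_1.\Delta=3$. Then $\ch_1$ is nef (unnodal case), so $[\ch_1]\in\{f_1,\dots,f_{10}\}$ by Lemma~\ref{lemma:genEnriques_tenLB}. The line bundle is then one of the twenty $\cO_T(F_i^\pm)$.

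The main obstacle is the last step. We need a sharp inequality with equality exactly at rank one, and we must verify that $\sigma_t$ exists (stays in $\Stab^\dagger(T)$ with the stated heart) for all $t>\lambda$. In particular, no other root class may reach $\bR_{\le0}$ first. I would handle this with the same argument as in (2) for K3: a potentially destabilizing root must have $\Im Z=0$, hence slope $3$, and the preceding inequality shows its real part is positive for $t>\lambda$.
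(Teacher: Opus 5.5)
\textbf{Part (1).} Your argument is essentially the paper's, with a harmless conversion slip. Since $-\ch_2=-s_2+\tfrac12 s_0$, one has $\Re Z^{\ArcaraBertram}_{0,tH}=-s_2+\tfrac12 s_0(t^2H^2+1)$, not $(t^2H^2-1)$. Your formula $-s_2+\tfrac12 s_0(t^2H^2-1)+\mathtt{i}H.s_1$ is therefore the $\GL_2^+(\bR)$-rescaling of $\sigma^{\ArcaraBertram}_{0,\sqrt{t^2-\lambda^2}\,H}$. So your path is the paper's $\omega\to 0$ path, reparametrized, and it exists for $t>\lambda$.

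Your classification also works: $(s,s)\ge-1$ unless $E\cong E\otimes\ko_T(K_T)$, and that case forces even rank by determinants or by the parity condition $\tfrac12 s_0+s_2\in\bZ$. The paper argues differently, using the exact kernel condition $s_2=\tfrac12 s_0$ and evenness of $s_1^2$.

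\textbf{Part (2): the path leaves the Arcara--Bertram family.} The critical charge is not of Arcara--Bertram type, so the path cannot stay in the $G$-orbit of $\sigma^{\ArcaraBertram}_{\beta,t\Delta}$.
\begin{itemize}
\item With the real part you wrote, $\Re Z_t(\ko_T(F_i^\pm)[1])=-\tfrac{9}{20}-5t^2$. This never vanishes, so the equation you propose to solve for $\lambda$ has no real solution.
\item This is structural. On $\ker Z^{\ArcaraBertram}_{\beta,\omega}$ the Bogomolov form $\ch_1^2-2\ch_0\ch_2$ is negative definite by Hodge index, but it vanishes on every line bundle. Hence no line bundle is ever massless at an Arcara--Bertram charge or any $\GL_2^+(\bR)$-translate of one.
\item In the paper's normalization $Z_t=-\ch_2+\beta.\ch_1+(t-\tfrac12)\ch_0+\mathtt{i}\Delta.(\ch_1-\beta\ch_0)$, the Arcara--Bertram family covers only $t>\tfrac1{20}$. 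The endpoint is $t=-\tfrac25$, which formally corresponds to $\omega^2=-\tfrac9{10}$.
\item Your plan to exclude earlier walls ``as in the K3 case'' has nothing to rest on. That argument used Bridgeland's description of the geometric chamber, Theorem~\ref{thm:pathsK3_geochamber}, and there is no Enriques counterpart available here.
\end{itemize}
The paper instead lets $a$ be the infimum of the interval of existence. Stable phase-one objects are constant in $t$ by Lemma~\ref{L:phase_one_independent}. If $a>-\tfrac25$ and none of them is massless at $a$, Theorem~\ref{T:strongquasiconvergence} gives a genuine stability condition with charge $Z_a$ and heart $\cA$. The deformation theorem then extends it below $a$, a contradiction.

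\textbf{Part (2): the classification step.} The input this continuity argument needs does not follow from Hodge index together with $(s,s)\ge-1$ or $-2$. Here is a class that passes all your numerical tests but is massless at the endpoint. First, $f_i.f_j=1$ for $i\ne j$: the nine positive integers $f_i.f_j$ with $j\ne i$ sum to $3[\Delta].f_i=9$. Now take $s=(2,f_i+f_j,1)$. It has slope $3$, satisfies $\tfrac12 s_0+s_2\in\bZ$, and has $(s,s)=-2$, which is compatible with even rank, so neither rank nor parity excludes it. Yet $\Re Z_{-2/5}(F[1])=s_2-\tfrac12 s_0=0$ for a bundle $F$ of this class.

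What is missing is the test-object argument of Lemma~\ref{lemma:pathsEnriques_massless10}. Let $F$ be $\mu_\Delta$-stable of slope $3$ and not isomorphic to any $\ko_T(F_i^\pm)$.
\begin{itemize}
\item Stability gives $\Hom(\ko_T(F_i^+),F)=0$, and Serre duality gives $\Ext^2(\ko_T(F_i^+),F)\cong\Hom(F,\ko_T(F_i^-))^\vee=0$.
\item Hence $f_i.s_1-s_2-\tfrac12 s_0=-\chi(\ko_T(F_i^+),F)\ge0$ for each $i$.
\item Summing over $i$ and using $\sum_i f_i=3[\Delta]$ gives $s_2\le\tfrac25 s_0$.
\item Therefore $\Re Z_t(F[1])\le(-\tfrac12-t)s_0<0$ for every $t\ge-\tfrac25$.
\end{itemize}
This single estimate rules out massless objects before the endpoint. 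It also shows that the massless objects at the endpoint are exactly the twenty line bundles.
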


For (1), the construction follows the idea of Theorem \ref{thm:pathsK3_main}, by considering a $G$-translate of $\sigma_{0, aH}^\ArcaraBertram$ for each $a>0$. For (2), we use that $\Delta.\ch_1(\ko_T(F_i^\pm))=3$ for every $i$. This way, the shifts $\ko_T(F_i^\pm)[1]$ are $\sigma_{\beta, t\Delta}^\ArcaraBertram$-stable, provided that $\beta.\Delta=3$. This motivates our choice of stability conditions in the proof of (2).

\begin{proof}[Proof of Theorem \ref{thm:pathsEnriques_main}(1)]
Choose an ample divisor $H$ on $T$. We define a path 
\[
    \sigma_\bullet = (Z_\bullet, \cA)\colon (1/2,\infty) \to \Stab(T)
\]
as follows. Set $\cA = \cA_{0, H}$, and
\begin{align*}
Z_t(E) &= -s_2(E)+ts_0(E)+\mathtt{i}H.s_1(E) \\
&= -\ch_2(E) + \left( t-\frac{1}{2}\right)\ch_0(E) + \mathtt{i}H.s_1(E).
\end{align*}
Note that $(-,-)|_{\ker Z_t}$ is negative definite for all $t>0$: indeed, if $v=(s_0,s_1,s_2) \in \ker Z_t$, then $s_2=ts_0$ and $H.s_1=0$, hence $(v,v)=s_1^2-2ts_0^2 \leq 0$ by the Hodge index theorem. Equality holds only when $s_1=0$ and $s_0=0$, which implies $v=0$.

We claim that for $t>1/2$, $\sigma_t \in \Stab^\dagger(T)$. In fact, after rescalling the imaginary part by $a(t)=\sqrt{(2t-1)/H^2}$, $\sigma_t$ agrees with $\sigma_{0, a(t)H}^\ArcaraBertram$. Thus, $\sigma_t$ is in the $G$-orbit of the Arcara--Bertram stability conditions. 

The path $\sigma_\bullet$ satisfies \ref{axiom1} and \ref{axiom2} by construction. By Proposition \ref{prop:stabEnriques_axiom4}, the path satisfies \ref{axiom4} for $\nu=0$. 

Let us determine the massless objects of phase 1. By Lemma \ref{lemma:stabEnriques_stphase1}, the $\sigma_t$-stable objects of phase 1 are skyscraper sheaves and shifts $F[1]$ of $\mu_H$-stable vector bundles with $\mu_H(F)=0$. We have $Z_t(\ko_x)=-1$ for all $t$, hence skyscraper sheaves are not massless. Let $E=F[1]$ be a shift of a $\mu_H$-stable vector bundle and assume that $E$ is massless, so that $s(E) \in \ker Z_{1/2}$ and hence $H.s_1(E) =0$ and $s_2(E)=\frac{1}{2}s_0(E)$. By Corollary \ref{cor:stabEnriques_stablebound},
\[ -2 \leq s_1(E)^2 -2s_0(E)s_2(E) = s_1(E)^2 - s_0^2(E). \]
The term $s_1(E)^2$ is either $0$ or $\leq -2$, by the Hodge index theorem and evenness of the intersection form on $\Num(T)$. The second term is $s_0(E)^2=\ch_0(F)^2$, which is $1$ or $\geq 4$.

Thus, $s_1(E)=0$ and $s_0(E)=1$, and it follows that $F$ is a vector bundle with $\ch_0(F)=1$, $\ch_1(F)=0$, and $\ch_2(F)=0$. Here $F$ is a line bundle with $\ch_1(F)$ either $0$ or $K_T$, hence $F=\ko_T$ or $F=\ko_T(K_T)$. To conclude, \ref{axiom4} holds for $\nu\geq 1$ by Lemma~\ref{L:qcpathsA4_massless}, and so $\sigma_\bullet$ is strongly quasi-convergent by Theorem \ref{T:strongquasiconvergence}.
\end{proof}

The proof of Theorem \ref{thm:pathsEnriques_main}(2) is divided into several steps. We fix an unnodal Enriques surface $T$ and a Fano polarization $\Delta$. We let $f_1, \dots, f_{10}$ be the nef classes satisfying $\Delta.f_i =3$, and $F_i^\pm$ be the associated half-fibers, as in Lemma \ref{lemma:genEnriques_tenLB} and the discussion thereafter. 

Let us write $\beta=\frac{3}{10}\Delta$,  $\cA:=\cA_{\beta, \Delta}$, 
\begin{align*}
Z_t(E) &= -s_2(E) + \beta.s_1(E) + ts_0(E) + \mathtt{i}\Delta.\left(s_1(E)-\beta s_0(E)\right) \\
&= -\ch_2(E) + \beta.\ch_1(E) + \left( t-\frac{1}{2}\right) \ch_0(E) + \mathtt{i}\Delta.\left(\ch_1(E)-\beta \ch_0(E)\right).
\end{align*}
Note that $v(\ko_T(F_i^\pm)) =(1,f_i,1/2)$ and so $Z_t(\ko_T(F_i^\pm)[1])=-\frac{2}{5}-t$.

We first show that $\sigma_t$ is a stability condition for all $t>-2/5$, and describe the massless objects of $\sigma_\bullet$ as $t \to (-2/5)^-$. Finally, we use these results to conclude strong quasi-convergence. We begin with the following two observations:

\begin{lem}
For any $t>-\frac{9}{20}$, $(-,-)|_{\ker Z_t}$ is negative definite.
\end{lem}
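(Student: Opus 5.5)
The plan is to compute $\ker Z_t$ explicitly and then decompose $s_1$ orthogonally with respect to $\Delta$, reducing the claim to the Hodge index theorem. This is the same pattern as in the proofs of Theorem~\ref{thm:pathsK3_main}(2) and Theorem~\ref{thm:pathsEnriques_main}(1).

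\emph{Describing the kernel.} Let $v=(s_0,s_1,s_2)\in\Lambda_\bR$ with $Z_t(v)=0$. The imaginary part gives $\Delta.s_1=\Delta.\beta\, s_0$. Since $\beta=\frac{3}{10}\Delta$ and $\Delta^2=10$, this reads $\Delta.s_1=3s_0$. The real part then gives
\[
    s_2=\beta.s_1+ts_0=\tfrac{3}{10}\,\Delta.s_1+ts_0=\left(\tfrac{9}{10}+t\right)s_0 .
\]

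\emph{Orthogonal decomposition.} Write $s_1=\beta s_0+w$ with $w\in\Num(T)_\bR$. The imaginary-part condition says exactly that $\Delta.w=0$. Using $\beta.w=\frac{3}{10}\Delta.w=0$ and $\beta^2=\frac{9}{10}$, we get
\[
    (v,v)=s_1^2-2s_0s_2=\tfrac{9}{10}s_0^2+w^2-2\left(\tfrac{9}{10}+t\right)s_0^2=w^2-\left(\tfrac{9}{10}+2t\right)s_0^2 .
\]

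\emph{Applying Hodge index.} By Proposition~\ref{prop:genEnriques_form} the intersection form on $\Num(T)_\bR$ is non-degenerate, of hyperbolic signature. Since $\Delta$ is ample and $\Delta.w=0$, the Hodge index theorem gives $w^2\le 0$, with equality only if $w=0$. For $t>-\frac{9}{20}$ we have $\frac{9}{10}+2t>0$, so $(v,v)\le 0$. If $(v,v)=0$, then $w=0$ and $s_0=0$, hence $s_1=0$ and $s_2=(\frac{9}{10}+t)s_0=0$, so $v=0$. Thus $(-,-)|_{\ker Z_t}$ is negative definite.

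There is no real obstacle here. The only point needing care is the strictness in the Hodge index theorem on real classes, which holds because the form on $\Num(T)_\bR$ is non-degenerate with a single positive direction spanned by $\Delta$. The threshold $-\frac{9}{20}$ comes out exactly as the value where the coefficient $\frac{9}{10}+2t$ vanishes.
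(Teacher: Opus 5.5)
Your proof is correct and is essentially the paper's argument. Both complete the square around $s_1-\beta s_0$ (your $w$), reduce on $\ker Z_t$ to $(v,v)=w^2-(\tfrac{9}{10}+2t)s_0^2$, and conclude by the Hodge index theorem. Your coefficient is in fact the right one: the paper's display writes $-2s_0^2(\tfrac{9}{10}+t)$ where it should read $-2s_0^2(\tfrac{9}{20}+t)$, and only your version matches the stated threshold $t>-\tfrac{9}{20}$.
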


\begin{proof}
If $v=(s_0,s_1,s_2) \in \ker Z_t$, then
\begin{align*}
(v,v) &= s_1^2-2s_0s_2 = (s_1-\beta s_0)^2 - 2s_0(s_2-\beta.s_1 + \frac{\beta^2}{2}s_0) \\
&= (s_1-\beta s_0)^2 - 2s_0^2(\frac{9}{10}+t) \leq 0,
\end{align*}
by the Hodge index theorem and $t>-\frac{9}{20}$. Equality gives $s_0=0$ and $s_1-\beta s_0=0$, so $v=0$.
\end{proof}

\begin{lem}
For any $t>1/20$, $\sigma_t\in \Stab^\dagger(T)$.
\end{lem}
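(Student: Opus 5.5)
The plan is to show that each $\sigma_t=(Z_t,\cA)$ is a $G$-translate of an Arcara--Bertram stability condition $\Sigma^{\ArcaraBertram}(\beta,a\Delta)$ for a suitable $a=a(t)>0$. Since $G$ is connected, its action preserves connected components, so this places $\sigma_t$ in $\Stab^\dagger(T)$.

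\textbf{Matching the charges.} First compare $Z_t$ with $Z^{\ArcaraBertram}_{\beta,a\Delta}$. The imaginary parts differ by the factor $a$: that of $Z^{\ArcaraBertram}_{\beta,a\Delta}$ is $a\,\Delta.(\ch_1-\beta\ch_0)$, that of $Z_t$ is $\Delta.(\ch_1-\beta\ch_0)$. The real parts agree exactly when
\[
\frac{a^2\Delta^2-\beta^2}{2}=t-\frac12 .
\]
Using $\Delta^2=10$ and $\beta^2=\frac{9}{100}\cdot 10=\frac{9}{10}$, this reads $5a^2-\frac{9}{20}=t-\frac12$, i.e. $a^2=(t-\frac1{20})/5$. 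This has a positive solution $a(t)$ precisely when $t>\frac1{20}$, which is where the bound in the statement comes from. With this $a$, we have $Z_t=\rho(g_a)\circ Z^{\ArcaraBertram}_{\beta,a\Delta}$, where $g_a\in G$ lifts $\mathrm{diag}(1,1/a)\in\GL_2^+(\bR)$. We choose the lift so that it fixes the phases $0$ and $1$; it then preserves $\cP(0,1]$, so the translate has the same heart as $\Sigma^{\ArcaraBertram}(\beta,a\Delta)$.

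\textbf{Matching the hearts.} It remains to check that this heart is $\cA=\cA_{\beta,\Delta}$. Formally this is clear: $\mu_{a\Delta}=a\mu_\Delta$ and $\beta.(a\Delta)=a\,\beta.\Delta$, so the torsion pair $(\cT_{\beta,a\Delta},\cF_{\beta,a\Delta})$, and hence $\cA_{\beta,a\Delta}$, does not depend on $a>0$.

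\textbf{Main obstacle.} The subtle point is that $a(t)$ is usually irrational. For irrational $a$, $\Sigma^{\ArcaraBertram}(\beta,a\Delta)$ is defined only by deformation, so we do not a priori know its heart. I would handle this with Lemma~\ref{L:constantheart}.
\begin{itemize}
\item Consider the path $s\mapsto g_{a(s)}\cdot\Sigma^{\ArcaraBertram}(\beta,a(s)\Delta)$. It is continuous in $s$, since $\Sigma^{\ArcaraBertram}$ and the $G$-action are continuous.
\item Its central charge is $Z_s$, whose imaginary part $\Delta.(\ch_1-\beta\ch_0)$ is independent of $s$, so the path satisfies \ref{axiom1}.
\item Lemma~\ref{L:constantheart} then says the heart is constant along the path.
\item At any $s_0$ with $a(s_0)$ rational, $\Sigma^{\ArcaraBertram}(\beta,a(s_0)\Delta)=\sigma^{\ArcaraBertram}_{\beta,a(s_0)\Delta}$ has heart $\cA_{\beta,\Delta}$ by construction ($\beta=\frac3{10}\Delta$ is rational). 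Such $s_0$ are dense.
\end{itemize}
Hence the heart is $\cA$ for every $t>\frac1{20}$. The translate therefore coincides with $(Z_t,\cA)=\sigma_t$, and $\sigma_t\in\Stab^\dagger(T)$.
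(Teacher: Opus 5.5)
Your proposal is correct and is essentially the paper's argument: the paper likewise solves $t-\tfrac12=\tfrac{(s\Delta)^2-\beta^2}{2}$ for some $s>0$, which is possible exactly when $t>\tfrac1{20}$. It then observes that $\sigma_t$ is a $G$-translate of the Arcara--Bertram stability condition at $(\tfrac{3}{10}\Delta,s\Delta)$, and hence lies in $\Stab^\dagger(T)$. Your extra step using Lemma~\ref{L:constantheart} to identify the heart as $\cA_{\beta,\Delta}$ when $s$ is irrational fills in a point the paper leaves implicit: it simply writes $\sigma^{\ArcaraBertram}_{\frac{3}{10}\Delta,s\Delta}$ without addressing that such conditions are only defined by deformation.
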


\begin{proof}
The condition $t>\frac{1}{20}$ implies that $t-\frac{1}{2}-\frac{\beta^2}{2}>0$, so we can write $t-\frac{1}{2} = \frac{(s\Delta)^2-\beta^2}{2}$ for some $s>0$. Thus, $\sigma_t$ is a $G$-translate of $\sigma_{\frac{3}{10}\Delta, s\Delta}^\ArcaraBertram$, and is in particular in $\Stab^\dagger(T)$.
\end{proof}

To extend the path $\sigma_t$ to all $t\in (-2/5, +\infty)$, we analyze the $\sigma_t$-stable objects of phase 1, and prove that none of them becomes massless before $-2/5$. To do so, we will need:

\begin{lem} \label{lemma:pathsEnriques_massless10}
Let $F$ be a $\mu_\Delta$-stable vector bundle on $T$ with $\mu_\Delta(F)=3$ and let $t\geq-2/5$ be given. Then $\Re Z_t(F[1]) <0$, unless $F \cong \ko_T(F_i^\pm)$ and $t=-2/5$.  
\end{lem}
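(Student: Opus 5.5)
The plan is to compute $\Re Z_t(F[1])$ in terms of the invariants of $F$ and rule out $\Re Z_t(F[1])\ge 0$. If this happens, we will produce a nonzero morphism between $F$ and one of the half-fiber line bundles, which stability forces to be an isomorphism.

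Write $s(F)=(r,c,s_2)$ with $r=\ch_0(F)\ge 1$, $c\in\Num(T)$ and $s_2=\ch_2(F)+r/2$. From the formula for $Z_t$ we get
\[
\Re Z_t(F[1]) = -\Re Z_t(F) = s_2-\beta.c-tr .
\]
The hypothesis $\mu_\Delta(F)=3$ gives $\Delta.c=3r$, hence $\beta.c=\tfrac{9r}{10}$. Therefore
\[
\Re Z_t(F[1]) = s_2-\tfrac{9r}{10}-tr \le s_2-\tfrac{r}{2} \quad\text{for } t\ge -\tfrac25 ,
\]
with equality only when $t=-2/5$. So it suffices to show that $s_2\ge r/2$ forces $F\cong\ko_T(F_i^\pm)$ together with $s_2=r/2$.

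\textbf{Key step: a morphism to or from a half-fiber.} Suppose $s_2\ge r/2$. For each $i$, $s(\ko_T(F_i^\pm))=(1,f_i,1/2)$. The isometry of Subsection~\ref{subsec:latticeEnriques} then gives
\[
\chi(\ko_T(F_i^\pm),F) = -\big(f_i.c - s_2 - \tfrac r2\big) = s_2+\tfrac r2 - f_i.c .
\]
Lemma~\ref{lemma:genEnriques_tenLB} gives $\sum_i f_i.c = 3\Delta.c = 9r$, so some index $i$ satisfies $f_i.c\le 9r/10$. For this $i$,
\[
\chi(\ko_T(F_i^+),F)\ge r-\tfrac{9r}{10}>0 .
\]
Hence either $\Hom(\ko_T(F_i^+),F)\ne 0$ or $\Ext^2(\ko_T(F_i^+),F)\ne 0$. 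Serre duality, together with $F_i^++K_T\sim F_i^-$, identifies the latter with $\Hom(F,\ko_T(F_i^-))^\vee$. In both cases we obtain a nonzero morphism between two $\mu_\Delta$-stable sheaves of the same slope $3$. Such a morphism must be an isomorphism, so $r=1$ and $F\cong\ko_T(F_i^\pm)$.

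\textbf{Closing the line bundle case.} For $F=\ko_T(F_i^\pm)$ we have $s_2=1/2=r/2$. The displayed inequality then reads $\Re Z_t(F[1])=-\tfrac25-t\le 0$, with equality exactly when $t=-2/5$. This matches the value $Z_t(\ko_T(F_i^\pm)[1])=-\tfrac25-t$ computed before the lemma, and completes the argument.

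The step I expect to be the real obstacle is the one above. A direct numerical approach, bounding $s_2\le (c^2+2)/(2r)$ via Corollary~\ref{cor:stabEnriques_stablebound} (or via $\Hom(F,F)=k$ and $\Ext^2(F,F)\subseteq k$) and then bounding $c^2\le 9r^2/10$ by the Hodge index theorem, only works for $r\ge 5$. For small ranks it leaves borderline classes, for instance $r=2$, $c=f_i+f_j$, $c^2=2$. The $\chi$-averaging trick with the ten classes $f_i$ is what handles all ranks uniformly. The remaining points to double-check are the Serre duality identification with the twist by $K_T$, and that $F_i^+$, $F_i^-$ are the only lifts of $f_i$.
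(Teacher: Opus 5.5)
Your proof is correct and is essentially the paper's argument. The paper assumes $F\not\cong\ko_T(F_i^\pm)$ and uses stability plus Serre duality (with $F_i^++K_T\sim F_i^-$) to get $\chi(\ko_T(F_i^+),F)\le 0$ for all ten $i$. Summing via $\sum_i f_i=3[\Delta]$ gives $s_2\le\frac25 s_0$, which is the contrapositive of your pigeonhole choice of an index with $f_i.c\le 9r/10$; the uniqueness of lifts of $f_i$ that you flag is not needed.
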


\begin{proof}
We write $s_i=s_i(F)$, so that $Z_t(F[1]) = s_2-\beta.s_1-ts_0 + i\Delta(\beta s_0-s_1)$.
By assumption, $\mu_\Delta(F)=3$, i.e. $\Delta.s_1=3s_0$. This gives us $\Im Z_t(F[1])=0$ and
\[ 
    \Re Z_t(F[1]) = s_2-\frac{9}{10}s_0 - ts_0. 
\]
Assume that $F \not\cong \ko_T(F_i^\pm)$ for all $i$. We have that $F$ and $\ko_T(F_i^\pm)$ are pairwise non-isomorphic $\mu_\Delta$-stable vector bundles of the same slope, hence there are no maps between them. Thus,
\begin{align*} 
-\chi(\ko_T(F_i^+), F) &= -\dim \Hom(\ko_T(F_i^+), F) + \dim \Ext^1(\ko_T(F_i^+), F) - \dim \Ext^2(\ko_T(F_i^+), F) \\
&= \dim \Ext^1(\ko_T(F_i^+), F) - \dim \Hom(F, \ko_T(F_i^++K_T)) \\
&=\dim \Ext^1(\ko_T(F_i^+), F) - \dim \Hom(F, \ko_T(F_i^-)) \geq 0,
\end{align*}
for all $i$. This gives us
\[ 
    0 \leq -\chi(\ko_T(F_i^+), F) = ((1,f_i, 1/2), (s_0, s_1, s_2)) = s_1.f_i-s_2-\frac{1}{2}s_0. 
\]
Summing over $i$ and using that $3[\Delta] = \sum_i f_i$, we get
\[ 
    3\Delta. s_1 - 10 s_2 - 5s_0 \geq 0 \implies s_2 \leq \frac{2}{5}s_0. 
\]
Therefore,
\[ 
    \Re Z_t(F[1]) \leq \frac{2}{5}s_0 - \frac{9}{10}s_0 - ts_0 = \left( -\frac{1}{2}-t\right) s_0, 
\]
which is negative for $t\geq -2/5>-1/2$. Finally, note that $\Re Z_t(\ko_T(F_i^\pm)[1]) = - \frac{2}{5}-ts_0$, which is negative for $t>-2/5$ and zero for $t=-2/5$.
\end{proof}

Let us write $a = \inf \{ s>-2/5: \sigma_t \in \Stab^\dagger(T) \text{ for all }t>s\}$. By construction, $\sigma_t \in \Stab^\dagger(T)$ for all $t>a$, and that $a \leq 1/20$. Note also that if $a>-2/5$, then $\sigma_a$ is \emph{not} a stability condition, as otherwise Bridgeland's deformation theorem would imply that $\sigma_{a-\epsilon} \in \Stab^\dagger(T)$ for $0<\epsilon \ll 1$.

\begin{proof}[Proof of Theorem \ref{thm:pathsEnriques_main}(2)]
Note that $\{\sigma_t: t>a\}$ satisfies \ref{axiom1} and \ref{axiom2} by construction, and Proposition \ref{prop:gen2cy_axiom4} shows that \ref{axiom4} holds for $\nu=0$.

Let us describe the massless subcategory. Recall that if $E$ is $\sigma_t$-semistable of phase 1, then either $E=\ko_x$ for some $x \in T$, or $E=F[1]$ for some $\mu_\Delta$-stable vector bundle $F$ with $\mu_\Delta(F)=3$ by Lemma \ref{lemma:stabEnriques_stphase1}. Here $Z_a(\ko_x) = -1$, hence skyscraper sheaves are not massless, and $\Re Z_a(F[1])<0$ by Lemma \ref{lemma:pathsEnriques_massless10}, unless $a=-2/5$ and $F=\ko_T(F_i^\pm)$.

Thus, if $a>-2/5$, then there are no massless objects, and so $\{\sigma_t: t>a\}$ converges to a genuine stability condition with central charge $Z_a$ and heart $\cA$ by Theorem \ref{T:strongquasiconvergence}. This contradicts that $\sigma_a$ is not a stability condition for $a>-2/5$.

We get that $a=-2/5$, and that $\{\ko_T(F_i^\pm)\}$ are the only massless stable objects of phase 1. As there are finitely many of them, Lemma~ \ref{L:qcpathsA4_massless} guarantees that \ref{axiom4} holds for $\nu>0$. By Theorem \ref{T:strongquasiconvergence} the path $\sigma_\bullet$ is strongly quasi-convergent. Since $Z_t(\ko_T(F_i^\pm)[1]) = -2/5-t$, the filtration is $0\subset \langle \ko_T(F_i^\pm): i=1, \dots, 10\rangle \subset \DCoh(T)$.
\end{proof}

\printendnotes

\bibliography{refs}{}
\bibliographystyle{plain}

\end{document}